\documentclass[11pt]{amsart}
\usepackage[utf8]{inputenc}
\usepackage{amsfonts, amsmath, amssymb, amsthm, color, bookmark,graphicx,subfig, mathabx, tikz-cd,  cleveref, enumitem}

\usepackage{hyperref}
\hypersetup{
  pdfborder={0 0 0},
  colorlinks   = true,
  urlcolor     = blue,
  linkcolor    = blue,
  citecolor   = red
}

\hoffset -1.45cm
\textheight=8.7in
\textwidth=6.2in
\voffset -1.2cm

\allowdisplaybreaks

\usepackage[centertags]{amsmath}

\hoffset -1.35cm \voffset -1.4cm \textwidth=6in \textheight=8.25in
             \tolerance=9000 \emergencystretch=5pt \vfuzz=2pt
             \parskip=1.2mm

\usepackage{mathrsfs}

\newtheorem{theorem}{Theorem}[section]                                          
\theoremstyle{plain}
            \newtheorem{lemma}[theorem]{Lemma}
            \newtheorem{corollary}[theorem]{Corollary}                   
            \newtheorem{proposition}[theorem]{Proposition}  
            \newtheorem{conjecture}[theorem]{Conjecture}
            \newtheorem{claim}{Claim}[theorem]
\theoremstyle{remark}
            \newtheorem{remark}[theorem]{Remark}
\theoremstyle{definition}
            \newtheorem{definition}[theorem]{Definition}

             \newtheorem{question}[theorem]{Question}

                  

\renewcommand{\ll }{\langle\hspace{-.7mm}\langle }
\newcommand{\rr }{\rangle\hspace{-.7mm}\rangle }

\newcommand{\bg}{\overline{G}}
\newcommand{\bh}{\overline{H}}
\newcommand{\Z}{\mathbb{Z}}
\newcommand{\N}{\mathcal{N}}

\newcommand{\C}{\mathbb{C}}
\newcommand{\Q}{\mathbb{Q}}
\newcommand{\fin}{\mathcal{FIN}}
\newcommand{\h}{\mathcal H}
\newcommand{\cH}{\mathscr{H}}

\DeclareMathOperator{\cd}{cd}
\DeclareMathOperator{\mg}{MG}
\DeclareMathOperator{\rk}{rk}
\DeclareMathOperator{\im}{im}

\DeclareMathOperator{\defi}{defi}
\DeclareMathOperator{\vcd}{vcd}

\newcommand{\F}{\mathcal F}
\newcommand{\fS}{\mathcal S}


\usepackage[textsize=tiny]{todonotes}


\begin{document}

\title{$L^2$-Betti numbers of Dehn fillings}
\author{Nansen Petrosyan}
\address{School of Mathematics, University of Southampton, Southampton SO17~1BJ, UK}
\email{n.petrosyan@soton.ac.uk}
\thanks{}
\author{Bin Sun}
\address{Department of Mathematics, Michigan State University, East Lansing, MI, 48824, USA}
\email{sunbin1@msu.edu}

\begin{abstract} 
We initiate the study of the $L^2$-Betti numbers of group-theoretic Dehn fillings. 
For  a broad class of virtually special  groups $G$, we prove that sufficiently deep Dehn fillings $\bg$ have the same $L^2$-Betti numbers as $G$. As applications, we verify the Singer Conjecture for certain Einstein manifolds, establish a virtual fibering criterion for $\bg$, obtain bounds on deficiency of $\bg$, and provide new examples of hyperbolic groups with exotic subgroups that arise as Dehn fillings of any cusped arithmetic hyperbolic manifold of dimension at least four.
\end{abstract}

\maketitle

\section{Introduction}

Dehn filling is a process that produces a closed manifold from one with boundary. Let $M$ be an $n$-manifold with boundary an $(n-1)$-torus. A \textit{Dehn filling} of $M$ is a manifold obtained by gluing an $n$-dimensional solid torus $D^2\times S^{n-1}$ to $M$ along their boundaries via a homeomorphism.  It is particularly central to the study of $3$-manifolds.  Notably, Thurston's Hyperbolic Dehn Filling Theorem \cite{thurston1983three} states that if $M$ is a $3$-dimensional hyperbolic knot complement, then most Dehn fillings of $M$ admit a hyperbolic Riemannian metric, producing examples of non-Haken hyperbolic $3$-manifolds and giving evidence to his Geometrization Conjecture.

There is an analogous construction in group theory, called {\it (group-theoretic) Dehn filling}, which also finds important applications in group theory and topology. Remarkably, one of the crucial steps in Agol's proof \cite{agol2013virtual} of Thurston's Virtual Haken Conjecture is to show that a virtually compact cubical hyperbolic group is in fact virtually compact special, for which Dehn filling is an indispensable tool. Other applications of group-theoretic Dehn filling can be found in \cite{dahmani2017hyperbolically,antolin2017farrell,dahmani2018recognizing,chifan2021wreath}.

In its most general form, group-theoretic Dehn filling involves a triple \((G, H, N)\), where \(G\) is a group, \(H \leqslant G\) is a subgroup, and \(N \lhd H\) is a normal subgroup. The Dehn filling process produces the quotient group \(G / \ll N \rr\) as its output, where \(\ll N \rr\) is the normal closure of $N$ in $G$. This algebraic framework mirrors the topological notion of Dehn filling, as seen through the Seifert--van Kampen theorem. Specifically, let \(G = \pi_1(M)\), \(H\) be the image of \(\pi_1(\partial M)\) in \(\pi_1(M)\), and \(N\) the image of \(\pi_1(\partial D^2)\). The gluing process in topology then corresponds to the group-theoretic construction.

The general setting of Dehn filling is overly broad to allow for meaningful conclusions. A typical strategy in practice is to assume that $H$ is a hyperbolically embedded subgroup of $G$, which is a generalization of $G$ being hyperbolic relative to $H$, and prove that for many choices of $N\lhd H$, the quotient $G/\ll N \rr$ will have certain properties. For example, a critical ingredient of Agol's work \cite{agol2013virtual} is Wise's Malnormal Special Quotient Theorem \cite{wise2021structure,agol2016alternate}, which states that if $G$ is a virtually compact special hyperbolic group, and $G$ is hyperbolic relative to its subgroup $H$, then for many choices of $N\lhd H$, the quotient $G/\ll N \rr$ will be hyperbolic virtually compact special.

In this paper, we consider the case where $H$ is a hyperbolically embedded subgroup of $G$ (denoted $H\hookrightarrow_h G$) and compute the $L^2$-Betti numbers of $G/\ll N \rr$ for many choices of $N\lhd H$. To simplify notation, we will often write $\bh := H/N$ and $\bg := G/\ll N \rr$.

For simplicity of exposition, most of our results in the introduction are stated for a single peripheral subgroup, but they hold more generally for multiple peripherals.

\subsection*{$L^2$-Betti numbers of Dehn filling.}For any group $G$, the $L^2$-Betti number $b^{(2)}_\ast(G)$ can be thought of as a normalized version of the usual Betti number. For example, for residually finite groups, $L^2$-Betti numbers can be computed using a nested sequence of finite-index normal subgroups, as described by L\"uck's Approximation Theorem \cite{luck1994approximating}.

Our main result is the following theorem, which computes the $L^2$-Betti numbers of sufficiently deep Dehn fillings for virtually compact cubical groups that are hyperbolic relative to virtually abelian subgroups.

\begin{theorem}[\Cref{thm. sparse rel. hyp.}]
    \label{thm. intro rel hyp}
    Let $G$ be a virtually compact cubical group that is hyperbolic relative to a virtually abelian subgroup $H\leqslant G$. Then there exists a finite-index torsion-free normal subgroup $K\lhd H$ such that for every sufficiently deep normal subgroup $N\lhd H$ that satisfies $N\leqslant K$ and $K/N$ is torsion-free, we have
    \[b^{(2)}_\ast(\bg)=b^{(2)}_\ast(G).\]
\end{theorem}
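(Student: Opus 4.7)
The plan is to compute $b^{(2)}_\ast(\bg)$ by comparing it with $b^{(2)}_\ast(G)$ via a Mayer--Vietoris argument in $L^2$-homology, exploiting the fact that the peripheral subgroups are amenable. To begin, since $G$ is virtually compact cubical and hyperbolic relative to virtually abelian subgroups, Agol's theorem together with its extensions to the relatively hyperbolic setting (Wise, Hruska--Wise, Sageev--Wise) implies that $G$ is virtually compact special. I would pass to a finite-index torsion-free compact special normal subgroup $G_0\lhd G$ and set $H_0=H\cap G_0$. Since $L^2$-Betti numbers scale multiplicatively under finite-index subgroups, it suffices to prove the theorem for $G_0$. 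I therefore assume $G$ is torsion-free compact special and $H$ is torsion-free virtually abelian, so in particular $H$ contains $\Z^n$ as a finite-index subgroup.

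Next, I would choose $K\lhd H$ to be a finite-index torsion-free normal subgroup, and take $N\lhd H$ sufficiently deep inside $K$ so that the Cohen--Lyndon property holds: $\ll N\rr=\underset{t\in T}{\ast}\,tNt^{-1}$, where $T$ is a system of coset representatives. The existence of such $K$ and of a sufficient-depth threshold comes from the Cohen--Lyndon theorem of Groves--Manning in the relatively hyperbolic setting (or of Dahmani--Guirardel--Osin for hyperbolically embedded subgroups). Topologically, the Cohen--Lyndon decomposition gives a classifying space for $\bg$ built from $BG$ by gluing a copy of $B\bh$ along each peripheral $BH$. Equivalently, it yields a pushout of $\bg$-spaces producing a long exact Mayer--Vietoris sequence in $L^2$-homology with coefficients in $\mathcal{N}(\bg)$, linking the $L^2$-chain complexes of $H$, $\bh$, $G$, and $\bg$.

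The crucial input is the vanishing of the peripheral contributions in this long exact sequence. Both $H$ and $\bh$ are virtually abelian, hence amenable, so their $L^2$-Betti numbers vanish in every positive degree; in degree zero they vanish when the groups are infinite, and the case of finite $\bh$ (when $K/N$ is trivial) can be handled in parallel. A Shapiro-type lemma for $L^2$-Betti numbers would then convert this vanishing into vanishing of the peripheral terms $H_\ast(H;\mathcal{N}(\bg))$ and $H_\ast(\bh;\mathcal{N}(\bg))$ in von Neumann dimension, isolating the desired equality $b^{(2)}_\ast(G)=b^{(2)}_\ast(\bg)$.

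The main technical obstacle is the translation between $\mathcal{N}(G)$- and $\mathcal{N}(\bg)$-coefficients: since $\ll N\rr$ is typically non-amenable (it is a free product of infinitely many amenable subgroups), there is no naive change-of-rings formula for von Neumann dimensions. Overcoming this requires either a delicate spectral sequence argument combining Shapiro-type vanishing with the Cohen--Lyndon free-product structure, or an approximation via L\"uck's theorem along a nested sequence of finite-index normal subgroups of $\bg$, whose existence is ensured by residual finiteness of $\bg$ (established by Wise-type machinery for sparse fillings of virtually special relatively hyperbolic groups). Matching such an approximation with a corresponding tower for $G$ and tracking the contributions of the peripheral subgroups throughout is where the bulk of the work will lie.
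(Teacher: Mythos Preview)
Your proposal correctly identifies the Cohen--Lyndon property and the Dehn filling space as the source of a Mayer--Vietoris sequence, and you are right that amenability of the peripherals will eventually be used. However, there are two genuine gaps.

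First, the Mayer--Vietoris sequence arising from the Dehn filling space does \emph{not} involve $b^{(2)}_\ast(G)$ or $b^{(2)}_\ast(H)$; it involves $b^{(2)}_\ast(\ll N\rr\backslash EG;\bg)$ and $b^{(2)}_\ast(N\backslash EH;\bh)$. Your Shapiro-type claim is incorrect here: one computes $\dim_{\bg} H_\ast(H;\mathcal{N}(\bg)) = b^{(2)}_\ast(N\backslash EH;\bh)$, not $b^{(2)}_\ast(H)$, because $N$ acts trivially on $\mathcal{N}(\bg)$ and the resolution $EH$ descends to $N\backslash EH$. Amenability of $H$ or $\bh$ gives no direct control over these quotient-space invariants. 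The paper bridges this gap not via L\"uck approximation over finite-index subgroups of $\bg$, but via the Jaikin-Zapirain--L\'opez-\'Alvarez strong approximation theorem for virtually locally indicable groups: since $G$ (and $H$) are virtually locally indicable, for sufficiently deep $N$ one has $|b^{(2)}_\ast(\ll N\rr\backslash EG;\bg)-b^{(2)}_\ast(G)|<\delta$ and $|b^{(2)}_\ast(N\backslash EH;\bh)-b^{(2)}_\ast(H)|<\delta$. Combining this with the excision inequalities from the Dehn filling space and the vanishing $b^{(2)}_\ast(H)=b^{(2)}_\ast(\bh)=0$ yields $|b^{(2)}_\ast(\bg)-b^{(2)}_\ast(G)|<\delta$.

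Second, and more seriously, your proposal contains no mechanism to upgrade the approximation $|b^{(2)}_\ast(\bg)-b^{(2)}_\ast(G)|<\delta$ to an equality. This is the heart of the argument and is entirely absent from your outline. The paper accomplishes it via the Atiyah Conjecture: using Wise's Malnormal Special Quotient Theorem (in its relatively hyperbolic form), one shows that when $K/N\cong\Z$ the quotient $\bg$ is virtually compact special, hence satisfies Atiyah by Schreve's theorem. A uniform bound on the orders of finite subgroups of $\bg$ (via control of finite subgroups under Dehn filling) then forces $b^{(2)}_\ast(\bg)\in\frac{1}{C!}\Z$ for a fixed $C$ independent of $N$, and likewise for $b^{(2)}_\ast(G)$; taking $\delta<\frac{1}{C!}$ gives equality. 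For general torsion-free $K/N$ of higher rank, one iterates: a further deep filling reduces to the rank-one case, and a bootstrapping argument (the paper's \Cref{lem. det and atiyah ver 2}) propagates the Atiyah-type discreteness back to $\bg$. Without this Atiyah/MSQT step your argument cannot conclude.
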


Here, we say that a group is \textit{compact cubical} if it is the fundamental group of a compact locally CAT(0) cube complex, and a group \textit{virtually} has property $\mathcal P$ if it has a finite-index subgroup that has property $\mathcal P$.

 The next theorem provides an approximation result for $L^2$-Betti numbers of locally indicable groups and serves as a foundation for the proof of \Cref{thm. intro rel hyp}.  Recall that a group is \textit{locally indicable} if every non-trivial finitely generated subgroup of it has a homomorphism onto $\Z$. 

\begin{theorem}[see also \Cref{cor. gen li}]
    \label{thm. intro li}
    Let $G$ be a virtually locally indicable group of type $F_\infty$ and has a hyperbolically embedded amenable subgroup $H\hookrightarrow_h G$. Then for every $\delta>0$ and $n\geq 0$, there exists a finite subset $\mathcal{F}_{n,\delta}\subset H\smallsetminus\{1\}$ such that if a normal subgroup $N\lhd H$ satisfies $N\cap\mathcal{F}_{n,\delta}=\emptyset$ and $\bh$ is of type $F_\infty$, then
    \[| b^{(2)}_n(\bg)-b^{(2)}_n(G)|< \delta.\]
\end{theorem}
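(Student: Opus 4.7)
The plan is to exploit the Cohen--Lyndon structure of the normal closure $\ll N\rr$ coming from the hyperbolic embedding $H\hookrightarrow_h G$, and then to combine a Mayer--Vietoris analysis with the amenability of $H$ and the dimension-theoretic consequences of (virtual) local indicability. By the Cohen--Lyndon theorem for hyperbolically embedded subgroups (following Sun and Dahmani--Guirardel--Osin), there is a finite subset $\mathcal F_0\subset H\smallsetminus\{1\}$ such that whenever $N\cap\mathcal F_0=\emptyset$, the normal closure splits as a free product $\ll N\rr\cong\ast_{t\in T}\,tNt^{-1}$ for a double-coset transversal $T$. Together with the type $F_\infty$ hypotheses on $G$ and $\bh$, this produces a finite-type homotopy pushout model
\[
B\bg \;\simeq\; BG \,\cup_{BH}\, B\bh ,
\]
and hence a Mayer--Vietoris long exact sequence in $L^2$-homology with $\ell^2(\bg)$-coefficients relating $b^{(2)}_\ast(\bg)$ to $b^{(2)}_\ast(G)$, $b^{(2)}_\ast(H)$, and $b^{(2)}_\ast(\bh)$.

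Next I would pass to von Neumann dimensions in this sequence. Because $H$ is amenable and infinite (peripheral subgroups arising from a nontrivial hyperbolic embedding are infinite), all of its $L^2$-Betti numbers vanish, and L\"uck's induction formula forces the $H$-contributions in the sequence to vanish as well. The quotient $\bh=H/N$ is again amenable, so $b^{(2)}_m(\bh)=0$ for $m\geq 1$, while $b^{(2)}_0(\bh)=1/|\bh|$ when $\bh$ is finite and $0$ otherwise. Since avoiding a sufficiently large finite subset in $H$ forces $|\bh|$ to be arbitrarily large, one enlarges $\mathcal F_{n,\delta}\supseteq\mathcal F_0$ so that $1/|\bh|<\delta/C$, where $C$ is a constant that depends only on the dimensions of the chain modules in degrees $\leq n$ in our chosen finite-type models. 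Virtual local indicability of $G$ enters here: after passing to a finite-index locally indicable subgroup $G_0\leq G$ (and its corresponding peripheral $H_0=H\cap G_0$), the Jaikin-Ariza/Linnell machinery guarantees that the Linnell dimension agrees with the von Neumann dimension, making the Mayer--Vietoris sequence strictly additive in $L^2$-dimensions; the multiplicative index factor coming from restricting to $G_0$ is absorbed into $C$.

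The main obstacle will be making the Mayer--Vietoris sequence rigorous in the Dehn-filling context. The pushout $B\bg\simeq BG\cup_{BH}B\bh$ is made plausible by the free-product shape of $\ll N\rr$: precisely because $\ll N\rr$ is a free product of conjugates of $N$, one can assemble an $E\bg$-complex from $EG$ and copies of $E\bh$ in a $\bg$-equivariant way, and this is what must be checked to ensure that the induced chain-level maps produce a long exact sequence whose dimensions add up. A secondary obstacle is producing the constant $C$ uniformly across all sufficiently deep fillings at once, which is where the type $F_\infty$ hypothesis on $\bh$ is used: it bounds the rank of the chain modules in degrees $\leq n$ independently of $N$, so that the same $\mathcal F_{n,\delta}$ works for every admissible $N$.
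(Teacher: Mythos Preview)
Your pushout $B\bg\simeq BG\cup_{BH}B\bh$ is correct---this is exactly the paper's Dehn filling space---but the Mayer--Vietoris sequence you extract from it does \emph{not} involve $b^{(2)}_\ast(G)$ or $b^{(2)}_\ast(H)$. Taking $\N(\bg)$-coefficients means computing homology on the $\bg$-cover of each piece: the relevant cover of $BG$ is $\ll N\rr\backslash EG$ (not $EG$, since $\pi_1(BG)=G$ maps to $\bg$ with kernel $\ll N\rr$), and the relevant cover of $BH$ is $\bg\times_{\bh}(N\backslash EH)$. So the sequence relates $b^{(2)}_\ast(\bg)$ to $b^{(2)}_\ast(\ll N\rr\backslash EG;\bg)$ and $b^{(2)}_\ast(N\backslash EH;\bh)$, and neither of these is $b^{(2)}_\ast(G)$ or $b^{(2)}_\ast(H)$. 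In particular, amenability of $H$ gives $b^{(2)}_\ast(H)=0$ but says nothing directly about $b^{(2)}_\ast(N\backslash EH;\bh)$, because $N\backslash EH$ is not contractible.

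This is precisely where virtual local indicability enters, and you have misidentified its role. Von Neumann dimension is already additive on short exact sequences of $\N(\bg)$-modules, so there is nothing to repair in the Mayer--Vietoris bookkeeping, and no ``Linnell dimension $=$ von Neumann dimension'' statement is relevant. What local indicability actually buys---via the Jaikin-Zapirain--L\'opez-\'Alvarez approximation theorem---is that for any finite matrix $A$ over $\C G$ and any $\epsilon>0$, one has $|\rk_{G/M}(A)-\rk_G(A)|<\epsilon$ once $M\lhd G$ avoids a fixed finite subset of $G\smallsetminus\{1\}$. Applied to the boundary matrices of a finite-type $EG$, this yields $|b^{(2)}_n(\ll N\rr\backslash EG;\bg)-b^{(2)}_n(G)|<\delta/2$ for sufficiently deep $N$, and the same argument applied to $H$ (which is also virtually locally indicable, being a subgroup of $G$) controls $b^{(2)}_n(N\backslash EH;\bh)$. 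Only \emph{after} this approximation step does amenability kill the peripheral contributions. Your final remark, that the $F_\infty$ hypothesis on $\bh$ gives rank bounds on the chain modules uniformly in $N$, is also off: the chain complex of $B\bh$ depends on $N$ and there is no such uniformity; the hypothesis is used only so that the relevant $L^2$-Betti numbers in degrees $\leq n$ are finite and the dimension inequalities from the long exact sequence make sense.
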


 For groups that are virtually compact cubical and hyperbolic relative to arbitrary subgroups, we also obtain the following inequality.

\begin{theorem}[\Cref{thm. hyp.} (see also \Cref{thm. gen hyp.})]
    \label{thm. intro hyp}
    Let $G$ be a virtually compact cubical hyperbolic group, and suppose that $G$ is hyperbolic relative to a subgroup $H$. Then there exists a finite-index torsion-free normal subgroup $K\lhd H$ such that for every sufficiently deep normal subgroup $N\lhd H$ that satisfies
    \begin{itemize}
        \item $N\leqslant K$, and
        \item $K/N$ is torsion-free virtually compact cubical hyperbolic,
    \end{itemize}
    we have
    \[b^{(2)}_1(\bg)\leqslant b^{(2)}_1(G).\]
\end{theorem}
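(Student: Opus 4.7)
My plan is to realise $\bg$ as a pushout of groups, use the resulting Mayer--Vietoris long exact sequence in $L^2$-homology, and exploit the surjectivity of $H \twoheadrightarrow \bh$ to extract the one-sided bound.

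First I would pass to a torsion-free finite-index normal subgroup $G_0 \lhd G$ that is compact special hyperbolic (via Haglund--Wise), and set $H_0 := H \cap G_0$, $N_0 := N \cap H_0$, $\overline{G_0} := G_0/\ll N_0\rr^{G_0}$, $\overline{H_0} := H_0/N_0$. For $N$ sufficiently deep, the Dahmani--Groves--Osin Dehn filling theorem together with its Cohen--Lyndon refinement identifies $\ll N_0 \rr^{G_0}$ as a free product of conjugates of $N_0$, gives the embedding $\overline{H_0} \hookrightarrow \overline{G_0}$, realises $\overline{G_0}$ as a finite-index subgroup of $\bg$, and exhibits $\overline{G_0} \cong G_0 *_{H_0} \overline{H_0}$ as a pushout of groups. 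Since $L^2$-Betti numbers scale multiplicatively under finite index, it suffices to show $b_1^{(2)}(\overline{G_0}) \leq b_1^{(2)}(G_0)$.

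The pushout produces a finite-type $2$-complex model for $K(\overline{G_0},1)$, hence the Mayer--Vietoris long exact sequence of Hilbert $\mathcal N(\overline{G_0})$-modules with $\ell^2 \overline{G_0}$ coefficients:
\[
H_1(H_0; \ell^2 \overline{G_0}) \xrightarrow{f} H_1(G_0; \ell^2 \overline{G_0}) \oplus H_1(\overline{H_0}; \ell^2 \overline{G_0}) \xrightarrow{g} H_1(\overline{G_0}; \ell^2 \overline{G_0}) \xrightarrow{\partial} H_0(H_0; \ell^2 \overline{G_0}).
\]
Shapiro's lemma for the subgroup inclusion $\overline{H_0} \hookrightarrow \overline{G_0}$ gives $\dim_{\mathcal N(\overline{G_0})} H_n(\overline{H_0}; \ell^2 \overline{G_0}) = b_n^{(2)}(\overline{H_0})$, and $\dim_{\mathcal N(\overline{G_0})} H_0(H_0; \ell^2 \overline{G_0}) = b_0^{(2)}(H_0) = 0$ since $H_0$ is infinite. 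For the surjection $G_0 \twoheadrightarrow \overline{G_0}$ (with kernel $\ll N_0 \rr^{G_0}$ acting trivially on $\ell^2 \overline{G_0}$), the Hochschild--Serre spectral sequence combined with the free-product decomposition of the kernel yields the bound $\dim_{\mathcal N(\overline{G_0})} H_1(G_0; \ell^2 \overline{G_0}) \leq b_1^{(2)}(G_0)$.

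The decisive input is that the $\overline{G_0}$-action on $\ell^2 \overline{G_0}$ factors through $H_0 \twoheadrightarrow \overline{H_0} \hookrightarrow \overline{G_0}$, so $N_0$ acts trivially on the coefficient module, and the five-term exact sequence for $1 \to N_0 \to H_0 \to \overline{H_0} \to 1$ produces a surjection
\[
H_1(H_0; \ell^2 \overline{G_0}) \twoheadrightarrow H_1(\overline{H_0}; \ell^2 \overline{G_0}).
\]
Hence the $\overline{H_0}$-component of $f$ is surjective, forcing $\dim_{\mathcal N(\overline{G_0})} \im(f) \geq b_1^{(2)}(\overline{H_0})$. Using exactness at the middle term of the Mayer--Vietoris sequence together with $\dim \im(\partial) \leq b_0^{(2)}(H_0) = 0$:
\[
b_1^{(2)}(\overline{G_0}) = \dim \im(g) + \dim \im(\partial) \leq \bigl(b_1^{(2)}(G_0) + b_1^{(2)}(\overline{H_0})\bigr) - b_1^{(2)}(\overline{H_0}) + 0 = b_1^{(2)}(G_0),
\]
which transfers to $b_1^{(2)}(\bg) \leq b_1^{(2)}(G)$ by the finite-index scaling. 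The hardest step is the bound $\dim_{\mathcal N(\overline{G_0})} H_1(G_0; \ell^2 \overline{G_0}) \leq b_1^{(2)}(G_0)$ for the surjection $G_0 \twoheadrightarrow \overline{G_0}$: it requires delicate control of the low-degree terms of the Hochschild--Serre spectral sequence, leveraging both the Dahmani--Groves--Osin free-product structure of the kernel $\ll N_0 \rr^{G_0}$ and the virtually compact cubical hyperbolic structure of $\overline{G_0}$ to keep the relevant $\mathcal N(\overline{G_0})$-dimensions finite and well-controlled.
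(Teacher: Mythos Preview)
Your Mayer--Vietoris framework is fine, but it only recovers the easy half of the argument. After your cancellation, what remains is
\[
b_1^{(2)}(\overline{G_0}) \;\leqslant\; \dim_{\N(\overline{G_0})} H_1(G_0;\ell^2\overline{G_0}) \;=\; b_1^{(2)}\bigl(\ll N_0\rr\backslash EG_0\,;\,\overline{G_0}\bigr),
\]
which is precisely the content of \Cref{lem. first l2 inequality} in the paper (and follows directly from the five-term sequence for $G_0\twoheadrightarrow\overline{G_0}$). The entire difficulty is the \emph{other} inequality, $b_1^{(2)}(\ll N_0\rr\backslash EG_0;\overline{G_0})\leqslant b_1^{(2)}(G_0)$, and here your proposed Hochschild--Serre argument fails: the spectral sequence for $1\to\ll N_0\rr\to G_0\to\overline{G_0}\to 1$ gives $\dim H_1(G_0;\ell^2\overline{G_0})=b_1^{(2)}(\overline{G_0})+\dim E^\infty_{0,1}$, i.e.\ a \emph{lower} bound by $b_1^{(2)}(\overline{G_0})$, not an upper bound by $b_1^{(2)}(G_0)$. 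There is no general mechanism comparing $\dim_{\N(G/M)}H_1(G;\ell^2(G/M))$ with $b_1^{(2)}(G)$ for an arbitrary quotient, and neither the free-product structure of the kernel nor finiteness properties of $\overline{G_0}$ supplies one.

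The paper closes this gap by an entirely different route: the strong approximation theorem of Jaikin-Zapirain--L\'opez-\'Alvarez (\Cref{thm. matrix approximation ver 2}, \Cref{cor. l2 betti approximation ver 2}) yields $b_1^{(2)}(\ll N\rr\backslash EG;\bg)<b_1^{(2)}(G)+\delta$ for sufficiently deep $N$, which combined with \Cref{lem. first l2 inequality} gives $b_1^{(2)}(\bg)<b_1^{(2)}(G)+\delta$. To upgrade this to a non-strict inequality, the paper invokes the Malnormal Special Quotient Theorem (this is where the hypothesis that $K/N$ is virtually compact cubical hyperbolic, and the specific choice of $K$, actually enter) to ensure $\bg$ is virtually compact special, hence satisfies the Atiyah Conjecture by Schreve. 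Then both $b_1^{(2)}(\bg)$ and $b_1^{(2)}(G)$ lie in $\frac{1}{C!}\Z$ for a fixed $C$, so taking $\delta<\frac{1}{C!}$ forces $b_1^{(2)}(\bg)\leqslant b_1^{(2)}(G)$. Your sketch never invokes either the approximation theorem or the Atiyah Conjecture, and without them the inequality you label ``the hardest step'' remains unproved. (Minor point: your claim $\dim H_0(H_0;\ell^2\overline{G_0})=b_0^{(2)}(H_0)$ should read $b_0^{(2)}(\overline{H_0})$, since $N_0$ acts trivially; this vanishes only when $\overline{H_0}$ is infinite, which does hold for sufficiently deep $N$.)
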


\begin{remark}
    The torsion-free assumption  on $K/N$ in \Cref{thm. intro rel hyp} and  \Cref{thm. intro hyp} cannot be removed. We refer to \Cref{rm. torsion-free necessary} for an example. The passage from $H$ to its finite-index subgroup $K$ is more subtle -- it can be omitted if the Atiyah Conjecture is preserved by sufficiently deep Dehn fillings (see \Cref{rm. why finite-index}).
\end{remark}

In several applications, we compute the $L^2$-Betti numbers of Dehn fillings associated with cusped arithmetic hyperbolic manifolds. While \Cref{thm. intro rel hyp} is a key input, it is itself insufficient for this purpose, as it is still open whether non-uniform arithmetic hyperbolic lattices are virtually compact cubical in general. To address this, we extend \Cref{thm. intro rel hyp} to include manifolds with virtually special sparse fundamental groups (see \Cref{thm. sparse rel. hyp.}). Importantly, Bergeron and Wise \cite{bergeron2012boundary} showed that all cusped arithmetic hyperbolic manifolds have virtually sparse special fundamental groups (see \Cref{thm. lattice sparse}). Furthermore, we note that any virtually compact cubical group that is hyperbolic relative to virtually abelian subgroups is itself virtually sparse special (see \Cref{prop. properties of vir special groups} and \Cref{rem:sparse compact}).

\subsection*{Applications.}  We now present some applications of the above theorems.  The first application concerns the algebraic fibering of groups, which is intimately related to $L^2$-Betti numbers \cite{kielak2020RFRS,fisher2021improved}. Recall that a group $G$ is said to \textit{$FP_k(\Q)$-fiber} if there is an epimorphism $G\twoheadrightarrow\Z$ whose kernel is of type $FP_k(\Q)$. In particular, $FP_1(\Q)$-fibering means that the kernel is finitely generated.

This notion of algebraic fibering originally emerged in the study of 3-manifolds that fiber over the circle. More recently, Kielak \cite{kielak2020RFRS} and Fisher \cite{fisher2021improved} characterized the virtual algebraic fibering of residually finite rationally solvable (RFRS) groups in terms of the vanishing of the first $L^2$-Betti number. Their results offer an algebraic counterpart to Agol’s theorem, which was instrumental in proving Thurston’s Virtual Fibering Conjecture. 

\begin{corollary}[\Cref{cor. 1 fiber}]\label{cor. intro 1 fiber}
    Let $G$ be a virtually compact cubical hyperbolic group that virtually $FP_1(\Q)$-fibers, and suppose that $G$ is hyperbolic relative to its subgroup $H$. Then there exists a finite-index torsion-free normal subgroup $K\lhd H$ such that for every sufficiently deep normal subgroup $N\lhd H$ that satisfies $N\leqslant K$ and $K/N$ is torsion-free virtually compact cubical hyperbolic, we have that the group $G/\ll N \rr$ virtually $FP_1(\Q)$-fibers.
\end{corollary}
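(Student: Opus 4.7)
The plan is to translate virtual $FP_1(\Q)$-fibering into the vanishing of the first $L^2$-Betti number via the Kielak--Fisher criterion, transfer this vanishing to the Dehn filling using \Cref{thm. intro hyp}, and then invert the translation on $\bg$.

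First, I would note that any virtually compact cubical hyperbolic group is virtually compact special by Agol's theorem, and in particular virtually RFRS by Haglund--Wise. In this setting, the Kielak--Fisher characterisation of virtual algebraic fibering makes the hypothesis that $G$ virtually $FP_1(\Q)$-fibers equivalent to $b^{(2)}_1(G)=0$. Invoking \Cref{thm. intro hyp}, there is a finite-index torsion-free normal subgroup $K\lhd H$ so that for sufficiently deep $N\lhd H$ with $N\leqslant K$ and $K/N$ torsion-free virtually compact cubical hyperbolic,
\[
b^{(2)}_1(\bg)\leqslant b^{(2)}_1(G)=0.
\]

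To close the loop via the converse direction of Kielak--Fisher, I need $\bg$ to be virtually RFRS and finitely generated; the latter is automatic from finite generation of $G$. For the former, I would appeal to a Dehn filling theorem in the style of Wise's Malnormal Special Quotient Theorem and its refinements by Agol, Groves and Manning: since $G$ is hyperbolic virtually compact special and each peripheral quotient $K/N$ is torsion-free virtually compact special hyperbolic, $\bg$ is hyperbolic virtually compact special for sufficiently deep fillings, enlarging the finite obstruction set from \Cref{thm. intro hyp} if necessary. In particular, $\bg$ is virtually RFRS, and Kielak--Fisher then yields that $\bg$ virtually $FP_1(\Q)$-fibers.

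The delicate step is the last one: ensuring that $\bg$ inherits virtual compact specialness, without which one cannot invert the Kielak--Fisher equivalence to promote $b^{(2)}_1(\bg)=0$ to algebraic fibering. This is precisely why the hypothesis forces $K/N$ to be torsion-free virtually compact cubical hyperbolic, exactly matching the input needed by the relevant Dehn filling theorems to preserve virtual compact specialness. By contrast, the first two steps are direct applications of Kielak--Fisher and \Cref{thm. intro hyp}.
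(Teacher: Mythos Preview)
Your approach is correct and essentially identical to the paper's: translate fibering to $b^{(2)}_1=0$ via Kielak--Fisher, push vanishing through the filling via \Cref{thm. intro hyp}, use the Malnormal Special Quotient Theorem of Agol--Groves--Manning to make $\bg$ virtually compact special (hence virtually RFRS), and invert Kielak--Fisher. One small omission: Kielak--Fisher for $n=1$ requires $b^{(2)}_i(\bg)=0$ for \emph{all} $i\le 1$, so you also need $b^{(2)}_0(\bg)=0$, i.e., $|\bg|=\infty$; the paper secures this by observing (via Osin's Dehn filling theorem) that sufficiently deep fillings are non-elementary hyperbolic, and it handles the degenerate case $G$ virtually $\Z$ separately.
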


\begin{corollary}[\Cref{cor. k fiber}]\label{intro. cor. k fiber}
    Let $G$ be a virtually compact cubical group that is hyperbolic relative to a virtually abelian subgroup $H\leqslant G$, and let $k\in\mathbb N^+$. Then there exists a finite-index torsion-free normal subgroup $K\lhd H$ such that for every sufficiently deep normal subgroup $N\lhd H$ such that 
    
    \begin{itemize}
        \item $N\leqslant K$, and
        \item $K/N\cong \Z$,
    \end{itemize}
    we have that the group $G/\ll N \rr$ virtually $FP_k(\Q)$-fibers if and only if so does $G$.
\end{corollary}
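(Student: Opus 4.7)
The plan is to combine \Cref{thm. intro rel hyp} with the Kielak--Fisher algebraic fibering criterion. Recall Fisher's theorem: a virtually RFRS group $\Gamma$ of type $FP_k(\Q)$ virtually $FP_k(\Q)$-fibers if and only if $b^{(2)}_i(\Gamma)=0$ for all $0\leqslant i\leqslant k$. The desired equivalence will therefore follow once I (i) check that $G$ and $\bg$ both satisfy the hypotheses of Fisher's criterion, and (ii) match their $L^2$-Betti numbers using \Cref{thm. intro rel hyp}.

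For $G$, being virtually compact cubical implies (by Haglund--Wise and Agol) that $G$ is virtually special, hence virtually RFRS, and it is of type $F_\infty$; so Fisher's criterion applies to $G$ directly. For $\bg$, I would argue that for $N$ sufficiently deep, $\bg$ is again virtually compact cubical. Since $K/N\cong\Z$ is compact special and sits as a finite-index subgroup of $\bh$, this should follow from the Dehn filling technology in the relatively hyperbolic virtually special setting---namely Wise's Malnormal Special Quotient Theorem together with the Groves--Manning relative Dehn filling machinery and its refinements (Agol--Groves--Manning, Einstein--Groves). In particular $\bg$ is virtually RFRS and of type $F_\infty$, so Fisher's criterion applies to $\bg$ as well.

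Taking $N$ sufficiently deep so that both (a) the conclusion of \Cref{thm. intro rel hyp} holds and (b) $\bg$ is virtually compact cubical, we obtain $b^{(2)}_i(G)=b^{(2)}_i(\bg)$ for every $i$. In particular the vanishing conditions $b^{(2)}_i=0$ for $0\leqslant i\leqslant k$ coincide for $G$ and for $\bg$, and Fisher's theorem then yields that $G$ virtually $FP_k(\Q)$-fibers if and only if so does $\bg$.

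The main obstacle I anticipate is step (i) for $\bg$, i.e. ensuring that the property of being virtually compact cubical (or, more weakly, virtually RFRS of type $FP_k(\Q)$) is inherited by $\bg$ under sufficiently deep Dehn filling with cyclic peripheral quotient. Once this structural statement is available, the rest of the argument is a bookkeeping combination of Fisher's criterion with \Cref{thm. intro rel hyp}; in practice I expect the relevant Dehn filling infrastructure to already be in place from the proof of \Cref{thm. intro rel hyp}, so this reduces to citing that setup.
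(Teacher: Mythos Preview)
Your proposal is correct and follows essentially the same route as the paper: verify that both $G$ and $\bg$ are virtually RFRS of type $FP_k(\Q)$, invoke the Kielak--Fisher criterion on each side, and transfer the vanishing of $b^{(2)}_i$ using \Cref{thm. intro rel hyp}. One small correction on attributions: the fact that $G$ is virtually compact special uses Groves--Manning's result for groups hyperbolic relative to virtually abelian subgroups (Agol's theorem alone only handles the hyperbolic case), and the structural input for $\bg$ is Wise's relative version of the MSQT (packaged in the paper as \Cref{lem. special quotient}), which is indeed part of the infrastructure behind \Cref{thm. intro rel hyp} as you anticipated.
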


Our next application concerns the Singer Conjecture for Einstein manifolds. Note that Thurston's Hyperbolic Dehn Filling Theorem only holds in dimension $3$. This is because if $M$ is a manifold with torus boundary and $\dim M\geqslant 4$, then for any Dehn filling $\overline M$ of $M$, the fundamental group $\pi_1(\overline M)$ contains a $\Z^2$ subgroup, and thus $\overline M$ cannot admit any hyperbolic Riemannian metric. Nonetheless, Anderson \cite{anderson2006dehn} proved that most Dehn fillings of $M$ still admit Einstein metrics.

\begin{corollary}[\Cref{cor. constructing Einstein}] 
For every cusped arithmetic hyperbolic manifold $M$ of dimension at least four, there exists a sequence of pairwise non-homeomorphic Anderson-type closed Einstein manifolds $\{W_n\}_{n\geqslant 1}$ such that each $W_n$ satisfies the Singer Conjecture and $\{W_n\}_{n\geqslant 1}$ converges to a finite-sheeted cover $W$ of $M$ under the pointed Gromov--Hausdorff metric with based point any $y\in W$.
\end{corollary}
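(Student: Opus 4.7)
\medskip

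\noindent\textbf{Proof plan.} The strategy is to combine Anderson's Einstein filling construction with the $L^2$-Betti number invariance from the sparse version of \Cref{thm. intro rel hyp}. First, I would invoke the Bergeron--Wise result (referenced in the excerpt as \Cref{thm. lattice sparse}) to pass to a finite-sheeted cover $W \to M$ whose fundamental group $G=\pi_1(W)$ is virtually sparse special. Since $M$ is a cusped arithmetic hyperbolic $n$-manifold with $n\geqslant 4$, the group $G$ is hyperbolic relative to its cusp subgroups $H_1,\dots,H_k\cong \mathbb Z^{n-1}$, which are abelian, so the hypotheses of the sparse extension of \Cref{thm. intro rel hyp} are satisfied. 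Apply that theorem to obtain finite-index torsion-free $K_i\lhd H_i$ and a depth threshold $\mathcal F_i\subset H_i\setminus\{1\}$ governing ``sufficiently deep'' fillings.

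Next, I would realize topological Dehn fillings inside the algebraic framework. A Dehn filling of $W$ glues a solid torus $D^2\times T^{n-2}$ to each cusp, which algebraically amounts to choosing a primitive slope $\alpha_i^{(n)}\in K_i$ and setting $N_i^{(n)}:=\langle \alpha_i^{(n)}\rangle$; note that $K_i/N_i^{(n)}\cong \mathbb Z^{n-2}$ is torsion-free, so the hypotheses of the theorem are met. Select a sequence of such slopes with $\|\alpha_i^{(n)}\|\to\infty$ lying outside $\mathcal F_i$, and in pairwise distinct primitive classes so the quotients $\bar G_n:=G/\ll N^{(n)}\rr$ yield pairwise non-isomorphic fundamental groups (detectable via homology, abelianisation ranks, or growth of finite quotients). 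By the sparse version of \Cref{thm. intro rel hyp},
\[
b_\ast^{(2)}(\bar G_n)=b_\ast^{(2)}(G)=b_\ast^{(2)}(W),
\]
the last equality being the $L^2$-Betti computation for a finite-volume hyperbolic manifold (Dodziuk--L\"uck): the right-hand side vanishes outside the middle dimension $n/2$.

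Third, I would apply Anderson's theorem \cite{anderson2006dehn}: for all sufficiently deep slopes, the closed manifold $W_n$ obtained by Dehn filling $W$ along $\alpha^{(n)}$ admits an Einstein metric that $C^\infty$-approximates the hyperbolic metric of $W$ outside the filling regions. Shrinking the admissible set further if necessary (intersecting with Anderson's hypothesis), one guarantees that for $n$ large the $W_n$ are genuine Einstein manifolds, pairwise non-homeomorphic (since they have distinct fundamental groups), and converge to $W$ in the pointed Gromov--Hausdorff topology with any basepoint $y\in W$ -- the last property is built into the Anderson/Thurston construction because the filled regions shrink away from any fixed compact set as the slope length tends to infinity. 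Finally, the Singer Conjecture for each $W_n$ follows because $W_n$ is aspherical (its Einstein metric has strictly negative sectional curvature by Anderson's pinching, so Cartan--Hadamard applies), giving $b_k^{(2)}(W_n)=b_k^{(2)}(\pi_1(W_n))=b_k^{(2)}(\bar G_n)=0$ for $k\neq n/2$.

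The main obstacle I anticipate is the compatibility between the algebraic depth condition furnished by \Cref{thm. intro rel hyp} and the geometric length condition required by Anderson's theorem: one must verify that the intersection of these two ``sufficiently deep'' conditions still leaves infinitely many pairwise non-homeomorphic slope choices and that the selected slopes produce a sequence converging to the chosen basepoint in $W$. Related, but less severe, is ensuring the finite-sheeted cover $W$ from Bergeron--Wise can be chosen so that every cusp subgroup embeds with enough flexibility to accommodate both families of slopes; this is routine after replacing $W$ by a deeper finite-index cover if necessary.
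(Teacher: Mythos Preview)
Your overall architecture matches the paper's: pass to a suitable finite cover $W$, apply \Cref{thm. sparse rel. hyp.} to get $L^2$-Betti number invariance under deep fillings, realise these fillings topologically, and invoke Anderson for the Einstein metrics and Gromov--Hausdorff convergence. The paper organises the cover step via \Cref{thm. einstein}, which packages the choice of $G_0\lhd \pi_1(M)$ from \Cref{thm. lattice summarize}~\ref{item. lattice df} so that the cusp subgroups of $W$ are already the free abelian groups $K_j$; you achieve the same effect by your ``pass to a deeper cover if necessary'' remark, which is fine.

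There is, however, one genuine gap in your asphericity step. You assert that Anderson's Einstein metric has \emph{strictly negative sectional curvature}, and then invoke Cartan--Hadamard. Anderson's metrics are Einstein with negative Einstein constant, but negative Ricci curvature does not imply non-positive sectional curvature, and Anderson's paper does not establish a global negative sectional curvature bound (the geometry near the filled cores is modelled on AdS--Schwarzschild, where this is delicate). The paper does \emph{not} extract asphericity from the Einstein metric; instead it cites Fujiwara--Manning \cite[Theorem 2.7]{fujmann2010}, which produces a locally CAT(0) metric on sufficiently deep fillings, and asphericity follows from that. You should replace your Cartan--Hadamard sentence with this citation (or the earlier $2\pi$-theorem of Schroeder); once you do, the argument goes through.

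A minor point: for pairwise non-homeomorphism you gesture at distinguishing the $\bar G_n$ by abelianisation or finite quotients. This can be made to work, but the paper simply cites \cite[Proposition 3.9]{anderson2006dehn}, which handles it directly and avoids the need to argue that your chosen slopes actually produce distinct invariants.
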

Recall that the Singer Conjecture predicts that the $L^2$-Betti numbers of the fundamental group of any closed aspherical manifold are concentrated in the middle dimension. To the best of our knowledge, the above corollary gives the first examples of Anderson-type Einstein manifolds that satisfy the Singer Conjecture.

In recent years, there has been significant progress in understanding the finiteness properties of subgroups of hyperbolic groups. Italiano--Martelli--Migliorini, Isenrich--Martelli--Py, and Isenrich--Py in \cite{IMM23, IMBP24, IP23} have provided new examples of subgroups of hyperbolic groups with unusual finiteness properties, offering counterexamples to long-standing conjectures. By applying \Cref{cor. k fiber}, we present a new construction of hyperbolic groups with exotic subgroups. An advantage of this approach is that every cusped arithmetic hyperbolic manifold of dimension at least four gives rise to infinitely many such examples.

\begin{theorem}[\Cref{thm. exotic quotient}] \label{intro: thm. exotic quotient} 
Let $G$ be the fundamental group of a cusped arithmetic hyperbolic manifold of dimension $n$.  Then there is an infinite sequence $\{\bg_k\}^\infty_{k=1}$ of pairwise non-isomorphic hyperbolic Dehn filling quotients of $G$, each of which has the same $L^2$-Betti numbers as $G$. In particular,
\begin{itemize}
  \item[(i)] If $n=2q\geqslant 4$,  then for each $k$, there exist a finite-index subgroup  $\bg'_k\leqslant \bg_k$ and an epimorphism $\psi:\bg'_k\rightarrow \Z$ such that $\ker \psi$ is of type $FP_{q-1}(\Q)$ but not of type $FP_{q}(\Q)$.\\
  \item[(ii)] If $n=2q+1\geqslant 5$, then there exist a finite-index subgroup $\bg'_k\leqslant \bg_k$ and an epimorphism $\psi:\bg_k\rightarrow \Z$ such that $\ker \psi$ is of type $FP(\Q)$ but not hyperbolic.
\end{itemize}
\end{theorem}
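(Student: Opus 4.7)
The strategy combines the sparse-special strengthening \Cref{thm. sparse rel. hyp.} of \Cref{thm. intro rel hyp} (and the analogous extension of \Cref{intro. cor. k fiber}) with the known vanishing pattern of $L^2$-Betti numbers of finite-volume hyperbolic manifolds. By Bergeron--Wise, the fundamental group $G$ of the cusped arithmetic hyperbolic $n$-manifold is virtually sparse special, and its peripheral subgroups are virtually $\Z^{n-1}$. Applying \Cref{thm. sparse rel. hyp.} produces finite-index torsion-free normal subgroups $K$ of the peripherals such that for every sufficiently deep $N\lhd H$ with $N\leqslant K$ and $K/N\cong\Z$, the Dehn filling $\bg = G/\ll N \rr$ is virtually compact cubical hyperbolic (by Osin's theorem together with Wise's Malnormal Special Quotient Theorem, using that $K/N\cong\Z$ is hyperbolic) and satisfies $b^{(2)}_j(\bg) = b^{(2)}_j(G)$ for every $j$. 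Letting $N$ vary over an infinite family of such subgroups yields the sequence $\{\bg_k\}$; pairwise non-isomorphism is arranged by invoking a standard invariant such as the rank of $H_1(\bg_k;\Z)$ or the systole of the underlying metric.

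By the computation of $L^2$-cohomology of complete finite-volume hyperbolic $n$-manifolds (Borel, extending Dodziuk's closed case), the only potentially non-zero $L^2$-Betti number of $G$ sits in degree $n/2$ and is strictly positive precisely when $n$ is even. Consequently, $b^{(2)}_j(\bg_k) = 0$ whenever $j\neq n/2$, and $b^{(2)}_q(\bg_k) > 0$ when $n = 2q$.

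For part (i) with $n = 2q\geqslant 4$, since $G$ is virtually compact special of type $F_\infty$, the Kielak--Fisher criterion identifies virtual $FP_m(\Q)$-fibration with the vanishing of $b^{(2)}_0,\dots,b^{(2)}_m$. Hence $G$ virtually $FP_{q-1}(\Q)$-fibers, but does not virtually $FP_q(\Q)$-fiber. \Cref{intro. cor. k fiber} transfers both assertions to $\bg_k$. Any virtual $FP_{q-1}(\Q)$-fibration $\psi\colon\bg'_k\twoheadrightarrow\Z$ therefore has kernel of type $FP_{q-1}(\Q)$ but not of type $FP_q(\Q)$, for otherwise $\bg_k$ would virtually $FP_q(\Q)$-fiber, contradicting the previous sentence.

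For part (ii) with $n = 2q+1\geqslant 5$, all $b^{(2)}_j(G)$ vanish, so Kielak--Fisher combined with \Cref{intro. cor. k fiber} yields a virtual $FP(\Q)$-fibration $\psi\colon\bg'_k\twoheadrightarrow\Z$ of $\bg_k$ with $\ker\psi$ of type $FP(\Q)$. The non-hyperbolicity of $\ker\psi$ is the principal obstacle. Since $\bg_k$ is itself hyperbolic, $\ker\psi$ contains no copy of $\Z^2$, so the obstruction cannot be a flat subgroup; instead I aim to show $\ker\psi$ is not finitely presented, which suffices because hyperbolic groups are finitely presented. The plan is to exploit the CAT(0) cube complex structure on a compact special finite cover of $\bg_k$ and apply Bestvina--Brady Morse theory to the character $\psi$: by a careful choice of $\psi$, arrange that some descending link of the induced Morse function is $\Q$-acyclic but has non-trivial fundamental group, so that $\ker\psi$ is of type $FP(\Q)$ but not of type $F_2$. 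Executing this Morse-theoretic analysis inside a Dehn filling of a cusped arithmetic hyperbolic manifold is the chief technical challenge and is where both the arithmetic origin of $G$ and the flexibility in the peripheral filling parameters enter essentially: together they provide enough combinatorial room to engineer a descending link whose $\Q$-homology vanishes while its $\pi_1$ does not.
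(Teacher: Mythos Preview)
Your outline for part (i) and for the equality of $L^2$-Betti numbers is essentially the paper's argument: Bergeron--Wise gives virtual sparse specialness, \Cref{thm. sparse rel. hyp.} gives the $L^2$-Betti number equality for fillings with $K/N\cong\Z$, Osin's filling theorem gives hyperbolicity, and Kielak--Fisher converts the vanishing pattern into the exact $FP_{q-1}(\Q)$/not-$FP_q(\Q)$ dichotomy.

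There are two genuine gaps. First, your argument for pairwise non-isomorphism is only gestured at; ``rank of $H_1$'' or ``systole'' are not obviously invariants you can force to differ across infinitely many fillings. The paper instead chooses a nested sequence $G_k\lhd G$ of finite-index normal subgroups with trivial intersection and fills with $N_{k,i}\leqslant G_k\cap H_i$; then $H_i/N_{k,i}$ contains torsion of order growing with $k$, so the $\bg_k$ contain finite-order elements of arbitrarily large order. Since any fixed hyperbolic group has a uniform bound on torsion orders, infinitely many isomorphism types must occur.

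Second, and more seriously, your plan for part (ii) is the wrong mechanism. You propose to show $\ker\psi$ is not finitely presented via Bestvina--Brady Morse theory by ``a careful choice of $\psi$,'' but $\psi$ is produced abstractly by Kielak--Fisher and you have no combinatorial control over descending links; the sketch is entirely speculative, as you acknowledge. The paper avoids this altogether with a clean cohomological-dimension argument. Using the excision formula for Cohen--Lyndon triples (\Cref{cor. alg excision}) and Poincar\'e--Lefschetz duality for the cusped hyperbolic manifold, one computes $H^n(\bg''_k,\{\overline H_{k,j}\};\Z/2\Z)\cong H^n(G_k,\{H_{k,j}\};\Z/2\Z)\cong\Z/2\Z$, whence $\cd(\bg''_k)\geqslant n\geqslant 5$ (the $\overline H_{k,j}\cong\Z$ contribute nothing above degree $1$). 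If $\ker\psi$ were hyperbolic, then since the ambient group is torsion-free, a result of Kudlinska forces $\cd(\ker\psi)\leqslant 2$, so $\cd(\bg'_k)\leqslant 3$, contradicting $\cd(\bg'_k)=\cd(\bg''_k)\geqslant 5$. This replaces your Morse-theoretic programme with a two-line dimension count, and it is the step you are missing.
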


In \cite{IMP24}, Isenrich, Martelli and Py constructed the first examples of hyperbolic groups which contain subgroups that are of type $F_3$ but are not of type $F_4$. These groups are obtained by Dehn filling a certain non-uniform lattice in $PO(8,1)$. They state that an alternative approach would be to compute the $L^2$-Betti numbers of Dehn fillings and pose the following:

\begin{question}[Isenrich--Martelli--Py, \cite{IMP24}]\label{ques. IMP} Let $\Gamma < PO(2k, 1)$ be a torsion-free non-uniform lattice with purely unipotent
parabolic subgroups. Is $b_k^{(2)}(\overline{\Lambda})\ne 0$ for a Dehn filling $\overline{\Lambda}$ of a deep enough  finite-index subgroup $\Lambda < \Gamma$?
\end{question}

We answer this question in the affirmative for arithmetic lattices and sufficiently deep fillings by establishing:

\begin{corollary}[\Cref{thm. lattice summarize} \ref{item. lattice li}]
    Let $\Gamma< PO(2k,1)$ be a non-uniform arithmetic lattice. Then for every $\delta>0$, all sufficiently deep fillings $\overline\Gamma$ of $\Gamma$ satisfy
    \[\left|b^{(2)}_k(\overline\Gamma)-|\chi(\Gamma)|\right|<\delta.\]
In particular,  if  $\delta$ is sufficiently small, then $b^{(2)}_k(\overline\Gamma)\ne 0$.
\end{corollary}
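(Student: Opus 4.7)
The plan is to combine the standard computation of $L^2$-Betti numbers of finite-volume hyperbolic lattices with the approximation result \Cref{thm. intro li}, applied in the virtually sparse special setting furnished by the extension \Cref{thm. sparse rel. hyp.}.

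First, I would observe that $\Gamma$ is hyperbolic relative to its cusp subgroups, which are virtually abelian and therefore amenable. By the Bergeron--Wise theorem (\Cref{thm. lattice sparse}), $\Gamma$ is virtually sparse special, hence virtually locally indicable: a sparse special group virtually embeds in a right-angled Artin group, which is locally indicable, and local indicability passes to subgroups. The Borel--Serre construction provides a cocompact classifying space for proper actions of a torsion-free finite-index subgroup of $\Gamma$, so $\Gamma$ is virtually of type $F$, and in particular of type $F_\infty$. This verifies the standing hypotheses needed for the approximation theorem.

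Next, I would compute $b_k^{(2)}(\Gamma)$ using the well-known concentration of $L^2$-Betti numbers in the middle dimension for finite-volume hyperbolic lattices in even dimension: $b_i^{(2)}(\Gamma)=0$ for $i\neq k$, which follows from Dodziuk's computation of the $L^2$-cohomology of $\mathbb{H}^{2k}$ together with the standard extension to finite-volume quotients. The $L^2$-Euler characteristic formula then yields $\chi(\Gamma)=(-1)^k b_k^{(2)}(\Gamma)$, and Gauss--Bonnet forces $(-1)^k \chi(\Gamma) > 0$, so that $b_k^{(2)}(\Gamma) = |\chi(\Gamma)|$.

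Finally, I would invoke \Cref{thm. intro li} (in the sparse special version supplied by \Cref{thm. sparse rel. hyp.}) with $n=k$. The peripheral subgroups being virtually abelian satisfy the amenability hypothesis, and their quotients $\bh = H/N$ by sufficiently deep $N$ remain virtually abelian and hence of type $F_\infty$. For the given $\delta > 0$ the theorem produces a finite set $\mathcal{F}_{k,\delta}$ such that every $N$ avoiding $\mathcal{F}_{k,\delta}$ gives $|b_k^{(2)}(\overline\Gamma) - b_k^{(2)}(\Gamma)| < \delta$, and combining with the preceding step yields $|b_k^{(2)}(\overline\Gamma) - |\chi(\Gamma)|| < \delta$. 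The ``in particular'' assertion is immediate, since $|\chi(\Gamma)| > 0$ for finite-volume hyperbolic $2k$-manifolds. The main obstacle will be verifying that the approximation result applies in a setting where $\Gamma$ is only known to be virtually sparse special rather than virtually compact cubical; this is exactly what the sparse extension \Cref{thm. sparse rel. hyp.} accomplishes. A secondary point is arranging the torsion-freeness and $F_\infty$ hypotheses on the filling quotients uniformly in $\delta$, which is handled by choosing $N$ to lie in a suitable torsion-free finite-index normal subgroup of $H$ in addition to avoiding $\mathcal{F}_{k,\delta}$.
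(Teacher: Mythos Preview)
Your approach is essentially the paper's: establish virtual local indicability via Bergeron--Wise, verify type $F_\infty$, compute $b_k^{(2)}(\Gamma)=|\chi(\Gamma)|$ via \Cref{thm. l2 betti hyp mnfl}, and apply the approximation \Cref{cor. gen li}. One point of confusion worth correcting: you invoke \Cref{thm. sparse rel. hyp.} as a ``sparse special version'' of the approximation theorem and identify a ``main obstacle'' in passing from compact cubical to sparse special, but there is no such obstacle. The approximation result \Cref{thm. intro li} (and its multi-peripheral form \Cref{cor. gen li}) requires only virtual local indicability and type $F_\infty$---there is no compact cubical hypothesis anywhere---so it applies directly once you have used the sparse special structure to deduce virtual local indicability. \Cref{thm. sparse rel. hyp.} is a different result: it gives the \emph{exact} equality $b_*^{(2)}(\bg)=b_*^{(2)}(G)$ under the additional torsion-freeness hypothesis on $L_i/N_i$, and is not needed here. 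Likewise your remark about ``arranging torsion-freeness'' is superfluous for this corollary, which concerns only the $\delta$-approximation.
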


Finally, we use $L^2$-Betti numbers to give an upper bound for deficiency of groups. For a finitely presented group $G$, given by a finite presentation, the \textit{deficiency} of the presentation is the number of generators minus the number of relators, and the \textit{deficiency} of $G$, denoted as $\defi(G)$, is the maximum of the deficiency of all finite presentations of $G$. 
\begin{corollary}[\Cref{cor. defi lattice}]\label{intro.cor. defi}
    Let $G$ be a non-uniform arithmetic lattice in $SO(n,1)$, $n\geq 3$. Denote by $\{H_i\}_{i=1}^k$  the collection of peripheral subgroups. Then for every family of sufficiently deep normal subgroups  $\{N_i\lhd H_i\}_{i=1}^k$, we have
    $$\defi(\bg)\leqslant 1.$$
    In addition, if $G< SO(4,1)$, then 
    $$\defi(\bg)\leqslant 1-\frac{3}{4\pi^2}\mbox{vol}(\mathbb H^4/G).$$
\end{corollary}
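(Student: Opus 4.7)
The approach is to combine L\"uck's classical deficiency inequality
\[
\defi(\Gamma)\leqslant 1-b^{(2)}_0(\Gamma)+b^{(2)}_1(\Gamma)-b^{(2)}_2(\Gamma),
\]
valid for any finitely presented group $\Gamma$, with the Dehn-filling invariance of $L^2$-Betti numbers supplied by \Cref{thm. sparse rel. hyp.} and the known vanishing pattern of $L^2$-Betti numbers of arithmetic hyperbolic lattices.

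First I would check that $\bg$ is finitely presented and infinite for every sufficiently deep filling. Finite presentation follows from the relative hyperbolicity of the pair $(\bg,\{\bh_i\})$ together with the fact that each quotient $\bh_i=H_i/N_i$ is finitely presented, being virtually abelian (the peripherals of a non-uniform lattice in $SO(n,1)$ are virtually $\Z^{n-1}$). Infiniteness is automatic once the relatively hyperbolic structure on $\bg$ is non-elementary, which holds for deep enough fillings. Hence $b^{(2)}_0(\bg)=0$. Since non-uniform arithmetic lattices are virtually sparse special by Bergeron--Wise (\Cref{thm. lattice sparse}) and are hyperbolic relative to their virtually abelian cusp subgroups, \Cref{thm. sparse rel. hyp.} gives $b^{(2)}_k(\bg)=b^{(2)}_k(G)$ for every $k\geqslant 0$, provided the $N_i$ are deep enough and satisfy the torsion-freeness conditions in the hypotheses.

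Next I would invoke the classical fact that for a non-uniform arithmetic lattice $G<SO(n,1)$ the $L^2$-Betti numbers concentrate in the middle dimension: $b^{(2)}_k(G)=0$ for $k\ne n/2$, while $b^{(2)}_{n/2}(G)=|\chi(G)|$ when $n$ is even. For $n\geqslant 3$ this forces $b^{(2)}_1(G)=0$, so L\"uck's inequality becomes $\defi(\bg)\leqslant 1-b^{(2)}_2(G)\leqslant 1$, proving the first claim. For $n=4$, we moreover have $b^{(2)}_2(G)=|\chi(G)|$, and the Chern--Gauss--Bonnet formula for finite-volume hyperbolic $4$-manifolds (whose cusp contribution vanishes) yields $|\chi(G)|=\frac{3}{4\pi^2}\operatorname{vol}(\mathbb{H}^4/G)$, which upon substitution gives the refined bound.

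The main obstacle is confirming that any ``sufficiently deep'' filling family satisfies the full technical hypotheses of \Cref{thm. sparse rel. hyp.}---in particular, the containments $N_i\leqslant K_i$ and the torsion-freeness of each $K_i/N_i$---but these can be absorbed into the meaning of ``sufficiently deep'' by enlarging the finite exceptional set of excluded elements in the peripherals.
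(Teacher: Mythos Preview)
There is a genuine gap in your final step. The hypotheses of \Cref{thm. sparse rel. hyp.} require not only that each $N_i$ avoid a finite set, but also that $N_i\leqslant K_i$ and that $K_i/N_i$ be torsion-free. You assert that these extra conditions ``can be absorbed into the meaning of sufficiently deep by enlarging the finite exceptional set,'' but this is false: for $K_i\cong\Z^{n-1}$ there exist arbitrarily deep subgroups $N_i$ (e.g.\ $N_i=mK_i$ for large $m$) for which $K_i/N_i$ has torsion. Such fillings cannot be excluded by any finite subset of $H_i\smallsetminus\{1\}$, and \Cref{rm. torsion-free necessary} shows the torsion-free hypothesis genuinely cannot be dropped from \Cref{thm. sparse rel. hyp.}. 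Since the corollary to be proved asserts the deficiency bound for \emph{all} sufficiently deep fillings, with no torsion-freeness hypothesis, your route via \Cref{thm. sparse rel. hyp.} proves a strictly weaker statement.

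The paper avoids this by not invoking the exact equality $b^{(2)}_\ast(\bg)=b^{(2)}_\ast(G)$ at all. Instead it uses \Cref{cor. deficiency}, which is built on the approximation result \Cref{thm. gen li} (and \Cref{cor. locally indicable dehn filling}) for virtually locally indicable groups. That corollary gives, for every sufficiently deep filling with $H_i/N_i$ of type $F_3$ (automatic here, since the $H_i$ are virtually $\Z^{n-1}$),
\[
\defi(\bg)\leqslant 1+b^{(2)}_1(G)-b^{(2)}_2(G)+\sum_i b^{(2)}_1(H_i)+\sum_i b^{(2)}_2(H_i).
\]
The peripheral terms vanish because the $H_i$ are infinite virtually abelian, and $b^{(2)}_1(G)=0$ by the middle-dimension concentration for lattices. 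The point is that the approximation inequalities in \Cref{thm. gen li} carry a $\delta$-error, but since $\defi(\bg)$ is an integer, choosing $\delta$ small enough forces the exact bound---no torsion-freeness of the quotient is needed.
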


We note that the bounds obtained in \Cref{intro.cor. defi} do not depend on the chosen Dehn filling and coincide with the known bounds for the deficiency of the lattice $G$. Lubotzky in \cite[Proposition 6.2]{Lubotzky83} showed that any lattice in $SO(n,1)$, $n\geq 3$ has deficiency at most one. Lott in \cite{Lott99} improved Lubotzky's result and in particular showed that any lattice in $SO(4,1)$ has a deficiency bound as above. 

\subsection*{Classifying space and finiteness properties.}
A key step in our computation of $L^2$-Betti numbers involves constructing a tailored classifying space.

\begin{theorem}[\Cref{cor. topology} (see also \Cref{cor. gen topology})]\label{thm. intro topology}
    Let $G$ be a group with a hyperbolically embedded subgroup $H$. Let $\phi\colon BH\rightarrow BG$ be a cellular map induced by the inclusion $H\hookrightarrow G$. For every normal subgroup $N\lhd H$, let $\psi_N\colon BH\rightarrow B\bh$ be a cellular map induced by $H\twoheadrightarrow \bh$. Let $X_N$ be the CW-complex obtained by gluing the mapping cylinders $M_\phi$ and $M_{\psi_N}$ along their common subcomplex $BH$. Then for sufficiently deep $N\lhd H$, the complex $X_N$ is a $K(\bg,1)$-space.
\end{theorem}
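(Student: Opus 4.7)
The plan is to identify $\pi_1(X_N)$ via Seifert--van Kampen and then show the universal cover $\tilde X_N$ is contractible via Mayer--Vietoris, using the Cohen--Lyndon decomposition of $\ll N\rr$ that is available for sufficiently deep Dehn fillings of a hyperbolically embedded subgroup.

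Since $X_N$ is the double mapping cylinder $M_\phi \cup_{BH} M_{\psi_N}$, Seifert--van Kampen immediately gives $\pi_1(X_N) = G *_H \bh = G *_H (H/N) = G/\ll N\rr = \bg$. It thus suffices to prove that $\tilde X_N$ is contractible.

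For sufficiently deep $N$, the theory of hyperbolically embedded subgroups furnishes three ingredients: (i) the map $\bh\hookrightarrow\bg$ is injective; (ii) $H\cap \ll N\rr = N$; and (iii) the Cohen--Lyndon splitting $\ll N\rr = *_{t\in T}\, tNt^{-1}$ with $T$ a left transversal for $H\ll N\rr$ in $G$. I would analyse the decomposition
\[\tilde X_N \;=\; \tilde M_\phi \;\cup\; \Bigl(\bigsqcup_\alpha \tilde M_{\psi_N,\alpha}\Bigr),\]
where, since $G\twoheadrightarrow \bg$ is surjective, the preimage $\tilde M_\phi$ is connected and is the cover of $M_\phi$ associated to $\ker(G\to\bg) = \ll N\rr$; hence $\tilde M_\phi$ is a $K(\ll N\rr,1)$. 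The caps $\tilde M_{\psi_N,\alpha}$, indexed by $\bg/\bh$, are each contractible by (i) together with the asphericity of $M_{\psi_N}\simeq B\bh$, while their common boundaries $\tilde{BH}_\alpha$ are each a $K(N,1)$ by (ii). Normality of $\ll N\rr$ gives $\bg/\bh = G/H\ll N\rr$, which is in bijection with $T$, and under this bijection each inclusion $\tilde{BH}_\alpha\hookrightarrow \tilde M_\phi$ realises (up to conjugation in $\ll N\rr$) the inclusion of the factor $tNt^{-1}$ of the Cohen--Lyndon splitting.

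Mayer--Vietoris for this cover then reads, for $n\geq 1$,
\[\bigoplus_\alpha H_n(\tilde{BH}_\alpha) \;=\; \bigoplus_{t\in T}H_n(N) \;\longrightarrow\; H_n(\tilde M_\phi) \;=\; H_n(\ll N\rr) \;=\; \bigoplus_{t\in T} H_n(N),\]
and this comparison map is an isomorphism because the homology of a free product splits as a direct sum over the factors and the bijection above matches the summands. Since also $H_n(\tilde M_{\psi_N,\alpha}) = 0$ for $n\geq 1$, the Mayer--Vietoris sequence forces $H_n(\tilde X_N) = 0$ for all $n\geq 1$; combined with the simple connectedness of $\tilde X_N$, Hurewicz and Whitehead yield $\tilde X_N\simeq\mathrm{pt}$, so $X_N$ is a $K(\bg,1)$. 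The main technical point will be the precise verification that the Mayer--Vietoris comparison map respects the two direct-sum splittings (up to relabelling of summands), which reduces to an unwinding of the Cohen--Lyndon theorem for hyperbolically embedded subgroups.
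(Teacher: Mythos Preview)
Your proposal is correct and follows essentially the same route as the paper: Seifert--van Kampen for $\pi_1$, then Mayer--Vietoris on the universal cover, with the Cohen--Lyndon decomposition $\ll N\rr = \Asterisk_{t\in T} tNt^{-1}$ supplying the key isomorphism on homology. The paper phrases the pieces as $M_{\tilde\phi}\simeq K(\ll N\rr,1)$, $M_{\tilde\psi}\simeq \bg\times_{\bh}E\bh$, and the intersection as $\bg\times_{\bh}(N\backslash EH)$, which matches your $\tilde M_\phi$, $\bigsqcup_\alpha \tilde M_{\psi_N,\alpha}$, and $\bigsqcup_\alpha \tilde{BH}_\alpha$; your bookkeeping of the bijection $\bg/\bh\cong G/H\ll N\rr\cong T$ and the identification of each inclusion with the free factor $tNt^{-1}$ is exactly the ``main technical point'' the paper also isolates when showing its map $\theta$ is an isomorphism.
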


In addition to the computation of $L^2$-Betti numbers, the above theorem allows us to deduce geometric finiteness properties of Dehn fillings, which are analogs of the algebraic finiteness properties obtained by the authors in \cite[Corollary 2.2 (iii)]{sun2019cohomologyii}.

\begin{corollary}[\Cref{cor. finiteness conditions} (see also \Cref{cor. gen finiteness conditions})]
Let $G$ be a group with a hyperbolically embedded subgroup $H\hookrightarrow_h G$.
\begin{enumerate}
    \item[(i)] If $G$ is of type $F_n$ for some $n\in \mathbb N^{+}\cup \{\infty\}$, then for all sufficiently deep normal  $N\lhd H$, we have that $\bg$ is of type $F_n$  if and only if $\bh$ is of type $F_n$.\\

    \item[(ii)] If both $G$ and $H$ are of type $F$, then for all sufficiently deep normal  $N\lhd H$ such that $\bh$ is of type $F$, we have that $\bg$ is of type $F$.
\end{enumerate}
\end{corollary}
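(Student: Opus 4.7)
The plan is to exploit \Cref{thm. intro topology}, which for sufficiently deep $N\lhd H$ provides the explicit $K(\bg,1)$-space
\[
X_N \;=\; M_\phi \cup_{BH} M_{\psi_N}.
\]
The whole strategy is to read off the CW-structure of $X_N$ and then feed it finite-type models for $BG$, $BH$, and $B\bh$.

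A direct inspection of the mapping cylinder pushout shows that the $n$-cells of $X_N$ break into four types: (a) $n$-cells of $BG$, (b) $n$-cells of $B\bh$, (c) $n$-cells of $BH$ (contributed once, as the glued subcomplex), and (d) product cells $\sigma\times I$ with $\sigma$ an $(n-1)$-cell of $BH$, contributed twice (once from each cylinder). Consequently, $X_N^{(n)}$ is finite if and only if each of $BG^{(n)}$, $B\bh^{(n)}$, and $BH^{(n)}$ is finite.

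For part (i), I would prove both directions separately. In the forward direction $\bh\in F_n \Rightarrow \bg\in F_n$, I would first apply the transfer of finiteness properties for hyperbolically embedded subgroups (the geometric counterpart of \cite[Corollary 2.2 (iii)]{sun2019cohomologyii}) to deduce that $H$ itself is of type $F_n$; then I would select models $BG$, $BH$, $B\bh$ with finite $n$-skeleta (using cellular approximation to realize $\phi$ and $\psi_N$ as cellular maps), assemble $X_N$, and observe via the cell-count that $X_N^{(n)}$ is finite. For the converse $\bg\in F_n \Rightarrow \bh \in F_n$, I would use the Dahmani--Guirardel--Osin Dehn filling theorem to conclude that $\bh\hookrightarrow_h \bg$ for sufficiently deep $N$, and then apply the same transfer result one level down. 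For part (ii), choosing finite classifying spaces $BG, BH, B\bh$ directly makes $X_N$ a finite CW-complex, and since $X_N$ is a $K(\bg,1)$ by \Cref{thm. intro topology}, $\bg$ is of type $F$.

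The main obstacle I anticipate is the invocation of the geometric transfer of $F_n$-finiteness for hyperbolically embedded subgroups, used in both directions of (i); in particular, I must verify the geometric version (not merely the algebraic $FP_n$ version) of \cite[Corollary 2.2 (iii)]{sun2019cohomologyii}. A minor technical point is ensuring that the models for $BG, BH, B\bh$ can be chosen simultaneously with finite $n$-skeleta so that the given maps $\phi$ and $\psi_N$ are cellular on the nose; this is handled by standard cellular approximation and will not cause difficulty.
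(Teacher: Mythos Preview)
Your proposal is correct and follows essentially the same route as the paper: build the Dehn filling space from finite-type models and, for the transfer of $F_n$ between a group and its hyperbolically embedded subgroup, use the Dahmani--Guirardel--Osin machinery in both directions. The ``main obstacle'' you flag is not actually an obstacle: the geometric $F_n$ transfer (not just $FP_n$) is exactly \cite[Corollary 4.32]{dahmani2017hyperbolically}, which the paper invokes directly, so you need not derive a geometric analog of \cite[Corollary 2.2 (iii)]{sun2019cohomologyii} yourself.
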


\begin{remark}  Analogs of other results of \cite{sun2019cohomologyii} with geometric finiteness properties in place of algebraic finiteness properties can be deduced using the above corollary. Since the proofs of those results are almost identical with their counterparts in \cite{sun2019cohomologyii}, we will not include them in this paper.
\end{remark}

\subsection*{Our approach on computing $L^2$-Betti numbers.} For any group $G$ and a $G$-CW-complex $X$, we denote the $L^2$-Betti numbers associated with the action of $G$ on $X$ by $b^{(2)}_\ast(X;G)$. Let $EG$ be the universal cover of a $K(G,1)$-CW-complex. A special case of L\"uck's Approximation Conjecture for $L^2$-Betti numbers, proved by Jaikin-Zapirain and L\'opez-\'Alvarez
in \cite{jaikin2020strong}, asserts that, if $\{M_i\}_{i\geqslant 1}$ is a nested sequence of normal subgroups of $G$ with trivial intersection, then under certain conditions, one obtains
\[b^{(2)}_\ast(EG ;G)=\lim_{i\rightarrow \infty}b^{(2)}_\ast (M_i\backslash EG; G/M_i).\]
Note that the above result deals with a sequence of normal subgroups rather than a single one. To apply this in the Dehn filling setting, we first slightly reformulate their approximation result by showing that, for every $\delta>0$, there exists a finite subset $\fS_\delta\subset G\smallsetminus\{1\}$ such that if a normal subgroup $M\lhd G$ satisfies $M\cap \fS_\delta=\emptyset$, then 

\[|b^{(2)}_\ast(M\backslash EG ; G/M)-b^{(2)}_\ast(EG ;G)|<\delta.\]
Two problems still remain. First, the above only gives an estimate rather than a computation of $L^2$-Betti numbers. Secondly, $b^{(2)}_\ast(M\backslash EG ; G/M)$ may not equal $b^{(2)}_\ast(G/M)$ as $M\backslash EG$ is not contractible unless $M=\{1\}$. We use the properties of Dehn fillings to resolve these as follows:

\begin{enumerate}[label=(\roman*)]
    \item[1)] Assuming $G$ is virtually compact cubical, and is either hyperbolic or relatively hyperbolic with respect to abelian subgroups, we use different versions of the Malnormal Special Quotient Theorem, proved by Agol--Groves--Manning \cite{agol2016alternate} and Wise \cite{wise2021structure}, to ensure that $G/M$ satisfies the Atiyah Conjecture, which asserts that $b^{(2)}_\ast(M\backslash EG ; G/M)$ takes value in a discrete subgroup of $\Q$, and thus must be equal to $b^{(2)}_\ast(EG ;G)$ provided $\delta$ is sufficiently small.\\

    \item[2)]\label{item. problem 2} By using the Cohen--Lyndon property for sufficiently deep Dehn fillings of hyperbolically embedded subgroups of $G$, established by the second authored in \cite{sun2018cohomologyi}, we construct a $K(G/M,1)$-space suitable for our applications. We refer to this construction as a {\it Dehn filling space}, which is summarized in Theorem \ref{thm. intro topology}. An excision argument on this space relates $b^{(2)}_\ast(M\backslash EG ; G/M)$ with $b^{(2)}_\ast(G/M)$.
\end{enumerate}

\noindent Finally, it is worth highlighting that the choice of finite subsets that the sufficiently deep normal subgroups must avoid in the main results depends not only on the Cohen--Lyndon property but also on the quantitative version of L\"{u}ck's Approximation Conjecture.

\subsection*{Ingredients in the proof of \Cref{thm. intro rel hyp} and its generalisation \Cref{thm. sparse rel. hyp.}:}
\begin{itemize}
    \item L\"uck's Approximation Conjecture for virtually locally indicable groups \cite{jaikin2020strong} and its quantitative version  \Cref{thm. matrix approximation ver 2}.
    \item Cohen--Lyndon property for acylindrically \cite{sun2018cohomologyi}  and relatively \cite{groves2016boundaries} hyperbolic groups.
    \item Construction of the Dehn filling space for Dehn filling quotients, \Cref{thm. topology}.
    \item Geometric finiteness properties of Dehn filling quotients, \Cref{cor. finiteness conditions}.
    \item Inequalities for $L^2$-Betti numbers of Dehn filling quotients of acylindrically hyperbolic virtually locally indicable groups, \Cref{thm. li dehn filling}.
   \item  Embeddability of finitely generated special groups in finitely generated right-angled Artin groups, \cite[Theorem 1.1]{haglund2008special}.
    \item The Malnormal Special Quotient Theorem \cite{wise2021structure, agol2016alternate} and its relative version  \cite{wise2021structure}.
    \item A weaker version of the Atiyah Conjecture for certain virtually locally indicable groups, \Cref{lem. det and atiyah ver 2}. 
       \item The Atiyah Conjecture for virtually compact special groups, \cite{schreve2015l2}.
    \item Separability of abelian subgroups in virtually special groups, \cite[Corollary 6.8]{wise2021structure}.
    \item Structure of sparse virtually special cube complexes whose fundamental group is hyperbolic relative to virtually abelian groups, \cite[Lemma 7.51, Theorem 7.54]{wise2021structure}.
\end{itemize}

\subsection*{Structure of the paper.} We recall the necessary background and notation in \Cref{sec. prelim}, and then,  in \Cref{sec. approximation}, we derive a quantitative version of  the result in \cite{jaikin2020strong} on L\"uck's Approximation Conjecture. In \Cref{sec. df and l2 betti}, we  first prove \Cref{thm. intro topology}, and then use it to deduce \Cref{thm. intro li} and \Cref{thm. intro hyp}. The proof of \Cref{thm. intro rel hyp} proceeds in two steps. First, we prove \Cref{thm. rel hyp.}, which is a special case of \Cref{thm. intro rel hyp} where the peripheral subgroup is abelian. Then, in \Cref{sec. multiple peripherals}, we state the general versions of \Cref{thm. intro li}, \Cref{thm. intro hyp} and \Cref{thm. rel hyp.} for a finite family of peripheral subgroups. In \Cref{sec:sparse}, we consider virtually sparse special groups and use the general results of \Cref{sec. multiple peripherals} to prove a general version of \Cref{thm. intro rel hyp}. Finally, we deduce our applications in \Cref{sec. applications}.

\subsection*{Acknowledgments.} First, the authors thank Kevin Li, who asked us whether the algebraic excision formula obtained in \cite{sun2019cohomologyii} could be derived using a topological excision.  The pursuit of an answer to this question led us to the construction of the Dehn filling space, which became the catalyst for the remaining results of this paper.  We are indebted to Mark Hagen for explaining to us how virtually sparse special groups may be used to understand cubulations of hyperbolic arithmetic lattices. The authors are also grateful to Robert Bell, Luca Di Cerbo,  Chris Hruska, Wolfgang L\"uck, Jason Manning, Ashot Minasyan and Alan Reid for their helpful conversations.

\section{Preliminaries}\label{sec. prelim}

\subsection{Finiteness properties}\label{sec:finiteness_properties}

Throughout this paper, all actions will be left actions. Let $G$ be a group. We use
the terminology {\it $G$-CW-complex} for a $G$-space $X$ as defined by L\"uck \cite[Definition 1.25]{luck2002l2}.
This is equivalent to the notion of an {\it admissible $G$-complex} in the sense of \cite[Chapter IX, \S 10]{brown1982cohomology}, characterised by the property that for any cell $\sigma$ of $X$, if an element of $G$ stabilises $\sigma$, then it must fix $\sigma$ pointwise.

The subspace $X_n$ is called the \textit{$n$-skeleton} of $X$. We say that $X$ is a \textit{free $G$-CW complex} if in addition the action $G\curvearrowright X$ is free. The $G$-CW complex $X$ is \textit{of type $F_n$} for some $n\in\mathbb N$ (resp. \textit{type $F_{\infty}$}) if $G\backslash X$ has finite $n$-skeleton (resp. has finitely many cells in each dimension).

Next, we define the necessary finiteness notions of groups and mention their relevant properties. For more details, we refer the reader to Chapter VIII of \cite{brown1982cohomology}.

Let $R$ denote a commutative ring with unit and $G$ denote a discrete group. The \textit{cohomological dimension of $G$ over $R$} can be defined by
$$\cd_R \, G=\sup\{n\in\mathbb{N}\mid  H^{n}(G,M)\neq 0\text{ for some } R G\text{-module }M\}.$$
When $R=\Z$, we omit it from the notation and denote the cohomological dimension by $\cd \, G$. A group $G$ is said to have finite {\it virtual cohomological dimension} equal to $n$, denoted $\vcd \, G$, if $G$ has a subgroup $H$ of finite index with $\cd \, H =n$. This is a well-defined notion by \cite[VIII.2.4]{brown1982cohomology}. A group $G$ is said to be of \textit{type} $FP_n(R)$ for some $n\in\mathbb{Z}_{\geqslant0}\cup\{\infty\}$ if there is a projective $RG$-resolution 
\[\cdot\cdot\cdot\rightarrow P_2\rightarrow P_1\rightarrow P_0\twoheadrightarrow R\]
 such that $P_i$ is finitely generated and projective as an $RG$-module for all $i\leqslant n$. A projective $RG$-resolution $P_*\twoheadrightarrow R$ is said to be of {\it length $n$}, if 
 $P_n\ne 0$ and $P_i=0$ for all $i> n$.
 A group $G$ is said to be of {\it type $FP(R)$}, if there is a projective $RG$-resolution of $R$ by finitely generated projective $RG$-modules of finite length. It is immediate from the definitions that if $G$ is of type $FP(R)$, then it is of type $FP_{\infty}(R)$. 

Denote $\cd_R \, G=n$. By \cite[VIII.2.1]{brown1982cohomology}, it follows that  $G$ is of type $FP(R)$ if and only if $n<\infty$  and $G$ is of type $FP_n(R)$. 

 There are analogous topological finiteness properties which we explain next.

 An Eilenberg-MacLane space or $K(G, 1)$-complex is an aspherical CW complex with fundamental group equal to $G$. A group $G$ is of {\it type} $F_n$ if there is a $K(G, 1)$-complex $X$ with finite $n$-skeleton and it is of {\it type}
$F$ if $X$ is finite. If $G$ is of type $F_n$ or $F$, then it is of type $FP_n(\Z)$ or $FP$, respectively. A group $G$ is finitely generated if and only if it is of type $F_1$ or $FP_1(R)$  \cite[1.2.1]{bieri_book}. A group $G$ is finitely presented if and only if it is of type $F_2$.

For the rest of this paper, by $EG$ we mean a contractible free $G$-CW complex, and by $BG$ we mean the quotient $G\backslash EG$. If we do not specify the choice of $EG$, then we simply choose an arbitrary contractible free $G$-CW complex as $EG$. Such a complex always exists for any group $G$ and is a terminal object in the homotopy category of free $G$-CW complexes. In particular, any two choices for $EG$ are $G$-homotopy equivalent \cite{luck05_survey}.  In certain situations we may also specify the choice so that $EG$ (resp. $BG$) has a certain property, say $\mathcal P$, in which case we will say that let $EG$ (resp. $BG$) be a contractible free $G$-CW complex (resp. $K(G,1)$-space) satisfying $\mathcal P$.

\subsection{Von Neumann rank of a matrix}
Let $G$ be a discrete group. Recall that $\ell^2(G)$ denotes the Hilbert space of square-summable complex-valued functions on $G$ and  there is a left $G$-action on $\ell^2(G)$, called the \textit{left regular representation}. The \textit{group von Neumann algebra} of $G$, denoted $\N(G)$, is the set of all bounded $G$-equivariant linear operators on $\ell^2(G)$.  The \textit{von Neumann dimension $\dim_G$} of $G$ is a function from the set of $\N(G)$-modules to $[0,\infty]$. We refer to \cite[Chapter 6]{luck2002l2} for the precise definition of $\dim_G$. For the purpose of this paper, we only collect some properties of $\dim_G$.

\begin{proposition}\label{prop. properties of dimension} Let $G$ be a group. The following properties hold.
\begin{enumerate}[label=(\roman*)]
    \item\label{item. finite dimension} If $M$ is a finitely generated $\N(G)$-module, then $\dim_G M<\infty$.

    \item\label{item. only depend on iso class} If $M$ and $N$ are isomorphic $\N(G)$-modules, then $\dim_G M=\dim_G N$.

    \item\label{item. additivity} If $0\rightarrow M\rightarrow N \rightarrow P\rightarrow 0$ is an exact sequence of $\N(G)$-modules, then 
    $$\dim_G N=\dim_G M+\dim_G P,$$
    where the laws of the summation of $[0,\infty)$ extends naturally to $[0,\infty]$.

    \item\label{item. cofinal} Let $\{M_i\mid i\in I\}$ be a cofinal system of $\N(G)$-submodules of an $\N(G)$-module $M$, i.e., $M=\bigcup_{i\in I}M_i$ and for every two indices $i,j\in I$ there exists an index $k\in I$ such that $M_i,M_j\subset M_k$. Then $\dim_G M=\sup\{\dim_G M_i\mid i\in I\}$. 
\end{enumerate}
    
\end{proposition}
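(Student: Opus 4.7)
The plan is to observe that all four assertions are standard properties of L\"uck's extended von Neumann dimension function, systematically developed in \cite[Chapter 6]{luck2002l2}, and to reduce each item to either the definition of $\dim_G$ or an explicit citation. No new ingredients beyond that framework are required.

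First I would dispose of \ref{item. only depend on iso class}: it is immediate from the definition, since $\dim_G$ is defined purely in terms of the $\N(G)$-module structure (for instance, as the supremum of von Neumann traces of projections onto finitely generated projective submodules), and this supremum is manifestly invariant under isomorphism of modules. Next, \ref{item. additivity} is precisely L\"uck's additivity theorem for the extended dimension, \cite[Theorem 6.7]{luck2002l2}, where the convention $a+\infty=\infty$ is built in and handles the case when one of the dimensions is infinite. For \ref{item. cofinal}, the cofinality statement also appears in \cite[Theorem 6.7]{luck2002l2}: one writes $M=\bigcup_{i\in I} M_i$ as a directed colimit and combines additivity with monotonicity of the dimension under inclusion to conclude that $\dim_G M=\sup_i \dim_G M_i$.

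Finally, for \ref{item. finite dimension}, I would argue that a finite generating set $m_1,\dots,m_n$ of $M$ gives a surjection $\N(G)^n\twoheadrightarrow M$ sending $(a_1,\dots,a_n)\mapsto \sum_i a_i m_i$. Applying \ref{item. additivity} to the short exact sequence $0\to K\to \N(G)^n\to M\to 0$ yields $\dim_G M\leqslant \dim_G \N(G)^n=n<\infty$, where the equality $\dim_G \N(G)^n=n$ is the standard normalisation of the dimension on finitely generated free $\N(G)$-modules.

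The only point that might look like an obstacle is the careful bookkeeping required because $\dim_G$ is valued in $[0,\infty]$ rather than $[0,\infty)$, so that the arithmetic used in \ref{item. additivity} and the supremum used in \ref{item. cofinal} must behave correctly with the value $\infty$. This is, however, exactly the content of L\"uck's extended formulation, so the proposition reduces to invoking the corresponding statements from \cite[Chapter 6]{luck2002l2}.
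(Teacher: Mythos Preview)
Your proposal is correct and matches the paper's treatment: the paper does not give a proof of this proposition at all, simply stating it as a collection of properties drawn from \cite[Chapter 6]{luck2002l2}. Your write-up is in fact more detailed than the paper, pinpointing the relevant theorem (Theorem 6.7) and spelling out the short argument for \ref{item. finite dimension}.
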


Let $I,J$ be indexing sets. A \textit{tamed matrix} over $\C G$ of dimension $I\times J$ is a (possibly infinite) matrix $A=(a_{ij})_{i\in I,j\in J}$ such that for all $i,j$, we have $a_{ij}\in \C G$, and for all $i\in I$, there are only finitely many $j\in J$ such that $a_{ij}\neq 0$. If $|I|,|J|<\infty$, then we call $A$ a \textit{finite matrix}.

Consider the $\N(G)$-modules $(\N(G))^I$ and $(\N(G))^J$, whose elements are of the form $(x_i)_{i\in I}$ and $(y_j)_{j\in J}$, respectively, where $x_i,y_j\in \N(G)$ for all $i,j$ and $x_i=y_j=0$ for all but finitely many $i\in I,j\in J$.

The right-multiplication of a tamed matrix $A=(a_{ij})_{i\in I,j\in J}$ over $\C G$ induces a $\N(G)$-module homomorphism $r_A\colon (\N(G))^I\rightarrow (\N(G))^J$, i.e.,
\[r_A(x):=x\cdot A= (\sum_{i\in I}x_i\cdot a_{ij})_{j\in J}.\]
The \textit{von Neumann rank} of $A$ is
\[\rk_G(A):=\dim_G \im (r_A)\in[0,\infty].\]
If $N\lhd G$ is a normal subgroup, then the quotient map $G\rightarrow G/N$ extends to a ring homomorphism $\C G\rightarrow \C[G/N]$, which, when applied to each entry of $A$, gives us a matrix $\epsilon_N(A)$. For simplicity, we will denote 
\[\rk_{G/N}(A):=\rk_{G/N}(\epsilon_N(A)).\]

\subsection{$L^2$-Betti numbers and (Strong) Atiyah Conjecture}

Let $G$ be a group, let $X$ be a $G$-CW complex, and let $C_\ast(X)$ be the cellular $\Z G$-chain complex of $X$, with boundary map $\partial_\ast$. Suppose $M$ is a $\Z G$-module. We can thus form the tensor product chain complex $M\otimes_{\Z G}C_\ast(X),$
whose boundary map is $\widetilde\partial_\ast=\mathrm{id}_{M}\otimes \partial_\ast$. The homology of this chain complex is denoted by $H^G_\ast(X ; M)$. Define
\[H_\ast(G ;M):=H^G_\ast(EG ;M).\]
If $Y\subset X$ is a $G$-CW subcomplex, then we can form a relative chain complex $M\otimes_{\Z G}C_\ast(X,Y)$,
where $C_{\ast}(X,Y)$ is the relative cellular chain complex of the pair $(X,Y)$. The homology of this chain complex will be denoted by $H^G_\ast(X,Y ;M)$. For any subgroup $H\leqslant G$, the inclusion $H\hookrightarrow G$ induces a classifying map $\phi\colon G\times_H EH \rightarrow EG$.
Let $M_\phi$ be the mapping cylinder of $\phi$. Define
\[H_\ast(G,H ; M):=H^G_n(M_\phi,G\times_H EH ; M).\]
This definition is equivalent to the original definition of relative group homology given by Bieri--Eckmann \cite{bieri1978relative}. The cohomology groups $H_G^\ast(X ;M)$ and $H_G^\ast(X,Y ;M)$ are defined similarly where instead of tensoring with $M\otimes_{\Z G}-$, one applies $\mathrm{Hom}_{\Z G}(-, M)$ to the corresponding chain complexes.

There is a natural embedding $\Z G\hookrightarrow \N(G)$, and thus the group von Neumann algebra $\N(G)$ becomes a right $\Z G$-module. If we let $M=\N(G)$, then the homology $H^G_\ast(X\; ;\N(G))$,
is naturally a left $\N(G)$-module. In this case, to emphasize the $\N(G)$-module structure,  we denote 
$$\cH_\ast(X ;\N(G)):=H^G_\ast(X ;\N(G)),$$
$$\cH_\ast(G ;\N(G)):=H_\ast(G ;\N(G)).$$
We denote
\[b^{(2)}_\ast(X ;G):=\dim_G \cH_\ast(X ;\N(G)),~~b^{(2)}_\ast(G):=\dim_G \cH_\ast(G ;\N(G))\]
and call them the \textit{$L^2$-Betti numbers} of $X$ and $G$, respectively.
Similarly, denote
\[b^{(2)}_\ast(X,Y ;G):=\dim_G \cH_\ast(X,Y ;\N(G)),~~b^{(2)}_\ast(G,H):=\dim_G \cH_\ast(G,H ;\N(G)).\]
\begin{remark} For any two choices of $EG$, there exists a natural $\N(G)$-module isomorphism between the corresponding $\cH_\ast(G ;\N(G))$, and for any two choices of $EG$ and any two choices of $EH$, there exists a natural $\N(G)$-module isomorphism between the corresponding $\cH_n(G,H ;\N(G))$.
\end{remark}

The next lemma follows easily from \cite[Theorem 6.54 (6)]{luck2002l2}. 
\begin{lemma}\label{lem. zero after vcd}
    Let $G$ be a group. Then for all $n\geqslant \vcd(G)+1$, we have $b^{(2)}_n(G)=0$.
\end{lemma}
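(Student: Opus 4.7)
The plan is to reduce the vanishing statement to a finite-index subgroup where the cohomological dimension is actually finite, and then push the vanishing back to $G$ via the multiplicativity of $L^2$-Betti numbers under finite-index inclusions -- this multiplicativity being the content of the cited \cite[Theorem 6.54 (6)]{luck2002l2}.

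Concretely, let $d = \vcd G$ and choose, by definition of $\vcd$, a finite-index subgroup $H\leqslant G$ with $\cd H = d$. Since $\cd H = d$, the trivial $\Z H$-module $\Z$ admits a projective $\Z H$-resolution of length at most $d$; computing $\mathrm{Tor}^{\Z H}_{*}(\N(H),\Z)$ from such a resolution immediately yields $\mathrm{Tor}^{\Z H}_{n}(\N(H),\Z)=0$ for every $n>d$. Because the cellular chain complex $C_{*}(EH)$ is itself a free $\Z H$-resolution of $\Z$, the identification
\[\cH_n(H ;\N(H)) \;=\; H_n\bigl(\N(H)\otimes_{\Z H} C_{*}(EH)\bigr) \;=\; \mathrm{Tor}^{\Z H}_{n}(\N(H),\Z)\]
then gives $b^{(2)}_n(H) = 0$ for every $n\geqslant d+1$.

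The final step is to invoke \cite[Theorem 6.54 (6)]{luck2002l2}, which asserts that $b^{(2)}_n(H) = [G:H]\cdot b^{(2)}_n(G)$ whenever $H\leqslant G$ is of finite index. Since $[G:H]$ is a positive finite integer and $b^{(2)}_n(H) = 0$ for all $n\geqslant d+1$, this forces $b^{(2)}_n(G) = 0$ for every $n\geqslant \vcd G + 1$, as required. I do not foresee any real obstacle here: the argument is essentially formal once the multiplicativity formula is at hand, and the only piece of bookkeeping is the Tor identification above, which holds independently of the choice of model for $EH$ since Tor is well-defined from any free resolution.
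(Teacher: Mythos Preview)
Your proof is correct and follows essentially the same approach the paper has in mind: the paper simply states that the lemma ``follows easily from \cite[Theorem 6.54 (6)]{luck2002l2}'', and your argument is precisely the natural unpacking of that remark---pass to a finite-index subgroup of finite cohomological dimension, use a finite-length projective resolution to see the vanishing there, and pull back via the multiplicativity formula.
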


For any group $G$, we will denote
\[\fin(G):=\{n\in\mathbb N^+\mid \exists \text{ subgroup } H\leqslant G \text{ with } |H|=n\}.\]
Let
\[\dfrac{1}{\fin(G)}\cdot \Z\]
be the subgroup of the additive group of $\Q$ generated by elements in the set
\[\left\{\dfrac{1}{n} \; \Big| \; n\in \fin(G)\right\}.\]
\begin{conjecture}[(Strong) Atiyah Conjecture]
For every free $G$-CW complex $X$ of type $F_{k+1}$ for some $k\in\mathbb N$ and every $n\leqslant k$, 
\[b^{(2)}_n(X ;G)\in \frac{1}{\fin(G)}\cdot \Z.\]    
\end{conjecture}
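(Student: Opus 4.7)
The final statement is the Strong Atiyah Conjecture, a well-known open problem. I can therefore only sketch the standard strategy that establishes it in the special cases relevant to this paper---virtually compact special groups and virtually locally indicable groups---rather than propose a universal proof. The first step is a reduction to matrix ranks: for a free $G$-CW complex $X$ of type $F_{k+1}$, the cellular $\N(G)$-chain complex in dimensions $\leqslant k+1$ consists of finitely generated free $\N(G)$-modules, and each $b^{(2)}_n(X;G)$ for $n\leqslant k$ may be written as an alternating sum of von Neumann ranks $\rk_G(A)$ of finite matrices $A$ over $\C G$ (arising from the cellular boundary maps). The conjecture thus reduces to showing $\rk_G(A)\in\tfrac{1}{\fin(G)}\cdot\Z$ for every such $A$.

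Following Linnell, the next step is to study the division closure $\mathcal{D}(G)$ of $\C G$ inside the algebra of affiliated operators $\mathcal{U}(G)$, which sits naturally between $\N(G)$ and the classical ring of quotients. For torsion-free $G$, the target is to prove that $\mathcal{D}(G)$ is a skew field: then every matrix over $\C G$ has integer rank over $\mathcal{D}(G)$, and this integer agrees with $\rk_G(A)$ via compatibility of the von Neumann dimension with the inclusion $\C G\hookrightarrow \mathcal{D}(G)$. For $G$ with torsion one instead aims for $\mathcal{D}(G)$ to be semisimple Artinian, with simple factors whose numerical invariants track the orders of finite subgroups and thereby account for the $\fin(G)$-denominators. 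In the classes relevant here, one would invoke the Jaikin-Zapirain--L\'opez-\'Alvarez theorem, which builds a Hughes-free division ring for virtually locally indicable groups, together with Schreve's theorem verifying the conjecture for virtually compact special groups via embeddings into right-angled Artin groups (combined with Linnell's theorem for the latter class).

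The main obstacle is the unconditional case: there is no known construction of a division closure of $\C G$ with the requisite algebraic structure for an arbitrary group $G$, and in fact unrestricted variants of the conjecture (with $\Q$ in place of $\tfrac{1}{\fin(G)}\cdot\Z$) are known to fail by the work of Grigorchuk--Linnell--Schick--\.{Z}uk and of Austin. Any proof must therefore genuinely exploit additional hypotheses, such as local indicability or virtual specialness, and it is precisely through such hypotheses---appearing explicitly in the ingredient list of this paper---that the conjecture is applied to ensure the rigidity $b^{(2)}_\ast(\bg)=b^{(2)}_\ast(G)$ in the main results.
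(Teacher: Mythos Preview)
Your proposal correctly identifies that this statement is a \emph{conjecture}, not a theorem, and the paper does not attempt to prove it: it is stated without proof and then invoked only in the special cases where it is already known (Schreve's result for virtually compact special groups, and the Jaikin-Zapirain--L\'opez-\'Alvarez machinery for virtually locally indicable groups). Your sketch of the Linnell strategy and the relevant known cases is accurate and aligns exactly with how the paper uses the conjecture, so there is nothing to compare or correct.
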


 We will exploit the specific cases for which the Atiyah conjecture  is already known to support some of our main arguments.

\subsection{Hyperbolically embedded subgroups}

Let $G$ be a group, $\{H_\lambda\}_{\lambda\in\Lambda}$ a family of subgroups of $G$, $X$ a subset of $G$ such that $G$ is generated by $X$ together with the union of all $H_{\lambda}$ (in which case $X$ is called a \textit{relative generating set} of $G$ with respect to $\{H_{\lambda}\}_{\lambda\in\Lambda}$), and $\mathcal{H}=\bigsqcup_{\lambda\in\Lambda}H_{\lambda}$. Consider the Cayley graph $\Gamma(G,X\sqcup \mathcal{H})$. Note that, for $\lambda\in\Lambda$ there is a natural embedding $\Gamma(H_{\lambda},H_{\lambda})\hookrightarrow \Gamma(G,X\sqcup\mathcal{H})$ whose image is the subgraph of $\Gamma(G,X\sqcup\mathcal{H})$ with vertices and edges labeled by elements of $H_{\lambda}$.

\begin{remark}
For distinct $\lambda,\mu\in\Lambda$, we allow $X\cap H_{\lambda}\neq \emptyset$ and $H_{\lambda}\cap H_{\mu}\neq\{1\}$, in which case there will be multiple edges between some pairs of vertices of $\Gamma(G,X\sqcup \mathcal{H})$.
\end{remark}

For $\lambda\in\Lambda$, an edge path in $\Gamma(G,X\sqcup \mathcal{H})$ between vertices of $\Gamma(H_{\lambda},H_{\lambda})$ is called $H_{\lambda}$\textit{-admissible} if it does not contain any edge of $\Gamma(H_{\lambda},H_{\lambda})$. Note that an $H_{\lambda}$-admissible path is allowed to pass through vertices of $\Gamma(H_{\lambda},H_{\lambda})$.

\begin{definition}\label{def. relative metric}
For every pair of elements $h,k\in H_{\lambda}$, let $\widehat{d}_{\lambda}(h,k)\in[0,\infty]$ be the length of a shortest $H_{\lambda}$-admissible path connecting the vertices labeled by $h$ and $k$. If no such path exists, set $\widehat{d}_{\lambda}(h,k)=\infty$. The laws of summation on $[0,\infty)$ extend naturally to $[0,\infty]$ and it is easy to verify that $\widehat{d}_{\lambda}:H_{\lambda}\times H_{\lambda}\rightarrow [0,+\infty]$ defines a metric on $H_{\lambda}$, which is called the \textit{relative metric on} $H_{\lambda}$ \textit{with respect to} $X$.
\end{definition}

\begin{definition}\label{hyperbolically embedded}
We say that the family $\{H_\lambda\}_{\lambda\in\Lambda}$ \textit{hyperbolically embeds into} $(G,X)$ (denoted by $\{H_\lambda\}_{\lambda\in\Lambda}\hookrightarrow_h(G,X)$) if the following hold:

\begin{enumerate}
    \item[(i)] $G$ is generated by the set $X$ together with the union of all $H_{\lambda},\lambda\in\Lambda$.

    \item[(ii)] The Cayley graph $\Gamma(G,X\sqcup \mathcal{H})$ is a Gromov hyperbolic space.

    \item[(iii)] For each $\lambda\in\Lambda$, the metric space $(H_{\lambda},\widehat{d}_{\lambda})$ is proper, i.e., every ball of finite radius contains only finitely many elements
\end{enumerate}

Further, we say that the family $\{H_\lambda\}_{\lambda\in\Lambda}$ \textit{hyperbolically embeds into} $G$, denoted by $\{H_\lambda\}_{\lambda\in\Lambda}\hookrightarrow_h G$, if there exists some subset $X\subset G$ such that $\{H_\lambda\}_{\lambda\in\Lambda}\hookrightarrow_h(G,X)$. And we say that \textit{$G$ is hyperbolic relative to the family $\{H_\lambda\}_{\lambda\in\Lambda}$} or the family $\{H_{\lambda}\}_{\lambda\in\Lambda}$ is a \textit{peripheral structure} on $G$ if $|\Lambda|<\infty$ and there exists a finite set $X\subset G$ such that  $\{H_\lambda\}_{\lambda\in\Lambda}\hookrightarrow_h(G,X)$. If $\{H_\lambda\}_{\lambda\in\Lambda}$ consists of a single element $H$ we will drop the braces and write that $H\hookrightarrow_h (G,X)$ (resp. $H\hookrightarrow_h G$, $G$ is hyperbolic relative to $H$) instead of $\{H\}\hookrightarrow_h (G,X)$ (resp. $\{H\}\hookrightarrow_h G$, $G$ is hyperbolic relative to $\{H\}$).
\end{definition}

\begin{remark}
    Our definition of relative hyperbolicity is not one of the standard definitions, but is equivalent to those by \cite[Proposition 4.28]{dahmani2017hyperbolically}.
\end{remark}

\subsection{Dehn filling}

Dehn filling is a process which takes input a group $G$, a family of subgroups $\{H_\lambda\}_{\lambda\in\Lambda}$ of $G$, and a family of normal subgroups $\{N_\lambda\in H_\lambda\}_{\lambda\in\Lambda}$, and outputs the quotient group $G/\ll \bigcup_{\lambda\in\Lambda} N \rr_G$. Here and in the sequel, for any subset $S$ of a group $G$, we denote by $\ll S \rr_G$ the normal closure of $S$ in $G$. If $G$ is understood, we will simply write $\ll S \rr$.

In practice, it is often assumed that $G$ and $\{H_\lambda\}_{\lambda\in\Lambda}$ satisfy certain conditions, so that for most choices of $\{N_\lambda\lhd H_\lambda\}_{\lambda\in\Lambda}$, the quotient $G/\ll \bigcup_{\lambda\in\Lambda} N \rr_G$ satisfy certain properties. In this paper, we always assume that $\{H_\lambda\}_{\lambda\in\Lambda}$ hyperbolically embeds into $G$, in which case the Cohen--Lyndon property arises naturally. To simplify the statements of our results, we use the following terminology, which aligns with the terminology of $3$-manifold theory.

\begin{definition}\label{def. sufficiently deep}

Let $G\geqslant H$ be groups, and let $\mathcal P= \mathcal P(G,H,N),\mathcal Q=\mathcal Q(G,H,N)$ be properties formulated for all normal subgroups $N\lhd H$. We say that $\mathcal P$ \textit{holds for sufficiently deep normal subgroups $N\lhd H$} if there exists a finite subset $\F\subset H\smallsetminus\{1\}$ such that $\mathcal P$ holds whenever $N\cap \F=\emptyset$. We say that $\mathcal P$ \textit{holds for sufficiently deep normal subgroups $N\lhd H$ such that $\mathcal Q$ holds} if there exists a finite subset $\F\subset H\smallsetminus\{1\}$ such that, if $N\cap \F=\emptyset$ and $\mathcal Q$ holds, then $\mathcal P$ holds.    
\end{definition}

\begin{remark}\label{rm. sufficiently deep for finite groups}
    Let $G\geqslant H$ be groups. Note that, if $|H|<\infty$ then a property $\mathcal P$ holds for sufficiently deep normal subgroup $N\lhd H$ if and only if it holds for $N=\{1\}$.
\end{remark}

There are two key ingredients in our study of Dehn filling. The first one is the following result which roughly says that hyperbolicity is preserved by sufficiently deep Dehn filling:

\begin{theorem}[{Dahmani--Guirardel--Osin \cite{dahmani2017hyperbolically}}]\label{thm. simple Dehn filling}
Let $G$ be a group with a subgroup $H\hookrightarrow_h G$. Then for sufficiently deep $N\lhd H$, the natural homomorphism $H/N\rightarrow G/\ll N \rr $ maps $H/N$ injectively onto a hyperbolically embedded subgroup of $G/\ll N \rr$.
\end{theorem}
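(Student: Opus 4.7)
The plan is to use the cone-off plus rotating family machinery that underlies the Dahmani--Guirardel--Osin approach to Dehn filling. My starting point is the Cayley graph $\Gamma = \Gamma(G, X \sqcup \mathcal{H})$, which is $\delta$-hyperbolic by the hyperbolic embedding hypothesis, together with the relative metric $\widehat{d}$ on $H$, which is proper. I form the coned-off graph $\dot\Gamma$ by attaching an apex $v_{gH}$ to each coset $gH$ and joining it by an edge to every vertex of $gH$; the group $G$ continues to act by isometries, and $\dot\Gamma$ is still $\delta'$-hyperbolic for some $\delta'$ depending only on $\delta$. Under this action, the pointwise stabiliser of $v_{gH}$ is exactly $gHg^{-1}$, so the conjugates $\{gNg^{-1}\}_{gH \in G/H}$ form a natural candidate family of rotation subgroups at the apices.

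Next, I want to show that for $N$ avoiding a carefully chosen finite subset $\F \subset H \smallsetminus \{1\}$, this collection is a \emph{very rotating family} in the sense of \cite{dahmani2017hyperbolically}: distinct apices are far apart in $\dot\Gamma$, and each non-trivial element of $gNg^{-1}$ moves any point outside the star of $v_{gH}$ by a large amount. The crucial input here is the properness of $\widehat{d}$: there exists a universal constant $R = R(\delta, \delta')$ such that if every non-trivial element of $N$ has $\widehat{d}$-length $>R$, then the rotating-family hypotheses are satisfied. I then define $\F$ to be the (finite) set of non-trivial elements of $H$ whose $\widehat{d}$-distance from $1$ is at most $R$, and assume $N \cap \F = \emptyset$.

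Once the rotating-family hypotheses hold, the Composite Rotating Family / Greendlinger theorem of \cite{dahmani2017hyperbolically} gives three things at once. First, the quotient $\dot\Gamma / \ll N \rr$ is hyperbolic and the action of $\bg = G/\ll N \rr$ on it has apex stabilisers equal to the images of $gHg^{-1}$ in $\bg$. Second, applied to an apex, this stabiliser computation forces $H \cap \ll N \rr = N$; thus the natural map $H/N \to \bg$ is injective, and its image $\bar H$ sits as the stabiliser of an apex. Third, short loops in the quotient are concatenations of lifts from $\dot\Gamma$ and disc diagrams around apices, which is the combinatorial control needed for the next step.

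It remains to verify that $\bar H \hookrightarrow_h (\bg, \bar X)$, where $\bar X$ is the image of $X$. I would identify the Cayley graph $\Gamma(\bg, \bar X \sqcup \bar H)$ with a subgraph of $\dot\Gamma / \ll N \rr$ (after a further cone-off at the $\bar H$-cosets, which undoes the cone-off up to quasi-isometry), deducing hyperbolicity. For properness of the quotient relative metric $\widehat{d}_{\bar H}$, I would show that any $\bar H$-admissible path in $\Gamma(\bg, \bar X \sqcup \bar H)$ connecting $1$ to a short element $\bar h \in \bar H$ lifts, via the rotating family disc-diagram machinery, to an $H$-admissible path in $\Gamma(G, X \sqcup \mathcal{H})$ of comparable length from $1$ to some lift $h \in hN$; since by choice of $\F$ the set $N \cup \{h : \widehat{d}(1,h) \le R\}$ is controlled, $\widehat{d}_{\bar H}$-balls are finite.

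The main obstacle is the quantitative conversion from "$N$ sufficiently deep" to the explicit finite avoidance set $\F$: this amounts to pinning down the rotating-family radius $R$ in terms of the hyperbolicity constant of $\dot\Gamma$ and then controlling lifts of paths through the quotient, which is exactly where all the rotating-family technology of \cite{dahmani2017hyperbolically} concentrates. Once $R$ is fixed and properness of $\widehat{d}$ is invoked to make $\F$ finite, the remaining verifications are essentially bookkeeping.
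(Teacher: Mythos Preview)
The paper does not supply its own proof of this theorem: it is quoted directly from Dahmani--Guirardel--Osin \cite{dahmani2017hyperbolically} and used as a black box throughout. So there is nothing in the paper to compare your argument against; your proposal is in effect a sketch of the original DGO proof rather than an alternative to anything the authors wrote.

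That said, your outline is a fair summary of the DGO strategy via very rotating families (roughly \cite[\S5--7]{dahmani2017hyperbolically}, culminating in their Theorem~7.19). A couple of small corrections to the sketch: in the relative Cayley graph $\Gamma(G,X\sqcup\mathcal H)$ the cosets $gH$ already have diameter~$1$, so the additional cone-off you describe is really a rescaling/hyperbolic cone-off needed to make the apex separation large relative to the hyperbolicity constant, not to make the cosets bounded; and the pointwise stabiliser of the apex $v_{gH}$ need not be all of $gHg^{-1}$ in general, only contain it---what matters is that $gNg^{-1}$ fixes $v_{gH}$. The verification that $\bar H\hookrightarrow_h(\bg,\bar X)$ is the most delicate part and in DGO is handled by a careful lifting argument for relative geodesics rather than disc diagrams per se, but the spirit of your last paragraph is right.
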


The second is a structural result about the kernel associated with a sufficiently deep Dehn filling, termed as a \textit{Cohen--Lyndon triple}, which was introduced by \cite{sun2018cohomologyi}, but the idea dates back to a paper by Cohen--Lyndon \cite{cohen1963free}, hence the name. This property is crucial for our construction of a classifying space of the quotient group arising from a Dehn filling process.

\begin{definition}
    Let $N\lhd H \leqslant G$ be a triple of groups. We say that the triple $(G,H,N)$ is a \textit{Cohen--Lyndon triple} if there exists a left transversal $T$ of $H\ll N \rr$ in $G$ such that $\ll N \rr$ decomposes as a free product of the following form $\ll N \rr = \Asterisk_{t\in T} tNt^{-1}$.
\end{definition}

\begin{theorem}[{\cite[Theorem 2.5]{sun2018cohomologyi}}]\label{thm. cl}
Let $G$ be a group with a hyperbolically embedded subgroup $H\hookrightarrow_h G$. Then for all sufficiently deep normal subgroups $N\lhd H$, the triple $(G,H,N)$ is a Cohen--Lyndon triple.    
\end{theorem}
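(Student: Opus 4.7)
The plan is to exploit Dahmani--Guirardel--Osin's theory of very rotating families on hyperbolic spaces. Starting from the hyperbolic embedding $H\hookrightarrow_h(G,X)$, one obtains (via a standard coning or cusping construction on $\Gamma(G,X\sqcup\mathcal{H})$) a Gromov hyperbolic $G$-space $Y$ in which the vertex $v_0$ associated to the coset $H$ has stabilizer exactly $H$, and in which the action of $H$ on points near $v_0$ is controlled by the relative metric $\widehat{d}_H$: an element $h\in H$ displaces neighboring apex points by a quantity comparable to $\widehat{d}_H(1,h)$.

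\textbf{Choosing the forbidden finite set.} Fix a hyperbolicity constant for $Y$, and let $\rho_0$ be a threshold large enough (relative to that constant) that any $G$-equivariant family of rotation subgroups whose non-trivial elements rotate by more than $\rho_0$ satisfies the axioms of a very rotating family. By condition (iii) of \Cref{hyperbolically embedded}, $(H,\widehat{d}_H)$ is proper, hence
\[\mathcal{F}:=\{h\in H\smallsetminus\{1\}\mid \widehat{d}_H(1,h)<\rho_0\}\]
is finite. Take this $\mathcal{F}$ as the finite set witnessing ``sufficiently deep'' in the sense of \Cref{def. sufficiently deep}. For any $N\lhd H$ with $N\cap\mathcal{F}=\emptyset$, every non-trivial element of $N$ satisfies $\widehat{d}_H(1,n)\geqslant\rho_0$, which translates geometrically into the rotation bound at $v_0$. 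By $G$-equivariance, the collection $\{(gv_0,\, gNg^{-1})\mid g\in G\}$ is then a very rotating family on $Y$.

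\textbf{Extracting the Cohen--Lyndon decomposition.} The structural theorem for very rotating families of Dahmani--Guirardel--Osin asserts that the subgroup generated by such a family decomposes as a free product indexed by the orbits of apices under the generated subgroup, with each factor being the corresponding rotation subgroup. In our situation the generated subgroup is $\ll N \rr$, and the apices $\{gv_0\}_{g\in G}$ are identified with $G/H$, the rotation subgroup at $gv_0$ being $gNg^{-1}$. Since $\ll N \rr \lhd G$, the $\ll N \rr$-orbits on $G/H$ coincide with the cosets $G/(H\ll N \rr)$. Choosing a left transversal $T$ of $H\ll N \rr$ in $G$ and taking $\{tv_0\}_{t\in T}$ as orbit representatives produces
\[\ll N \rr=\Asterisk_{t\in T} tNt^{-1},\]
which is precisely the Cohen--Lyndon decomposition.

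\textbf{Main obstacle.} The delicate point is the calibration between the abstract properness of $\widehat{d}_H$ and the geometric rotation axioms. One has to set up $Y$ (with the appropriate cone-off or cusp scale) so that the relative metric cleanly measures near-apex rotation, and then select $\rho_0$ large enough compared to the hyperbolicity constant so that the very rotating axioms are genuinely satisfied; after this calibration, the free product conclusion is a formal output of the rotating family / small cancellation machinery over hyperbolic spaces.
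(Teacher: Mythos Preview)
The paper does not give its own proof of this statement; it is quoted from \cite{sun2018cohomologyi} and used as a black box. Your outline via the Dahmani--Guirardel--Osin very rotating family machinery is correct and is essentially the argument carried out in \cite{sun2018cohomologyi}: one passes to the coned-off space, uses properness of $\widehat d_H$ to produce the finite set $\mathcal F$, and then invokes the structural free-product theorem for very rotating families (DGO, Theorem~5.3 and its consequences) to obtain $\ll N\rr=\Asterisk_{t\in T} tNt^{-1}$ with $T$ a transversal of $H\ll N\rr$. The identification of $\ll N\rr$-orbits on the apex set $G/H$ with $G/(H\ll N\rr)$ is exactly as you describe, using normality of $\ll N\rr$. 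So your proposal is sound and aligned with the original source, even though the present paper offers nothing to compare it against.
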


In the subsequent sections, we will often need to pass to a finite-index normal subgroup of a (sufficiently deep) Dehn filling quotient of a group $G$. \Cref{lem. df in fi subgroup} below roughly states that such a finite-index normal subgroup arises as a (sufficiently deep) \textit{$G$-equivariant Dehn filling} of a finite-index normal subgroup of $G$. Let us start with the related definitions.

\begin{definition}
    Let $G$ be a group that is hyperbolic relative to a family of subgroups $\{H_i\}^k_{i=1}$, let $G_0\lhd G$ be a finite-index normal subgroup. For all $i$, let $H_{0i}=G_0\cap H_i$, and let $\h_i$ be the family of subgroups of $G_0$ obtained by choosing one element from each $G_0$-conjugacy class of the set $\{gH_{0i}g^{-1}\mid g\in G\}$. Let $\h=\bigcup^k_{i=1} \h_i$.
Then $\h$ is a finite family of subgroups of $G_0$ and $G_0$ is hyperbolic relative to $\h$ by \cite[Theorem 9.1]{hruska2010relative}. We call $\h$ the \textit{peripheral structure induced by} $\{H_i\}^k_{i=1}$.
\end{definition}

\begin{definition}\label{def. equivariant filling}
Let $G$ be a group hyperbolic relative to a family of subgroups, and let $G_0\lhd G$ be a finite-index normal subgroup with the induced peripheral structure $\h$. We say that a family of normal subgroups $\{N_H\lhd H\mid H\in\h\}$ is \textit{$G$-equivariant} or the Dehn filling of $G_0$ induced by this family is \textit{$G$-equivariant} if, for all $H\in \h$ and $g\in G$ such that $gHg^{-1}\in \h$, we have $N_{gHg^{-1}}=gN_Hg^{-1}$.
\end{definition}

In the following lemma, as we will discuss normal closures in different groups, we will use $\ll\cdot\rr_G$ instead of $\ll\cdot\rr$.

\begin{lemma}\label{lem. df in fi subgroup}
    Let $G$ be a group that is hyperbolic relative to a family of infinite subgroups $\{H_i\}^k_{i=1}$, and let $G_0\lhd G$ be a finite-index normal subgroup, endowed with the induced peripheral structure $\{K_j\}^\ell_{j=1}$ from $G$. Then the following hold:
    
    \begin{enumerate}[label=(\roman*)]
        \item\label{item. bijection} There is a bijection $\varphi\colon \mathcal T\rightarrow \fS$, where $\fS$ is the set of all families of normal subgroups $\{N_i\lhd H_i\}^k_{i=1}$ such that $N_i\leqslant G_0\cap H_i$ for all $i$, and $\mathcal T$ is the set of all $G$-equivariant families of normal subgroups $\{M_j\lhd K_j\}^\ell_{j=1}$.

        \item\label{item. peripheral}
        For $k=1,2$, let $S_k=\{N_{k,i}\lhd H_i\}^k_{i=1}\in \fS$ such that $N_{1,i}\leqslant N_{2,i}$ for all $i$, and let $\varphi^{-1}(S)=\{M_{k,j}\lhd K_j\}^\ell_{j=1}$. Then for each $j\in\{1,\dots,\ell\}$, we have $M_{1,j}\leqslant M_{2,j}$ and there exists $i\in\{1,\dots,k\}$ and $g\in G$ such that
        \[gN_{k,i}g^{-1}=M_{k,j},~~~~ K_j/M_{k,j}\cong (G_0\cap H_i)/N_{k,i},~~~~ N_{2,i}/N_{1,i}\cong M_{2,j}/M_{1,j}.\]
        \item\label{item. embedding}
        For each $T\in\mathcal{T}$, the following diagram commutes
        \begin{center}
        \begin{tikzcd}
         G_0 \arrow[r] \arrow[d,hook]
         & G_0/\ll \bigcup_{M\in T} M \rr_{G_0} \arrow[d, hook] \\
         G \arrow[r]
         &  G/\ll \bigcup_{N\in \varphi(T)} N \rr_G
       \end{tikzcd}
       \end{center}
       and we have

      \begin{equation}\label{eq. index}
      [G/\ll \bigcup_{N\in \varphi(T)} N \rr_G:G_0/\ll \bigcup_{M\in T} M \rr_{G_0}]=[G:G_0].
      \end{equation}

      \item\label{item. deep}
      For each family of finite subsets $\{\F_i\subset H_i\smallsetminus\{1\}\}^k_{i=1}$, there exists a family of finite subsets $\{\F^{\prime}_j\subset K_j\smallsetminus\{1\}\}^\ell_{j=1}$ such that the following hold: let $S=\{N_i\lhd H_i\}^k_{i=1}\in \fS$, and let $\varphi^{-1}(S)=\{M_j\lhd K_j\}^\ell_{j=1}$. Then $M_j\cap \F^{\prime}_j=\emptyset$ for all $j$ implies $N_i\cap \F_i=\emptyset$ for all $i$.

      Conversely, for each family of finite subsets $\{\F^{\prime}_j\subset K_j\smallsetminus\{1\}\}^\ell_{j=1}$, there exists a family of finite subsets $\{\F_i\subset H_i\smallsetminus\{1\}\}^k_{i=1}$ such that the following hold:
      let $T=\{M_j\lhd K_j\}^\ell_{j=1}\in\mathcal{T}$ and let $\varphi(T)=\{N_i\lhd H_i\}^k_{i=1}$. Then $N_i\cap \F_i=\emptyset$ for all $i$ implies $M_j\cap \F^{\prime}_j=\emptyset$ for all $j$.

      \item\label{item. finite index} 
      For each family of finite-index subgroups $\{P_i\leqslant  H_i\}^k_{i=1}$, there exists a family of finite-index subgroups $\{Q_j\leqslant K_j\}^\ell_{j=1}$ such that the following hold: let $S=\{N_i\lhd H_i\}^k_{i=1}\in \fS$, and let $\varphi^{-1}(S)=\{M_j\lhd K_j\}^\ell_{j=1}$. Then $M_j\leqslant Q_j$ for all $j$ implies $N_i\leqslant P_i$ for all $i$.

      Conversely, for each family of finite-index subgroups $\{Q'_j\leqslant K_j\}$, there exists a family of finite-index subgroups $\{P'_i\leqslant H_i\}^k_{i=1}$ such that the following hold:
      let $T=\{M_j\lhd K_j\}^\ell_{j=1}\in\mathcal{T}$ and let $\varphi(T)=\{N_i\lhd H_i\}^k_{i=1}$. Then $N_i\leqslant P'_i$ for all $i$ implies $M_j\leqslant Q'_j$ for all $j$.
    \end{enumerate}
\end{lemma}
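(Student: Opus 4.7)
The plan is to reduce the five claims to bookkeeping about the induced peripheral family $\{K_j\}_{j=1}^\ell$, using two structural facts: (a) $G_0\lhd G$ forces each $G_0\cap H_i$ to be normal in $H_i$; (b) almost malnormality of the $H_i$'s (a standard property of peripheral subgroups in relatively hyperbolic groups) combined with the hypothesis that each $H_i$ is infinite yields $N_G(G_0\cap H_i) = H_i$ for each $i$.

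First I would fix notation: by construction of the induced peripheral structure, for each $j$ there is a unique $i(j)\in\{1,\ldots,k\}$ and an element $g_j\in G$ with $K_j = g_j(G_0\cap H_{i(j)})g_j^{-1}$. Set $\mathcal K := \{gK_jg^{-1} \mid g\in G,\ 1\leqslant j\leqslant \ell\}$; each element of $\mathcal K$ is a $G_0$-conjugate of a unique $K_j$. For parts (i) and (ii), I would define $\varphi^{-1}$ by sending $\{N_i\}_{i=1}^k$ to $\{M_j := g_j N_{i(j)} g_j^{-1}\}_{j=1}^\ell$; independence of the choice of $g_j$ and the $G$-equivariance condition as $hK_jh^{-1}$ ranges over $\mathcal K$ both follow from (b) together with the $H_i$-normality of $N_i$. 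Conversely, $\varphi$ sends $\{M_j\}$ to $N_i := g_{j_0}^{-1} M_{j_0} g_{j_0}$ for any $j_0$ with $i(j_0) = i$; well-definedness and $H_i$-normality are dual to the above check, the latter using $G$-equivariance applied to elements of the form $g_{j_0} h g_{j_0}^{-1} \in N_G(K_{j_0})$ for $h\in H_i$. Part (ii) is then immediate from the conjugacy relations.

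For part (iii), the central step is the identity $\ll\bigcup_i N_i\rr_G = \ll\bigcup_j M_j\rr_{G_0}$ as subgroups of $G$. I would show both sides equal the subgroup $L$ generated by $\bigcup_{K\in\mathcal K} M_K$, where $\{M_K\}$ is the $G$-equivariant extension to $\mathcal K$. This set is stable under both $G$- and $G_0$-conjugation (by $G$-equivariance and the $G_0$-stability of $\mathcal K$); each $N_i$ is a $G$-conjugate of some $M_j$, and each $M_j$ is in the generating set by definition, so both normal closures agree with $L$. Since $L\subseteq G_0$, the commutative diagram and the index formula $[G/\ll\bigcup N\rr_G : G_0/\ll\bigcup M\rr_{G_0}] = [G:G_0]$ follow from the second isomorphism theorem.

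Parts (iv) and (v) are handled by transporting finite data along the conjugations $g_j$. Given $\{\F_i\}_{i=1}^k$, set $\F'_j := g_j\F_{i(j)}g_j^{-1}\cap K_j$: then the identity $N_{i(j)} = g_j^{-1} M_j g_j$ gives the required implication, with the converse direction handled by $\F_i := \bigcup_{j:i(j)=i} g_j^{-1} \F'_j g_j$. Part (v) is parallel, with $Q_j := g_j(P_{i(j)} \cap G_0\cap H_{i(j)}) g_j^{-1}$ in one direction and $P'_i := \bigcap_{j:i(j)=i} g_j^{-1} Q'_j g_j$ in the other, using that finite intersections and conjugates preserve finite index. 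The main technical obstacle throughout is ensuring compatibility between $G$-equivariance and $H_i$-normality in (i), which hinges on $N_G(G_0\cap H_i) = H_i$; beyond this the argument is routine.
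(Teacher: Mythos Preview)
Your proposal is correct and follows essentially the same approach as the paper: fix conjugators $g_j$ with $K_j=g_j(G_0\cap H_{i(j)})g_j^{-1}$, define $\varphi^{-1}$ and $\varphi$ by conjugating through the $g_j$, and use almost malnormality of the peripheral family (your fact (b) and the uniqueness of $i(j)$ are exactly the consequences the paper extracts from \cite[Proposition~4.33]{dahmani2017hyperbolically}) to check well-definedness, $H_i$-normality, and $G$-equivariance. The only cosmetic difference is in part (iii): you introduce the subgroup $L$ generated by the full $G$-orbit $\{M_K:K\in\mathcal K\}$ and observe it is normal in both $G$ and $G_0$, whereas the paper verifies $\ll\bigcup N_i\rr_G\subseteq\ll\bigcup M_j\rr_{G_0}$ by rewriting a generic element $\prod g'_p n_p (g'_p)^{-1}$ term by term; these are the same argument in different packaging.
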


\begin{proof}

    For each $j\in\{1,\dots,\ell\}$, choose $\alpha(j)\in\{1,\dots,k\}$ and $g_j\in G$ such that $$K_j=g_j(G_0\cap H_{\alpha(j)})g^{-1}_j.$$
\noindent    For each $i\in\{1,\dots,k\}$, choose $\beta(i)\in\{1,\dots,\ell\}$ such that $\alpha\circ\beta(i)=i$.
    Therefore, $$K_{\beta(i)}=g_{\beta(i)}(G_0\cap H_i)g^{-1}_{\beta(i)}.$$

\noindent{\it (i).}  Let $\{M_j\lhd K_j\}^\ell_{j=1}$ be a $G$-equivariant family of normal subgroups. For each $i$, let
        \[N_i=g^{-1}_{\beta(i)} M_{\beta(i)} g_{\beta(i)}.\]
        Then $N_i\leqslant G_0\cap H_i$. We need to show that $N_i$ is normal in $H_i$. Let $h\in H_i$. As $G_0\lhd G$ we have $g^{-1}_{\beta(i)}K_{\beta(i)}g_{\beta(i)}=G_0\cap H_i\lhd H_i$ and thus $g_{\beta(i)}hg^{-1}_{\beta(i)}K_{\beta(i)}g_{\beta(i)}h^{-1}g^{-1}_{\beta(i)}=K_{\beta(i)}$. As the family $\{M_j\lhd K_j\}^\ell_{j=1}$ is $G$-equivariant, we have $g_{\beta(i)}hg^{-1}_{\beta(i)}M_{\beta(i)}g_{\beta(i)}h^{-1}g^{-1}_{\beta(i)}=M_{\beta(i)}$, which is equivalent to $hN_ih^{-1}=N_i$ as desired. 
        Next, define
        \[\varphi(\{M_j\lhd K_j\}^\ell_{j=1}):=\{N_i\}^k_{i=1}=\{g^{-1}_{\beta(i)} M_{\beta(i)} g_{\beta(i)}\}^k_{i=1}.\]
        We need to construct the inverse of $\varphi$. For each $i\in\{1,\dots,k\}$, let $N'_i\lhd H_i$ such that $N'_i\leqslant G_0\cap H_i$. For $j=1,\dots,\ell$, let    
        \[M'_j=g_j N'_{\alpha(j)} g^{-1}_j.\]
        We verify that $\{M'_j\}^\ell_{j=1}$ is a $G$-equivariant family. Let $j_1,j_2\in\{1,\dots,\ell\}$ such that there exists $g\in G$ with $gK_{j_1}g^{-1}=K_{j_2}$. We have $K_{j_1}\leqslant g_{j_1}H_{\alpha(j_1)}g^{-1}_{j_1}$, and thus $K_{j_2}=gK_{j_1}g^{-1}\leqslant g g_{j_1}H_{\alpha(j_1)}g^{-1}_{j_1} g^{-1}$. Therefore, $G_0\cap H_{\alpha(j_2)}= g^{-1}_{j_2}K_{j_2}g_{j_2}\leqslant g^{-1}_{j_2}g g_{j_1}H_{\alpha(j_1)}g^{-1}_{j_1} g^{-1}g_{j_2}$. Being a finite-index subgroup of the infinite group $H_{\alpha(j_2)}$, the group $G_0\cap H_{\alpha(j_2)}$ is infinite, and thus \cite[Proposition 4.33]{dahmani2017hyperbolically} implies that $\alpha(j_1)=\alpha(j_2)$ and $g^{-1}_{j_2}g g_{j_1}\in H_{\alpha(j_1)}$. Therefore,
\begin{align*}
    M'_{j_2}&= g_{j_2}N'_{\alpha(j_2)}g^{-1}_{j_2}=g_{j_2}N'_{\alpha(j_1)}g^{-1}_{j_2}=g_{j_2}(g^{-1}_{j_2}g g_{j_1})N'_{\alpha(j_1)}(g^{-1}_{j_2}g g_{j_1})^{-1}g^{-1}_{j_2}\\
    &=gg_{j_1}N'_{\alpha(j_1)}g^{-1}_{j_1}g^{-1}=g M'_{j_1} g^{-1},
\end{align*}
        as desired.\\ 
        
\noindent{\it (ii).}  For every $j\in\{1,\dots,\ell\}$, we have
        \begin{align*}
            &g_jN_{k,\alpha(j)}g^{-1}_j=M_{k,j},\\
            &M_{1,j}=g_j N_{1,\alpha(j)}g^{-1}_j\leqslant g_j N_{2,\alpha(j)}g^{-1}_j= M_{2,j},\\
            &M_{2,j}/M_{1,j}=(g_j N_{2,\alpha(j)}g^{-1}_j)/(g_j N_{1,\alpha(j)}g^{-1}_j)\cong N_{2,\alpha(j)}/N_{1,\alpha(j)},\\
            &K_j/M_{k,j}=(g_j(G_0\cap H_{\alpha(j)})g^{-1}_j)/(g_jN_{k,\alpha(j)}g^{-1}_j)\cong (G_0\cap H_{\alpha(j)})/N_{k,\alpha(j)}.
        \end{align*}
        
\noindent{\it (iii).}  Let $T=\{M_j\lhd K_j\}^\ell_{j=1}\in\mathcal{T}$ and let $\varphi(T)=\{N_i\lhd H_i\}^k_{i=1}$. For $j=1,\dots,\ell$, we have $M_j=g_jN_{\alpha(j)}g^{-1}_j$. In particular, we have $\ll \bigcup^\ell_{j=1} M_j \rr_{G_0}\leqslant \ll \bigcup^k_{i=1} N_i \rr_G$ and thus there is a natural homomorphism $\psi\colon G_0/\ll \bigcup^\ell_{j=1} M_j \rr_{G_0} \rightarrow G/\ll \bigcup^k_{i=1} N_i \rr_G$ that fits into a commutative diagram

    \begin{center}
    \begin{tikzcd}
     G_0 \arrow[r] \arrow[d,hook]
     & G_0/\ll \bigcup^\ell_{j=1} M_j \rr_{G_0} \arrow[d, "\psi"] \\
     G \arrow[r]
     &  G/\ll \bigcup^k_{i=1} N_i \rr_G
     \end{tikzcd}
     \end{center}

\noindent     To prove that $\psi$ is injective, it suffices to prove $\ll \bigcup^k_{i=1} N_i \rr_G\leqslant \ll \bigcup^\ell_{j=1} M_j \rr_{G_0}$. A general element $n\in \ll \bigcup^k_{i=1} N_i \rr_G$ has the form $n=\prod^q_{p=1}g'_p n_p (g'_p)^{-1}$. For each $p$, there exists $i(p)\in\{1,\dots,k\}$ such that $n_p\in N_{i(p)}$. We have
     \[N_{i(p)}=g^{-1}_{\beta(i(p))}M_{\beta(i(p))}g_{\beta(i(p))}.\]
\noindent       So,
     \[n\in\prod^q_{p=1} g'_p g^{-1}_{\beta(i(p))}M_{\beta(i(p))}g_{\beta(i(p))} (g'_p)^{-1}.\]
\noindent       By the definition of the induced peripheral structure, there exists $j(p)\in\{1,\dots,\ell\}$ and $g''_p\in G_0$ such that 
     \[g'_p g^{-1}_{\beta(i(p))}K_{\beta(i(p))}g_{\beta(i(p))} (g'_p)^{-1}= g''_p K_{j(p)}(g''_p)^{-1}.\]
\noindent       As $\{M_j\}^\ell_{j=1}$ is a $G$-equivariant family, we have
     \[g'_p g^{-1}_{\beta(i(p))}M_{\beta(i(p))}g_{\beta(i(p))} (g'_p)^{-1}= g''_p M_{j(p)}(g''_p)^{-1}.\]
\noindent       Therefore,
     \[n\in\prod^q_{p=1}g''_p M_{j(p)}(g''_p)^{-1}\subset \ll \bigcup^\ell_{j=1} M_j \rr_{G_0},\]
     which finishes the proof that $\psi$ is injective and also proves \eqref{eq. index}.\\

\noindent{\it (iv).}  Let $\{\F_i\subset H_i\smallsetminus\{1\}\}^k_{i=1}$ be a family of finite subsets. For $j\in\{1,\dots,\ell\}$, let 
     \[\F^{\prime}_j=g_j (\F_{\alpha(j)}\cap G_0) g^{-1}_j.\]
     Then the family $\{\F^{\prime}_j\}^\ell_{j=1}$ satisfies the requirement.

     Conversely, let $\{\F^{\prime}_j\subset K_j\smallsetminus\{1\}\}^\ell_{j=1}$ be a family of finite subsets. For $i\in\{1,\dots,k\}$, let
     \[\F_i=\bigcup_{j\in \alpha^{-1}(\{i\})} g^{-1}_j\F^{\prime}_j g_j.\]
     Then the family $\{\F_i\}^k_{i=1}$ satisfies the requirement.\\

\noindent{\it (v).}  Let $\{P_i\leqslant H_i\}^k_{i=1}$ be a family of finite-index subgroups. For $j\in\{1,\dots,\ell\}$, let   
     \[Q_j=g_j (P_{\alpha(j)}\cap G_0) g^{-1}_j.\]
     Then the family $\{Q_j\}^\ell_{j=1}$ satisfies the requirement.

     Conversely, let $\{Q'_j\leqslant K_j\}^\ell_{j=1}$ be a family of finite-index subgroups. For $i\in\{1,\dots,k\}$, let
     \[P'_i=\bigcap_{j\in \alpha^{-1}(\{i\})} g^{-1}_j Q_j g_j.\] 
     Then the family $\{P'_i\}^k_{i=1}$ satisfies the requirement.
\end{proof}

\section{Quantifying L\"{u}ck's approximation}\label{sec. approximation}

We will need an approximation result for $L^2$-Betti numbers for the space of marked groups, proved by Jaikin-Zapirain--L\'{o}pez-\'{A}lvarez \cite{jaikin2020strong}. 

Let $F$ be a group freely generated by a finite set $S$. The space of marked groups $\mg(F)$ can be identified with the set of normal subgroups of $F$ with the metric $d(M_1,M_2)=e^{-n}$, where $M_1\neq M_2\lhd F$ are two normal subgroups and $n$ is the largest number such that the balls of radius $n$ in the Cayley graphs $\Gamma(F/M_1,S)$ and $\Gamma(F/M_2,S)$ are isomorphic as labeled graphs. The metric $d$ induces a topology on $\mg(F)$. Below, the notion of convergence is discussed with respect to this topology. 

\begin{theorem}[Jaikin-Zapirain--L\'{o}pez-\'{A}lvarez, 2020]\label{thm. marked group luck}
    Let $F$ be a finitely generated free group and assume that a sequence $\{M_i\}_{i\geqslant 1}$ of normal subgroups of $F$ converges to a normal subgroup $M\lhd F$ in $\mg(F)$. Suppose that $F/M$ is virtually locally indicable. Then for every finite matrix $A$ over $\C F$, we have
    \[\lim_{i\rightarrow \infty} \rk_{F/M_i} (A)=\rk_{F/M} (A).\]
\end{theorem}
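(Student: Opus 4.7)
My plan is to identify $\rk_G$ for $G=F/M$ with an algebraic Sylvester matrix rank function coming from an embedding into a division ring, and then exploit universality to pass to the limit along $M_i\to M$. First, I would reduce to the case where $F/M$ is itself locally indicable, rather than merely virtually so: choose a finite-index normal subgroup $F^\prime\lhd F$ with $M\leqslant F^\prime$ and $F^\prime/M$ locally indicable, replace each $M_i$ by $M_i\cap F^\prime$ (still convergent to $M$ in $\mg(F^\prime)$), and track the factor $[F:F^\prime]$ on both sides via the standard block-matrix reformulation of $A$ as a matrix over $\C F^\prime$.

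Next, for $G=F/M$ locally indicable, Jaikin-Zapirain's resolution of the strong Atiyah conjecture for locally indicable groups provides a unique (up to $\C G$-isomorphism) Hughes-free division $\C G$-ring $\mathcal{D}_{\C G}$ whose induced Sylvester matrix rank function on $\C G$ coincides with the normalised von Neumann rank $\rk_G$. This identifies $\rk_G(A)$ with the inner rank of the image of $A$ over $\mathcal{D}_{\C G}$, a purely algebraic quantity amenable to approximation.

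The heart of the matter is passing to the limit. The space of Sylvester matrix rank functions on $\C F$ with the pointwise topology is compact, so after passing to a subsequence I may assume the pullbacks of $\rk_{F/M_i}$ along $F\twoheadrightarrow F/M_i$ converge pointwise to some Sylvester rank function $r$ on $\C F$. Because $M_i\to M$ in $\mg(F)$ and any finite computation involves only a Cayley ball of bounded radius, $r$ must descend to a Sylvester rank function $\bar r$ on $\C G$. I would then verify that $\bar r$ satisfies the Hughes-freeness property with respect to an ascending normal series $1=G_0\lhd G_1\lhd\cdots$ of $G$ witnessing local indicability; the uniqueness half of Jaikin-Zapirain's theorem would then force $\bar r=\rk_G$, which evaluated at $A$ gives $\lim_i\rk_{F/M_i}(A)=\rk_{F/M}(A)$.

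The hard part is verifying Hughes-freeness of $\bar r$. Elementary Cayley-ball arguments yield one inequality between $\bar r$ and $\rk_G$ via semi-continuity of Sylvester ranks along convergence in $\mg(F)$; the other direction — precisely where the Hughes-free condition is genuinely tested — requires a quantitative analysis of how normalised von Neumann ranks behave under the crossed-product decompositions induced by the ascending normal series, together with the observation that these decompositions are appropriately stable as $M_i\to M$. This is the substantive contribution of Jaikin-Zapirain and L\'opez-\'Alvarez beyond earlier variants of L\"uck's approximation theorem, and in this step I would defer to their argument rather than attempt to reconstruct it from scratch.
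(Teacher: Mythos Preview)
The paper does not prove this theorem: it is stated as a black-box result of Jaikin-Zapirain and L\'opez-\'Alvarez, cited as \cite{jaikin2020strong}, and used without proof as input to the quantitative reformulation in \Cref{thm. matrix approximation ver 2}. So there is no ``paper's own proof'' to compare your proposal against.

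That said, your sketch is a reasonable outline of the argument in \cite{jaikin2020strong} itself: the identification of $\rk_G$ with the Sylvester rank coming from the Hughes-free division ring of $\C G$, compactness of the space of Sylvester rank functions, and the uniqueness/universality of the Hughes-free rank are indeed the key ingredients there. You are correct that the substantive work lies in the step you flag as ``hard'' --- showing the limiting rank function inherits the Hughes-free property --- and you rightly defer to the original paper for it. For the purposes of the present paper, however, no proof is expected; the theorem is simply quoted.
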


\noindent For applications to  Dehn fillings, we need to quantify the above theorem.

\begin{theorem}\label{thm. matrix approximation ver 2}
    Let $G$ be a finitely generated, virtually locally indicable group. Then for every finite matrix $A$ over $\C G$ and every $\delta>0$, there exists a finite subset $\F_\delta\subset G\smallsetminus\{1\}$ such that if a normal subgroup $N\lhd G$ satisfies $N\cap \F_\delta=\emptyset$, then 
    \[|\rk_{G/N}(A)-\rk_G(A)|<\delta.\]
\end{theorem}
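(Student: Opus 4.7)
The plan is to reduce the statement to Theorem \ref{thm. marked group luck} by a compactness argument in the space $\mg(F)$ of marked groups over a finitely generated free group $F$.

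\textbf{Setup.} Since $G$ is finitely generated, choose a finite generating set of $G$ and write $G=F/M$, where $F$ is the free group on a finite set $S$ in bijection with the generating set and $M\lhd F$ is the kernel of the projection $\pi\colon F\twoheadrightarrow G$. Choose a set-theoretic section $G\to F$ and use it to lift each entry of $A$ to an element of $\C F$; call the resulting finite matrix $\widetilde A$ over $\C F$. By construction $\epsilon_M(\widetilde A)=A$, and for any normal subgroup $N\lhd G$, if we let $\widetilde N:=\pi^{-1}(N)\lhd F$ (so $M\leqslant \widetilde N$), then $F/\widetilde N\cong G/N$ and $\rk_{G/N}(A)=\rk_{F/\widetilde N}(\widetilde A)$. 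In particular $\rk_G(A)=\rk_{F/M}(\widetilde A)$.

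\textbf{Contradiction argument.} Suppose no such $\F_\delta$ exists. Enumerate $G\smallsetminus\{1\}=\{g_1,g_2,\dots\}$ (using that $G$ is countable). Then for every $n\in\mathbb N$ there exists $N_n\lhd G$ such that $N_n\cap\{g_1,\dots,g_n\}=\emptyset$ and
\[
|\rk_{G/N_n}(A)-\rk_G(A)|\geqslant \delta.
\]
The space $\mg(F)$ is compact, so passing to a subsequence we may assume $\widetilde N_n\to \widetilde N_\infty$ in $\mg(F)$ for some normal subgroup $\widetilde N_\infty\lhd F$.

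\textbf{Identifying the limit.} The claim is that $\widetilde N_\infty=M$. Given $f\in F$ of $S$-word length $\ell$, convergence in $\mg(F)$ implies that for all sufficiently large $n$, the labeled $\ell$-balls around the identity in $\Gamma(F/\widetilde N_n,S)$ and $\Gamma(F/\widetilde N_\infty,S)$ agree; in particular, $f\in \widetilde N_n$ if and only if $f\in\widetilde N_\infty$. Since $M\leqslant \widetilde N_n$ for every $n$, this forces $M\leqslant \widetilde N_\infty$. Conversely, if $f\in \widetilde N_\infty$ and $\pi(f)\neq 1$, then $\pi(f)=g_k$ for some $k$, but by construction $g_k\notin N_n$ whenever $n\geqslant k$, so $f\notin\widetilde N_n$ for large $n$, contradicting the equivalence above. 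Hence $\widetilde N_\infty\leqslant M$, and the two inclusions give $\widetilde N_\infty=M$.

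\textbf{Conclusion.} Because $F/M\cong G$ is virtually locally indicable, Theorem \ref{thm. marked group luck} applies to the sequence $\{\widetilde N_n\}$ in $\mg(F)$ with the finite matrix $\widetilde A$, yielding
\[
\rk_{G/N_n}(A)=\rk_{F/\widetilde N_n}(\widetilde A)\xrightarrow{n\to\infty}\rk_{F/M}(\widetilde A)=\rk_G(A),
\]
which contradicts the assumption that $|\rk_{G/N_n}(A)-\rk_G(A)|\geqslant \delta$ for all $n$. Therefore the desired finite subset $\F_\delta\subset G\smallsetminus\{1\}$ exists.

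The only subtle point is the identification $\widetilde N_\infty=M$; once that is in hand, the argument is a routine compactness reduction. No genuinely new tool beyond Theorem \ref{thm. marked group luck} and the compactness of $\mg(F)$ is required.
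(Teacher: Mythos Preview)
Your proof is correct and follows essentially the same approach as the paper: both argue by contradiction, lift to a free group $F$ with $G=F/M$, show the preimages $\widetilde N_n$ converge to $M$ in $\mg(F)$, and invoke Theorem~\ref{thm. marked group luck}. The only cosmetic difference is that the paper takes the finite sets $\F_n$ to be Cayley-graph balls of radius $n$, which makes convergence $\widetilde N_n\to M$ immediate without passing to a subsequence, whereas you enumerate $G\smallsetminus\{1\}$ arbitrarily and appeal to compactness of $\mg(F)$ to extract a convergent subsequence before identifying the limit.
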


\begin{proof}
    Suppose, to the contrary, that there exists a matrix $A$ over $\C G$ and $\delta_0>0$ such that for each finite subset $\F\subset G\smallsetminus\{1\}$, there exists a normal subgroup $N\lhd G$ such that $N\cap \F=\emptyset$ but
    \[|\rk_{G/N}(A)-\rk_G(A)|>\delta_0.\]

    Fix a finite generating set $S$ of $G$. Let $F$ be the free group on basis $S$, let $\epsilon\colon F\rightarrow G$ be the natural surjection, and let $M=\ker\epsilon$. Then $\epsilon$ can be naturally seen to be a map from the set of square matrices over $\C F$ to the set of square matrices over $\C G$. Let $B$ be a matrix over $\C F$ such that $A=\epsilon(B)$.

    For every $n\in \mathbb N^+$, let $B_n\subset \Gamma(G,S)$ be the subgraph consisting of all points within distance $n$ from the vertex $1$, and let $\F_n=G\cap B_n\smallsetminus\{1\}$.
    By assumption, there exists a normal subgroup $N_n\lhd G$ such that $N_n\cap \F_n=\emptyset$ and
    \begin{equation}\label{eq. rank 1}
        |\rk_{G/N_n}(A)-\rk_G(A)|>\delta_0.
    \end{equation}

    Let $M_n\lhd F$ be the preimage of $N_n$ under $\epsilon$. We claim that the sequence of normal subgroups $\{M_n\}_{n\geqslant 1}$ converges to $M$ in $\mg(F)$. Indeed, for every $R\in\mathbb N^+$, let $n>2R$. Consider the subgraph $\overline B_R\subset\Gamma(F/M_n,S)=\Gamma(G/N_n,S)$ consisting of all points within distance $R$ from $1$, and the subgraph $B_R\subset \Gamma(F/M,S)=\Gamma(G,S)$. Let $ g\neq  h$ be two vertices of $ B_R$, where we think of $ g, h$ as elements of $G$, and let $\bar g,\bar h$ be their natural images in $G/N_n$. Then $\bar g, \bar h$ are vertices in $\overline B_R$. Note that $g$ and $h$ are a distance less than $2R$ apart from each other. As $n>2R$, we have $g^{-1}h\not\in N_n$, and thus $\bar g\neq \bar h$. Therefore, the natural map from $B_R$ to $\overline B_R$ is injective. As this natural map is obviously surjective, the labeled graphs $ B_R$ and $\overline B_R$ are isomorphic, whenever $n>2R$. 

    By \Cref{thm. marked group luck}, we have
    \begin{equation}\label{eq. rank 2}
        \lim_{n\rightarrow \infty}\rk_{F/M_n}(B)=\rk_{F/M}(B).
    \end{equation}

    On the other hand, we have $\rk_{F/M_n}(B)=\rk_{G/N_n}(A)$ and $\rk_{F/M}(B)=\rk_G(A)$. Thus, \eqref{eq. rank 1} and \eqref{eq. rank 2} contradict each other.
\end{proof}

In the above theorem, if $A$ is not finite but just a tamed matrix, one can still obtain a lower bound on $\rk_{G/N}(A)$.

\begin{corollary}\label{lem. infinite matrix approximation}
    Let $G$ be a finitely generated, virtually locally indicable group. Then for each tamed matrix $A$ over $\C G$ such that $\rk_G(A)<\infty$ and each $\delta>0$, there exists a finite subset $\F_\delta\subset G\smallsetminus\{1\}$ such that if a normal subgroup $N\lhd G$ satisfies $N\cap \F_\delta=\emptyset$, then 
    \[\rk_{G/N}(A)>\rk_G(A)-\delta.\]
\end{corollary}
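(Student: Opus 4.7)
\medskip

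\noindent\textbf{Proof proposal.} The plan is to reduce to the finite matrix case already handled by \Cref{thm. matrix approximation ver 2}, using the cofinality property of the von Neumann dimension (\Cref{prop. properties of dimension}\ref{item. cofinal}) together with tameness of $A$.

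\emph{Step 1: Approximate $A$ by a finite submatrix.} Write $A = (a_{ij})_{i \in I, j \in J}$. For every finite subset $I' \subset I$, let $A_{I'}$ denote the tamed matrix indexed by $I' \times J$ obtained by restricting $A$ to the rows in $I'$. Since every element $x \in (\N(G))^I$ has finite support, one has $r_A(x) = r_{A_{I(x)}}(x|_{I(x)})$, where $I(x) \subset I$ is the finite support of $x$. Hence the family $\{\im(r_{A_{I'}}) \mid I' \subset I \text{ finite}\}$ is a directed (and in particular cofinal) system of $\N(G)$-submodules whose union is $\im(r_A)$. By \Cref{prop. properties of dimension}\ref{item. cofinal},
\[\rk_G(A) = \dim_G \im(r_A) = \sup_{I' \subset I \text{ finite}} \rk_G(A_{I'}).\]
Since $\rk_G(A) < \infty$, we may choose a finite $I' \subset I$ with $\rk_G(A_{I'}) > \rk_G(A) - \delta/2$. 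By tameness, the set $J' = \{ j \in J \mid a_{ij} \neq 0 \text{ for some } i \in I'\}$ is finite, so discarding the zero columns outside $J'$ does not affect the rank and $A_{I'}$ can be viewed as a genuine finite matrix $B$ of dimension $I' \times J'$ over $\C G$.

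\emph{Step 2: Apply the finite matrix approximation.} Apply \Cref{thm. matrix approximation ver 2} to the finite matrix $B$ with precision $\delta/2$ to obtain a finite subset $\F_\delta \subset G \smallsetminus \{1\}$ such that every normal subgroup $N \lhd G$ with $N \cap \F_\delta = \emptyset$ satisfies
\[|\rk_{G/N}(B) - \rk_G(B)| < \delta/2.\]
Since $\rk_G(B) = \rk_G(A_{I'})$ and $\rk_{G/N}(B) = \rk_{G/N}(A_{I'})$ (zero columns do not contribute), this reads $\rk_{G/N}(A_{I'}) > \rk_G(A_{I'}) - \delta/2$.

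\emph{Step 3: Conclude by monotonicity in the rows.} For any normal $N \lhd G$, the image $\im(r_{\epsilon_N(A_{I'})})$ is contained in $\im(r_{\epsilon_N(A)})$, since restricting the indexing set of $x \in (\N(G/N))^I$ to $I'$ produces a subset of all $x \cdot \epsilon_N(A)$. By \Cref{prop. properties of dimension}\ref{item. additivity} applied to this inclusion, $\rk_{G/N}(A_{I'}) \leqslant \rk_{G/N}(A)$. Combining with Step 2 and the choice of $I'$,
\[\rk_{G/N}(A) \;\geqslant\; \rk_{G/N}(A_{I'}) \;>\; \rk_G(A_{I'}) - \delta/2 \;>\; \rk_G(A) - \delta,\]
as desired. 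There is no real obstacle here; the only point requiring care is that we can only obtain a one-sided inequality, because we have no control over $\rk_{G/N}$ on the ``tail'' rows $I \smallsetminus I'$, which is precisely why the corollary is stated as a lower bound rather than an absolute value estimate.
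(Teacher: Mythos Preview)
Your proof is correct and follows essentially the same approach as the paper: approximate the image of $r_A$ from below using the cofinality property \Cref{prop. properties of dimension}\ref{item. cofinal} over finite subsets of rows, pass to the associated finite submatrix $B$, apply \Cref{thm. matrix approximation ver 2} with precision $\delta/2$, and conclude using the inclusion of images over $\N(G/N)$. The paper phrases the restriction in terms of the submodule $M_S$ generated by the standard basis vectors $e_i$ for $i$ in a finite set $S$, but this is exactly your $A_{I'}$ in different notation.
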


\begin{proof}
    Write $A$ as $A=(a_{ij})_{i\in I,j\in J}$ and let $r_A\colon (\N(G))^I\rightarrow (\N(G))^J$ be the $\N(G)$-module homomorphism given by the right-multiplication of $A$. By definition,
    \begin{equation}\label{eq. definition of rank}
        \rk_G(A)=\dim_G \im(r_A).
    \end{equation}
    For each $i\in I$, let $e_i=(x_{i'})_{i'\in I}\in (\N(G))^I$ be the element such that
    \[
    x_{i'}=
    \begin{cases}
        1,&\text{if } i'=i\\
        0,&\text{if } i'\neq i.
    \end{cases}
    \]
    Let $\delta>0$, and let $\fS$ be the family of non-empty finite subsets of $I$. For every $S\in\fS$, let $M_S$ be the $\N(G)$-submodule of $(\N(G))^I$ generated by $e_i,i\in S$. Then $\{r_A(M_S)\mid S\in\fS\}$ is a cofinal family of $\im(r_A)$, and thus
    \[\dim_G\im(r_A)=\sup\{\dim_G r_A(M_S)\mid S\in \fS\}\]
    by \Cref{prop. properties of dimension} \ref{item. cofinal}. So there exists $S\in \fS$ such that
    \begin{equation}\label{eq. connect to a finite submodule}
        \dim_G\im(r_A)<\dim_G r_A(M_S)+\dfrac{\delta}{2}.
    \end{equation}
    For each $i\in S$, there are only finitely many $j\in J$ such that $a_{ij}\neq 0$. Let $T\subset J$ be the subset consisting of $j\in J$ such that there exists $i\in S$ with $a_{ij}\neq 0$. As $|S|<\infty$, we also have $|T|<\infty$. Let $B=(a_{ij})_{i\in S,j\in T}$, and let $r_B\colon (\N(G))^S\rightarrow (\N(G))^T$ be the $\N(G)$-module homomorphism induced by the right-multiplication of $B$. Then $\im(r_B)\cong r_A(M_S)$ as $\N(G)$-modules, and thus
    \begin{equation}\label{eq. connect to a finite matrix}
        \dim_G r_A(M_S)=\dim_G \im(r_B)=\rk_G(B).
    \end{equation}
    By \Cref{thm. matrix approximation ver 2}, there exists a finite subset $\F_\delta\subset G\smallsetminus\{1\}$ such that if a normal subgroup $N\lhd G$ satisfies $N\cap\F_\delta=\emptyset$, then
    \begin{equation}\label{eq. estimate for the finite submatrix}
        \rk_{G/N}(B)>\rk_G(B)-\dfrac{\delta}{2}.
    \end{equation}
    For every $N\lhd G$, let $\epsilon_N\colon G\rightarrow G/N$ be the natural homomorphism, and let $r_{\epsilon_N(A)}\colon (\N(G/N))^I\rightarrow (\N(G/N))^J$ (resp. $r_{\epsilon_N(B)}\colon (\N(G/N))^S\rightarrow (\N(G/N))^T$) be the $\N(G/N)$-module homomorphism induced by the right-multiplication of $\epsilon_N(A)$ (resp. $\epsilon_N(B)$). 

    For each $i\in I$, let $\bar e_i=(\bar x_{i'})_{i'\in I}\in (\N(G/N))^I$ be the element such that
    \[
    \bar x_{i'}=
    \begin{cases}
        1,&\text{if } i'=i\\
        0,&\text{if } i'\neq i.
    \end{cases}
    \]
    And let $\overline M_S$ be the submodule of $(\N(G/N))^I$ generated by $\bar e_i, i\in S$. Then $\im( r_{\epsilon_N(B)})\cong r_{\epsilon_N(A)}(\overline M_S)$ as $\N(G)$-modules, and thus
    \begin{equation}\label{eq. transform back}
    \begin{aligned}
        \rk_{G/N}(B)&=\dim_{G/N}\im( r_{\epsilon_N(B)})=\dim_{G/N}\bar r_{\epsilon_N(A)}(\overline M_S)\\
        &\leqslant \dim_{G/N}\im( r_{\epsilon_N(A)})=\rk_{G/N}(A).
        \end{aligned}
    \end{equation}
    The desired result follows from \eqref{eq. definition of rank}, \eqref{eq. connect to a finite submodule}, \eqref{eq. connect to a finite matrix}, \eqref{eq. estimate for the finite submatrix} and \eqref{eq. transform back}.
\end{proof}

\begin{corollary}\label{cor. l2 betti approximation ver 2}
    Let $G$ be a finitely generated virtually locally indicable group and let $X$ be a free $G$-CW complex of type $F_{k+1}$ for some $k\in\mathbb N$. Then for every $\delta>0$, there exists a finite subset $\F_\delta\subset G\smallsetminus\{1\}$ such that if a normal subgroup $N\lhd G$ satisfies $N\cap \F_\delta=\emptyset$, then 
    \[b^{(2)}_{k+1}(X ;G)+\delta>b^{(2)}_{k+1}(N\backslash X ; G/N),\]
    and for all $n\leqslant k$, we have
    \[|b^{(2)}_n(X ;G)-b^{(2)}_n(N\backslash X ; G/N)|<\delta.\]
\end{corollary}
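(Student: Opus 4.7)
My plan is to encode both $L^2$-Betti numbers as alternating combinations of von Neumann ranks of the cellular boundary operators of $X$, and then apply \Cref{thm. matrix approximation ver 2} to the finite boundary matrices and \Cref{lem. infinite matrix approximation} to the one (only) tamed boundary matrix that appears.

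Since $X$ is a free $G$-CW complex of type $F_{k+1}$, the cellular chain modules $C_i(X)$ are finitely generated free $\Z G$-modules for $i\leqslant k+1$. Fixing $\Z G$-bases, the boundaries $\partial_i\colon C_i(X)\to C_{i-1}(X)$ are represented by finite matrices over $\Z G$ for all $i\leqslant k+1$, while $\partial_{k+2}$ is in general only a tamed matrix over $\Z G$. Because $N\lhd G$ acts freely on $X$, we have an identification of cellular chain complexes $C_\ast(N\backslash X)\cong C_\ast(X)\otimes_{\Z G}\Z[G/N]$, and in the chosen bases the boundaries are represented by the same matrices, now read via $\epsilon_N$. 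Moreover, the number of $G$-orbits of $i$-cells equals the number of $G/N$-orbits of $i$-cells, so $\dim_G(C_i(X)\otimes_{\Z G}\N(G))=\dim_{G/N}(C_i(N\backslash X)\otimes_{\Z[G/N]}\N(G/N))$ for $i\leqslant k+1$. Using the additivity of von Neumann dimension (\Cref{prop. properties of dimension}\ref{item. additivity}) applied to the exact sequences $0\to\ker\widetilde\partial_n\to C_n(X)\otimes\N(G)\to\im\widetilde\partial_n\to 0$ and $0\to\im\widetilde\partial_{n+1}\to\ker\widetilde\partial_n\to\cH_n(X;\N(G))\to 0$, one obtains, for $n\leqslant k+1$,
\[b^{(2)}_n(X;G)=\dim_G\!\bigl(C_n(X)\otimes_{\Z G}\N(G)\bigr)-\rk_G(\partial_n)-\rk_G(\partial_{n+1}),\]
and the analogous formula for $b^{(2)}_n(N\backslash X;G/N)$. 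Note that $\rk_G(\partial_{k+2})\leqslant\dim_G\ker\widetilde\partial_{k+1}\leqslant\dim_G(C_{k+1}(X)\otimes\N(G))<\infty$, so the formula makes sense at $n=k+1$.

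Subtracting the two formulas cancels the dimension terms and leaves
\[b^{(2)}_n(N\backslash X;G/N)-b^{(2)}_n(X;G)=\bigl[\rk_G(\partial_n)-\rk_{G/N}(\partial_n)\bigr]+\bigl[\rk_G(\partial_{n+1})-\rk_{G/N}(\partial_{n+1})\bigr].\]
For each $n\leqslant k$ the matrices $\partial_n,\partial_{n+1}$ are finite, so applying \Cref{thm. matrix approximation ver 2} with tolerance $\delta/2$ to each of them produces a finite subset of $G\smallsetminus\{1\}$ which, once avoided by $N$, bounds both summands in absolute value by $\delta/2$ and yields the two-sided estimate. For $n=k+1$, the finite matrix $\partial_{k+1}$ is again controlled by \Cref{thm. matrix approximation ver 2}, while the tamed matrix $\partial_{k+2}$ (having finite $G$-rank) is controlled by \Cref{lem. infinite matrix approximation}, which delivers only the one-sided bound $\rk_G(\partial_{k+2})-\rk_{G/N}(\partial_{k+2})<\delta/2$; this is exactly what is needed for $b^{(2)}_{k+1}(N\backslash X;G/N)-b^{(2)}_{k+1}(X;G)<\delta$. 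Setting $\F_\delta$ to be the union of the finitely many finite subsets produced for $\partial_0,\dots,\partial_{k+2}$ completes the proof.

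The only genuine obstacle is the asymmetry at degree $k+1$: the matrix $\partial_{k+2}$ can fail to be finite because $X$ is only assumed of type $F_{k+1}$, and \Cref{lem. infinite matrix approximation} provides for tamed matrices only a lower-semicontinuity-type bound on the rank after passing to a quotient. This is precisely why the statement offers a one-sided inequality in the top degree $k+1$ rather than a two-sided one.
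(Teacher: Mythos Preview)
Your proof is correct and follows essentially the same approach as the paper: both express $b^{(2)}_n$ via the rank formula $|B_n|-\rk(\partial_n)-\rk(\partial_{n+1})$, apply \Cref{thm. matrix approximation ver 2} to the finite boundary matrices $\partial_1,\dots,\partial_{k+1}$, and invoke \Cref{lem. infinite matrix approximation} for the tamed matrix $\partial_{k+2}$ to get the one-sided bound at degree $k+1$. Your explanation of why only a one-sided inequality is available in the top degree is exactly the point.
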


\begin{proof}
Let $\partial_\ast$ be the boundary map of $C_\ast(X)$. For each $n\in \Z$ and each $G$-orbit of the $n$-cells of $X$, pick one representative. The chosen representatives form a $\Z G$-basis $B_n$ of the module $C_n(X)$ of $X$, yielding an isomorphism $C_n(X)\cong (\Z G)^{B_n}$.

Let $A_n$ be the tamed matrix over $\Z G$ such that 
\[\partial_n\colon C_n(X)\cong (\Z G)^{B_n}\rightarrow C_{n-1}(X)\cong (\Z G)^{B_{n-1}}\]
is given by the right-multiplication of $A_n$. Then for $n\leqslant k+1$, we have
\[b^{(2)}_n(X ;G)=|B_n|-\rk_G (A_n)-\rk_G (A_{n+1}),\]
\[b^{(2)}_n(N\backslash X ; G/N)=|B_n|-\rk_{G/N} (A_n)-\rk_{G/N} (A_{n+1}).\]
For $n\leqslant k$, let $\F_{n1}\subset G\smallsetminus\{1\}$ (resp. $\F_{n2}\subset G\smallsetminus\{1\}$) be the finite subset given by \Cref{thm. matrix approximation ver 2} with respect to $A_n$ (resp. $A_{n+1}$) and $\delta/2$. Let $\F_{k+1}\subset G\smallsetminus\{1\}$ be the finite subset given by \Cref{lem. infinite matrix approximation} with respect to $A_{k+2}$ and $\delta/2$. Let 
\[\F_\delta=\F_{k+1}\cup\left(\bigcup^{k+1}_{n=0}(\F_{n1}\cup F_{n2})\right),\]
and let $N\lhd G$ be a normal subgroup such that $N\cap\F_\delta=\emptyset$. By \Cref{thm. matrix approximation ver 2} and \Cref{lem. infinite matrix approximation}, we have
\[\rk_{G/N}(A_{k+2})>\rk_G(A_{k+2})-\dfrac{\delta}{2},\]
and
\[|\rk_G(A_n)-\rk_{G/N}(A_n)|<\delta/2\]
for $n\leqslant k+1$.  Therefore,

\begin{align*}
    b^{(2)}_{k+1}(N\backslash X\; ; G/N)&=|B_{k+1}|-\rk_{G/N} (A_{k+1})-\rk_{G/N} (A_{k+2})\\
    &<|B_{k+1}|-\rk_G (A_{k+1})-\rk_G (A_{k+2})+\delta\\
    &=b^{(2)}_{k+1}(X\; ;G)+\delta,
\end{align*}
and
\[|b^{(2)}_n(X\; ;G)-b^{(2)}_n(N\backslash X\; ; G/N)|\leqslant |\rk_G A_n-\rk_{G/N}A_n|+|\rk_G A_{n+1}-\rk_{G/N}A_{n+1}|<\delta\]
for $n\leqslant k$.
\end{proof}

The above results provide estimates of $L^2$-Betti numbers. To obtain actual computation rather than just estimation, we will combine them with the Atiyah Conjecture. We will need the following  result:

\begin{proposition}\label{lem. det and atiyah ver 2}
    Let $G$ be a finitely generated virtually locally indicable group. Suppose that there exists $C\in\mathbb{N}$ such that for all finite subset $\fS\subset G\smallsetminus\{1\}$, there exists a normal subgroup $N_{\fS}\lhd G$ such that 
    \begin{enumerate}
        \item[(i)] $N_{\fS}\cap \fS=\emptyset$;
        \item[(ii)] $G/N_{\fS}$ satisfies the Atiyah Conjecture; and
        \item[(iii)] $\max\fin(G/N_{\fS})\leqslant C$, where $\max$ means taking the maximum of a set.
    \end{enumerate}
    Let $X$ be a free $G$-CW complex of type $F_{k+1}$ for some $k\in\mathbb N$. Then for all $n\leqslant k$, we have
    \[b^{(2)}_n(X\; ;G)\in \dfrac{1}{C!}\cdot\Z.\]
\end{proposition}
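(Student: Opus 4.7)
The plan is to combine the quantitative approximation from \Cref{cor. l2 betti approximation ver 2} with the hypothesized Atiyah Conjecture on the quotients $G/N_{\fS}$, using the uniform bound $C$ on the finite subgroup orders to force the approximating values into the closed discrete set $\tfrac{1}{C!}\Z$.

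Fix $n\leqslant k$ and an arbitrary $\delta>0$. First I would apply \Cref{cor. l2 betti approximation ver 2} to obtain a finite subset $\F_\delta\subset G\smallsetminus\{1\}$ such that for every normal subgroup $N\lhd G$ with $N\cap\F_\delta=\emptyset$,
\[
\bigl|\, b^{(2)}_n(X;G)-b^{(2)}_n(N\backslash X; G/N)\,\bigr|<\delta.
\]
By hypothesis applied to $\fS=\F_\delta$, there exists a normal subgroup $N=N_{\F_\delta}\lhd G$ satisfying (i)–(iii). Next I would verify that $N\backslash X$ naturally carries the structure of a free $G/N$-CW complex of type $F_{k+1}$: the cellular $G$-action on $X$ descends to a cellular $G/N$-action on the quotient; this action is free because $G$ acts freely on $X$ (so if $gN$ stabilises $Nx$, then $gx=n'x$ for some $n'\in N$, forcing $g=n'\in N$); and $(G/N)\backslash(N\backslash X)\cong G\backslash X$ has finite $(k+1)$-skeleton since $X$ is of type $F_{k+1}$ over $G$. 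Hence the Atiyah Conjecture for $G/N$ gives
\[
b^{(2)}_n(N\backslash X; G/N)\in \tfrac{1}{\fin(G/N)}\cdot\Z.
\]

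The central (and essentially only) observation now is arithmetic: every integer $m\in\{1,2,\dots,C\}$ divides $C!=1\cdot 2\cdots C$. Since $\max\fin(G/N)\leqslant C$, it follows that $\tfrac{1}{\fin(G/N)}\cdot\Z\subset \tfrac{1}{C!}\cdot\Z$, so $b^{(2)}_n(N\backslash X; G/N)\in \tfrac{1}{C!}\Z$. Thus $b^{(2)}_n(X;G)$ lies within distance $\delta$ of $\tfrac{1}{C!}\Z$. As $\delta>0$ was arbitrary and $\tfrac{1}{C!}\Z$ is a closed discrete subset of $\R$, we conclude $b^{(2)}_n(X;G)\in \tfrac{1}{C!}\Z$, as required. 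The main substantive ingredients (quantitative Lück approximation and the hypothesised Atiyah Conjecture for quotients) are already in place, so no step should present a genuine obstacle; the mild subtlety is recognising that the bound $C$ on $\max\fin(G/N)$ automatically implies divisibility into $C!$, which is what converts an estimate into exact rational membership.
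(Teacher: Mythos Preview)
Your proof is correct and follows essentially the same approach as the paper: approximate by quotients avoiding a finite set, invoke the Atiyah Conjecture on those quotients to land in $\tfrac{1}{C!}\Z$, and use discreteness to pass to the limit. The only cosmetic difference is that you work directly with the Betti numbers via \Cref{cor. l2 betti approximation ver 2}, whereas the paper drops down to the level of matrix ranks via \Cref{thm. matrix approximation ver 2} and then reassembles $b^{(2)}_n(X;G)=|B_n|-\rk_G A_n-\rk_G A_{n+1}$; your packaging is arguably cleaner.
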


\begin{proof}
    For each finite subset $\fS\subset G\smallsetminus\{1\}$, let $N_{\fS}\lhd G$ be a normal subgroup such that the above (i) through (iii) hold. The family
    \[\N=\{N_{\fS}\mid \fS \text{ is a finite subset of }G\}\]
    is a directed set under the partial order $N_{\fS}\prec N_{\fS'}$ if and only if $\fS\subset \fS'$.
     For each finite matrix $A$ over $\Z G$, we have
    \[\rk_G(A)=\varinjlim \rk_{G/N_{\fS}}(A)\in\mathbb{N}\in \dfrac{1}{(\max_{N_{\fS}\in\N}\fin(G/N_{\fS}))!}\cdot\Z\leqslant \dfrac{1}{C!}\cdot\Z\]
    by \Cref{thm. matrix approximation ver 2}. 
    
    Now fix $n\leqslant k$. Let $B_n$ (resp. $B_{n+1},B_{n-1}$) be the set obtained by choosing one representative of each $G$-orbit of the $n$-cells (resp. $(n+1)$-cells, $(n-1)$-cells) of $X$. Then $B_n$ (resp. $B_{n+1},B_{n-1}$) is a $\Z G$-basis of $C_n(X)$ (resp. $C_{n+1}(X),C_{n-1}(X)$). Let $\partial_\ast$ be the boundary map of $C_\ast(X)$, and let $A_n$ (resp. $A_{n+1}$) be the matrix representative of $\partial_n$ (resp. $\partial_{n+1}$) under the bases $B_n$ and $B_{n-1}$ (resp. $B_{n+1}$ and $B_n$). Then
    \[b^{(2)}_n(X\; ;G)=|B_n|-\rk_G A_n-\rk_G A_{n+1}\in\dfrac{1}{C!}\cdot \Z.\]
\end{proof}

\subsection{The first $L^2$-Betti number}

Our next results yield an upper bound for the first $L^2$-Betti number of Dehn filling (see \Cref{cor. locally indicable dehn filling}). Estimation of the other $L^2$-Betti numbers needs an additional ingredient called a Dehn filling space, which will be studied in \Cref{sec. df and l2 betti}.

\begin{lemma}\label{lem. first l2 inequality}
    Let $G$ be any group and let $N\lhd G$ be a normal subgroup. Then
    \[b^{(2)}_1(N\backslash EG ; G/N)\geqslant b^{(2)}_1(G/N).\]
\end{lemma}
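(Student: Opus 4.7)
My plan is to construct a free $G/N$-CW model for $E(G/N)$ that contains $N\backslash EG$ as a $G/N$-CW subcomplex sharing the same $1$-skeleton, and then compare the associated $L^2$-chain complexes.

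First I would check that $N\backslash EG$ is itself a \emph{free} $G/N$-CW complex: if $gN$ stabilises $Nx$, then $gx=nx$ for some $n\in N$, and freeness of $G\curvearrowright EG$ forces $g=n\in N$, so $gN=N$. Of course $N\backslash EG$ is not contractible; its fundamental group is $N$, since $EG$ is the universal cover.

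Next I would build a model $Y$ of $E(G/N)$ by attaching free $G/N$-orbits of cells of dimension $\geqslant 2$ to $N\backslash EG$: equivariantly attach $2$-cells along loops in $N\backslash EG$ that generate $\pi_1=N$ to kill the fundamental group, and then inductively attach free $G/N$-orbits of higher cells to kill the successive higher homotopy groups. The resulting space $Y$ is contractible and carries a free cellular $G/N$-action, so it is a valid choice of $E(G/N)$, and $N\backslash EG\hookrightarrow Y$ is a $G/N$-CW subcomplex inclusion that is the identity on $1$-skeleta.

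Finally I would compare chain complexes. The inclusion $N\backslash EG\hookrightarrow Y$ gives an inclusion $C_{\ast}(N\backslash EG)\hookrightarrow C_{\ast}(Y)$ of $\Z[G/N]$-chain complexes that is an equality in degrees $\leqslant 1$; in each degree the inclusion splits, because both sides are free $\Z[G/N]$-modules on the corresponding $G/N$-orbits of cells and the former is generated by a subset of the latter. Applying $\mathcal{N}(G/N)\otimes_{\Z[G/N]}-$ preserves this split inclusion, so $Z_1^{(2)}(N\backslash EG)=Z_1^{(2)}(Y)$ while $B_1^{(2)}(N\backslash EG)\subseteq B_1^{(2)}(Y)$. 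This yields a short exact sequence of $\mathcal{N}(G/N)$-modules
\[
0\longrightarrow B_1^{(2)}(Y)/B_1^{(2)}(N\backslash EG)\longrightarrow \cH_1(N\backslash EG;\mathcal{N}(G/N))\longrightarrow \cH_1(Y;\mathcal{N}(G/N))\longrightarrow 0,
\]
and additivity of $\dim_{G/N}$ (Proposition~2.1~(iii)) gives
\[
b^{(2)}_1(N\backslash EG;G/N)\geqslant \dim_{G/N}\cH_1(Y;\mathcal{N}(G/N))=b^{(2)}_1(G/N).
\]
The only point that requires a little care is the control on $1$-skeleta when enlarging $N\backslash EG$ to $E(G/N)$; this is exactly what the construction by attaching cells of dimension $\geqslant 2$ provides.
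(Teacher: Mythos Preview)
Your proof is correct, and it takes a genuinely different route from the paper's. The paper invokes \cite[Theorem~6.54~(1)]{luck2002l2}: it considers the $G/N$-equivariant map $\tilde f\colon N\backslash EG\to E(G/N)$, observes that on fixed-point sets (which are empty for nontrivial subgroups, and are the whole spaces for the trivial subgroup) $\tilde f$ induces an isomorphism on $H_0(-;\C)$ and an epimorphism on $H_1(-;\C)$ (the latter trivially, since $H_1(E(G/N);\C)=0$), and then reads off the inequality from L\"uck's comparison theorem.

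Your approach is more hands-on: you manufacture a model $Y$ of $E(G/N)$ by attaching only free $G/N$-cells of dimension~$\geqslant 2$ to $N\backslash EG$, so that the two complexes share their $1$-skeleton, and then compare $Z_1^{(2)}$ and $B_1^{(2)}$ directly. This is essentially the classical ``$b_1$ goes down when you add relations'' argument, carried out equivariantly. It is more elementary and entirely self-contained (no black-box citation needed), at the cost of a slightly longer construction. The paper's proof, by contrast, is a two-line appeal to a general theorem. Both are perfectly good; your version has the advantage that a reader can verify it without consulting L\"uck's book.
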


\begin{proof}
    Let $EG$ (resp. $E(G/N)$) be a free contractible $G$-CW complex (resp. $(G/N)$-CW complex). The quotient map $G\twoheadrightarrow G/N$ induces a $G$-equivariant cellular map $f\colon EG \rightarrow E(G/N)$, which factors through a $G/N$-equivariant cellular map $\tilde f\colon N\backslash EG\rightarrow E(G/N)$.

    We will use \cite[Theorem 6.54 (1)]{luck2002l2}. To this end, let us consider a subgroup $H\leqslant G/N$ and let $(N\backslash EG)^H$ (resp. $(E(G/N))^H$) be the subcomplex of $N\backslash EG$ (resp. $E(G/N)$) consisting of the global fixed points of $H$. If $H\neq\{1\}$, then $(N\backslash EG)^H=(E(G/N))^H=\emptyset$ and the assumptions of \cite[Theorem 6.54 (1)]{luck2002l2} are trivially satisfied. If $H=\{1\}$, then $(N\backslash EG)^H=N\backslash EG$ and $(E(G/N))^H=E(G/N)$. Clearly, $\tilde f$ induces an isomorphism $H_0(N\backslash EG ; \C)\rightarrow H_0(E(G/N) ; \C)$. As $H_1(E(G/N) ;\C)=0$, $\tilde f$ induces an epimorphism $H_1(N\backslash EG ;\C)\rightarrow H_1(E(G/N) ;\C)$. So the assumptions of \cite[Theorem 6.54 (1)]{luck2002l2} are met, which yields
    \[b^{(2)}_1(N\backslash EG ; G/N)\geqslant b^{(2)}_1(E(G/N) ;G/N)=b^{(2)}_1(G/N).\]
\end{proof}

\begin{corollary}\label{cor. upper bound ver 2}
    Let $G$ be a finitely generated virtually locally indicable group. Then for every $\delta>0$, there exists a finite subset $\F_\delta\subset G\smallsetminus \{1\}$ such that if a normal subgroup $N\lhd G$ satisfies $N\cap \F_\delta=\emptyset$, then
    \[b^{(2)}_1(G/N)< b^{(2)}_1 (G)+\delta\]
\end{corollary}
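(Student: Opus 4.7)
The plan is to combine Lemma~\ref{lem. first l2 inequality} with the $n = k+1$ case of Corollary~\ref{cor. l2 betti approximation ver 2}, applied to a carefully chosen model for $EG$ with $k = 0$.

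First I would fix a free $G$-CW model of $EG$ of type $F_1$. This is possible because $G$ is finitely generated: start from a $G$-equivariant tree with fundamental domain a wedge of circles indexed by a finite generating set, and equivariantly attach higher-dimensional cells to kill higher homotopy. With this choice, $b^{(2)}_1(EG\,;G) = b^{(2)}_1(G)$ by definition, and the hypotheses of Corollary~\ref{cor. l2 betti approximation ver 2} are satisfied with $k = 0$.

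Next, apply Corollary~\ref{cor. l2 betti approximation ver 2} with $k = 0$ and this choice of $X = EG$ to obtain a finite subset $\F_\delta \subset G \smallsetminus \{1\}$ such that any normal subgroup $N \lhd G$ with $N \cap \F_\delta = \emptyset$ satisfies
\[
b^{(2)}_1(N \backslash EG\,;\, G/N) < b^{(2)}_1(EG\,;G) + \delta = b^{(2)}_1(G) + \delta.
\]
Then invoke Lemma~\ref{lem. first l2 inequality} with the same $N$, which gives
\[
b^{(2)}_1(G/N) \leqslant b^{(2)}_1(N \backslash EG\,;\, G/N).
\]
Chaining these two inequalities yields the desired bound $b^{(2)}_1(G/N) < b^{(2)}_1(G) + \delta$.

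There is no real obstacle here, since the two ingredients have already been assembled in this section — the only thing to verify is that a type $F_1$ model of $EG$ suffices to run the $k = 0$ case of Corollary~\ref{cor. l2 betti approximation ver 2}, which is immediate from finite generation of $G$. The asymmetry between the strict inequality provided for the top degree $n = k+1$ in that corollary and the two-sided estimates for $n \leqslant k$ is exactly what makes the argument work even though we only have control on $b^{(2)}_1$ from above (not below) in this generality; correspondingly, the conclusion is one-sided.
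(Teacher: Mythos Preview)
Your proposal is correct and follows essentially the same approach as the paper: apply the $k=0$ case of Corollary~\ref{cor. l2 betti approximation ver 2} to a free $G$-CW complex of type $F_1$, then use Lemma~\ref{lem. first l2 inequality} to pass from $b^{(2)}_1(N\backslash EG;G/N)$ down to $b^{(2)}_1(G/N)$. The only cosmetic difference is that the paper takes $X$ to be the Cayley $2$-complex of a presentation with finitely many generators (and then observes that $b^{(2)}_1(N\backslash X;G/N)=b^{(2)}_1(N\backslash EG;G/N)$ and $b^{(2)}_1(X;G)=b^{(2)}_1(G)$), whereas you work directly with a type-$F_1$ model of $EG$.
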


\begin{proof}
    Let $X$ be the Cayley $2$-complex of $G$ with respect to any presentation of $G$ with finitely many generators, and let $\F_\delta\subset G\smallsetminus\{1\}$ be the finite subset provided by \Cref{cor. l2 betti approximation ver 2}  with respect to $k=0$ and $X$. Then if a normal subgroup $N\lhd G$ satisfies $N\cap \F_\delta=\emptyset$, we have
    \[b^{(2)}_1(N\backslash X ;G/N)<b^{(2)}_1(X ;G)+\delta.\] 
    By \Cref{lem. first l2 inequality}, we have
    \[b^{(2)}_1(N\backslash EG ; G/N)\geqslant b^{(2)}_1(G/N).\]
    Note that
\[b^{(2)}_1(N\backslash EG ; G/N)=b^{(2)}_1(N\backslash X ;G/N),\]
    \[b^{(2)}_1(G)=b^{(2)}_1(X ;G),\]
    from which the desired result follows.
\end{proof}

\begin{corollary}\label{cor. locally indicable dehn filling}
    Let $G$ be a finitely generated virtually locally indicable group, and let $H\hookrightarrow_h G$ be a hyperbolically embedded subgroup. Then for every $\delta>0$, there exists a finite subset $\F_\delta\subset H\smallsetminus\{1\}$ such that if a normal subgroup $N\lhd H$ satisfies $N\cap \F_\delta=\emptyset$, then
    \begin{equation}\label{eq. first l2 betti inequality}
        b^{(2)}_1(G/\ll N \rr)< b^{(2)}_1(G)+\delta.
    \end{equation}
\end{corollary}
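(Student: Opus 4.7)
The plan is to reduce \Cref{cor. locally indicable dehn filling} to a direct application of \Cref{cor. upper bound ver 2} to the ambient group $G$, with the normal subgroup $\ll N\rr \lhd G$ playing the role of ``$N$'' in that corollary.

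Given $\delta > 0$, \Cref{cor. upper bound ver 2} produces a finite subset $\fS_\delta \subset G \smallsetminus \{1\}$ such that every normal subgroup $M \lhd G$ satisfying $M \cap \fS_\delta = \emptyset$ obeys $b^{(2)}_1(G/M) < b^{(2)}_1(G) + \delta$. It therefore suffices to construct a finite subset $\F_\delta \subset H \smallsetminus \{1\}$ with the property that $N \cap \F_\delta = \emptyset$ implies $\ll N\rr \cap \fS_\delta = \emptyset$; setting $M := \ll N\rr$ then yields the desired inequality \eqref{eq. first l2 betti inequality}.

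To pass from $\fS_\delta \subset G$ to $\F_\delta \subset H$, we invoke the standard Dehn filling fact that for any finite subset $\fS \subset G \smallsetminus \{1\}$ and any hyperbolically embedded subgroup $H \hookrightarrow_h G$, the natural projection $G \twoheadrightarrow G/\ll N\rr$ is injective on $\fS \cup \{1\}$ for all sufficiently deep $N \lhd H$. Equivalently, for each $g \in \fS$ there is a finite subset $\F_g \subset H \smallsetminus \{1\}$ such that $N \cap \F_g = \emptyset$ forces $g \notin \ll N\rr$; then $\F_\delta := \bigcup_{g \in \fS_\delta} \F_g$ is finite and satisfies the required implication. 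This avoidance fact is part of the Dehn filling machinery of Dahmani--Guirardel--Osin, and can alternatively be extracted from the Cohen--Lyndon description of $\ll N\rr$ as a free product of conjugates of $N$ (\Cref{thm. cl}), which forces any fixed non-trivial element of $G$ out of $\ll N\rr$ once $N$ is sufficiently deep.

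The main obstacle, such as it is, lies in the finite-set avoidance step just described; however, this is a well-established consequence of the hyperbolic embedding hypothesis. Everything else then reduces to a direct invocation of \Cref{cor. upper bound ver 2}, and no further input is required.
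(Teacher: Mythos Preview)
Your proposal is correct and follows essentially the same approach as the paper: apply \Cref{cor. upper bound ver 2} to obtain $\fS_\delta\subset G\smallsetminus\{1\}$, then use the Dahmani--Guirardel--Osin Dehn filling machinery to produce $\F_\delta\subset H\smallsetminus\{1\}$ ensuring $\ll N\rr\cap\fS_\delta=\emptyset$. The paper cites \cite[Theorem 7.19 (c)]{dahmani2017hyperbolically} specifically for the avoidance step, which is exactly the fact you invoke informally.
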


\begin{proof}
    By \Cref{cor. upper bound ver 2}, there exists a finite subset $\fS_\delta\subset G\smallsetminus\{1\}$ such that if a normal subgroup $M\lhd G$ satisfies $M\cap \fS_\delta=\emptyset$, then 
    \begin{equation}\label{eq. li dehn 8}
        b^{(2)}_1(G/M)<b^{(2)}_1(G)+\delta.
    \end{equation}
    By \cite[Theorem 7.19 (c)]{dahmani2017hyperbolically},  there exists a finite subset $\F_\delta\subset H\smallsetminus\{1\}$ such that if a normal subgroup $N\lhd H$ satisfies $ N\cap \F_\delta=\emptyset$, then $\ll N \rr\cap \fS_\delta=\emptyset$, which implies the desired result.
\end{proof}

\section{Dehn filling space and $L^2$-Betti numbers}\label{sec. df and l2 betti}

\subsection{Dehn filling space}\label{subsec: Dehn filling space}

Let $G$ be a group, $H\leqslant G$ a subgroup, and $N\lhd H$ a normal subgroup such that $(G,H,N)$ is a Cohen--Lyndon triple, i.e., there exists a left transversal $T$ of $H\ll N \rr$ in $G$ such that 
\[\ll N \rr = \Asterisk_{t\in T}tNt^{-1}.\]
As in the introduction, we denote $\bg=G/\ll N \rr$ and $\bh=H/N$. The goal of this subsection is to build a specific $K(\bg,1)$-space. 

Note that the natural map $\bh\rightarrow \bg$ is injective (see e.g., \cite[Lemma 6.4]{sun2018cohomologyi}), and below we will view $\bh$ as a subgroup of $\bg$. Let $BG$ (resp. $BH, B\bh$) be a $K(G,1)$ (resp. $K(H,1),K(\bh,1)$) CW-complex.

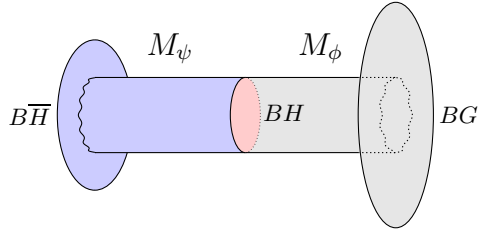
\begin{figure}[ht!]
    \centering
    \begin{tikzpicture}
        \begin{scope}
            \clip (-0.7,-1.2) rectangle (2.3,1.2);
            \fill[blue!20] (0,0) ellipse (0.5 and 1);
            \fill[blue!20] (0,-0.5) rectangle (2,0.5); 
        \end{scope}
        
        \begin{scope}
            \clip (1.7,-1.7) rectangle (4.7,1.7);
            \fill[gray!20] (4,0) ellipse (0.5 and 1.5);
            \fill[gray!20] (2,-0.5) rectangle (4,0.5); 
        \end{scope}

        \fill[red!20] (2,0) ellipse (0.2 and 0.5); 

        \draw (1,0.5) circle (0pt) node[anchor=south, minimum size=0.3in] {\large $M_{\psi}$};
        \draw (3,0.5) circle (0pt) node[anchor=south, minimum size=0.3in] {\large $M_{\phi}$};
        \draw (0,0.5) -- (2,0.5);
        \draw (0,-0.5) -- (2,-0.5);
        \draw[dashed,dash pattern=on 0.2pt off 1pt]  (2,.5) arc (90:-90:0.2 and 0.5) node[anchor=west, yshift=15pt, minimum size=0.4in] {\footnotesize $BH$};
       \draw (2,.5) arc (90:270:0.2 and 0.5);
             \draw (0,1) arc (90:30:0.5 and 1);
                                       \draw (0.43,-0.5) arc (-30:-90:0.5 and 1);
                                       
       \draw (0,1) arc (90:270:0.5 and 1) node[anchor=south east, yshift=21pt, xshift=-13pt] {\footnotesize $B\bh$}; 
        \draw[domain=90:270, samples=200, smooth, variable=\t]
        plot ({0.2*cos(\t) + 0.02*cos(15*\t)}, {0.5*sin(\t)});
           \draw[dashed, dash pattern=on 0.5pt off 1pt, domain=0:360, samples=200, smooth, variable=\t]
        plot ({4 + 0.2*cos(\t) + 0.02*cos(10*\t)}, {0.5*sin(\t)});
        \draw (2,0.5) -- (3.5,0.5);
            \draw (3.5,0.5)[dashed,dash pattern=on 0.5pt off 1pt] -- (4,0.5);
    \draw (2,-0.5) -- (3.5,-0.5);
            \draw (3.5,-0.5)[dashed,dash pattern=on 0.5pt off 1pt] -- (4,-0.5);
        \draw (4,0) ellipse (0.5 and 1.5) node[anchor=west, minimum size=0.65in] {\footnotesize $BG$};
    \end{tikzpicture}
    \caption{Dehn filling space associated to the triple $(G,H,N)$. The mappings cylinders $M_{\psi}$ and $M_{\phi}$ are shaded blue and grey, respectively. Their bases $B\bh$ and $BG$ are represented by larger discs. The common subspace $BH$ is shaded red.}
    \label{fig:Dehn_space}
\end{figure}

\begin{definition}\label{Dehn_space}
There is a classifying map $\phi\colon BH\rightarrow BG$ (resp. $\psi\colon BH\rightarrow B\bh$) induced by the inclusion $H\hookrightarrow G$ (resp. the quotient map $q\colon H\rightarrow \bh$). Let $X$  be the complex obtained by gluing the mapping cylinders $M_{\phi},M_{\psi}$ along their common subcomplex $BH$ (see Figure \ref{fig:Dehn_space}). We call the complex $X$ a {\it Dehn filling space} associated to the triple  $(G,H,N)$.  
\end{definition}
For simplicity, we will use the following notation. Let
\[\tilde\phi\colon \bg\times_{\bh}(N\backslash EH)\cong \ll N \rr\backslash  \big(G\times_{H} EH\big)\rightarrow \ll N \rr\backslash EG\]
and
\[\tilde\psi\colon \bg\times_{\bh}(N\backslash EH)\rightarrow \bg\times_{\bh}E\bh\]
be the $\bg$-equivariant maps induced by $\phi$ and $\psi$, respectively. Also let $M_{\tilde\phi}$ and $M_{\tilde\psi}$ be the mapping cylinder of $\tilde\phi$ and $\tilde\psi$, respectively.

\begin{theorem}\label{thm. topology}
Let $(G,H,N)$ be a Cohen--Lyndon triple. Then every Dehn filling space associated to the triple $(G, H, N)$ is a $K(\bg,1)$-space.
\end{theorem}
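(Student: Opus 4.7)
The strategy is to verify that $\pi_1(X) \cong \bg$ and that the universal cover $\tilde X$ is contractible; the Cohen--Lyndon decomposition $\ll N \rr = \Asterisk_{t\in T} tNt^{-1}$ will be decisive for the second point.

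First, for the fundamental group, I apply van Kampen's theorem to the pushout $X = M_\phi \cup_{BH} M_\psi$, which gives $\pi_1(X) \cong G *_H \bh$ where $H \to \bh$ is the quotient by $N$. This amalgamation identifies each $h \in H \subseteq G$ with its image in $\bh$, hence kills every element of $N$ together with all its $G$-conjugates, yielding $\pi_1(X) \cong G/\ll N \rr = \bg$.

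Next, for the universal cover, I lift the pushout structure to $\tilde X = M_{\tilde\phi} \cup_P M_{\tilde\psi}$ with $P = \bg \times_{\bh}(N\backslash EH)$. Standard covering-space calculations then identify the preimage of $BG$ in $\tilde X$ with the single connected component $\ll N \rr\backslash EG \simeq K(\ll N \rr, 1)$ (single because $G \twoheadrightarrow \bg$); the preimage of $B\bh$ with the disjoint union $\bg \times_{\bh} E\bh$ of $|\bg/\bh|$ contractible components; and the preimage of $BH$ with $P$, a disjoint union of $|\bg/\bh|$ spaces of type $K(N,1)$. Using the coset decomposition $G = \bigsqcup_{t\in T} tH\ll N \rr$, the set $\bg/\bh$ is in bijection with $T$, and the Cohen--Lyndon property ensures that the $t$-indexed component of $P$ embeds into $\ll N \rr\backslash EG$ so as to represent, on $\pi_1$, the inclusion of the free-product factor $tNt^{-1} \leqslant \ll N \rr$.

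Since each component of $M_{\tilde\psi}$ is contractible, $\tilde X$ is homotopy equivalent to $\ll N \rr\backslash EG$ with a cone attached along each component of $P$. For simple connectedness of $\tilde X$, I appeal again to van Kampen: each cone kills one factor $tNt^{-1}$, so together they kill all of $\ll N \rr$. For the vanishing of the higher homology, I use Mayer--Vietoris on $\tilde X = M_{\tilde\phi}\cup M_{\tilde\psi}$; combined with the free-product formula $H_n(\ll N \rr) \cong \bigoplus_{t\in T} H_n(tNt^{-1})$ for $n \geqslant 1$, the map $H_n(P) \to H_n(M_{\tilde\phi})$ becomes an isomorphism in every positive degree, and together with $H_n(M_{\tilde\psi}) = 0$ for $n\geqslant 1$ plus a direct low-degree check this forces $H_n(\tilde X) = 0$ for all $n \geqslant 1$. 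Hurewicz and Whitehead then give contractibility of $\tilde X$, finishing the proof.

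The main obstacle will be the precise identification of the $\pi_1$-image of each component of $P \subset \ll N \rr\backslash EG$ with the correct free-product factor $tNt^{-1}$. Without the Cohen--Lyndon hypothesis, the computation $H \cap t\ll N \rr t^{-1} = N$ still gives the right $\pi_1$ on each component, but offers no control over how these copies of $N$ assemble inside $\ll N \rr$. The Cohen--Lyndon structure is exactly what makes the cone attachments kill $\ll N \rr$ cleanly, with no additional relations, and hence what drives both the van Kampen computation and the Mayer--Vietoris identification.
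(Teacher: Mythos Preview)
Your proposal is correct and follows essentially the same route as the paper: van Kampen for $\pi_1(X)\cong\bg$, then Mayer--Vietoris on $\widetilde X = M_{\tilde\phi}\cup_P M_{\tilde\psi}$, using the Cohen--Lyndon free-product decomposition to identify $H_n(P)\to H_n(M_{\tilde\phi})$ with the direct-sum isomorphism $\bigoplus_{t\in T}H_n(tNt^{-1})\to H_n(\ll N\rr)$. The only superfluous step is your separate van Kampen argument for simple connectedness of $\widetilde X$; since $\widetilde X$ is by construction the universal cover, it is automatically simply connected, and the paper simply invokes this before running Mayer--Vietoris from degree $2$ onward.
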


By combining the above theorem with \Cref{thm. cl} we get the following:

\begin{corollary}\label{cor. topology}
Let $G$ be a group with a hyperbolically embedded subgroup $H\hookrightarrow_h G$. Then for sufficiently deep $N\lhd H$, every Dehn filling space associated to the triple $(G, H, N)$ is a $K(\bg,1)$-space.
\end{corollary}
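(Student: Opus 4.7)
The corollary follows immediately by combining \Cref{thm. cl} with \Cref{thm. topology}: the former provides a finite exceptional subset $\mathcal F\subset H\smallsetminus\{1\}$ so that any $N\lhd H$ with $N\cap\mathcal F=\emptyset$ makes $(G,H,N)$ a Cohen--Lyndon triple, and the latter then applies directly. Since the proof of \Cref{thm. topology} is not given in the excerpt, I sketch how I would prove it; the corollary is then a one-line invocation.

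First, the fundamental group computation. A Seifert--van Kampen argument on $X = M_\phi\cup_{BH}M_\psi$, using the homotopy equivalences $M_\phi\simeq BG$ and $M_\psi\simeq B\bh$ and the fact that $H\to G$ is the subgroup inclusion while $H\to\bh$ is the quotient $H\twoheadrightarrow H/N$, yields $\pi_1(X)\cong G\ast_H\bh$. In this amalgam every $n\in N$ is killed in $\bh$ and identified with itself in $G$, so the amalgam collapses to $G/\ll N\rr=\bg$. (This part of the argument does not even require the Cohen--Lyndon hypothesis.)

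The substantive content is showing the $\bg$-cover $\tilde X$ is contractible. Via the identification $\bg\times_{\bh}(N\backslash EH)\cong\ll N\rr\backslash(G\times_H EH)$, $\tilde X$ is the double mapping cylinder of $\tilde\phi$ and $\tilde\psi$ defined just before the theorem. I would compute $H_\ast(\tilde X)$ by Mayer--Vietoris on $M_{\tilde\phi}\cup M_{\tilde\psi}$, whose ingredients are $M_{\tilde\phi}\simeq\ll N\rr\backslash EG\simeq K(\ll N\rr,1)$, $M_{\tilde\psi}\simeq\bg\times_{\bh}E\bh$ (a disjoint union of contractible pieces indexed by $\bg/\bh$), and the intersection $\bg\times_{\bh}(N\backslash EH)\simeq\bigsqcup_{\bg/\bh}K(N,1)$, using that $\bh$ acts freely on the contractible space $N\backslash EH$. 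The key algebraic input is the Cohen--Lyndon decomposition $\ll N\rr=\Asterisk_{t\in T}tNt^{-1}$, where $T$ is a transversal for $H\ll N\rr$ in $G$ and hence indexes $\bg/\bh$. Unpacking the double-coset description shows that each $K(N,1)$-component of the intersection labelled by $t$ is sent by $\tilde\phi$ into $\ll N\rr\backslash EG$ inducing the subgroup inclusion $tNt^{-1}\hookrightarrow\ll N\rr$ on $\pi_1$. By the classical formula for the homology of a free product, the resulting map $\bigoplus_{t\in T}H_n(tNt^{-1})\to H_n(\ll N\rr)$ is an isomorphism for $n\geqslant 1$. Combined with $H_n(M_{\tilde\psi})=0$ for $n\geqslant 1$, the Mayer--Vietoris map $H_n(\text{intersection})\to H_n(M_{\tilde\phi})\oplus H_n(M_{\tilde\psi})$ is an isomorphism for all $n\geqslant 1$, immediately forcing $H_n(\tilde X)=0$ for $n\geqslant 2$; a short diagram chase in degree zero, using that $H_0$ of the intersection injects diagonally into $H_0(M_{\tilde\phi})\oplus H_0(M_{\tilde\psi})$, yields $H_1(\tilde X)=0$ and $H_0(\tilde X)=\Z$. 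Since $\tilde X$ is simply connected as a universal cover, Hurewicz and Whitehead imply it is contractible.

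The principal obstacle I anticipate is the equivariant bookkeeping: pinning down the bijection between the transversal $T$ appearing in the Cohen--Lyndon decomposition and the coset set $\bg/\bh$ indexing components of $\bg\times_{\bh}(N\backslash EH)$, and verifying that under $\tilde\phi$ each $K(N,1)$-component lands with $\pi_1$-image exactly the subgroup $tNt^{-1}\leqslant\ll N\rr$, so that it matches the free-product splitting on homology. Once that compatibility is set up carefully, the Mayer--Vietoris computation and the Hurewicz--Whitehead conclusion are routine, and the corollary follows as described at the outset.
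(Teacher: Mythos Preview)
Your proposal is correct and follows essentially the same route as the paper: the corollary is deduced in one line from \Cref{thm. cl} plus \Cref{thm. topology}, and your sketch of the latter---Seifert--van Kampen for $\pi_1$, then Mayer--Vietoris on $M_{\tilde\phi}\cup M_{\tilde\psi}$ using the Cohen--Lyndon free-product decomposition of $\ll N\rr$ to identify $H_n$ of the intersection with $H_n(M_{\tilde\phi})$---matches the paper's argument almost verbatim. Your degree-$0/1$ diagram chase is unnecessary, since $\tilde X$ is already known to be simply connected as the universal cover; the paper simply observes that $H_n(\tilde X)=0$ for $n\geqslant 2$ suffices.
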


\begin{proof}[Proof of \Cref{thm. topology}]
We will use the notation from \Cref{Dehn_space}. We first prove that $\pi_1(X)=\bg$. By the Seifert--van Kampen theorem, $\pi_1(X)$ has the presentation
\begin{equation}\label{eq. pre1}
    \pi_1(X)=\langle G,\bh \mid h=q(h),\forall h\in H\leqslant G \rangle.
\end{equation}
In particular, there is an epimorphism $\tau\colon G\twoheadrightarrow \pi_1(X)$ that maps every $n\in N$ to $1$. Thus, $\tau$ descends to an epimorphism (still denoted by) $\tau\colon \bg\twoheadrightarrow\pi_1(X)$.

Let $p\colon G\rightarrow \bg$ be the natural quotient map. Then from the presentation \eqref{eq. pre1} we see that there is a homomorphism $\eta\colon \pi_1(X)\rightarrow \bg$ given by $\eta(g)=p(g)$ for all $g\in G$ and $\eta(\bar h)=\bar h$ for all $\bar h\in\bh$. It is easy to see that $\tau$ and $\eta$ are mutual inverses. Thus, $\pi_1(X)=\bg$.

It remains to prove that the universal cover $\widetilde X$ of $X$ is contractible. Since $\widetilde X$ is a simply connected CW-complex,  it suffices to prove that $H_n(\widetilde X)=0$ for all $n\geqslant 2$. For simplicity, all homology groups have integral coefficients (where the group action on $\Z$ is trivial) for the rest of this proof.

The space $\widetilde X$ can be obtained by gluing the mapping cylinders $M_{\widetilde\phi},M_{\widetilde\psi}$ along their common subcomplex $\bg\times_{\bh} (N\backslash EH)$. We thus have a Mayer--Vietoris sequence:
\[\cdots\rightarrow H_{n+1}(\widetilde X)\rightarrow H_n(\bg\times_{\bh} (N\backslash EH))\xrightarrow{\lambda} H_n(M_{\widetilde\phi})\oplus H_n(M_{\widetilde\psi})\rightarrow H_n(\widetilde X)\rightarrow \cdots\]
It is enough to prove that for all $n\geqslant 2$, $\lambda\colon H_n(\bg\times_{\bh} (N\backslash EH))\rightarrow H_n(M_{\widetilde\phi})\oplus H_n(M_{\widetilde\psi})$ is an isomorphism. Note that $M_{\tilde\psi}$ deformation retracts onto $\bg\times_{\bh}E\bh$ and $H_n(\bg\times_{\bh}E\bh)=0$. Thus, $H_n(M_{\psi})=0$. So it suffices to prove that the homomorphism \[\theta\colon H_n(\bg\times_{\bh} (N\backslash EH))\rightarrow H_n(M_{\widetilde\phi})\]
induced by $\lambda$ is an isomorphism. Note that $M_{\widetilde\phi}$ deformation retracts onto $\ll N \rr \backslash EG$, and thus is a $K(\ll N \rr, 1)$-space. By the Cohen--Lyndon property,
\[H_n(\ll N \rr \backslash EG)=\bigoplus_{t\in T}H_n(tNt^{-1}),\]
\noindent where $T$ is a left transversal of $H\ll N \rr$ in $G$.  The space $\bg\times_{\bh} (N\backslash EH)$ is a disjoint union of copies of $N\backslash EH$. Each of these copies is labeled by a unique element $p(t)$ for some $t\in T$. The inclusion $\bg\times_{\bh} (N\backslash EH)\hookrightarrow M_{\widetilde\phi}$, which induces the homology map $\theta$, can be seen as a disjoint union of inclusions of each copy of $N\backslash EH$. Consider one such copy $(N\backslash EH)_{p(t)}$ labeled by $p(t)$. The copy $(N\backslash EH)_{p(t)}$ is a $K(tNt^{-1},1)$-space and the inclusion $i_t\colon (N\backslash EH)_{p(t)}\hookrightarrow M_{\widetilde\phi}$ is the classifying map. So the induced map on homology maps $H_n((N\backslash EH)_{p(t)})=H_n(tNt^{-1})$ isomorphically onto the summand $H_n(tNt^{-1})$ of $H_n(\ll N \rr \backslash EG)$. Since this holds for all $t\in T$ we conclude that $\theta$ is an isomorphism, as desired.
\end{proof}

\begin{corollary}\label{cor. finiteness conditions}
Let $G$ be a group with a hyperbolically embedded subgroup $H\hookrightarrow_h G$.
\begin{enumerate}
    \item[(i)] If $G$ is of type $F_n$ for some $n\in \mathbb N^{+}\cup \{\infty\}$, then for all sufficiently deep normal  $N\lhd H$, we have that $\bg$ is of type $F_n$  if and only if $\bh$ is of type $F_n$. 

    \item[(ii)] If both $G$ and $H$ are of type $F$, then for all sufficiently deep normal  $N\lhd H$ such that $\bh$ is of type $F$, we have that $\bg$ is of type $F$.
\end{enumerate}

\end{corollary}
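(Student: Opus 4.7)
The plan is to apply \Cref{cor. topology} directly. For sufficiently deep $N \lhd H$, the Dehn filling space $X = M_\phi \cup_{BH} M_\psi$ is a $K(\bar G, 1)$-CW-complex, so the finiteness properties of $\bar G$ will be read off from the cellular structure of $X$. The key point is that, provided $BG$, $BH$, and $B\bar H$ admit models with finite $n$-skeleton, so will each of the two mapping cylinders, and hence so will their union $X$; the analogous statement holds with ``finite $n$-skeleton'' replaced by ``finite''.

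For the forward direction of (i), \Cref{thm. simple Dehn filling} yields $\bar H \hookrightarrow_h \bar G$ for sufficiently deep $N$. Thus, if $\bar G$ is of type $F_n$, then so is $\bar H$, by the standard inheritance of $F_n$-finiteness by hyperbolically embedded subgroups, a topological counterpart of the algebraic inheritance recorded in \cite[Corollary 2.2]{sun2019cohomologyii}. For the converse direction of (i) and for (ii), I would choose models $BG$, $BH$, $B\bar H$ with the required finiteness: $BG$ from the hypothesis on $G$; $B\bar H$ from the hypothesis on $\bar H$; and $BH$ either from the same inheritance applied to $H \hookrightarrow_h G$ (in (i)), or from the hypothesis on $H$ (in (ii)). Making $\phi$ and $\psi$ cellular, the mapping cylinders $M_\phi$ and $M_\psi$, and hence $X$, have the required finiteness. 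Since $X$ is a $K(\bar G, 1)$-complex by \Cref{cor. topology}, $\bar G$ is of type $F_n$ (respectively, of type $F$).

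The main obstacle is the inheritance of $F_n$-finiteness (or type $F$) from $G$ to a hyperbolically embedded subgroup $H$, which is needed to supply the model of $BH$ with finite $n$-skeleton (resp.\ finite). For $n = 1, 2$ this is classical; for larger $n$ and for type $F$, it is the topological version of the algebraic inheritance result of \cite{sun2019cohomologyii}. Once this ingredient is in place, the remainder is a routine exercise: CW-complexes with finite $n$-skeleton are closed under mapping cylinders and adjunctions along common finite $n$-subcomplexes, so the conclusion follows from \Cref{cor. topology}.
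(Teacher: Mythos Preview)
Your proposal is correct and follows essentially the same approach as the paper. The one point you flag as ``the main obstacle'' --- inheritance of type $F_n$ by a hyperbolically embedded subgroup --- is not an obstacle at all: it is already established as \cite[Corollary~4.32]{dahmani2017hyperbolically}, which the paper invokes directly both for $H\hookrightarrow_h G$ (to get a model of $BH$ with finite $n$-skeleton) and for $\bar H\hookrightarrow_h \bar G$ (the forward direction of (i)). With that reference in hand, your argument and the paper's coincide.
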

\begin{proof} \hspace{1cm}

\noindent{\it (i).} Suppose $\bg$ is of type $F_n$. By Theorem \ref{thm. simple Dehn filling}, $\bh$ is hyperbolically embedded in $\bg$. Applying, Corollary 4.32 of \cite{dahmani2017hyperbolically}, we obtain that $\bh$ is of type $F_n$.

Now, suppose $\bh$ is of type $F_n$. We will show that the Dehn filling space has finite $n$-skeleton. It follows that there is a model for $B\bh$ with finite $n$-skeleton. Since $G$ is of type $F_n$, by Corollary 4.32 of \cite{dahmani2017hyperbolically}, $H$  is of type $F_n$. Thus, there are  models for $BG$ and $BH$ with finite $n$-skeleta. Using the notation of \Cref{Dehn_space}, it follows that the mapping cylinders $M_{\phi}$ and $M_{\psi}$ have finite $n$-skeleta. Since they are glued along $BH$, the resulting Dehn filling space has the desired finiteness properties. By \Cref{cor. topology}, this Dehn filling space is a $K(\bg, 1)$ space provided $N$ is sufficiently deep.\\

\noindent{\it (ii).}  The proof is analogous to part (i). There are models for $BG,BH$ and $B\bh$ that are finite complexes. Using the notation of \Cref{Dehn_space}, it follows that the mapping cylinders $M_{\phi}$ and $M_{\psi}$ are finite complexes. Since they are glued along $BH$, the resulting Dehn filling space is a finite complex. By \Cref{cor. topology}, this Dehn filling space is a $K(\bg, 1)$ space provided $N$ is sufficiently deep.
\end{proof}

In \cite{sun2019cohomologyii}, we gave a rather elaborate argument based on the spectral sequence derived in \cite[Theorem 4.3]{sun2019cohomologyii} to obtain the next result, which now follows easily from excision on Dehn filling space.

\begin{corollary}[{\cite[Theorem A (ii)]{sun2019cohomologyii}}]\label{cor. alg excision} Let $(G,H,N)$ be a Cohen--Lyndon triple and $A$ be a $\bg$-module. Then, there is natural isomorphism induced by the quotient maps $G\to \bg$ and  $H\to \bh$,
    $$H^n(G, H ; A)\cong H^n(\bg, \bh ; A),  \; \; \forall n\geq 0.$$
\end{corollary}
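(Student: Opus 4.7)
The approach is to use the Dehn filling space $X$ from \Cref{Dehn_space} and its universal cover $\widetilde X$, which by \Cref{thm. topology} serves as $E\bg$. By construction, $\widetilde X$ decomposes $\bg$-equivariantly as $M_{\tilde\phi}\cup M_{\tilde\psi}$ glued along the common subcomplex $\bg\times_{\bh}(N\backslash EH)$, where $\tilde\phi$ and $\tilde\psi$ are the equivariant maps introduced just before \Cref{thm. topology}. The entire proof is then an exercise in equivariant excision on this decomposition.

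First I would identify $H^n(\bg,\bh;A)$ with a relative equivariant cohomology group on $\widetilde X$. Since $M_{\tilde\psi}$ deformation retracts $\bg$-equivariantly onto $\bg\times_{\bh}E\bh$, and since the inclusion $\bg\times_{\bh}E\bh\hookrightarrow\widetilde X$ is a valid classifying map for $\bh\hookrightarrow\bg$, one obtains
$$H^n(\bg,\bh;A)\cong H^n_{\bg}(\widetilde X,\bg\times_{\bh}E\bh;A)\cong H^n_{\bg}(\widetilde X,M_{\tilde\psi};A).$$

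Next I would match the relative cohomology on the $M_{\tilde\phi}$ side with $H^n(G,H;A)$. Using the natural $\bg$-homeomorphisms $M_{\tilde\phi}\cong \ll N\rr\backslash M_\phi$ and $\bg\times_{\bh}(N\backslash EH)\cong \ll N\rr\backslash(G\times_H EH)$, both carrying free $\bg$-actions, the standard identification $H^*_\Gamma(Y;A)\cong H^*(\Gamma\backslash Y;A)$ for free $\Gamma$-CW complexes yields
$$H^n_{\bg}(M_{\tilde\phi},\bg\times_{\bh}(N\backslash EH);A)\cong H^n_G(M_\phi,G\times_H EH;A)=H^n(G,H;A).$$

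The final step is to bridge the two identifications by equivariant excision applied to the triple $\widetilde X\supset M_{\tilde\psi}\supset \bg\times_{\bh}(N\backslash EH)$: removing the open $\bg$-invariant subcomplex of $M_{\tilde\psi}$ that misses a $\bg$-invariant collar of $\bg\times_{\bh}(N\backslash EH)$ produces
$$H^n_{\bg}(\widetilde X,M_{\tilde\psi};A)\cong H^n_{\bg}(M_{\tilde\phi},\bg\times_{\bh}(N\backslash EH);A).$$
Chaining this with the two earlier identifications gives the desired isomorphism $H^n(\bg,\bh;A)\cong H^n(G,H;A)$. Naturality with respect to the quotient maps $G\to\bg$ and $H\to\bh$ is automatic since every step is built from the mapping cylinders $M_\phi$ and $M_\psi$, which themselves encode these quotient maps. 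The main point to check is that excision is carried out equivariantly, but this is standard once the excised neighborhoods are taken to be $\bg$-invariant; the essential content of the proof is already absorbed into the existence of the $K(\bg,1)$-model $X$ supplied by \Cref{thm. topology}.
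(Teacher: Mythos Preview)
Your proof is correct and follows essentially the same route as the paper: excision on the decomposition $\widetilde X = M_{\tilde\phi}\cup M_{\tilde\psi}$, identification of $H^*(\bg,\bh;A)$ via the $M_{\tilde\psi}$ side, and identification of $H^*(G,H;A)$ via the $M_{\tilde\phi}$ side using that $\ll N\rr$ acts trivially on $A$. One small notational point: what you write as $M_\phi$ in your displayed identification should be the $G$-equivariant mapping cylinder of $G\times_H EH\to EG$ (i.e.\ the universal cover of the paper's $M_\phi$, denoted $\widetilde M_{\widetilde\phi}$ there), not $M_\phi$ itself.
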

\begin{proof} Recall that the space $\widetilde X$ of \Cref{thm. topology} is obtained by gluing the mapping cylinders $M_{\widetilde\phi},M_{\widetilde\psi}$ along $\bg\times_{\bh} (N\backslash EH)$.  By excision \cite[Corollary 2.24]{hatcher2002algebraic}, for all $\bg$-modules $A$, we have a natural isomorphism 
    $$H_{G}^n(M_{\widetilde\phi}, \bg \times_{\bh} (N\backslash EH)  ; A)\cong H^n_{G}(\widetilde X,  M_{\widetilde\psi} ; A),  \; \forall n\geq 0.$$ 
  Since $\widetilde X$ is a model for $E\bg$, by definition, 
  $$H^n(\bg,  \bh  ; A)=H^n_{\bg}(\widetilde X,  M_{\widetilde\psi}  ; A)=H^n_G(\widetilde X,  M_{\widetilde\psi} ; A).$$
  Since $\ll N\rr$ acts trivially on $A$,   the pair $(M_{\widetilde\phi},\; \bg \times_{\bh} (N\backslash EH))$ and its $\ll N\rr$-cover $(\widetilde M_{\widetilde\phi},\; G \times_{H} EH)$ have the same cohomology, i.e.
  $$H^n_G(\widetilde  M_{\widetilde\phi},\; G \times_{H} EH\; ; A)=H^n_G(M_{\widetilde\phi},\; \bg \times_{\bh} (N\backslash EH)\; ; A).$$
  By definition,  $H^n(G, H  ; A) = H^n_{G}(\widetilde  M_{\widetilde\phi}, G \times_{H} EH\; ; A).$
\end{proof}

\begin{lemma}\label{lem:L2-diagram}   Let $G$ be a group with a hyperbolically embedded subgroup $H\hookrightarrow_h G$. Then for all sufficiently deep normal  $N\lhd H$, we have the following commutative diagram with exact rows
\begin{footnotesize}
\begin{equation}\label{eq. l2-diagram}
\begin{tikzcd}
  \dots \;  \N(\bg)\otimes_{\N(\bh)}\cH_n(N\backslash EH; \N(\bh)) \arrow[d] \arrow[r] & \cH_n(\ll N\rr\backslash EG; \N(\bg)) \arrow[r] \arrow[d] & \cH_n(\bg, \bh;\N(\bg)) \arrow[d, "="] \; \dots \\
  \dots \;  \N(\bg)\otimes_{\N(\bh)}\cH_n(\bh;\N(\bh)) \arrow[r] & \cH_n(\bg;\N(\bg)) \arrow[r] & \cH_n(\bg, \bh;\N(\bg))  \; \dots 
\end{tikzcd}
\end{equation}
\end{footnotesize}
Moreover, if both $G$ and $\bh$ are of type $F_k$ for some $k\in\mathbb N$, then for $n\leqslant k$ we have the following inequalities
\begin{equation}\label{eq. li dehn 3}
\begin{aligned}
    b^{(2)}_n(\ll N \rr\backslash EG; \bg)-b^{(2)}_n(N\backslash EH; \bh)
    &\leqslant b^{(2)}_n(\bg,\bh)\\
    &\leqslant b^{(2)}_n(\ll N \rr\backslash EG; \bg)+b^{(2)}_{n-1}(N\backslash EH; \bh),
 \end{aligned}  
\end{equation}
\begin{equation}\label{eq. li dehn 4}
  b^{(2)}_n(\bg)-b^{(2)}_n(\bh)
    \leqslant b^{(2)}_n(\bg,\bh)
    \leqslant b^{(2)}_n(\bg)+b^{(2)}_{n-1}(\bh).
    \end{equation}
\end{lemma}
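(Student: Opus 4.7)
The plan is to realize both rows of \eqref{eq. l2-diagram} as long exact sequences in $L^2$-homology of suitable CW-pairs inside the Dehn filling space, related via excision. By \Cref{thm. cl} and \Cref{thm. topology}, for sufficiently deep $N\lhd H$ the triple $(G,H,N)$ is Cohen--Lyndon and the universal cover $\widetilde X$ of the Dehn filling space is a model for $E\bg$. Recall that $\widetilde X$ decomposes as the pushout of $M_{\tilde\phi}$ and $M_{\tilde\psi}$ along $\bg\times_{\bh}(N\backslash EH)$, where $M_{\tilde\phi}$ deformation retracts onto $\ll N\rr\backslash EG$ and $M_{\tilde\psi}$ deformation retracts onto $\bg\times_{\bh}E\bh$.

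The bottom row is the long exact sequence in $L^2$-homology of the pair $(\widetilde X, M_{\tilde\psi})$: after the identifications $\widetilde X\simeq E\bg$, $M_{\tilde\psi}\simeq \bg\times_{\bh}E\bh$, and the induction isomorphism $\cH_n(\bg\times_{\bh}E\bh;\N(\bg))\cong \N(\bg)\otimes_{\N(\bh)}\cH_n(\bh;\N(\bh))$, this sequence takes the stated form, with rightmost term $\cH_n(\bg,\bh;\N(\bg))$ by definition of relative group $L^2$-homology. The top row is the long exact sequence of the pair $(M_{\tilde\phi}, \bg\times_{\bh}(N\backslash EH))$; here I again use the deformation retract of $M_{\tilde\phi}$ together with the analogous induction isomorphism for the subspace, and then invoke equivariant excision (applied cellularly, then tensored with $\N(\bg)$ over $\Z\bg$) to identify the relative term with $\cH_n(\widetilde X, M_{\tilde\psi};\N(\bg))=\cH_n(\bg,\bh;\N(\bg))$; this is precisely the $L^2$-analogue of \Cref{cor. alg excision}. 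The inclusion of pairs $(M_{\tilde\phi}, \bg\times_{\bh}(N\backslash EH))\hookrightarrow (\widetilde X, M_{\tilde\psi})$ supplies the vertical arrows, and the commutativity of \eqref{eq. l2-diagram} is then the naturality of the long exact sequence of a pair, with the rightmost vertical map being the excision isomorphism (displayed as the equality in the diagram).

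For the inequalities, since $G$ is of type $F_k$ it follows that so is $H$ \cite[Corollary 4.32]{dahmani2017hyperbolically}, and combined with $\bh$ being of type $F_k$, \Cref{cor. finiteness conditions} yields that $\bg$ is of type $F_k$ (after possibly passing to a deeper $N$). Hence every $L^2$-Betti number appearing in the statement through degree $k$ is finite. The inequalities \eqref{eq. li dehn 3} then follow by exactness of the top row together with additivity of von Neumann dimension (\Cref{prop. properties of dimension}\ref{item. additivity}): given an exact segment $A\to B\to C\to D$, one has $\dim B\leqslant \dim A+\dim C$ and $\dim C\leqslant \dim B+\dim D$, which applied to the segment $\cH_n(\bg\times_{\bh}(N\backslash EH))\to \cH_n(\ll N\rr\backslash EG)\to \cH_n(\bg,\bh)\to \cH_{n-1}(\bg\times_{\bh}(N\backslash EH))$ gives both bounds once the induction identity $\dim_{\bg}\N(\bg)\otimes_{\N(\bh)}M=\dim_{\bh}M$ (L\"uck \cite[Theorem 6.29(2)]{luck2002l2}) is used to rewrite the outer terms as $L^2$-Betti numbers of the $\bh$-CW complex $N\backslash EH$. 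The inequalities \eqref{eq. li dehn 4} follow verbatim from the bottom row. The principal technical point is the $L^2$-excision identification of the relative term, which must be checked to commute with both the deformation retractions and the induction isomorphism so that the diagram genuinely commutes and the two long exact sequences fit into a single natural ladder.
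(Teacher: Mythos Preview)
Your proposal is correct and follows essentially the same approach as the paper: both obtain the ladder \eqref{eq. l2-diagram} from the map of CW-pairs $(M_{\tilde\phi},\bg\times_{\bh}(N\backslash EH))\to(\widetilde X,M_{\tilde\psi})$, identify the terms via deformation retractions, induction, and excision, and then read off the inequalities from additivity of von Neumann dimension once the relevant terms are finite. The only cosmetic differences are that the paper cites \cite[Theorem~6.54~(7)]{luck2002l2} rather than Theorem~6.29(2) for the induction identity, and it leaves the finiteness of the Dehn filling space's $k$-skeleton implicit rather than invoking \Cref{cor. finiteness conditions} as you do.
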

\begin{proof}
We use the notation from \Cref{Dehn_space}. 
The map of CW pairs 
$$(M_{\tilde\phi},\bg\times_{\bh} N\backslash(EH))\rightarrow(\widetilde X, M_{\tilde \psi})$$
gives rise to a commutative diagram with exact rows
\begin{footnotesize}
\begin{equation}\label{eq. pre diagram}
\begin{tikzcd}
\dots \; \cH_n(\bg\times_{\bh}(N\backslash EH); \N(\bg)) \arrow[d] \arrow[r] & \cH_n(M_{\tilde\phi}; \N(\bg)) \arrow[r] \arrow[d] & \cH_n(M_{\tilde\phi}, \bg\times_{\bh}(N\backslash EH); \N(\bg)) \arrow[d] \; \dots \\
  \dots \;  \cH_n(M_{\tilde\psi}; \N(\bg)) \arrow[r] & \cH_n(\widetilde X; \N(\bg)) \arrow[r] & \cH_n(\widetilde X, M_{\tilde\psi}; \N(\bg))  \; \dots
\end{tikzcd}
\end{equation}
\end{footnotesize}
By \cite[Theorem 6.54 (7)]{luck2002l2}, we have natural isomorphisms
\begin{align*}\cH_n(\bg\times_{\bh} (N\backslash EH); \N(\bg))&\cong \N(\bg)\otimes_{\N(\bh)}\cH_n(N\backslash EH; \N(\bh)),\\
\cH_n(M_{\tilde\psi}; \N(\bg))&\cong \N(\bg)\otimes_{\N(\bh)}\cH_n(\bh;\N(\bh)).
\end{align*}
By \Cref{Dehn_space} and \Cref{cor. topology}, we also have
\begin{align*}\cH_n(M_{\tilde\phi}; \N(\bg))&\cong \cH_n(\ll N\rr\backslash EG; \N(\bg)),\\
\cH_n(\widetilde X; \N(\bg))&\cong \cH_n(\bg; \N(\bg)).
\end{align*}
By excision, we have the natural isomorphism
\[\cH_n(M_{\tilde\phi}, \bg\times_{\bh}(N\backslash EH); \N(\bg))\cong \cH_n(\widetilde X, M_{\tilde\psi}; \N(\bg))\cong \cH_n(\bg, \bh; \N(\bg)).\]
The above isomorphisms transform \eqref{eq. pre diagram} to \eqref{eq. l2-diagram}.

If both $G$ and $\bh$ are of type $F_k$ for some $k\in\mathbb N$, then from \eqref{eq. pre diagram} and \Cref{prop. properties of dimension} \ref{item. additivity} we deduce the inequalities for all $n\leqslant k$,
\begin{align}\label{eq. pre li dehn 3}
    b_n^{(2)}(M_{\tilde\phi} ; \bg)-b_n^{(2)}(\bg\times_{\bh}(N\backslash EH) ; \bg) & \leqslant  b_n^{(2)}(M_{\tilde\phi}, \bg\times_{\bh}(N\backslash EH)  ;\bg)\\\nonumber
   & \leqslant  b_n^{(2)}(M_{\tilde\phi} ; \bg)+b_{n-1}^{(2)}(\bg\times_{\bh}(N\backslash EH) ; \bg),\nonumber
\end{align}
\begin{align}\label{eq. pre li dehn 4}
    b_n^{(2)}(\widetilde X\; ;\bg)-b^{(2)}_n(M_{\tilde\psi}\; ;\bg) & \leqslant b_n^{(2)}(\widetilde X, M_{\tilde\psi}\; ;\bg)\\\nonumber
   &  \leqslant b_n^{(2)}(\widetilde X\; ;\bg)+b^{(2)}_{n-1}(M_{\tilde\psi}\; ;\bg),\nonumber
\end{align}
as these numbers are all finite by \Cref{prop. properties of dimension} \ref{item. finite dimension}.

By \cite[Theorem 6.54 (7)]{luck2002l2}, we have
\begin{align*}
b_n^{(2)}(\bg\times_{\bh}(N\backslash EH);  \bg)&=b_n^{(2)}(N\backslash EH; \bh),\\
b_{n-1}^{(2)}(\bg\times_{\bh}(N\backslash EH);  \bg)&=b_{n-1}^{(2)}(N\backslash EH ;\bh),\\
b^{(2)}_n(M_{\tilde\psi}; \bg)&=b^{(2)}_n(\bh),\\
b^{(2)}_{n-1}(M_{\tilde\psi}; \bg)&=b^{(2)}_{n-1}(\bh).
\end{align*}

The above equalities turn \eqref{eq. pre li dehn 3} and \eqref{eq. pre li dehn 4} into \eqref{eq. li dehn 3} and \eqref{eq. li dehn 4}, respectively.
\end{proof}

We now ready to prove our first general result on $L^2$-Betti numbers of groups, providing bounds for the $L^2$-Betti numbers of Dehn fillings of virtually locally indicable groups.
\begin{theorem}\label{thm. li dehn filling}
    Let $G$ be a virtually locally indicable group of type $F_{k+1}$ for some $k\geqslant 0$, and let $H\hookrightarrow_h G$ be a hyperbolically embedded subgroup. Then for every $\delta>0$, there exists a finite subset $\F_\delta\subset H\smallsetminus\{1\}$ such that if a normal subgroup $N\lhd H$ satisfies $N\cap \F_\delta=\emptyset$ and $H/N$ is of type $F_{k+1}$, then 
    \begin{equation}\label{eq. highest dim l2 betti inequality}
        b^{(2)}_{k+1}(G/\ll N \rr)<b^{(2)}_{k+1}(G)+b^{(2)}_k(H)+b^{(2)}_{k+1}(H/N)+\delta,
    \end{equation}
    and for $n\leqslant k$, we have      
    \begin{equation}\label{eq. l2 betti inequality}
  -b^{(2)}_n(H)-b^{(2)}_{n-1}(H/N)-\delta< b^{(2)}_n(G/\ll N \rr)- b^{(2)}_n(G) < b^{(2)}_{n-1}(H)+b^{(2)}_n(H/N)+\delta.
    \end{equation}
\end{theorem}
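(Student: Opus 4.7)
The strategy is to interpolate between $b^{(2)}_n(\bg)$ and $b^{(2)}_n(G)$ via the two intermediate quantities $b^{(2)}_n(\ll N \rr\backslash EG;\bg)$ and $b^{(2)}_n(N\backslash EH;\bh)$: the first is forced close to $b^{(2)}_n(G)$ by the quantitative L\"{u}ck approximation (Corollary \ref{cor. l2 betti approximation ver 2}) applied to $G$, the second is forced close to $b^{(2)}_n(H)$ by the same result applied to $H$, and the two together control $b^{(2)}_n(\bg)-b^{(2)}_n(\bh)$ through the Mayer--Vietoris sequence for the Dehn filling space (Lemma \ref{lem:L2-diagram}). The finite set $\F_\delta$ will be assembled by collecting enough finite subsets of $H\smallsetminus\{1\}$ to make each of these ingredients simultaneously available.

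First I would verify that $H$ satisfies the hypotheses of Corollary \ref{cor. l2 betti approximation ver 2}: being hyperbolically embedded in $G$ of type $F_{k+1}$, $H$ is itself of type $F_{k+1}$ by \cite[Corollary 4.32]{dahmani2017hyperbolically}, and hence finitely generated; and if $G_0\leqslant G$ is a finite-index locally indicable subgroup, then $H\cap G_0$ is a finite-index locally indicable subgroup of $H$, so $H$ is virtually locally indicable. I would then apply the corollary to $G$ acting on $EG$ with precision $\delta/3$ to obtain a finite $\fS\subset G\smallsetminus\{1\}$, and pull this back using \cite[Theorem 7.19(c)]{dahmani2017hyperbolically} to a finite $\F_1\subset H\smallsetminus\{1\}$ such that $N\cap\F_1=\emptyset$ forces $\ll N \rr\cap\fS=\emptyset$, thereby giving
\[|b^{(2)}_n(\ll N \rr\backslash EG;\bg)-b^{(2)}_n(G)|<\delta/3\quad(n\leqslant k),\qquad b^{(2)}_{k+1}(\ll N \rr\backslash EG;\bg)<b^{(2)}_{k+1}(G)+\delta/3.\]
Apply the same corollary directly to $H$ acting on $EH$ to obtain a finite $\F_2\subset H\smallsetminus\{1\}$ guaranteeing $|b^{(2)}_n(N\backslash EH;\bh)-b^{(2)}_n(H)|<\delta/3$ for $n\leqslant k$. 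Finally, enlarge $\F_1\cup\F_2$ by the finite subsets of $H\smallsetminus\{1\}$ supplied by Theorems \ref{thm. simple Dehn filling} and \ref{thm. cl} and Lemma \ref{lem:L2-diagram}, so that for any $N\cap\F_\delta=\emptyset$, the triple $(G,H,N)$ is Cohen--Lyndon, $\bh$ embeds hyperbolically in $\bg$, and the diagram \eqref{eq. l2-diagram} together with the inequalities \eqref{eq. li dehn 3}--\eqref{eq. li dehn 4} are available.

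Since $G$ and $\bh$ are both of type $F_{k+1}$, the inequalities \eqref{eq. li dehn 3}--\eqref{eq. li dehn 4} hold for $n\leqslant k+1$. Combining their upper estimates through the common middle term $b^{(2)}_n(\bg,\bh)$ yields
\[b^{(2)}_n(\bg)\leqslant b^{(2)}_n(\ll N \rr\backslash EG;\bg)+b^{(2)}_{n-1}(N\backslash EH;\bh)+b^{(2)}_n(\bh),\]
and combining their lower estimates yields
\[b^{(2)}_n(\bg)\geqslant b^{(2)}_n(\ll N \rr\backslash EG;\bg)-b^{(2)}_n(N\backslash EH;\bh)-b^{(2)}_{n-1}(\bh).\]
Plugging in the two approximation bounds and absorbing the $\delta/3$-errors into $\delta$ produces \eqref{eq. l2 betti inequality} for $n\leqslant k$; at level $k+1$ only the one-sided approximation bound for $b^{(2)}_{k+1}(\ll N \rr\backslash EG;\bg)$ is available, and combining it with the upper estimate alone yields \eqref{eq. highest dim l2 betti inequality}.

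The plan is quite mechanical once Lemma \ref{lem:L2-diagram} and the quantitative approximation are in hand; the main subtlety will be the bookkeeping step of translating the approximation-control set from $G$ to $H$, which is precisely the role of \cite[Theorem 7.19(c)]{dahmani2017hyperbolically} and is what turns the hypothesis into one about \emph{sufficiently deep} $N\lhd H$ rather than about sufficiently deep $M\lhd G$.
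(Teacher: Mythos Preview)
Your proposal is correct and follows essentially the same approach as the paper's proof: apply Corollary~\ref{cor. l2 betti approximation ver 2} to both $G$ (pulling the control set back to $H$ via \cite[Theorem 7.19(c)]{dahmani2017hyperbolically}) and to $H$, then combine the resulting estimates through the relative term $b^{(2)}_n(\bg,\bh)$ using the two inequality chains \eqref{eq. li dehn 3}--\eqref{eq. li dehn 4} of Lemma~\ref{lem:L2-diagram}. The only cosmetic differences are your use of $\delta/3$ versus the paper's $\delta/2$, and your explicit verification that $H$ is virtually locally indicable (which the paper uses implicitly).
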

\begin{proof}
By \Cref{cor. l2 betti approximation ver 2}, there exists a finite subset $\fS_\delta\subset G\smallsetminus\{1\}$ such that if a normal subgroup $M\lhd G$ satisfies $M\cap \fS_\delta=\emptyset$, then we have
    \begin{equation}\label{eq. highest li dehn for G}
        b^{(2)}_{k+1}(M\backslash EG\; ; G/M)<b^{(2)}_{k+1}(G)+\delta/2,
    \end{equation}
and
    \begin{equation}\label{eq. li dehn 1}
        |b^{(2)}_n(G)-b^{(2)}_n(M\backslash EG\; ; G/M)|<\delta/2 \text{ for } n\leqslant k.
    \end{equation}
By \cite[Theorem 7.19 (c)]{dahmani2017hyperbolically}, there exists a finite subset $\F_1\subset H\smallsetminus\{1\}$ such that if a normal subgroup $N\lhd H$ satisfies $N\cap \F_1=\emptyset$, then $\ll N \rr\cap \fS_\delta=\emptyset$.

By \cite[Corollary 4.32 (b)]{dahmani2017hyperbolically}, $H$ is of type $F_{k+1}$. Therefore, by \Cref{cor. l2 betti approximation ver 2}, there exists a finite subset $\F_2\subset G\smallsetminus\{1\}$ such that if a normal subgroup $N\lhd H$ satisfies that $N\cap \F_2=\emptyset$, then we have
    \begin{equation}\label{eq. highest li dehn for H/N}
    b^{(2)}_{k+1}(N\backslash EH\; ; H/N)<b^{(2)}_{k+1}(H)+\delta/2,
    \end{equation}
and
    \begin{equation}\label{eq. li dehn 2}
        |b^{(2)}_n(H)-b^{(2)}_n(N\backslash EH\; ; H/N)|<\delta/2 \text{ for } n\leqslant k.
    \end{equation}

By \Cref{lem:L2-diagram}, there exists a finite subset $\F_3\subset H\smallsetminus\{1\}$ such that if a normal subgroup $N\lhd H$ satisfies $N\cap \F_3=\emptyset$, then \eqref{eq. li dehn 3} and \eqref{eq. li dehn 4} hold for all $n\leqslant k+1$.

Let $\F_\delta=\F_1\cup \F_2\cup\F_3$ and fix a normal subgroup $N\lhd H$ such that $N\cap \F_\delta=\emptyset$ and $H/N$ is of type $F_{k+1}$.
    
By \eqref{eq. highest li dehn for G}, \eqref{eq. li dehn 1} and the choice of $N$, we have
    \begin{equation}\label{eq. highest li dehn for quotient space}
        b^{(2)}_{k+1}(\ll N \rr\backslash EG\; ; G/\ll N \rr)<b^{(2)}_{k+1}(G)+\delta
    \end{equation}
and
    \begin{equation}\label{eq. li dehn 5}
        |b^{(2)}_n(G)-b^{(2)}_n(\ll N \rr\backslash EG\; ; G/\ll N \rr)|<\delta/2 \text{ for } n\leqslant k.
    \end{equation}

Inequalities \eqref{eq. highest dim l2 betti inequality} and \eqref{eq. l2 betti inequality} follow by combining \eqref{eq. li dehn 3}, \eqref{eq. li dehn 4}, \eqref{eq. highest li dehn for H/N} \eqref{eq. li dehn 2}, \eqref{eq. highest li dehn for quotient space}, and \eqref{eq. li dehn 5}.
\end{proof}

We will prove \Cref{thm. intro li} using \Cref{thm. li dehn filling}. In preparation, we first prove the following lemma:

\begin{lemma}\label{lem. large quotient}
    Let $G$ be a finitely generated infinite group. Then for every $n>0$, there exists a finite subset $\F\subset G\smallsetminus\{1\}$ such that if a normal subgroup $N\lhd G$ satisfies $N\cap \F=\emptyset$, then $|G/N|>n$, and thus $b^{(2)}_0(G/N)<1/n$.
\end{lemma}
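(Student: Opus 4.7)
The plan is to exploit the classical fact that a finitely generated group has only finitely many subgroups of each fixed finite index (a theorem of M.\ Hall). Since $G$ is finitely generated, for every $n$ there are only finitely many normal subgroups $N_1,\dots,N_k\lhd G$ with $[G:N_i]\leqslant n$. The strategy is to pick one non-identity element out of each $N_i$ to form $\F$, so that avoiding $\F$ forces $N$ to be \emph{different} from each $N_i$ and hence forces $[G:N]>n$.

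Here is how I would carry this out in order. First, I enumerate the finitely many normal subgroups $N_1,\dots,N_k$ of index at most $n$. Because $G$ is infinite, each $N_i$ has infinite index in $G$ would contradict $[G:N_i]\leqslant n$; instead each $N_i$ is itself infinite (a finite-index subgroup of an infinite group), and in particular non-trivial. So I can choose an element $g_i\in N_i\smallsetminus\{1\}$ for each $i$, and set
\[\F=\{g_1,\dots,g_k\}\subset G\smallsetminus\{1\}.\]
Now suppose $N\lhd G$ satisfies $N\cap \F=\emptyset$. If we had $[G:N]\leqslant n$, then $N$ would be one of the $N_i$'s, forcing $g_i\in N$ for that $i$, contradicting $N\cap \F=\emptyset$. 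Hence $[G:N]>n$, i.e.\ $|G/N|>n$.

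For the final assertion about $b^{(2)}_0$, I would simply invoke the standard computation $b^{(2)}_0(H)=1/|H|$ (interpreted as $0$ when $H$ is infinite); see \cite[Theorem 6.54 (8)(b)]{luck2002l2}. Thus when $|G/N|>n$ we obtain
\[b^{(2)}_0(G/N)=\frac{1}{|G/N|}<\frac{1}{n},\]
where the inequality is strict even in the case $|G/N|=\infty$, interpreting $1/\infty$ as $0$.

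There is no real obstacle: the only subtlety is the observation that each $N_i$ is non-trivial so that an element $g_i\neq 1$ can actually be picked. This is where the hypothesis that $G$ is infinite enters in a crucial way --- for a finite group $G$, the trivial subgroup itself has finite index and contains no non-identity element to place in $\F$, so the conclusion would fail (consistent with \Cref{rm. sufficiently deep for finite groups}).
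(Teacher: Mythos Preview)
Your proof is correct but follows a genuinely different route from the paper's. The paper argues directly via the Cayley graph: fixing a finite generating set $S$, it chooses a radius $R$ so that the ball $B_R$ in $\Gamma(G,S)$ contains more than $n$ vertices (possible since $G$ is infinite), and takes $\F$ to be the set of non-identity elements in the ball $B_{2R}$. If $N\cap\F=\emptyset$, then distinct vertices $u,v\in B_R$ satisfy $uv^{-1}\in B_{2R}\smallsetminus\{1\}\subset\F$, so $uv^{-1}\notin N$ and hence $u,v$ map to distinct elements of $G/N$; thus $|G/N|>|G\cap B_R|>n$.

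By contrast, you invoke M.~Hall's theorem that a finitely generated group has only finitely many subgroups of index at most $n$, and pick one non-identity witness from each such normal subgroup. Your argument is slicker and yields a smaller $\F$ (one element per bad subgroup rather than an entire Cayley ball), at the cost of importing a nontrivial external result. The paper's argument is entirely self-contained and constructive, and has the advantage of fitting the ambient narrative: the same Cayley-ball technique is used in the proof of \Cref{thm. matrix approximation ver 2} to show convergence in the space of marked groups, so the reader has already seen the idea. Either approach is perfectly adequate for this elementary lemma.
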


\begin{proof}
    Let $S$ be a finite generating set of $G$. For $r\in\mathbb N^+$ we denote by $B_r$ the subgraph of the Cayley graph $\Gamma(G,S)$ consisting of all points within distance $r$ from the vertex $1$. Choose $R\in\mathbb N^+$ such that $B_R$ has more than $n$ vertices. Let $\F=G\cap B_{2R}\smallsetminus\{1\}$. Then for every two vertices $u\neq v\in B_R$, we have $uv^{-1}\in \F$. So if $N\lhd G$ is a normal subgroup such that $N\cap \F=\emptyset$, then $u$ and $v$ have different images in $G/N$, whence $|G/N|>|G\cap B_R|>n$. The last inequality of the lemma follows from \cite[Theorem 6.54 (8b)]{luck2002l2}.
\end{proof}

\begin{proof}[Proof of \Cref{thm. intro li}] First consider the case where $|H|<\infty$. Fix $n\geqslant 0$ and let $\F_{n,\delta}=H\smallsetminus\{1\}$. Then the only normal subgroup $N\lhd H$ that satisfies $N\cap \F_{n,\delta}=\emptyset$ is $\{1\}$. So $\bg=G$ and the result follows.

Now consider the case where $|H|=\infty$. Since $H$ is amenable, then so is $H/N$, being a quotient of an amenable group. By \cite[Theorem 6.37]{luck2002l2}, we have 
\begin{equation}\label{eq. vanishing l2 betti 1}
    b^{(2)}_n(H)=b^{(2)}_n(H/N)=0\text{ for all }n\geqslant 1.
\end{equation}
and by \cite[Theorem 6.54 (8b)]{luck2002l2}, we have
\begin{equation}\label{eq. vanishing l2 betti 2}
    b^{(2)}_0(H)=0.
\end{equation}
By \Cref{lem. large quotient}, there exists a finite subset $\F'_\delta\subset H\smallsetminus\{1\}$ such that 
\begin{equation}\label{eq. small l2 betti}
    b^{(2)}_0(\bh)<\delta/2
\end{equation}
whenever $N\cap \F'_\delta=\emptyset$. Now fix $n\geqslant 0$. \Cref{thm. li dehn filling} provides us a finite subset $\F'_{n,\delta}\subset H\smallsetminus\{1\}$ such that if $N\cap \F'_{n,\delta}=\emptyset$, then
\begin{equation}\label{eq. bound l2 betti}
    -b^{(2)}_n(H)-b^{(2)}_{n-1}(H/N)-\delta/2< b^{(2)}_n(G/\ll N \rr)- b^{(2)}_n(G) < b^{(2)}_{n-1}(H)+b^{(2)}_n(H/N)+\delta/2.
\end{equation}
Let $\F_{n,\delta}=\F'_\delta\cup \F'_{n,\delta}$. So, if $N\cap \F_{n,\delta}=\emptyset$, then all of \eqref{eq. vanishing l2 betti 1}, \eqref{eq. vanishing l2 betti 2}, \eqref{eq. small l2 betti} and \eqref{eq. bound l2 betti} hold, which yields the desired result.
\end{proof}

\subsection{Bounding finite subgroups}

Next, we refine the above approximation result to an equality by applying the Atiyah Conjecture. To achieve this, we proceed as follows:

\begin{enumerate}
    \item[(i)] Bound the order of finite subgroups arising in Dehn fillings (this will be addressed in the current subsection), and
    \item[(ii)] Apply the theory of virtually compact special groups to show that certain Dehn fillings satisfy the Atiyah Conjecture (this will be discussed in the following subsection).
\end{enumerate}

\begin{lemma}\label{lem. fin}
    Let $G$ be a group and let $N\lhd G$ be a normal subgroup. Then 
    \[\max\fin(G)\leqslant \max\fin(N)\cdot\max\fin(G/N).\]
\end{lemma}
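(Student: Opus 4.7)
The plan is to pick an arbitrary finite subgroup $H \leqslant G$ and bound its order by the product $\max\fin(N)\cdot\max\fin(G/N)$. The key is the short exact sequence obtained by intersecting with $N$: there is an injection $H/(H\cap N) \hookrightarrow G/N$, and $H\cap N$ sits inside $N$.

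More precisely, first I would observe that $H \cap N$ is a finite subgroup of $N$, so by definition $|H\cap N| \in \fin(N)$, whence $|H\cap N| \leqslant \max\fin(N)$. Next, the second isomorphism theorem gives $H/(H\cap N) \cong HN/N$, which is a finite subgroup of $G/N$; therefore $|H/(H\cap N)| \in \fin(G/N)$ and so $|H/(H\cap N)| \leqslant \max\fin(G/N)$. Combining via Lagrange's theorem,
\[
|H| \;=\; |H\cap N|\cdot |H/(H\cap N)| \;\leqslant\; \max\fin(N)\cdot \max\fin(G/N).
\]
Taking the supremum over all finite subgroups $H\leqslant G$ yields the claimed inequality.

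There is essentially no obstacle: the argument is a direct application of Lagrange and the second isomorphism theorem. The only mild subtlety is handling the case when $\max\fin(N)$ or $\max\fin(G/N)$ is infinite, in which case the inequality holds vacuously under the usual extension of multiplication to $[1,\infty]$. Similarly, if $G$ is torsion-free then $\fin(G)=\{1\}$ and the inequality $1 \leqslant 1\cdot 1$ is trivial.
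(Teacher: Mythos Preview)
Your proof is correct and essentially identical to the paper's: the paper also picks an arbitrary finite subgroup $K\leqslant G$, writes $|K|=|K\cap\ker\phi|\cdot|\phi(K)|$ for the quotient map $\phi\colon G\to G/N$, and bounds each factor by $\max\fin(N)$ and $\max\fin(G/N)$ respectively. Your version is slightly more explicit in invoking the second isomorphism theorem and discussing the edge cases, but the argument is the same.
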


\begin{proof}
    Let $K\leqslant G$ be a finite subgroup and let $\phi\colon G\rightarrow G/N$ be the natural quotient map. Then $|K|=|K\cap\ker \phi|\cdot |\phi(K)|\leqslant \max\fin(N)\cdot\max\fin(G/N)$.
\end{proof}

\begin{lemma}\label{lem. pre torsion-free}
    Let $G$ be a group that is hyperbolic relative to a subgroup $H$. Then there exists a finite subset $\F\subset H\smallsetminus\{1\}$ such that if a normal subgroup $N\lhd H$ satisfies $N\cap \F=\emptyset$, then 
    \[\max\fin(G/\ll N \rr)\leqslant \max\big(\fin(G)\cup\fin(H/N)\big).\]
\end{lemma}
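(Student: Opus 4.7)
The plan is to apply the classical result on Dehn filling saying that finite subgroups of a sufficiently deep Dehn filling quotient come, up to conjugacy, either from finite subgroups of $G$ or from finite subgroups of the peripheral quotient. Specifically, I would invoke the version due to Osin and its extension in Dahmani--Guirardel--Osin (for example, inside \cite[Theorem 7.19]{dahmani2017hyperbolically} together with \cite[Proposition 4.33]{dahmani2017hyperbolically}): there exists a finite subset $\F\subset H\smallsetminus\{1\}$ such that whenever $N\lhd H$ satisfies $N\cap\F=\emptyset$, every finite subgroup $F\leqslant \bg=G/\ll N\rr$ is conjugate either to the image of a finite subgroup of $G$ under the quotient $G\twoheadrightarrow \bg$, or to a subgroup of $\bh$ (viewed as a subgroup of $\bg$ via the injection $\bh\hookrightarrow\bg$ provided by \Cref{thm. simple Dehn filling}).

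Granting this, the conclusion is immediate. Fix such a finite $\F$ and an $N\lhd H$ with $N\cap\F=\emptyset$, and let $F\leqslant \bg$ be any finite subgroup. In the first alternative, $F$ is the image of a finite subgroup $K\leqslant G$, so
\[|F|\;\leqslant\;|K|\;\leqslant\;\max\fin(G).\]
In the second alternative, after conjugation $F\leqslant\bh$, so
\[|F|\;\leqslant\;\max\fin(\bh).\]
Either way,
\[|F|\;\leqslant\;\max\bigl(\fin(G)\cup\fin(\bh)\bigr),\]
and taking the supremum over finite $F\leqslant\bg$ yields the desired inequality.

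The only real step is to identify the correct citation for the finite-subgroup control on sufficiently deep fillings in the relatively hyperbolic setting; this is the main (but essentially bookkeeping) obstacle, because the exact statement is phrased slightly differently in the literature. The paper's own Definition \ref{def. sufficiently deep} conveniently absorbs the choice of $\F$, so no quantitative tracking of filling depth is needed here. Note that the finiteness of $\F$ is crucial (relatively hyperbolic fillings use the avoidance of only finitely many elements), and the hypothesis that $H$ is a peripheral subgroup of a relatively hyperbolic $G$ gives exactly the setting where the required dichotomy applies.
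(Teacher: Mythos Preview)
Your approach is essentially identical to the paper's: both invoke the dichotomy that every finite subgroup of $\bg$ is, up to conjugacy, either the image of a finite subgroup of $G$ or contained in $\bh$, and then read off the bound. The paper's citation for this finite-subgroup control is \cite[Lemma 4.3]{dahmani2018recognizing} (which gives a finite $\fS\subset G\smallsetminus\{1\}$ and the dichotomy whenever $\ll N\rr\cap\fS=\emptyset$), combined with \cite[Theorem 1.1]{osin2007peripheral} to convert this into avoidance of a finite $\F\subset H\smallsetminus\{1\}$ and to ensure $H\cap\ll N\rr=N$; the references you suggest from \cite{dahmani2017hyperbolically} do not state this dichotomy directly.
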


\begin{proof}
    For every $N\lhd H$, let $\phi_N\colon G\rightarrow G/\ll N \rr$ be the natural quotient map. By \cite[Lemma 4.3]{dahmani2018recognizing}, there exists a finite subset $\fS\subset G\smallsetminus\{1\}$ and a collection $\mathcal{C}$ of finite subgroups of $G$ such that if a normal subgroup $N\lhd H$ satisfies $\ll N \rr\cap \fS=\emptyset$, then the following holds:

    \begin{enumerate}[label=($\ast$)]
        \item\label{item. control finite subgroup} for every finite subgroup $K\leqslant G/\ll N \rr$, there exists a subgroup $L\leqslant G$ such that $K\leqslant \phi_N(L)$ and $L$ either conjugates to $H$ or belongs to $\mathcal{C}$.
    \end{enumerate}

     By \cite[Theorem 1.1]{osin2007peripheral}, there exists a finite subset $\F\subset H\smallsetminus\{1\}$ such that if a normal subgroup $N\lhd H$ satisfies $N\cap \F=\emptyset$, then $\ll N \rr\cap \fS=\emptyset$ and $H\cap\ll N \rr=N$.

    Now let $N\lhd H$ be a normal subgroup such that $N\cap \F=\emptyset$, and let $K\leqslant G/\ll N \rr$ be a finite subgroup. Let $L$ and $\mathcal C$ be given by \ref{item. control finite subgroup}. If $L$ conjugates to $H$, then $K$ conjugates to a finite subgroup of $\phi_N(H)\cong H/N$, and thus $|K|\leqslant \max\fin(H/N)$. If $L\in\mathcal{C}$, then $|K|\leqslant |L|\leqslant \max\fin(G)$.
\end{proof}

\subsection{Compact special Dehn filling}

\begin{definition}
    A group $G$ is \textit{virtually compact cubical} if there is a finite-index subgroup $H\leqslant G$ such that $H$ is the fundamental group of a compact locally CAT(0) cube complex $X$.
    
    The group $G$ is \textit{virtually compact special} if in addition $X$ is special. Note that \cite{agol2016alternate, groves2022specializing} call such groups virtually special, while \cite{schreve2015l2} calls such groups virtually cocompact special.

    A group $G$ is \textit{virtually special} if there is a finite-index subgroup $H\leqslant G$ such that $H$ is the fundamental group of a special locally CAT(0) cube complex.
\end{definition}

Below, we collect some known facts about virtually (compact) special groups that will be used later.

\begin{proposition}\label{prop. properties of vir special groups} The following hold. 

\begin{enumerate}[label=(\roman*)]
\item \label{item: special} Any cover of a special cube complex is special. In particular, any subgroup of a virtually special group is virtually special.

    \item Every virtually compact cubical group is of type $F_\infty$ and has finite virtual cohomological dimension.

    \item Every finitely generated virtually special group is virtually locally indicable.

    \item Every finitely generated virtually special group is virtually residually finite rationally solvable (RFRS).

    \item (\cite{groves2022specializing}) Let $G$ be a virtually compact cubical group that is hyperbolic relative to a family of virtually abelian groups. Then $G$ is virtually compact special. In particular, virtually compact cubical hyperbolic groups are virtually compact special, which was first proved by \cite[Theorem 1.1]{agol2013virtual}.
\end{enumerate}
\end{proposition}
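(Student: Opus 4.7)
The plan is to dispatch each of (i)--(v) by a short argument grounded in standard facts about special cube complexes and right-angled Artin groups; no single item is genuinely difficult. I would begin with (i), since items (iii) and (iv) will rely on passing from a virtually special group to its special finite-index subgroup, and items (ii)--(iv) each require only a couple of lines once the right reference is in hand.

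For (i), I would verify directly from the Haglund--Wise definition that specialness is a local condition on hyperplanes. Being two-sided, embedded, and free of direct self-osculations and inter-osculations are all statements that can be checked in the links of vertices, so they are preserved under local isometries and, in particular, under covering maps of cube complexes; hence any cover of a special cube complex is special. For the ``in particular'' clause, given $K \leqslant G$ with $G$ virtually special and a finite-index special $H = \pi_1(X) \leqslant G$ with $X$ special, the subgroup $K \cap H$ has finite index in $K$ and is realised as $\pi_1$ of a connected cover of $X$, which is special by the first half; hence $K \cap H$ is special and $K$ is virtually special.

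For (ii), I would use that the universal cover of a locally $\mathrm{CAT}(0)$ cube complex is itself $\mathrm{CAT}(0)$ and therefore contractible. So if $H \leqslant G$ is finite-index and $H = \pi_1(X)$ with $X$ a compact locally $\mathrm{CAT}(0)$ cube complex, then $X$ is a finite model for $BH$; this exhibits $H$ as a group of type $F$ with $\cd H \leqslant \dim X < \infty$, and the commensurability invariance of type $F_\infty$ and of having finite $\vcd$ then promotes these properties to $G$.

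For (iii) and (iv), I would invoke the Haglund--Wise embedding theorem \cite[Theorem 1.1]{haglund2008special} (already listed among the paper's ingredients) to embed the finitely generated special finite-index subgroup $H \leqslant G$ into a finitely generated right-angled Artin group $A$, with finite generation of $H$ ensured by Schreier's formula. Right-angled Artin groups are bi-orderable, hence locally indicable, and are known to be RFRS; since both local indicability and the RFRS property are inherited by arbitrary subgroups, $H$ is both locally indicable and RFRS, so $G$ is virtually so. Finally, (v) is a direct citation of the main result of \cite{groves2022specializing}, which extends Agol's theorem \cite{agol2013virtual} from the hyperbolic to the relatively hyperbolic setting with virtually abelian peripherals; no further argument is needed. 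The main ``obstacle'' throughout is organisational---aligning the right reference or local calculation with each item---rather than mathematical.
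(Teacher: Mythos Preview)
Your proposal is correct and follows essentially the same strategy as the paper, with two small divergences worth noting. For (i), you argue directly from the hyperplane definition that the four pathologies pull back under covers; the paper instead invokes Wise's characterisation that a cube complex is special if and only if it admits a local isometry to a Salvetti complex, and observes that the composition of a cover with such a local isometry is again a local isometry. Both routes are standard and equally short. For (v), you describe the step as a direct citation, but the paper does a little more: it passes to a finite-index normal subgroup $G_0$ acting freely and geometrically on the cube complex, equips $G_0$ with the induced peripheral structure, and checks (via \cite[Remark 1.4]{groves2022specializing}) that Assumptions 1.1 and 1.2 of Groves--Manning hold before applying their Theorem A. This is routine, but strictly speaking the cited theorem is stated for the group acting on the cube complex rather than a virtual overgroup, so the passage to $G_0$ is not entirely cosmetic. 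One further minor point: in (iv) you assert RAAGs are RFRS, whereas the paper (following Agol) only claims they are \emph{virtually} RFRS; this does not affect the conclusion since virtual RFRS still passes to subgroups.
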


\begin{proof}\hspace{1cm}
    \begin{enumerate}[label=(\roman*)] 
\item Let $X$ be a special cube complex. By   \cite[Proposition 6.2]{wise2021structure},  $X$ admits a local-isometry to a Salvetti complex $S$. Let $Y$ be a cover of $X$. Then, the covering $Y\to X$, followed by the local-isometry $X\to S$, induces a local-isometry $Y\to S$. Thus, by \cite[Proposition 6.2]{wise2021structure}, $Y$ is special.

Let $G$ be a special group and $H$ a subgroup. Consider a finite-index subgroup $G_0\leq G$ that is a fundamental group of special cube complex $X$.   Let $Y$ be the cover of $X$ whose fundamental group is $H_0:=H\cap G_0$, which is a finite-index subgroup of $H$. Then, by the above, $Y$ is special.\\   
    
        \item Let $G$ be a virtually compact special group. Then $G$ has a finite-index subgroup $G_1$ that is the fundamental group of some compact locally CAT(0) cube complex. In particular, $G_1$ is of type $F$.\\

        \item Let $G$ be a finitely generated virtually special group. By \cite[Theorem 1.1]{haglund2008special}, $G$ has a finite-index subgroup $G_1$ that embeds into a finitely generated right-angled Artin group. By \cite[p. 387 and p.389]{duchamp1992lower}, every finitely generated right-angled Artin group is residually torsion-free nilpotent, and thus is residually $p$ for any prime $p$ by \cite[Theorem 2.1]{gruenberg1957residual}, and thus is bi-orderable by \cite{rhemtulla1973residually}, and thus is locally indicable by \cite[Corollary 1.4]{rhemtulla2002local}. So $G_1$ is locally indicable. \\

        \item Let $G$ be a finitely generated virtually special group. By (iii), $G$ has a finite-index subgroup $G_1$ that embeds into a finitely generated right-angled Artin group. By \cite[Corollary 2.3]{agol2008criteria}, finitely generated right-angled Artin groups are virtually RFRS, and thus so is the subgroup $G_1$.\\

        \item Let $\{H_i\}^n_{i=1}$ be a family of virtually abelian subgroups of $G$ such that $G$ is hyperbolic relative to $\{H_i\}^n_{i=1}$, let $X$ be a CAT(0) cube complex with a free geometric action of a finite-index normal subgroup $G_0\leqslant G$, and let $\{K_j\}^m_{j=1}$ be the peripheral structure of $G_0$ induced by $\{H_i\}^n_{i=1}$. As each $H_i$ is abelian, so is each $K_j$, and thus by \cite[Remark 1.4]{groves2022specializing} the relatively hyperbolic pair $(G_0,\{K_j\}^\ell_{j=1})$ and the action $G_0\curvearrowright X$ satisfy Assumptions 1.1 and 1.2 of \cite{groves2022specializing}. Hence, by \cite[Theorem A]{groves2022specializing}, the group $G_0$, and therefore the group $G$, is virtually compact special.

        The last statement follows by noting that every hyperbolic group is hyperbolic relative to the trivial subgroup $\{1\}$.
    \end{enumerate}
\end{proof}

\begin{theorem}\label{thm. hyp.}
    Let $G$ be a virtually compact cubical hyperbolic group, and let $H\leqslant G$ be an almost malnormal quasi-convex subgroup. Then there exists a finite-index torsion-free normal subgroup $K\lhd H$ such that for every sufficiently deep normal subgroup $N\lhd H$ that satisfies 
    \begin{itemize}
        \item $N\leqslant K$, and
        \item $K/N$ is torsion-free virtually compact cubical hyperbolic,
    \end{itemize}
    we have
    \[b^{(2)}_1(G/\ll N \rr)\leqslant b^{(2)}_1(G).\]
\end{theorem}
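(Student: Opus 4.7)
The strategy is to promote the approximate inequality of \Cref{cor. locally indicable dehn filling} to a non-strict inequality by exploiting the discreteness of $L^2$-Betti numbers guaranteed by the Atiyah Conjecture. Since $G$ is virtually compact cubical hyperbolic, \Cref{prop. properties of vir special groups}~(v) gives that $G$ is virtually compact special, hence finitely generated and virtually locally indicable, and so \Cref{cor. locally indicable dehn filling} applies: for every $\delta>0$ there is a finite set $\F_\delta\subset H\smallsetminus\{1\}$ such that $N\cap\F_\delta=\emptyset$ implies $b^{(2)}_1(\bg)<b^{(2)}_1(G)+\delta$.

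Next I would invoke the Malnormal Special Quotient Theorem of Wise and Agol--Groves--Manning (in its relative form) to produce a finite-index torsion-free normal subgroup $K\lhd H$ such that for $N\leqslant K$ with $K/N$ torsion-free virtually compact cubical hyperbolic and $N$ sufficiently deep in $H$, the quotient $\bg$ is itself virtually compact cubical hyperbolic, hence virtually compact special by another application of \Cref{prop. properties of vir special groups}~(v). Schreve's theorem then gives the Atiyah Conjecture for $\bg$, yielding $b^{(2)}_1(\bg)\in\frac{1}{\fin(\bg)}\cdot\Z$, and analogously $b^{(2)}_1(G)\in\frac{1}{\fin(G)}\cdot\Z$. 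To place both numbers inside one common discrete subgroup of $\Q$, I would apply \Cref{lem. pre torsion-free} to further enlarge the ``sufficiently deep'' condition so that $\max\fin(\bg)\leqslant\max(\fin(G)\cup\fin(\bh))$. Because $K/N$ is a torsion-free finite-index subgroup of $\bh=H/N$, every finite subgroup of $\bh$ injects into $H/K$, so $\max\fin(\bh)\leqslant[H:K]$. Setting $C:=\max(\max\fin(G),[H:K])$, both $b^{(2)}_1(G)$ and $b^{(2)}_1(\bg)$ lie in the discrete subgroup $\frac{1}{C!}\cdot\Z$.

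To conclude, fix $\delta:=\frac{1}{2\cdot C!}$ and let $\F\subset H\smallsetminus\{1\}$ be the union of $\F_\delta$ with the finite subsets produced by the MSQT step and by \Cref{lem. pre torsion-free}. For any $N\lhd H$ disjoint from $\F$, with $N\leqslant K$ and $K/N$ torsion-free virtually compact cubical hyperbolic, the strict inequality $b^{(2)}_1(\bg)<b^{(2)}_1(G)+\frac{1}{2\cdot C!}$ combined with $b^{(2)}_1(G),b^{(2)}_1(\bg)\in \frac{1}{C!}\cdot\Z$ forces $b^{(2)}_1(\bg)\leqslant b^{(2)}_1(G)$, as desired. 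The main obstacle I anticipate is citing the precise variant of the MSQT that outputs a virtually compact cubical hyperbolic quotient under the weaker hypothesis that $K/N$ is torsion-free virtually compact cubical hyperbolic rather than merely finite; in the classical case (finite $K/N$) this is exactly the Wise and Agol--Groves--Manning statement, and the paper's ingredient list suggests that the required flexibility comes from the relative version of the MSQT.
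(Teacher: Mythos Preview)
Your proposal is correct and follows essentially the same route as the paper: obtain $K$ from the Malnormal Special Quotient Theorem, apply \Cref{cor. locally indicable dehn filling} with $\delta$ calibrated to $C!$, bound $\fin(\bg)$ via \Cref{lem. pre torsion-free} together with the torsion-freeness of $K/N$, and invoke Schreve's result on the Atiyah Conjecture for both $G$ and $\bg$ to conclude by discreteness. The obstacle you anticipate is not really one: the precise statement needed is \cite[Theorem~2.7]{agol2016alternate}, which already allows $H/N$ to be an arbitrary virtually compact special hyperbolic group (not just finite) and outputs a virtually compact special hyperbolic quotient; the paper cites exactly this, and also uses \cite[Theorem~1.1]{osin2007peripheral} to ensure $\phi_N(H)\cong H/N$ so that the hypothesis on $K/N$ transfers to $\phi_N(H)$.
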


\begin{remark}\label{rm. assumption differ}
    Note that in \Cref{thm. intro hyp} we assume that $G$ is hyperbolic relative to $H$, whereas here we assume that $H$ is an almost malnormal quasi-convex subgroup. These assumptions are equivalent, by \cite[Theorem 7.11]{bowditch2012relatively}.
\end{remark}

\begin{proof}
If $H$ is finite we can simply take $K=\{1\}$, and the theorem holds trivially. Below, we assume $|H|=\infty$. Then, by \cite[Theorem 7.11]{bowditch2012relatively}, $G$ is hyperbolic relative to $H$. For every $N\lhd H$, let $\phi_N\colon G\rightarrow G/\ll N \rr$ be the natural quotient map. 

\begin{claim}\label{cl. special} 
    There is a finite-index normal subgroup $K\lhd H$ such that if a normal subgroup $N\lhd H$ satisfies that $N\leqslant K$ and $\phi_N(H)$ is virtually compact cubical hyperbolic, then $G/\ll N \rr$ is virtually compact special hyperbolic.
\end{claim}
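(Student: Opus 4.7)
The plan is to take $K$ to be a finite-index normal subgroup of $H$ obtained by combining the Malnormal Special Quotient Theorem (MSQT) with additional refinements via residual finiteness. Since $G$ is virtually compact cubical hyperbolic, \Cref{prop. properties of vir special groups} gives that $G$ is virtually compact special; by \Cref{rm. assumption differ}, $G$ is hyperbolic relative to $H$. So we are precisely in the setting of the MSQT with $H$ as an almost malnormal quasiconvex peripheral subgroup of a virtually compact special hyperbolic group.

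First, I would invoke the MSQT in the form proved by Wise \cite{wise2021structure} and reformulated by Agol--Groves--Manning \cite{agol2016alternate} to obtain a finite-index normal subgroup $K_0 \lhd H$ with the following property: for every $N \lhd H$ with $N \leqslant K_0$ such that $H/N$ is virtually compact special and $G/\ll N \rr$ is hyperbolic, the quotient $G/\ll N \rr$ is virtually compact special. Next, applying Osin's Dehn filling theorem \cite{osin2007peripheral} to the relatively hyperbolic pair $(G, H)$, I would extract a finite subset $\F \subset H \smallsetminus \{1\}$ such that for any $N \lhd H$ with $N \cap \F = \emptyset$, the quotient $G/\ll N \rr$ is hyperbolic relative to $H/N$, and hence hyperbolic whenever $H/N$ is hyperbolic. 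Finally, since $G$ is virtually compact special it is residually finite by Haglund--Wise, and therefore so is the subgroup $H$; this lets me pass to a further finite-index normal subgroup $K \lhd H$ with $K \leqslant K_0$ and $K \cap \F = \emptyset$. Then any $N \lhd H$ with $N \leqslant K$ automatically satisfies $N \cap \F = \emptyset$. Under the hypothesis that $\phi_N(H) = H/N$ is virtually compact cubical hyperbolic, \Cref{prop. properties of vir special groups} gives that $H/N$ is virtually compact special, and the choice of $\F$ guarantees hyperbolicity of $G/\ll N \rr$; the MSQT then yields that $G/\ll N \rr$ is virtually compact special hyperbolic.

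The main obstacle I anticipate is pinning down the correct version of MSQT to cite, since its most classical formulation concerns finite-index Dehn fillings, whereas here $H/N$ is permitted to be an infinite virtually compact cubical hyperbolic group. The generalized versions in \cite{wise2021structure} and \cite{agol2016alternate} do accommodate this setting under the extra hypothesis that each peripheral quotient is itself virtually compact special, which is exactly what our standing assumption on $\phi_N(H)$ supplies. Once that variant is identified, the remaining ingredients---virtual compact specialness of $H/N$ via \Cref{prop. properties of vir special groups} and hyperbolicity of $G/\ll N \rr$ via Osin's filling theorem---combine without further difficulty to yield the claim.
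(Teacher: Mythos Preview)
Your argument is correct, but it is more elaborate than what the paper does. The paper simply invokes \cite[Theorem 2.7]{agol2016alternate} as a black box: that theorem already produces a finite-index normal subgroup $K\lhd H$ such that whenever $N\leqslant K$ and $\phi_N(H)$ is virtually compact special hyperbolic, the quotient $G/\ll N\rr$ is virtually compact special \emph{and} hyperbolic, all in one statement. The only additional remark needed is that ``virtually compact cubical hyperbolic'' and ``virtually compact special hyperbolic'' coincide, which is \Cref{prop. properties of vir special groups}.

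By contrast, you separate the two conclusions: you use the MSQT to get virtual compact specialness, then Osin's theorem to get hyperbolicity of the quotient (as a ``sufficiently deep'' condition given by a finite set $\F$), and finally residual finiteness of $H$ to absorb the finite set $\F$ into a genuine finite-index subgroup $K\leqslant K_0$. This works, and it has the virtue of making explicit what \cite[Theorem 2.7]{agol2016alternate} is doing internally; but the residual-finiteness step is unnecessary once you observe that the Agol--Groves--Manning formulation already outputs a finite-index subgroup rather than a finite avoidance set, and already guarantees hyperbolicity of the filling.
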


\begin{proof}[Proof of claim]
    The claim follows from \cite[Theorem 2.7]{agol2016alternate}. Note that \cite[Theorem 2.7]{agol2016alternate} requires both $G$ and $\phi_N(H)$ to be virtually compact special hyperbolic, but by \Cref{prop. properties of vir special groups}, this is equivalent to being virtually compact cubical and hyperbolic.
\end{proof}

Fix a $K\lhd H$ that satisfies \Cref{cl. special}. As $G$ is virtually compact cubical, it is virtually torsion-free. Let $G_1\lhd G$ be a torsion-free finite-index normal subgroup. By \Cref{prop. properties of vir special groups}, $G$ is virtually compact special, and thus is virtually locally indicable, again by \Cref{prop. properties of vir special groups}. By \Cref{cor. locally indicable dehn filling}, there exists a finite subset $\F_1\subset H\smallsetminus\{1\}$ such that if a normal subgroup $N\lhd H$ satisfies $N\cap \F_1=\emptyset$, then 
\begin{equation}\label{eq. hyp. 1}
    b^{(2)}_1(G/\ll N \rr)<b^{(2)}_1(G)+\dfrac{1}{\max\{|G/G_1|,|H/K|\}!}.
\end{equation}

By \cite[Theorem 1.1]{osin2007peripheral}, there exists a finite subset $\F_2\subset H\smallsetminus\{1\}$ such that if a normal subgroup $N\lhd H$ satisfies $N\cap \F_2=\emptyset$, then $\phi_N(H)\cong H/N$. Let $\F_3\subset H\smallsetminus\{1\}$ be the finite subset given by \Cref{lem. pre torsion-free}. Let 
\[\F_\delta=\F_1\cup \F_2\cup \F_3\]
and consider a normal subgroup $N\lhd H$ such that $N\leqslant K, N\cap \F_\delta=\emptyset$ and $K/N$ is torsion-free virtually compact cubical hyperbolic. As $\phi_N(K)\cong K/N$ is a finite-index subgroup of $\phi_N(H)\cong H/N$, we have that $\phi_N(H)$ is virtually compact cubical hyperbolic. \Cref{cl. special} implies that $G/\ll N \rr$ is virtually compact special, and thus satisfies the Atiyah Conjecture \cite{schreve2015l2}. Therefore, by \Cref{lem. pre torsion-free}
\begin{equation}\label{eq. 4}
    b^{(2)}_1(G/\ll N \rr)\in \dfrac{1}{\fin(G/\ll N \rr)}\cdot \Z\leqslant \dfrac{1}{(\max (\fin(G)\cup \fin(H/N)))!}\cdot \Z.
\end{equation}
As $G_1$ is a torsion-free normal subgroup of $G$, \Cref{lem. fin} implies 
\begin{equation}\label{eq. 5}
    \max\fin(G)\leqslant \max\fin(G_1)\cdot \max\fin(G/G_1)=|G/G_1|.
\end{equation}
Similarly, by considering $K/N$ in $H/N$ we have
\[\fin(H/N)\leqslant \fin(K/N)\cdot\fin(H/K)=|H/K|.\]
Combining these with \eqref{eq. 4}, we get
\begin{equation}\label{eq. hyp. 2}
    b^{(2)}_1(G/\ll N \rr)\in \dfrac{1}{\max\{|G/G_1|,|H/K|\}!}\cdot \Z.
\end{equation}
As $G$ is virtually compact special, it satisfies the Atiyah Conjecture \cite{schreve2015l2}, and thus

\begin{equation}\label{eq. l2 of big group}
    b^{(2)}_1(G)\in \dfrac{1}{\fin(G)}\cdot\Z\leqslant \dfrac{1}{\max\{|G/G_1|,|H/K|\}!}\cdot \Z
\end{equation}
by \eqref{eq. 5}.

The theorem follows from \eqref{eq. hyp. 1}, \eqref{eq. hyp. 2}, and \eqref{eq. l2 of big group}.
\end{proof}

\begin{remark}\label{rm. why finite-index}
    In the proof of \Cref{thm. hyp.}, the passage from $H$ to the finite-index subgroup $K$ is a consequence of the Malnormal Special Quotient Theorem, which guarantees that the Dehn filling quotient is virtually compact special, which in turn guarantees that the quotient satisfies the Atiyah Conjecture. If the Atiyah Conjecture is preserved by sufficiently deep Dehn fillings (i.e., whenever $G$ is a group that satisfies the Atiyah Conjecture and has a hyperbolically embedded subgroup $H\hookrightarrow_h G$, the quotient $G/\ll N \rr$ satisfies the Atiyah Conjecture for sufficiently deep $N\lhd H$), one can assume $K=H$.
\end{remark}

Our next result concerns groups hyperbolic relative to abelian subgroups. The lemma below follows by the work of Groves--Manning \cite{groves2022specializing} and \cite{wise2021structure}.

\begin{lemma}\label{lem. special quotient}
    Let $G$ be a virtually compact cubical group that is hyperbolic relative to a finite family of abelian subgroups $\{H_i\}^k_{i=1}$. Then there exists a family of finite-index torsion-free subgroups $\{P_i\lhd H_i\}^k_{i=1}$ such that for every family of subgroups $\{N_i\leqslant P_i\}^k_{i=1}$ that satisfies that $P_i/N_i$ is virtually $\Z$ for all $i$, the group $G/\ll \bigcup^k_{i=1} N_i \rr$ is virtually compact special.
\end{lemma}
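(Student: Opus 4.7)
The plan is to pass to a torsion-free finite-index compact special subgroup of $G$, apply the relative Malnormal Special Quotient Theorem there, and then transfer the resulting systems of peripheral subgroups back to $G$ via \Cref{lem. df in fi subgroup}. By \Cref{prop. properties of vir special groups}(v), the hypotheses already guarantee that $G$ is virtually compact special. First, I would choose a torsion-free finite-index normal subgroup $G_0\lhd G$ that is the fundamental group of a compact special cube complex, and let $\{K_j\}_{j=1}^\ell$ denote the peripheral structure on $G_0$ induced from $\{H_i\}_{i=1}^k$. Each $K_j$ is a finite-index subgroup of a conjugate of some $G_0\cap H_i$, hence abelian (and finitely generated, since $G$ is finitely generated).

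Next, I would invoke the relative Malnormal Special Quotient Theorem of Wise for the pair $(G_0,\{K_j\})$. Because the peripherals are abelian, this produces finite-index subgroups $\{\dot K_j\leqslant K_j\}$ such that, for every family $\{M_j\lhd K_j\}$ with $M_j\leqslant\dot K_j$ and each $K_j/M_j$ virtually cyclic, the Dehn filling $G_0/\ll\bigcup_j M_j\rr$ is again virtually compact special. The hypothesis ``$P_i/N_i$ is virtually $\Z$'' in the statement is present precisely so that the induced peripheral quotients $K_j/M_j$ are virtually cyclic and this version of the relative MSQT applies.

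Using \Cref{lem. df in fi subgroup}(v), I would pull back $\{\dot K_j\}$ to a family of finite-index subgroups of $\{G_0\cap H_i\}$, and then intersect with the (canonical) torsion-free finite-index subgroup of each $H_i$ to obtain the required $P_i\lhd H_i$; normality is automatic since $H_i$ is abelian. Given any family $\{N_i\leqslant P_i\}$ with $P_i/N_i$ virtually $\Z$, \Cref{lem. df in fi subgroup}(i) and (ii) yield a $G$-equivariant family $\{M_j\lhd K_j\}$ with $M_j\leqslant\dot K_j$ and $K_j/M_j$ isomorphic to a finite-index subgroup of some $P_i/N_i$, hence virtually cyclic. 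Applying the previous paragraph, $G_0/\ll\bigcup_j M_j\rr$ is virtually compact special; by \Cref{lem. df in fi subgroup}(iii), this group sits as a finite-index subgroup of $G/\ll\bigcup_i N_i\rr$, so the latter is itself virtually compact special.

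The main obstacle I anticipate is pinning down the precise statement of the relative MSQT that delivers a virtually compact \emph{special} quotient under the virtually cyclic peripheral condition; this is the content of the ingredients attributed to Lemma~7.51 and Theorem~7.54 of Wise. In applying it one must verify that the $K_j$ are almost malnormal and full relatively quasiconvex in $G_0$, both of which follow from the fact that $\{H_i\}$ is a peripheral structure of $G$ together with the standard transfer of relative hyperbolicity to finite-index subgroups already used in the setup of \Cref{lem. df in fi subgroup}. Everything else is routine bookkeeping between the two peripheral structures.
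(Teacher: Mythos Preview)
Your approach is correct but takes a more circuitous route than the paper's. The paper observes, via \Cref{prop. properties of vir special groups}(v), that $G$ is virtually compact special, hence virtually sparse special by \Cref{rem:sparse compact}, and then applies Wise's Theorem~15.6 \emph{directly to $G$}: that theorem already handles the virtually sparse special case with virtually abelian peripherals and delivers the finite-index subgroups $P_i\lhd H_i$ and the virtually compact special conclusion in one stroke. There is no need to descend to a torsion-free compact special $G_0$ and then transport the data back through \Cref{lem. df in fi subgroup}; that machinery is reserved in the paper for the genuinely sparse (non-compact) setting in \Cref{sec:sparse}.

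Your bookkeeping via \Cref{lem. df in fi subgroup} is sound (in particular, the pull-back in part~(v) really does land in $G_0\cap H_i$, and part~(ii) gives $K_j/M_j\cong (G_0\cap H_i)/N_i$, which is virtually cyclic as needed). The one inaccuracy is the citation: Lemma~7.51 and Theorem~7.54 of Wise concern closability of quasiflats and the amalgam structure of sparse special groups, not the relative Malnormal Special Quotient Theorem. The result you actually need---and the one the paper invokes---is \cite[Theorem~15.6]{wise2021structure}.
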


\begin{proof}
By \Cref{prop. properties of vir special groups}, the group $G$ is virtually compact special. The desired result then follows from \cite[Theorem 15.6]{wise2021structure}. Note that \cite[Theorem 15.6]{wise2021structure} is stated for virtually sparse special groups, but virtually compact special groups are virtually sparse special (see, e.g., \Cref{rem:sparse compact}).
\end{proof}

\begin{lemma}\label{lem. zn}
    Let $n\geqslant 2$. Then for any finite subset $\F\subset \Z^n\smallsetminus\{0\}$, there exists a subgroup $N\leqslant \Z^n$ such that $N\cap \F=\emptyset$ and $\Z^n/N\cong \Z$.
\end{lemma}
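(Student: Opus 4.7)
The plan is to construct a surjective homomorphism $\phi\colon \Z^n\rightarrow\Z$ whose kernel avoids $\F$, and then take $N=\ker\phi$. Concretely, for a positive integer $k$ to be chosen, I would define
\[\phi_k(x_1,\dots,x_n)=\sum_{i=1}^{n}k^{i-1}x_i.\]
Since $\phi_k(e_1)=1$, the map $\phi_k$ is automatically surjective, so $\Z^n/\ker\phi_k\cong\Z$ regardless of the choice of $k$. Thus the only thing to arrange is that no element of $\F$ lies in $\ker\phi_k$.

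For each $f=(f_1,\dots,f_n)\in\F$, the quantity
\[\phi_k(f)=\sum_{i=1}^{n}f_ik^{i-1}\]
is a polynomial in $k$ of degree at most $n-1$. Since $f\ne 0$, at least one coefficient $f_i$ is nonzero, so this polynomial is not identically zero and therefore has at most $n-1$ integer roots. The key step is then simply to observe that the finite ``bad set''
\[B=\bigcup_{f\in\F}\{k\in\Z:\phi_k(f)=0\}\]
has cardinality at most $(n-1)|\F|$, hence is finite, so we may pick any $k\in\Z\smallsetminus B$. For such $k$, the subgroup $N:=\ker\phi_k$ satisfies $\Z^n/N\cong\Z$ and $N\cap\F=\emptyset$, as required.

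There is essentially no obstacle; the only point needing a moment's care is the polynomial argument verifying that the ``bad'' values of $k$ form a finite set, which relies on $f\ne 0$ for every $f\in\F$. The hypothesis $n\geqslant 2$ is not strictly necessary for the stated conclusion, but it matches the setting of \Cref{lem. special quotient}, which is where this lemma is applied to produce the subgroups $N_i\leqslant P_i\cong\Z^n$ with $P_i/N_i\cong\Z$ needed for the compact special Dehn filling machinery.
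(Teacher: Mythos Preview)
Your proof is correct and takes a genuinely different route from the paper's. The paper chooses mutually distinct primes $p_1,\dots,p_n$ large enough that the sublattice $G=\langle p_ie_i\rangle$ misses $\F$, then uses the Chinese Remainder Theorem to identify $\Z^n/G$ with the cyclic group $\Z/(p_1\cdots p_n)\Z$, and finally lifts the quotient map $\alpha\colon\Z^n\to\Z/(p_1\cdots p_n)\Z$ through the surjection $\beta\colon\Z\to\Z/(p_1\cdots p_n)\Z$ using freeness of $\Z^n$; the kernel of the lift is the required $N$. Your argument bypasses all of this: the ``base-$k$'' map $\phi_k$ is surjective for every $k$ by construction, and the polynomial root-counting step disposes of the avoidance condition in one line. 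Your approach is more elementary and more explicit (one can write down $N$ directly), while the paper's approach has the mild conceptual advantage of making visible that $N$ can be chosen to contain any prescribed full-rank sublattice avoiding $\F$---a feature not needed here but natural in the Dehn filling context where one often wants $N$ deep inside a given finite-index subgroup.
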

\begin{proof}
    Let $e_1,\dots,e_n$ be a basis of the free abelian group $\Z^n$. Let $p_1,\dots,p_n$ be mutually distinct primes such that the subgroup $G$ of $\Z^n$ generated by $\{p_ie_i\mid i=1,\dots,n\}$
    satisfies $G\cap \F=\emptyset$.  Consider the natural quotient homomorphisms
    \[\alpha\colon \Z^n\rightarrow \Z^n/G\cong \prod^n_{i=1}(\Z/p_i\Z)\cong \Z/(p_1\cdots p_n\cdot\Z)\]
    and
    \[\beta\colon \Z\rightarrow \Z/(p_1 \cdots p_n\cdot\Z).\]
    As $\Z^n$ is free abelian, there exists a homomorphism $\gamma\colon \Z^n\rightarrow \Z$ such that $\beta\circ\gamma=\alpha$.
    Let $N=\ker\gamma$. Then $N\cap \F\subset \ker\alpha\cap \F=\emptyset$.
\end{proof}

We now state our next main result.

\begin{theorem}\label{thm. rel hyp.}
    Let $G$ be a virtually compact cubical group that is hyperbolic relative to an  abelian subgroup $H\leqslant G$. Then there exists a finite-index torsion-free subgroup $K\leqslant H$ such that for every sufficiently deep subgroup $N\leqslant H$ that satisfies $N\leqslant K$ and $K/N$ is torsion-free, we have for all $n$,
    \begin{equation}\label{eq. rel hyp.}
        b^{(2)}_n( G/\ll N \rr)=b^{(2)}_n (G).
    \end{equation}
\end{theorem}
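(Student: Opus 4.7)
The plan is to combine a quantitative approximation $|b^{(2)}_n(\bg)-b^{(2)}_n(G)|<\delta$ (in a bounded range of $n$) with a uniform discreteness $b^{(2)}_n(\bg),\,b^{(2)}_n(G)\in\tfrac{1}{C!}\Z$; equality then follows by taking $\delta<1/C!$.

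\textbf{Setup and dimension bound.} If $H$ is finite, take $K=\{1\}$ and the theorem is trivial. Otherwise $H$ is finitely generated infinite abelian, so $H\cong\Z^r\times F$ with $r\geqslant 1$ and $F$ finite. By \Cref{prop. properties of vir special groups}, $G$ is virtually compact special, virtually locally indicable, of type $F_\infty$, and has $d:=\vcd(G)<\infty$; fix a torsion-free finite-index $G_1\lhd G$. Applying \Cref{lem. special quotient} to the singleton $\{H\}$ produces a torsion-free finite-index $P\leqslant H$; set $K:=P\cong\Z^r$. For any admissible $N\leqslant K$, $K/N\cong\Z^s$ with $0\leqslant s\leqslant r$; by including a fixed nonzero element of $K$ in the ``sufficiently deep'' finite set we force $s\geqslant 1$, so both $H$ and $H/N$ are infinite amenable. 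Passing to a torsion-free finite-index Dehn filling of $G_1$ via \Cref{lem. df in fi subgroup} and constructing a Dehn filling space (\Cref{thm. topology}) from finite-dimensional $B$-spaces yields $\vcd(\bg)\leqslant V:=\max(d,\,r+1)$, so by \Cref{lem. zero after vcd}, $b^{(2)}_n(\bg)=0=b^{(2)}_n(G)$ for $n>V$. Only $n\leqslant V$ requires attention.

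\textbf{Approximation step.} For any $\delta>0$, \Cref{thm. li dehn filling} applied with $k=V$ produces a finite $\mathcal F_\delta\subset H\setminus\{1\}$ such that $N\cap\mathcal F_\delta=\emptyset$ forces, for all $n\leqslant V$,
\[
|b^{(2)}_n(\bg)-b^{(2)}_n(G)|\;<\;b^{(2)}_{n-1}(H)+b^{(2)}_n(H/N)+\delta\;=\;\delta,
\]
using Cheeger--Gromov vanishing of $L^2$-Betti numbers for the infinite amenable groups $H$ and $H/N$.

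\textbf{Atiyah step.} Let $C:=\max(|G:G_1|,|H:K|)$. If $s=1$, \Cref{lem. special quotient} makes $\bg$ virtually compact special, so Schreve's theorem \cite{schreve2015l2}, combined with \Cref{lem. fin,lem. pre torsion-free}, gives $b^{(2)}_n(\bg)\in\tfrac{1}{C!}\Z$ directly. For $s\geqslant 2$, the plan is to apply \Cref{lem. det and atiyah ver 2} to $\bg$. Given a finite $\overline{\mathcal S}\subset\bg\setminus\{1\}$: \Cref{thm. simple Dehn filling} gives $H/N\hookrightarrow_h\bg$, and \cite[Theorem 7.19(c)]{dahmani2017hyperbolically} applied inside $\bg$ produces a finite $\overline{\mathcal F}\subset H/N\setminus\{1\}$ such that $\overline M\cap\overline{\mathcal F}=\emptyset\Rightarrow\ll\overline M\rr_{\bg}\cap\overline{\mathcal S}=\emptyset$. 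Applying \Cref{lem. zn} to $K/N\cong\Z^s$ with the finite set $\overline{\mathcal F}\cap(K/N)$ yields $\overline M\leqslant K/N$ with $(K/N)/\overline M\cong\Z$ avoiding $\overline{\mathcal F}$; lifting gives $N\leqslant M\leqslant K$ with $K/M\cong\Z$, so by \Cref{lem. special quotient} the quotient $G/\ll M\rr$ is virtually compact special, satisfies Atiyah, and has $\max\fin\leqslant C$. Since $\bg/\ll\overline M\rr_{\bg}=G/\ll M\rr$, \Cref{lem. det and atiyah ver 2} then gives $b^{(2)}_n(\bg)\in\tfrac{1}{C!}\Z$. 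Together with Schreve's Atiyah for $G$, choosing $\delta<1/C!$ in the approximation step forces $b^{(2)}_n(\bg)=b^{(2)}_n(G)$ for $n\leqslant V$.

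\textbf{Main obstacle.} The delicate hypothesis of \Cref{lem. det and atiyah ver 2} is that $\bg$ itself be virtually locally indicable, which is automatic for $s=1$ (via \Cref{prop. properties of vir special groups} applied to the virtually compact special $\bg$) but not direct for $s\geqslant 2$, since \Cref{lem. special quotient} does not certify $\bg$ virtually compact special in that case. The intended route is to use \Cref{lem. df in fi subgroup} to descend to a torsion-free finite-index subgroup of $\bg$ that is a $G$-equivariant Dehn filling of a virtually compact special finite-index subgroup of $G$, and then exploit the chain $N\leqslant M_1\leqslant\dots\leqslant M_{s-1}\leqslant K$ with each $K/M_i\cong\Z^i$ (so that each $G/\ll M_1\rr$ is virtually compact special by \Cref{lem. special quotient}) together with the Cohen--Lyndon decompositions of \Cref{thm. cl} to inductively extract local indicability.
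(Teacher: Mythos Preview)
Your overall architecture matches the paper's: bound the range of $n$, apply \Cref{thm. li dehn filling} with amenable peripheral to get $|b^{(2)}_n(\bg)-b^{(2)}_n(G)|<\delta$, then force both sides into $\tfrac{1}{C!}\Z$ via the Atiyah Conjecture with $C=\max(|G:G_1|,|H:K|)$. The two minor differences are harmless: for the high-degree vanishing the paper quotes \cite[Corollary 4.6]{sun2019cohomologyii} rather than building a finite-dimensional Dehn filling space, and the paper explicitly records (your proposal omits it) that the preimage $M_{\mathcal S}\leqslant K$ of $\overline M_{\mathcal S}$ must itself avoid the original finite set $\mathcal F$, which is arranged by enlarging $\overline{\mathcal F}$ to contain $\phi_N(\mathcal F)$.

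The genuine gap is exactly the one you flag: virtual local indicability of $\bg$ when $s\geqslant 2$, needed for \Cref{lem. det and atiyah ver 2}. Your proposed inductive chain is both confused in its indexing (with $M_1\leqslant\cdots\leqslant M_{s-1}$ the conditions $K/M_i\cong\Z^i$ are inconsistent) and unnecessary. The paper resolves this in a single step, using the very $\overline M_{\mathcal S}$ you have already produced. Enlarge $\overline{\mathcal F}$ so that $\overline M_{\mathcal S}\cap\overline{\mathcal F}=\emptyset$ also forces the Cohen--Lyndon property for the triple $(\bg,\,H/N,\,\overline M_{\mathcal S})$ (\Cref{thm. cl} applied inside $\bg$, via $H/N\hookrightarrow_h\bg$). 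Then
\[
\ll\overline M_{\mathcal S}\rr_{\bg}\;=\;\Asterisk_{t\in T}\,t\,\overline M_{\mathcal S}\,t^{-1}
\]
is a free product of abelian groups, hence locally indicable. The quotient $\bg/\ll\overline M_{\mathcal S}\rr_{\bg}\cong G/\ll M_{\mathcal S}\rr$ is virtually compact special (since $K/M_{\mathcal S}\cong\Z$), hence virtually locally indicable by \Cref{prop. properties of vir special groups}. The short exact sequence
\[
1\longrightarrow \ll\overline M_{\mathcal S}\rr_{\bg}\longrightarrow \bg\longrightarrow \bg/\ll\overline M_{\mathcal S}\rr_{\bg}\longrightarrow 1
\]
then gives that $\bg$ is virtually locally indicable. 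This is a direct extension-closure argument; no induction on $s$ is required.
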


\begin{remark}\label{rm. trivial}
    In \Cref{thm. rel hyp.}, if $H$ is finite, we can choose $K$ to be $\{1\}$. If $H$ is virtually $\Z$, we can choose the group $K$ to be an infinite cyclic subgroup of $H$ and the finite set $\F$ to consist of just a generator of $K$. In this case, the only subgroup $N\leqslant K$, that satisfies that both $N\cap \F=\emptyset$ and $K/N$ is torsion-free, is $\{1\}$. The desired conclusion clearly follows.
\end{remark}

\begin{proof}
    Let $G_1\lhd G$ be a finite-index normal subgroup that is the fundamental group of some compact locally CAT(0) cube complex, and let $\{H_i\}_{i\in I}$ be the induced peripheral structure on $G_1$. The main idea of the proof is to apply \Cref{thm. li dehn filling}, which, by itself, only gives \eqref{eq. rel hyp.} for finitely many $n$. The claim below takes care of all other values of $n$.

    \begin{claim}\label{cl. beyong vcd}
        There exists a finite subset $\F_1\subset H\smallsetminus\{1\}$ such that if a subgroup $N\leqslant G_1\cap H$ satisfies that $N\cap\F_1=\emptyset$ and $|H/N|=\infty$, then
        \[b^{(2)}_n(G/\ll N \rr_G)=0,\]
        for all $n>\vcd(G)+1$.
    \end{claim}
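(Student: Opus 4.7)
The plan is to leverage the multi-peripheral versions of the Cohen--Lyndon property and of \Cref{lem:L2-diagram} to reduce the vanishing to a dimension bound on suitable model spaces, exploiting that $|H/N|=\infty$ forces each peripheral of the Dehn filling quotient to be infinite amenable.

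First I would pass from $G$ to $G_1$. By \Cref{lem. df in fi subgroup} \ref{item. bijection}, the subgroup $N\leqslant G_1\cap H$ corresponds to a unique $G$-equivariant family $\{M_j\lhd K_j\}_{j\in J}$, and by \Cref{lem. df in fi subgroup} \ref{item. embedding} the Dehn filling quotient $\bar G_1:=G_1/\ll \bigcup_j M_j \rr_{G_1}$ embeds in $\bg:=G/\ll N \rr_G$ as a subgroup of index $[G:G_1]$. By the standard scaling formula for $L^2$-Betti numbers under finite-index inclusions \cite[Theorem 6.54 (6)]{luck2002l2}, it then suffices to show $b^{(2)}_n(\bar G_1)=0$ for $n>\vcd(G)+1$. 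Since $G_1$ is torsion-free and each $K_j$ lies in a conjugate of the abelian group $H$, each $K_j\cong\Z^{r_j}$ with $r_j\leqslant\cd K_j\leqslant\cd G_1=\vcd(G)$. By \Cref{lem. df in fi subgroup} \ref{item. peripheral}, $\bar K_j:=K_j/M_j\cong(G_1\cap H)/N$ is a finite-index subgroup of $H/N$, so $|\bar K_j|=\infty$ and each $\bar K_j$ is infinite amenable; hence $b^{(2)}_n(\bar K_j)=0$ for every $n\geqslant 0$.

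Next, using \Cref{lem. df in fi subgroup} \ref{item. deep} together with the multi-peripheral versions of \Cref{thm. cl}, \Cref{thm. topology} and \Cref{lem:L2-diagram} (which follow from the same arguments as their single-peripheral analogs), I would choose $\F_1\subset H\smallsetminus\{1\}$ so that $N\cap\F_1=\emptyset$ guarantees that each triple $(G_1,K_j,M_j)$ is Cohen--Lyndon and the two-row diagram of exact sequences analogous to \eqref{eq. l2-diagram} holds. Taking $EG_1$ to be the universal cover of a $K(G_1,1)$-CW complex of dimension at most $\vcd(G)$ (available via Wall's finiteness theorem, since $G_1$ is of type $F$ with $\cd G_1=\vcd(G)$) and $EK_j:=\R^{r_j}$, the free cellular chain complex of $\ll \bigcup_j M_j \rr \backslash EG_1$ over $\Z\bar G_1$ vanishes above degree $\vcd(G)$, and that of $M_j\backslash EK_j$ over $\Z\bar K_j$ vanishes above degree $r_j\leqslant\vcd(G)$. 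Hence $\cH_n(\ll \bigcup_j M_j \rr \backslash EG_1;\N(\bar G_1))=0$ for $n>\vcd(G)$ and $\cH_n(M_j\backslash EK_j;\N(\bar K_j))=0$ for $n>r_j$.

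For $n>\vcd(G)+1$, the top row of the multi-peripheral analog of \eqref{eq. l2-diagram} sandwiches $\cH_n(\bar G_1,\{\bar K_j\};\N(\bar G_1))$ between two $\N(\bar G_1)$-modules of von Neumann dimension zero, which forces $\dim_{\bar G_1}\cH_n(\bar G_1,\{\bar K_j\};\N(\bar G_1))=0$. The bottom row, combined with $b^{(2)}_\ast(\bar K_j)=0$, then yields $b^{(2)}_n(\bar G_1)=0$, and the claim follows from the finite-index reduction. The main technical point is arranging a model of $BG_1$ of dimension at most $\vcd(G)$: this is immediate for $\vcd(G)\geqslant 3$ via Wall's theorem, while in low dimensions the compact cube-complex model of $BG_1$ suffices. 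Extending \Cref{thm. cl}, \Cref{thm. topology} and \Cref{lem:L2-diagram} to finite families of peripherals is a secondary routine step.
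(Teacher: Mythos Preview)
Your argument is correct and reaches the same conclusion as the paper, but via a somewhat different route. The paper invokes \cite[Corollary~4.6]{sun2019cohomologyii} directly: for sufficiently deep fillings one has an $\N(\bar G_1)$-module isomorphism
\[
\cH_n(\bar G_1;\N(\bar G_1))\cong \bigoplus_j H_n(\bar K_j;\N(\bar G_1))\qquad (n>\cd(G_1)+1),
\]
and then vanishing follows immediately from amenability of the infinite abelian $\bar K_j$. You instead stay within the machinery of the present paper, using the multi-peripheral Dehn filling space and the two long exact sequences of \Cref{lem:L2-diagram}, combined with finite-dimensional models for $EG_1$ and $EK_j$ to kill the top row above degree $\vcd(G)+1$. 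Both arguments rest on the same Cohen--Lyndon input and the same finite-index reduction via \Cref{lem. df in fi subgroup}; yours is more self-contained in this paper, while the paper's is shorter because the excision work has already been packaged in \cite{sun2019cohomologyii}.

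One small point: your appeal to Wall's theorem to get a model of $BG_1$ of dimension exactly $\vcd(G)$ is delicate when $\vcd(G)=2$ (Eilenberg--Ganea), and the compact cube-complex model need not have the right dimension. This is harmless, however, since the claim only concerns $n>\vcd(G)+1$: the Eilenberg--Ganea theorem always gives a (not necessarily finite) model of dimension $\leqslant\max(\vcd(G),3)\leqslant\vcd(G)+1$, which is all you need for $\cH_n(\ll\cup_j M_j\rr\backslash EG_1;\N(\bar G_1))=0$ in that range.
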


    \begin{proof}[Proof of  claim]
    By \cite[Theorem 1 (1)]{osin2007peripheral} and \cite[Corollary 4.6]{sun2019cohomologyii}, there exists a family of finite subsets $\{\F_{1i}\subset H_i\smallsetminus\{1\}\}_{i\in I}$ such that whenever a family of groups $\{N_i\leqslant H_i\}_{i\in I}$ satisfies that  $N_i\cap \F_{1i}=\emptyset$ for all $i\in I$, then the natural maps $H_i/N_i\rightarrow \bg_1$ are all injective, where 
    \[\bg_1:=G_1/\ll \bigcup_{i\in I}N_i \rr_{G_1},\]
   and for all $n>\cd(G_1)+1=\vcd(G)+1$, by \cite[Corollary 4.6 (a)]{sun2019cohomologyii}, we have 

 \begin{align*}
     \cH_n\big(\bg_1\; ;\N(\bg_1)\big) & \cong \bigoplus_{i\in I} H_n\big(H_i/N_i\; ;\N(\bg_1)\big)\\
     & \cong \bigoplus_{i\in I}\cH_n\big({\bg_1}\times_{H_i/N_i} E(H_i/N_i)\; ;\N(\bg_1)\big).
 \end{align*}
as $\N(\bg_1)$-modules. By \cite[Theorem 6.54 (7)]{luck2002l2}, we get
\begin{equation}\label{eq. above vcd}
        b^{(2)}_n(\bg_1)=\sum_{i\in I}b^{(2)}_n \big({\bg_1}\times_{H_i/N_i} E(H_i/N_i)\; ;\N(\bg_1)\big)=\sum_{i\in I}b^{(2)}_n(H_i/N_i)
    \end{equation}
    for $n>\vcd(G)+1$.

    We now choose the family $\{N_i\leqslant H_i\}_{i\in I}$. Let $\F_1\subset H\smallsetminus\{1\}$
    be the subset given by \Cref{lem. df in fi subgroup} \ref{item. deep} with respect to the family $\{\F_{1i}\}_{i\in I}$, let $N\leqslant G_1\cap H$ be a subgroup that satisfies that $N\cap \F_1=\emptyset$ and $|H/N|=\infty$, and let $\{N_i\leqslant H_i\}_{i\in I}=\varphi^{-1}(\{N\})$, where $\varphi$ is the map given by \Cref{lem. df in fi subgroup}. Then
    \[b^{(2)}_n\big(G_1/\ll \bigcup_{i\in I}N_i \rr_{G_1}\big)=\sum_{i\in I}b^{(2)}_n(H_i/N_i)=\sum_{i\in I}b^{(2)}_n\big((G_1\cap H)/N\big)=0\]
    for all $n>\vcd(G)+1$, by \cite[Theorem 6.37]{luck2002l2} and the fact that $(G_1\cap H)/N$ is infinite abelian,  being a finite-index subgroup of $H/N$.

    By \Cref{lem. df in fi subgroup} \ref{item. embedding}, $G_1/\ll \bigcup_{i\in I}N_i \rr_{G_1}$ is a finite-index subgroup of $G/\ll N \rr_G$, and thus by \cite[Theorem 6.54 (6)]{luck2002l2} and \Cref{lem. df in fi subgroup}, 
    \begin{align*}
        b^{(2)}_n\big(G/\ll N \rr_G\big)&=b^{(2)}_n\Big(E(G/\ll N \rr_G)\; ;G/\ll N \rr_G\Big)\\
        &=\dfrac{1}{[G/\ll N \rr_G:G_1/\ll \bigcup_{i\in I}N_i \rr_{G_1}]}\cdot b^{(2)}_n\Big(E(G/\ll N \rr_G)\; ;G_1/\ll \bigcup_{i\in I}N_i \rr_{G_1}\Big)\\
        &=\dfrac{1}{[G/\ll N \rr_G:G_1/\ll \bigcup_{i\in I}N_i \rr_{G_1}]}\cdot b^{(2)}_n\big(G_1/\ll \bigcup_{i\in I}N_i \rr_{G_1}\big)=0.
    \end{align*}  
    \end{proof}
    
    We now apply \Cref{thm. li dehn filling} along with the Atiyah Conjecture. By \Cref{prop. properties of vir special groups}, the group $G$ is virtually compact special, and thus is virtually locally indicable, again by \Cref{prop. properties of vir special groups}. By \Cref{thm. li dehn filling}, we have the following:

    \begin{claim}\label{cl. rel. hyp. approximation}
        There exists a finite subset $\F_2\subset H\smallsetminus\{1\}$ such that if a subgroup $N\lhd H$ satisfies $N\cap \F_2=\emptyset$, then for all $n\leqslant \vcd(G)+1$, 
    \begin{align*}
        & b^{(2)}_n(G)-b^{(2)}_n(H)-b^{(2)}_{n-1}(H/N)-\dfrac{1}{\max\{|G/G_1|,|H/K|\}!}\\
        & < b^{{2}}_n\big(G/\ll N \rr_G\big)\\
        & < b^{(2)}_n(G)+b^{(2)}_{n-1}(H)+b^{(2)}_n(H/N)+\dfrac{1}{\max\{|G/G_1|,|H/K|\}!}.
    \end{align*}
    \end{claim}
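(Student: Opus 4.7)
The plan is to apply \Cref{thm. li dehn filling} directly, with the tolerance $\delta$ chosen to be the precise quantity $\dfrac{1}{\max\{|G/G_1|,|H/K|\}!}$ appearing in the statement. Both inequalities then drop out simultaneously for every $n\leqslant \vcd(G)+1$, which is exactly what is asserted.

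First I verify the hypotheses of \Cref{thm. li dehn filling} for the pair $H\hookrightarrow_h G$ (which holds because $G$ is hyperbolic relative to $H$) with parameter $k=\vcd(G)+1$. Since $G$ is virtually compact cubical, \Cref{prop. properties of vir special groups} gives that $G$ is of type $F_\infty$ (in particular of type $F_{\vcd(G)+2}$), virtually compact special, and therefore virtually locally indicable. The remaining assumption that $H/N$ be of type $F_{\vcd(G)+2}$ is automatic here: the peripheral subgroup $H$ is finitely generated by \cite[Theorem 1.1]{osin2007peripheral} since $G$ is finitely generated, and because $H$ is abelian every quotient $H/N$ is finitely generated abelian, hence of type $F$.

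Setting $\delta:=\dfrac{1}{\max\{|G/G_1|,|H/K|\}!}$ and invoking \Cref{thm. li dehn filling} with $k=\vcd(G)+1$ produces a finite subset $\F_2\subset H\smallsetminus\{1\}$ such that whenever $N\lhd H$ satisfies $N\cap \F_2=\emptyset$, the two-sided bound
\begin{equation*}
-b^{(2)}_n(H)-b^{(2)}_{n-1}(H/N)-\delta<b^{(2)}_n(G/\ll N \rr_G)-b^{(2)}_n(G)<b^{(2)}_{n-1}(H)+b^{(2)}_n(H/N)+\delta
\end{equation*}
holds for every $n\leqslant \vcd(G)+1$. A trivial rearrangement yields the statement of the claim. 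I take $\F_2$ to be exactly this finite set.

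There is no real independent obstacle in this step, since the difficulty has already been absorbed by the quantitative L\"uck approximation of \Cref{sec. approximation} and by the construction of the Dehn filling space in \Cref{subsec: Dehn filling space}. \Cref{cl. rel. hyp. approximation} is the precise packaging that, in conjunction with \Cref{cl. beyong vcd} (which handles $n>\vcd(G)+1$) and with the Atiyah Conjecture for the virtually compact special Dehn filling quotients supplied by \Cref{lem. special quotient}, will collapse the approximation inequality into the exact equality \eqref{eq. rel hyp.}; in particular, the specific denominator $\max\{|G/G_1|,|H/K|\}!$ is chosen so that the right-hand sides of the inequality are strictly less than the minimal non-zero value permitted by the Atiyah Conjecture for $\bg$, which is why equality will ultimately be forced.
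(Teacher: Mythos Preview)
Your proposal is correct and follows exactly the same approach as the paper: the claim is obtained by a direct application of \Cref{thm. li dehn filling} with $k=\vcd(G)+1$ and $\delta=\dfrac{1}{\max\{|G/G_1|,|H/K|\}!}$, after observing via \Cref{prop. properties of vir special groups} that $G$ is virtually locally indicable and of type $F_\infty$. Your added remark that $H/N$ is automatically of type $F$ (being finitely generated abelian) is a useful detail the paper leaves implicit; the citation of \cite[Theorem 1.1]{osin2007peripheral} for finite generation of $H$ is not quite the right pointer (that theorem concerns Dehn fillings), but the fact itself is standard for peripherals of finitely generated relatively hyperbolic groups.
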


    Now, let $K\leqslant H$ be the finite-index torsion-free subgroup given by \Cref{lem. special quotient}. If $|K/N|=\infty$, then $H/N$ is infinite abelian. So by \cite[Theorem 1.44]{luck2002l2}, we have the following:
    \begin{claim}\label{cl. rel. hyp. vanishing}
        If $|K/N|=\infty$, then for all $n\in\mathbb N$, 
        \[b^{(2)}_n(H)=b^{(2)}_{n-1}(H)=b^{(2)}_n(H/N)=b^{(2)}_{n-1}(H/N)=0.\]
    Therefore,
    \begin{equation*}
        |b^{{2}}_n(G/\ll N \rr_G)-b^{(2)}_n(G)|<\dfrac{1}{\max\{|G/G_1|,|H/K|\}!}
    \end{equation*}
    whenever $N\cap \F_2=\emptyset$ and $n\leqslant \vcd(G)+1$.
    \end{claim}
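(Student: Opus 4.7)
The plan is to apply the vanishing of $L^2$-Betti numbers of infinite amenable groups to kill the error terms appearing in Claim \ref{cl. rel. hyp. approximation}, and then read off the stated bound by substitution. First, I would verify the vanishing hypotheses. Since $|K/N|=\infty$, the subgroup $K\leqslant H$ is infinite, so $H$ is infinite; and since the natural inclusion $K/N\hookrightarrow H/N$ exists (because $N\leqslant K\leqslant H$), the quotient $H/N$ is infinite as well. Both groups are abelian (the latter being a quotient of the abelian group $H$), hence infinite amenable. By \cite[Theorem 1.44]{luck2002l2}, every $L^2$-Betti number of an infinite amenable group vanishes, which yields
\[b^{(2)}_n(H)=b^{(2)}_{n-1}(H)=b^{(2)}_n(H/N)=b^{(2)}_{n-1}(H/N)=0\]
for all $n\in\mathbb N$, which is the first assertion of the claim.

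For the ``Therefore'' clause I would simply plug these vanishings into the two-sided inequality provided by Claim \ref{cl. rel. hyp. approximation}: under the hypotheses $N\cap\F_2=\emptyset$ and $n\leqslant \vcd(G)+1$, both correction terms $b^{(2)}_n(H)+b^{(2)}_{n-1}(H/N)$ and $b^{(2)}_{n-1}(H)+b^{(2)}_n(H/N)$ drop out, leaving exactly
\[\Big| b^{(2)}_n\big(G/\ll N \rr_G\big)-b^{(2)}_n(G)\Big|<\dfrac{1}{\max\{|G/G_1|,|H/K|\}!},\]
as required.

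There is no real obstacle inside the claim itself: it amounts to a bookkeeping step that combines the quantitative L\"uck approximation (already established in Claim \ref{cl. rel. hyp. approximation}) with a standard amenable vanishing. The step is nevertheless crucial in the overall argument for \Cref{thm. rel hyp.}, since it produces an approximation whose error is exactly the reciprocal of $\max\{|G/G_1|,|H/K|\}!$. This is the precise denominator needed in the final stage, where one promotes the approximation to equality: using the Atiyah Conjecture for virtually compact special groups together with \Cref{lem. det and atiyah ver 2}, both $b^{(2)}_n(G)$ and $b^{(2)}_n(G/\ll N\rr_G)$ are shown to lie in the discrete group $\tfrac{1}{\max\{|G/G_1|,|H/K|\}!}\cdot\Z$, forcing them to coincide.
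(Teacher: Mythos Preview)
Your proposal is correct and follows essentially the same approach as the paper: establish that $H$ and $H/N$ are infinite abelian (hence amenable) and invoke \cite[Theorem 1.44]{luck2002l2} to kill all their $L^2$-Betti numbers, then substitute these vanishings into the two-sided inequality of Claim \ref{cl. rel. hyp. approximation}. Your explanation is in fact slightly more detailed than the paper's, which simply notes that $|K/N|=\infty$ forces $H/N$ to be infinite abelian and immediately applies L\"uck's theorem.
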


    By \Cref{lem. fin}, we have the following:

    \begin{claim}\label{cl. rel. hyp. finite subgroup H}
        If a subgroup $N\leqslant K$ satisfies that $K/N$ is torsion-free, then
        \[\max\fin(H/N)\leqslant \max\fin(K/N)\cdot\max\fin(H/K)=|H/K|.\]
    \end{claim}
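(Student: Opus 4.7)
The plan is to apply \Cref{lem. fin} directly to the group $H/N$ with its normal subgroup $K/N$. First I will observe that, since $H$ is abelian by the hypothesis of \Cref{thm. rel hyp.}, every subgroup of $H$ is normal; in particular $K\lhd H$ and $N\lhd H$, so $K/N\lhd H/N$, and the third isomorphism theorem yields $(H/N)/(K/N)\cong H/K$.

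Next, I will invoke \Cref{lem. fin} to obtain
\[\max\fin(H/N)\leqslant \max\fin(K/N)\cdot\max\fin(H/K).\]
To identify the two factors: the quotient $K/N$ is torsion-free by hypothesis, so its only finite subgroup is trivial and hence $\max\fin(K/N)=1$; the quotient $H/K$ is finite because $K$ was chosen to have finite index in $H$, so every subgroup of $H/K$ is finite and the largest one is $H/K$ itself, giving $\max\fin(H/K)=|H/K|$. Substituting these values yields the desired bound $\max\fin(H/N)\leqslant |H/K|$, together with the claimed equality $\max\fin(K/N)\cdot\max\fin(H/K)=|H/K|$.

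I do not anticipate any obstacle here; the argument is a one-line application of \Cref{lem. fin} combined with the two defining properties of $K$ (finite index in $H$, together with $K/N$ being torsion-free) and the abelianness of $H$ built into the hypotheses of \Cref{thm. rel hyp.}.
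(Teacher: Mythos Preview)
Your proposal is correct and follows exactly the paper's approach: the paper simply prefaces the claim with ``By \Cref{lem. fin}, we have the following'' and leaves the remaining details (normality via abelianness of $H$, $\max\fin(K/N)=1$ from torsion-freeness, and $\max\fin(H/K)=|H/K|$ from finiteness) implicit, which you have spelled out.
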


    We will also need the following:
    
    \begin{claim}\label{cl. rel. hyp. finite subgroup G}
        There exists a finite subset $\F_3\subset H\smallsetminus\{1\}$ such that if a subgroup $N\leqslant K$ satisfies that $N\cap \F_3=\emptyset$ and $K/N$ is torsion-free, then
        \[\max\fin(G/\ll N \rr_G)\leqslant \max\{|G/G_1|,|H/K|\}.\]
    \end{claim}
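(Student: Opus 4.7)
The plan is to combine the general bound for finite subgroups of relatively hyperbolic Dehn fillings (\Cref{lem. pre torsion-free}) with the torsion-free finite-index structure already in place, namely $G_1 \lhd G$ and $K \lhd H$. First I would apply \Cref{lem. pre torsion-free} to the relatively hyperbolic pair $(G,H)$ to extract a finite subset $\F_3 \subset H \setminus \{1\}$ such that, whenever a normal subgroup $N \lhd H$ satisfies $N \cap \F_3 = \emptyset$, one has
\[
\max \fin(G/\ll N \rr_G) \;\leqslant\; \max\bigl(\fin(G) \cup \fin(H/N)\bigr).
\]
(Note that although \Cref{lem. pre torsion-free} is stated for normal subgroups of $H$, since $H$ is abelian every subgroup $N \leqslant H$ is normal, so there is no issue applying it under the hypothesis $N \leqslant K \leqslant H$.) It therefore suffices to bound each of the two sets on the right hand side by $\max\{|G/G_1|,|H/K|\}$.

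For the first set, the group $G_1$ is by construction the fundamental group of a compact locally CAT(0) cube complex; such a complex is aspherical, and hence $G_1$ is torsion-free. Applying \Cref{lem. fin} to the normal subgroup $G_1 \lhd G$ yields
\[
\max \fin(G) \;\leqslant\; \max\fin(G_1) \cdot \max\fin(G/G_1) \;=\; 1 \cdot |G/G_1| \;=\; |G/G_1|.
\]
For the second set, provided $N \leqslant K$ and $K/N$ is torsion-free, \Cref{cl. rel. hyp. finite subgroup H} directly gives $\max\fin(H/N) \leqslant |H/K|$.

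Combining the two bounds yields
\[
\max \fin(G/\ll N \rr_G) \;\leqslant\; \max\{|G/G_1|,\, |H/K|\},
\]
as required. The argument is essentially assembly of earlier ingredients, so I do not anticipate any substantial obstacle; the only point requiring mild care is to ensure that the $\F_3$ extracted from \Cref{lem. pre torsion-free} serves all subgroups $N \leqslant K$ satisfying $N \cap \F_3 = \emptyset$ simultaneously, which it does by construction since \Cref{lem. pre torsion-free} quantifies over all such $N$.
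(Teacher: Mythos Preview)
Your proposal is correct and follows essentially the same route as the paper: apply \Cref{lem. pre torsion-free} to obtain $\F_3$ and the bound $\max\fin(G/\ll N\rr_G)\leqslant \max(\fin(G)\cup\fin(H/N))$, then bound $\max\fin(G)$ by $|G/G_1|$ via \Cref{lem. fin} (using that $G_1$ is torsion-free) and $\max\fin(H/N)$ by $|H/K|$ via \Cref{cl. rel. hyp. finite subgroup H}. Your observation that normality of $N$ in $H$ is automatic because $H$ is abelian is a helpful clarification that the paper leaves implicit.
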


    \begin{proof}[Proof of  claim]
    By \Cref{lem. fin}, we have
    \begin{equation}\label{eq. 6}
        \max\fin(G)\leqslant \max\fin(G_1)\cdot \max\fin(G/G_1)=|G/G_1|.
    \end{equation}
    
    By \Cref{lem. pre torsion-free}, there exists a finite subset $\F_3\subset H\smallsetminus\{1\}$ such that if a subgroup $N\leqslant K$ satisfies that $N\cap \F_3=\emptyset$ and $K/N$ is torsion-free, then
    \begin{equation}\label{eq. 61}
        \max\fin(G/\ll N \rr_G)\leqslant \max(\fin(G)\cup\fin(H/N)).
    \end{equation}

    The claim follows from \Cref{cl. rel. hyp. finite subgroup H}, \eqref{eq. 6} and \eqref{eq. 61}.
    \end{proof}

    For any subgroup $N\leqslant H$, let $\phi_N\colon G\rightarrow G/\ll N \rr_G$ be the natural quotient map. The following claim follows from \cite[Theorem 1.1]{osin2007peripheral}.
    
    \begin{claim}\label{cl. rel. hyp. rh}
        There exists a finite subset $\F_4\subset H\smallsetminus\{1\}$ such that if a subgroup $N\leqslant H$ satisfies $N\cap \F_4=\emptyset$, then $\phi_N(H)\cong H/N$ and $G/\ll N \rr_G$ is hyperbolic relative to $\phi_N(H)$.
    \end{claim}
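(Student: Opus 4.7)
The plan is to invoke Osin's relatively hyperbolic Dehn filling theorem \cite[Theorem 1.1]{osin2007peripheral} directly. Recall from the hypothesis of \Cref{thm. rel hyp.} that $G$ is hyperbolic relative to the single peripheral subgroup $H$. Osin's theorem, applied to the pair $(G,H)$, asserts that there is a finite subset $\F_4 \subset H \smallsetminus \{1\}$ such that whenever $N \lhd H$ avoids $\F_4$, the following two conclusions hold simultaneously: first, the restriction of the quotient map $\phi_N \colon G \twoheadrightarrow G/\ll N \rr_G$ to $H$ has kernel exactly $N$, so $\phi_N(H) \cong H/N$; and second, the quotient group $G/\ll N \rr_G$ is hyperbolic relative to the image $\phi_N(H)$.

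The only minor point to note is that \Cref{thm. rel hyp.} is stated in terms of subgroups $N \leqslant H$, whereas Osin's theorem is stated for normal subgroups $N \lhd H$. Since $H$ is abelian in our setting, every subgroup of $H$ is normal in $H$, so this distinction is vacuous and the theorem applies verbatim.

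Thus the proof reduces to a single citation: choose $\F_4$ to be the finite subset provided by \cite[Theorem 1.1]{osin2007peripheral}, and the conclusion of the claim is exactly the conclusion of that theorem. There is no obstacle here; the claim is a direct translation of Osin's theorem into the ``sufficiently deep'' language introduced in \Cref{def. sufficiently deep}.
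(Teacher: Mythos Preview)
Your proposal is correct and matches the paper's own proof exactly: the paper simply says the claim follows from \cite[Theorem 1.1]{osin2007peripheral}, which is precisely what you invoke. Your additional remark that every subgroup of the abelian group $H$ is automatically normal is a helpful clarification but not a departure from the paper's argument.
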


    Next, we let
    \[\F=\F_1\cup \F_2\cup \F_3\cup \F_4.\]
    By \Cref{rm. trivial}, we may assume that $|H|=\infty$. So, by enlarging $\F_1$, we may assume that $\F\cap K\neq \emptyset$. We have the following:

    \begin{enumerate}[label=($\ast$)]
        \item\label{item. K/N neq 1} If a subgroup $N\leqslant K$ satisfies $N\cap \F=\emptyset$, then $K/N\neq\{1\}$.
    \end{enumerate}

    Now consider a subgroup $N\leqslant K$ such that $N\cap \F=\emptyset$ and $K/N$ is torsion-free.

    \begin{claim}\label{cl. K/N 2}
        \[b^{(2)}_i(G/\ll N \rr_G)\in \dfrac{1}{\max\{|G/G_1|,|H/K|\}!}\cdot \Z.\]
    \end{claim}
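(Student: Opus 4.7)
The plan is to invoke \Cref{lem. det and atiyah ver 2} with $G$ replaced by $\bar G := G/\ll N\rr_G$, constant $C = L := \max\{|G/G_1|,|H/K|\}$, and $X = E\bar G$ chosen with finite skeleta in every dimension (possible by \Cref{cor. finiteness conditions}, since $\bh = H/N$ is a quotient of the finitely generated abelian group $H$ and hence of type $F_\infty$). The conclusion of that lemma will give $b^{(2)}_i(\bar G)\in\tfrac{1}{L!}\cdot\Z$ for every $i$. Three inputs must be established: finite generation of $\bar G$ (immediate), virtual local indicability of $\bar G$, and the existence, for each finite $\fS\subset\bar G\setminus\{1\}$, of a normal subgroup $M_\fS\lhd\bar G$ with $M_\fS\cap\fS=\emptyset$, $\bar G/M_\fS$ satisfying the Atiyah Conjecture, and $\max\fin(\bar G/M_\fS)\leqslant L$.

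I would construct $M_\fS$ by a further Dehn filling inside $\bh$. Since $\bh$ is hyperbolically embedded in $\bar G$ by \Cref{thm. simple Dehn filling}, \cite[Theorem 7.19(c)]{dahmani2017hyperbolically} supplies a finite subset $\F'\subset\bh\setminus\{1\}$ such that every normal subgroup $\bar M\lhd\bh$ with $\bar M\cap\F'=\emptyset$ satisfies $\ll\bar M\rr_{\bar G}\cap\fS=\emptyset$. By \Cref{item. K/N neq 1}, $\bar K := K/N$ is nontrivial torsion-free abelian, hence isomorphic to $\Z^m$ for some $m\geqslant 1$. If $m=1$ set $\bar M=\{0\}$; if $m\geqslant 2$ apply \Cref{lem. zn} to $\bar K$ to obtain $\bar M\leqslant\bar K$ with $\bar K/\bar M\cong\Z$ avoiding the finite set $(\F'\cap\bar K)\cup\pi(\F_3\cap K)$, where $\pi\colon K\to\bar K$ is the projection and $\F_3$ is from \Cref{cl. rel. hyp. finite subgroup G}. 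Setting $N'':=\pi^{-1}(\bar M)\leqslant K$ gives $K/N''\cong\Z$, so by \Cref{lem. special quotient} the group $G/\ll N''\rr_G$ is virtually compact special, and thus satisfies the Atiyah Conjecture by \cite{schreve2015l2}. Taking $M_\fS:=\ll N''\rr_G/\ll N\rr_G$ identifies $\bar G/M_\fS$ with $G/\ll N''\rr_G$; all three conditions then hold, with $\max\fin(\bar G/M_\fS)\leqslant L$ following from \Cref{cl. rel. hyp. finite subgroup G} applied to $N''$ (valid because $N''\cap\F_3=\emptyset$ by construction).

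The main obstacle is verifying that $\bar G$ itself is virtually locally indicable. When $K/N\cong\Z$ (so that $N''=N$ above) this is immediate: \Cref{lem. special quotient} renders $\bar G$ virtually compact special, whence \Cref{prop. properties of vir special groups} yields virtual local indicability. For general $K/N\cong\Z^m$ with $m\geqslant 2$, \Cref{lem. special quotient} does not apply directly to $\bar G$, so I would enlarge the finite subset $\F$ in the statement of \Cref{thm. rel hyp.} if necessary so that Wise's structural results for sparse virtually special cube complexes whose fundamental group is hyperbolic relative to virtually abelian peripherals (\cite[Lemma 7.51, Theorem 7.54]{wise2021structure}) force $\bar G$ to be virtually sparse special. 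Any virtually special group has a finite-index subgroup embedding into a finitely generated right-angled Artin group by \cite[Theorem 1.1]{haglund2008special}, and right-angled Artin groups are bi-orderable, hence locally indicable (as noted in the proof of \Cref{prop. properties of vir special groups}); thus $\bar G$ is virtually locally indicable. With all hypotheses in place, \Cref{lem. det and atiyah ver 2} applied to $\bar G$ and $X=E\bar G$ yields the desired inclusion.
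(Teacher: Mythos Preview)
Your overall strategy---invoke \Cref{lem. det and atiyah ver 2} for $\bar G=G/\ll N\rr_G$ and produce the required $M_{\fS}$ by a further filling along a rank-one direction in $\bar K=K/N$, so that the iterated quotient is virtually compact special and hence satisfies Atiyah---matches the paper's proof. The construction of $N''$ with $K/N''\cong\Z$ and $N''\cap\F_3=\emptyset$, and the identification $\bar G/M_{\fS}\cong G/\ll N''\rr_G$, are handled correctly.

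The gap is in your argument for virtual local indicability of $\bar G$ when $\rk(K/N)\geqslant 2$. You invoke \cite[Lemma~7.51, Theorem~7.54]{wise2021structure} to ``force $\bar G$ to be virtually sparse special,'' but those results describe the structure of a group \emph{already known} to act cosparsely on a CAT(0) cube complex; they do not manufacture such an action on a Dehn filling quotient. The filling result that does produce specialness (\cite[Theorem~15.6]{wise2021structure}, as in \Cref{lem. special quotient}) requires the peripheral quotient to be virtually cyclic, which is exactly the hypothesis you lack. So this step is not justified.

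The paper avoids this entirely by a short extension argument that you already have the ingredients for. When choosing $\F'$, also arrange (via \Cref{thm. cl}) that the triple $(\bar G,\phi_N(H),\overline M_{\fS})$ is Cohen--Lyndon. Then $\ll\overline M_{\fS}\rr_{\bar G}$ is a free product of conjugates of the abelian group $\overline M_{\fS}$, hence locally indicable. Since $\bar G/\ll\overline M_{\fS}\rr_{\bar G}\cong G/\ll N''\rr_G$ is virtually compact special and thus virtually locally indicable (\Cref{prop. properties of vir special groups}), the short exact sequence
\[
1\longrightarrow \ll\overline M_{\fS}\rr_{\bar G}\longrightarrow \bar G\longrightarrow \bar G/\ll\overline M_{\fS}\rr_{\bar G}\longrightarrow 1
\]
forces $\bar G$ itself to be virtually locally indicable. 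This requires no cubical structure on $\bar G$.
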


    \begin{proof}[Proof of claim]
        By \ref{item. K/N neq 1}, the group $K/N$ is infinite finitely generated free abelian. If $K/N \cong \Z$, then \Cref{lem. special quotient} implies that $G/\ll N \rr_G$ is virtually compact special, and thus satisfies the Atiyah Conjecture \cite{schreve2015l2}. In particular,
        \[b^{(2)}_i(G/\ll N \rr_G)\in \dfrac{1}{\fin(G/\ll N \rr_G)}\cdot \Z \subset \dfrac{1}{\max\{|G/G_1|,|H/K|\}!}\cdot \Z,\]
        by \Cref{cl. rel. hyp. finite subgroup G}.
    
        Below, we assume that $K/N$ is free abelian of finite rank at least $2$. We will use \Cref{lem. det and atiyah ver 2}. Let $\fS\subset G/\ll N \rr_G\smallsetminus\{1\}$ be any finite subset. We need to find a normal subgroup of $G/\ll N \rr_G$ satisfying the assumptions in \Cref{lem. det and atiyah ver 2}. By \Cref{cl. rel. hyp. rh}, \Cref{thm. cl} and \cite[Theorem 1.1]{osin2007peripheral}, there is a finite subset $\F^{\prime}\subset \phi_N(H)\smallsetminus\{1\}$ such that if a subgroup $\overline M\leqslant \phi_N(H)$ satisfies $\overline M\cap \F^{\prime}=\emptyset$, then $\ll \overline M \rr_{G/\ll N \rr_G}\cap \fS=\emptyset$ and $(G/\ll N \rr_G, \phi_N(H), \overline M)$ is a Cohen--Lyndon triple. By enlarging $\F^{\prime}$, we may assume 
        \begin{equation}\label{eq. F' contain F}
            \phi_N(\F)\subset \F^{\prime}
        \end{equation}
        
        By assumption, $\phi_N(K)\cong K/N$ is free abelian of rank at least $2$. Use \Cref{lem. zn} to choose a subgroup $\overline M_{\fS}\leqslant \phi_N(K)$ such that $\overline M_{\fS}\cap \F^{\prime}=\emptyset$ and $\phi_N(K)/\overline M_{\fS}\cong \Z$. Then $\ll \overline M_{\fS} \rr_{G/\ll N \rr_G}\lhd G/\ll N \rr_G$ and $\ll \overline M_{\fS} \rr_{G/\ll N \rr_G}\cap \fS=\emptyset$.

        Let $ M_{\fS}\leqslant K$ be the preimage of $\overline M_{\fS}\leqslant \phi_N(K)$ under the restriction $\phi_N|_K$. Then $M_{\fS}\cap \F=\emptyset$ and thus all other claims in the current proof can be applied to $M_{\fS}$. Note that $K/ M_{\fS}\cong \phi_N(K)/\overline M_{\fS} \cong \Z$. By \Cref{lem. special quotient}, we have that 
        \[(G/\ll N \rr_G)/\ll \overline M_{\fS}\rr_{G/\ll N \rr_G}\cong G/\ll M_{\fS} \rr_G\]
        is virtually compact special. In particular, $(G/\ll N \rr_G)/\ll M_{\fS}\rr_{G/\ll N \rr_G}$ satisfies the Atiyah Conjecture \cite{schreve2015l2}. Another consequence of the virtual compact specialness of $(G/\ll N \rr_G)/\ll \overline M_{\fS}\rr_{G/\ll N \rr_G}$ is that $(G/\ll N \rr_G)/\ll \overline M_{\fS}\rr_{G/\ll N \rr_G}$ is virtually locally indicable by \Cref{prop. properties of vir special groups}. By \Cref{thm. cl} and our choice of $\F'$, the triple $(G/\ll N \rr_G, \phi_N(H),\overline M_{\fS})$ has the Cohen--Lyndon property, which implies that $\ll \overline M_{\fS}\rr_{G/\ll N \rr_G}$ is a free product of groups isomorphic to $\overline M_{\fS}$. In particular, $\ll \overline M_{\fS}\rr_{G/\ll N \rr_G}$ is locally indicable. From the short exact sequence 
        \[1\rightarrow \ll \overline M_{\fS}\rr_{G/\ll N \rr_G} \rightarrow G/\ll N \rr_G \rightarrow (G/\ll N \rr_G)/\ll \overline M_{\fS}\rr_{G/\ll N \rr_G}\rightarrow 1\]
        we conclude that $G/\ll N \rr_G$ is virtually locally indicable.

        By \Cref{cl. rel. hyp. finite subgroup G}, we have
        \[\max\fin(G/\ll M_{\fS} \rr_G)\leqslant \max\{|G/G_1|,|H/K|\}.\]
        The above shows that for every finite subset $\fS\subset G/\ll N \rr_G\smallsetminus\{1\}$, there is a normal subgroup $\ll \overline M_{\fS} \rr_{G/\ll N \rr_G}\lhd G/\ll N \rr_G$ satisfying the conditions in \Cref{lem. det and atiyah ver 2} with $C=\max\{|G/G_1|,|H/K|\}$, which then yields the desired result.
    \end{proof}
    
    The theorem then follows from \Cref{cl. rel. hyp. vanishing} and \Cref{cl. K/N 2}.
\end{proof}

\begin{remark}\label{rm. torsion-free necessary}
    The torsion-free condition in \Cref{thm. intro hyp} and \Cref{thm. intro rel hyp} cannot be dropped, as shown by the following example. Let $G'$ be the fundamental group of a hyperbolic $3$-manifold and let $H$ be a maximal cyclic subgroup of $G'$. Let $G=G'\ast_{H=H}G'$ be the amalgam of two copies of $G'$ glued via the identity map on $H$. Then by \cite[Theorem 7.11]{bowditch2012relatively}, $G'$ is hyperbolic relative to $H$. So by \cite[Theorem 0.1 (3)]{dahmani2003combination}, $G$ is hyperbolic relative to $H$. As $H$ is hyperbolic, so is $G$ by \cite[Corollary 2.41]{osin2006relatively}. Note that $G'$ is virtually compact cubical by \cite[Theorem 5.4]{afw2015three}. Thus, by \cite[Theorem 13.1]{wise2021structure}, $G$ is virtually compact cubical. So the pair of groups $G$ and $H$ satisfy the assumptions of both \Cref{thm. intro hyp} and \Cref{thm. intro rel hyp}.

    Using a Mayer--Vietoris sequence argument, we get that $b^{(2)}_1(G)=0$. Let $h$ be a generator of $H$ and let $n\in\mathbb N$ be a large integer. By \cite[Theorem 1.1]{osin2007peripheral}, the group $G'/\ll t^n \rr_{G'}$ is infinite and contains $H/\langle t^n \rangle$ as a subgroup. Then $G/\ll t^n\rr_G$ is the amalgam of two copies of $G'/\ll t^n \rr_{G'}$ along their common subgroup $H/\langle t^n\rangle$. By \cite[Lemma 3.1]{peterson2011group}, we have $b^{(2)}_1(G/\ll t^n \rr_G)\geqslant 1/n$, and so $G/\ll t^n \rr_G$ does not satisfy the conclusions of \Cref{thm. intro hyp} and \Cref{thm. intro rel hyp}.
\end{remark}

\section{Main results for multiple peripherals}\label{sec. multiple peripherals}

    None of the results involved in the proof of \Cref{thm. hyp.} and \Cref{thm. rel hyp.} are confined to the case of a single peripheral subgroup. Below, we state their general versions. Their proofs are completely analogous and therefore left to the reader.

    \subsection{Dehn filling space}

    Let $G$ be a group, let $\{H_\lambda\}_{\lambda\in\Lambda}$ be a family of subgroups of $G$, and let $\{N_\lambda\lhd H_\lambda\}_{\lambda\in\Lambda}$ be a family of normal subgroups. For the rest of this subsection, we assume that the triple $(G,\{H_\lambda\}_{\lambda\in\Lambda},\{N_\lambda\}_{\lambda\in\Lambda})$ is a \textit{Cohen--Lyndon triple}, i.e., for each $\lambda$ there exists a family of left transversal of the subgroup $T_\lambda$ of $H_\lambda\ll \bigcup_{\lambda\in\Lambda} N_\lambda \rr$ in $G$ such that 
\[\ll \bigcup_{\lambda\in\Lambda} N_\lambda \rr = \Asterisk_{\lambda\in\Lambda,t\in T_\lambda}tN_\lambda t^{-1}.\]
Let $\bg:=G/\ll \bigcup_{\lambda\in\Lambda} N_\lambda \rr$, and let $\bh_\lambda:=H_\lambda/N_\lambda$ for each $\lambda$. Note that the natural maps $\bh_\lambda\rightarrow \bg$ are injective (see e.g., \cite[Lemma 6.4]{sun2018cohomologyi}), and below we will view each $\bh_\lambda$ as a subgroup of $\bg$. Let $BG$ (resp. $BH_\lambda, B\bh_\lambda$) be a $K(G,1)$ (resp. $K(H_\lambda,1),K(\bh_\lambda,1)$) space that has the homotopy type of a CW complex.

For each $\lambda$, there is a classifying map $\phi_\lambda\colon BH_\lambda\rightarrow BG$ (resp. $\psi_\lambda\colon BH_\lambda\rightarrow B\bh_\lambda$) induced by the inclusion $H_\lambda\hookrightarrow G$ (resp. the quotient map $q\colon H_\lambda\rightarrow \bh_\lambda$). Let
\[\phi\colon \bigsqcup_{\lambda\in \Lambda} BH_\lambda\rightarrow BG\]
be the map induced by $\phi_\lambda,\lambda\in\Lambda$, and let
\[\psi\colon \bigsqcup_{\lambda\in \Lambda} BH_\lambda\rightarrow \bigsqcup_{\lambda\in \Lambda} B\bh_\lambda\]
be the disjoint union of $\psi_\lambda,\lambda\in \Lambda$. Let $X$  be the complex obtained by gluing all the mapping cylinders $M_\phi$ and $M_\psi$ along the common subcomplex $\bigsqcup_{\lambda\in \Lambda} BH_\lambda$. We call $X$ a {\it Dehn filling space} associated to the triple  $(G,\{H_\lambda\}_{\lambda\in\Lambda},\{N_\lambda\}_{\lambda\in\Lambda})$.

\begin{theorem}
Let $(G,\{H_\lambda\}_{\lambda\in\Lambda},\{N_\lambda\}_{\lambda\in\Lambda})$ be a Cohen--Lyndon triple. Then every Dehn filling space associated to the triple $(G,\{H_\lambda\}_{\lambda\in\Lambda},\{N_\lambda\}_{\lambda\in\Lambda})$ is a $K(G/\ll \bigcup_{\lambda\in \Lambda} N_\lambda\rr,1)$-space.
\end{theorem}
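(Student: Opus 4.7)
The plan is to adapt the proof of Theorem \ref{thm. topology} essentially verbatim, with only minor bookkeeping adjustments to accommodate the family $\{H_\lambda\}_{\lambda\in\Lambda}$. First, I would apply the Seifert--van Kampen theorem to the cover of $X$ by the two mapping cylinders $M_\phi$ and $M_\psi$, whose intersection is the disjoint union $\bigsqcup_{\lambda\in\Lambda} BH_\lambda$. This yields the presentation
\begin{equation*}
 \pi_1(X) \;\cong\; \langle G, \{\bh_\lambda\}_{\lambda\in\Lambda} \mid h = q_\lambda(h),\ \forall h\in H_\lambda,\ \forall \lambda\in\Lambda \rangle,
\end{equation*}
where $q_\lambda\colon H_\lambda\to\bh_\lambda$ is the quotient homomorphism. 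The mutually inverse homomorphisms constructed in Theorem \ref{thm. topology} generalize immediately: one has an epimorphism $\bg \twoheadrightarrow \pi_1(X)$ and a homomorphism $\pi_1(X) \to \bg$ sending $g\in G$ to $p(g)$ and each $\bar h \in \bh_\lambda$ to itself, and these identify $\pi_1(X)$ with $\bg$.

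Next, since $\widetilde X$ is simply connected, contractibility reduces to verifying $H_n(\widetilde X;\Z) = 0$ for $n\geqslant 2$. Writing $\widetilde X$ as the union of lifted mapping cylinders $M_{\widetilde\phi}$ and $M_{\widetilde\psi}$ glued along
\begin{equation*}
 Y := \bg \times_{\bigsqcup_\lambda \bh_\lambda}\Bigl(\bigsqcup_{\lambda\in\Lambda} N_\lambda\backslash EH_\lambda\Bigr) \;\cong\; \bigsqcup_{\lambda\in\Lambda}\bg\times_{\bh_\lambda}(N_\lambda\backslash EH_\lambda),
\end{equation*}
I would apply Mayer--Vietoris. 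The cylinder $M_{\widetilde\psi}$ deformation retracts onto $\bigsqcup_\lambda \bg\times_{\bh_\lambda} E\bh_\lambda$, each component of which is a copy of the contractible space $E\bh_\lambda$; hence $H_n(M_{\widetilde\psi}) = 0$ for $n\geqslant 1$. Thus, it suffices to show that the map $\theta\colon H_n(Y) \to H_n(M_{\widetilde\phi})$ induced by the inclusion is an isomorphism for all $n\geqslant 1$.

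The final step is the identification of $\theta$ using the Cohen--Lyndon hypothesis. Since $M_{\widetilde\phi}$ deformation retracts onto $\ll\bigcup_\lambda N_\lambda\rr\backslash EG$, the space $M_{\widetilde\phi}$ is a $K(\ll\bigcup_\lambda N_\lambda\rr,1)$. The Cohen--Lyndon decomposition
\begin{equation*}
 \ll\bigcup_{\lambda\in\Lambda} N_\lambda\rr \;=\; \Asterisk_{\lambda\in\Lambda,\, t\in T_\lambda} tN_\lambda t^{-1},
\end{equation*}
together with the standard homology-of-free-product computation, gives
\begin{equation*}
 H_n(M_{\widetilde\phi};\Z) \;\cong\; \bigoplus_{\lambda\in\Lambda}\bigoplus_{t\in T_\lambda} H_n(tN_\lambda t^{-1};\Z)\quad\text{for } n\geqslant 1.
\end{equation*}
On the other hand, each factor $\bg\times_{\bh_\lambda}(N_\lambda\backslash EH_\lambda)$ of $Y$ is the disjoint union, indexed by $\bg/\bh_\lambda$, of copies of the $K(N_\lambda,1)$-space $N_\lambda\backslash EH_\lambda$. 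The main bookkeeping task---which I expect to be the only real subtlety---is to verify that the quotient $p\colon G\twoheadrightarrow\bg$ restricts, for each $\lambda$, to a bijection between $T_\lambda$ and $\bg/\bh_\lambda$, and that under this identification the inclusion of the component labelled $t\in T_\lambda$ into $M_{\widetilde\phi}$ is a classifying map for the subgroup $tN_\lambda t^{-1}$. Once this is in place, $\theta$ splits as a direct sum of classifying-map-induced isomorphisms between matching summands, completing the proof.
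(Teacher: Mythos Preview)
Your proposal is correct and follows exactly the approach the paper indicates, namely adapting the proof of Theorem~\ref{thm. topology} with only the bookkeeping changes needed for the family $\{H_\lambda\}_{\lambda\in\Lambda}$. The one caveat is that when $|\Lambda|>1$ the pieces $M_\psi$ and $\bigsqcup_\lambda BH_\lambda$ are disconnected, so the Seifert--van Kampen step should be justified either via the groupoid version or by iterating the standard version over each $\lambda$; the presentation you write down is nonetheless correct.
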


\begin{corollary}\label{cor. gen topology}
Let $G$ be a group with a family of hyperbolically embedded subgroups $\{H_\lambda\}_{\lambda\in\Lambda}\hookrightarrow_h G$. Then for sufficiently deep $\{N_\lambda\lhd H_\lambda\}_{\lambda\in\Lambda}$, every Dehn filling space associated to the triple $(G,\{H_\lambda\}_{\lambda\in\Lambda},\{N_\lambda\}_{\lambda\in\Lambda})$ is a $K(G/\ll \bigcup_{\lambda\in \Lambda} N_\lambda\rr,1)$-space.
\end{corollary}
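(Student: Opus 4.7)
The plan is to derive this corollary from the preceding multi-peripheral theorem (every Dehn filling space associated to a Cohen--Lyndon triple is a $K(\bg,1)$-space) together with the multi-peripheral version of the Cohen--Lyndon property, exactly as \Cref{cor. topology} was obtained from \Cref{thm. topology} and \Cref{thm. cl} in the single-peripheral setting. All the topological content has already been carried out in the preceding theorem; the only additional input needed is that, whenever $\{H_\lambda\}_{\lambda\in\Lambda}\hookrightarrow_h G$ and $\{N_\lambda\lhd H_\lambda\}_{\lambda\in\Lambda}$ is sufficiently deep, the triple $(G,\{H_\lambda\}_{\lambda\in\Lambda},\{N_\lambda\}_{\lambda\in\Lambda})$ is a Cohen--Lyndon triple in the sense just recalled.

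This Cohen--Lyndon property for a family of peripherals is precisely the multi-peripheral extension of \cite[Theorem~2.5]{sun2018cohomologyi}, whose proof in \cite{sun2018cohomologyi} is already carried out uniformly in this generality. Concretely, I would invoke it to select, for each $\lambda\in\Lambda$, a finite subset $\mathcal{F}_\lambda\subset H_\lambda\smallsetminus\{1\}$ (with $\mathcal{F}_\lambda=\emptyset$ for all but finitely many $\lambda$ when $\Lambda$ is infinite) so that, whenever $N_\lambda\cap\mathcal{F}_\lambda=\emptyset$ for every $\lambda$, there exist left transversals $T_\lambda$ of $H_\lambda\ll\bigcup_\mu N_\mu\rr$ in $G$ realizing the free product decomposition
\[
\ll\bigcup_{\mu\in\Lambda} N_\mu\rr \;=\; \Asterisk_{\lambda\in\Lambda,\,t\in T_\lambda}\, tN_\lambda t^{-1}.
\]
The preceding theorem then applies verbatim to this Cohen--Lyndon triple and yields that every associated Dehn filling space is a $K(\bg,1)$-space, which is exactly the conclusion.

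The least automatic step, and the only point worth checking, is the bookkeeping of the notion ``sufficiently deep'' for a family of peripherals. This is handled by reading \Cref{def. sufficiently deep} coordinatewise: a property of $\{N_\lambda\}_{\lambda\in\Lambda}$ holds for sufficiently deep families if there is a choice of finite avoidance sets $\mathcal{F}_\lambda\subset H_\lambda\smallsetminus\{1\}$ (nontrivial for only finitely many indices) such that the property holds whenever each $N_\lambda$ avoids $\mathcal{F}_\lambda$. This matches the form in which the multi-peripheral Cohen--Lyndon theorem is stated, so no new technical difficulty arises and the corollary follows immediately.
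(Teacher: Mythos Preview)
Your proposal is correct and matches the paper's approach exactly: the paper does not give a separate proof here but declares that all results in this section are completely analogous to their single-peripheral counterparts and leaves them to the reader, and what you have written is precisely that analogy---combine the multi-peripheral version of the preceding theorem with the multi-peripheral Cohen--Lyndon property from \cite{sun2018cohomologyi}, just as \Cref{cor. topology} was obtained from \Cref{thm. topology} and \Cref{thm. cl}. One small over-specification: the parenthetical claim that $\mathcal{F}_\lambda=\emptyset$ for all but finitely many $\lambda$ is not something the Cohen--Lyndon theorem guarantees in general, and it is not needed for the argument, so you can safely drop it.
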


\begin{corollary}\label{cor. gen finiteness conditions}
Let $G$ be a group with a finite family hyperbolically embedded subgroups $\{H_i\}^k_{i=1}\hookrightarrow_h G$.
\begin{enumerate}
    \item[(i)] If $G$ is of type $F_n$ for some $n\in \mathbb N^{+}\cup \{\infty\}$, then there exists a family of finite subsets $\{\F_i\subset H_i\smallsetminus\{1\}\}^k_{i=1}$ such that if, for every $i$, we have $N_i\cap\F_i=\emptyset$, then $G/\ll \bigcup^k_{i=1} N_i\rr$ is of type $F_n$  if and only if all $H_i/N_i$ are of type $F_n$. 

    \item[(ii)] If $G$ and every $H_i$ are of type $F$, then  then there exists a family of finite subsets $\{\F_i\subset H_i\smallsetminus\{1\}\}^k_{i=1}$ such that if for every $i$, we have $N_i\cap\F_i=\emptyset$ and $H_i/N_i$ is of type $F$, then $G/\ll \bigcup^k_{i=1} N_i\rr$ is of type $F$.
\end{enumerate}

\end{corollary}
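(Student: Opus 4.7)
The plan is to mimic the proof of \Cref{cor. finiteness conditions}, replacing the single-peripheral Dehn filling space by its multi-peripheral version from \Cref{cor. gen topology}. Since the family $\{H_i\}_{i=1}^k$ is finite, the disjoint unions appearing in the construction of $X$ remain CW complexes with good finiteness properties, which is what makes the argument go through.

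For part (i), I would first handle the forward implication. By the multi-peripheral analogue of \Cref{thm. simple Dehn filling}, for sufficiently deep $\{N_i\lhd H_i\}_{i=1}^k$, each quotient $\bh_i = H_i/N_i$ embeds as a hyperbolically embedded subgroup of $\bg = G/\ll \bigcup_{i=1}^k N_i\rr$. Hence, if $\bg$ is of type $F_n$, then \cite[Corollary 4.32]{dahmani2017hyperbolically} yields that each $\bh_i$ is of type $F_n$. For the reverse implication, I would use that $G$ of type $F_n$ forces each $H_i$ to be of type $F_n$, again by \cite[Corollary 4.32]{dahmani2017hyperbolically}. Thus there exist $K(G,1)$ and $K(H_i,1)$ models $BG$, $BH_i$ with finite $n$-skeleta, and if the $\bh_i$ are of type $F_n$ then one can also choose $B\bh_i$ with finite $n$-skeleta. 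The mapping cylinders $M_\phi$ and $M_\psi$ of the classifying maps
\[\phi\colon \bigsqcup_{i=1}^k BH_i\rightarrow BG,\qquad \psi\colon \bigsqcup_{i=1}^k BH_i\rightarrow \bigsqcup_{i=1}^k B\bh_i\]
then have finite $n$-skeleta (here finiteness of $k$ is essential). Gluing them along the common subcomplex $\bigsqcup_{i=1}^k BH_i$ produces a Dehn filling space $X$ with finite $n$-skeleton, and by \Cref{cor. gen topology}, for sufficiently deep fillings $X$ is a $K(\bg,1)$-space. Hence $\bg$ is of type $F_n$.

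Part (ii) is a direct adaptation: starting from finite models for $BG$, $BH_i$ and (under the assumption) $B\bh_i$, the mapping cylinders $M_\phi$ and $M_\psi$ become finite CW complexes, and their gluing along $\bigsqcup_{i=1}^k BH_i$ yields a finite Dehn filling space which, by \Cref{cor. gen topology}, is a $K(\bg,1)$-space for sufficiently deep fillings. Thus $\bg$ is of type $F$.

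The only non-routine issue is the combinatorial bookkeeping for the finite set $\bigcup_{i=1}^k \F_i$: it must simultaneously ensure the Cohen--Lyndon property needed for $X$ to be aspherical (via \Cref{cor. gen topology}) and, in the forward direction of (i), the embedding of the family $\{\bh_i\}_{i=1}^k$ into $\bg$ as hyperbolically embedded subgroups. But both requirements are of the ``sufficiently deep'' type in the sense of \Cref{def. sufficiently deep}, so one simply takes the (finite) union of the finite sets guaranteed by each. No genuine obstacle arises beyond this; the finiteness of $\Lambda$ is what keeps all the relevant unions within the finiteness class under consideration.
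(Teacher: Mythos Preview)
Your proposal is correct and follows essentially the same approach as the paper: the paper does not write out a proof of \Cref{cor. gen finiteness conditions} at all, stating only that the proofs in \Cref{sec. multiple peripherals} ``are completely analogous and therefore left to the reader,'' and your argument is precisely the expected multi-peripheral adaptation of the proof of \Cref{cor. finiteness conditions}.
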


    \subsection{$L^2$-Betti number} The following results are generalizations of our main results on $L^2$-Betti numbers in the presence of multiple peripheral subgroups.

    \begin{theorem}\label{thm. gen li}
 Let $k\geqslant 0$ and let $G$ be a type $F_{k+1}$ virtually locally indicable group with a finite family of hyperbolically embedded subgroups $\{H_i\}^\ell_{i=1}\hookrightarrow_h G$. Then for every $\delta>0$, there exists a family of finite subsets $\{\F_{i,\delta}\subset H_i\smallsetminus\{1\}\}^\ell_{i=1}$ such that if a family of normal subgroups $\{N_i\lhd H_i\}^\ell_{i=1}$ satisfies $N_i\cap \F_{i,\delta}=\emptyset$ and $H_i/N_i$ is of type $F_{k+1}$ for all $i$, then $\bg:=G/\ll \bigcup^\ell_{i=1} N_i \rr$ satisfies
    \[b^{(2)}_1(\bg)<b^{(2)}_1(G)+\delta,\]
    \begin{equation*}
        b^{(2)}_{k+1}(\bg)<b^{(2)}_{k+1}(G)+\sum^\ell_{i=1} b^{(2)}_k(H_i)+ \sum^\ell_{i=1}b^{(2)}_{k+1}(H_i/N_i)+\delta,
    \end{equation*}
    and
    \begin{equation*}
       -\sum^\ell_{i=1}b^{(2)}_n(H_i)-\sum^\ell_{i=1}b^{(2)}_{n-1}(H_i/N_i)-\delta< b^{(2)}_n(\bg)- b^{(2)}_n(G)< \sum^\ell_{i=1}b^{(2)}_{n-1}(H_i)+\sum^\ell_{i=1}b^{(2)}_n(H_i/N_i)+\delta
    \end{equation*}
    for all $n\leqslant k$.
    \end{theorem}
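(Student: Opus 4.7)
The strategy is to follow the proof of Theorem 3.9 verbatim, with the single peripheral subgroup $H$ replaced systematically by the family $\{H_i\}_{i=1}^\ell$. Every ingredient generalizes with only notational changes, so the substantive content of the argument is unchanged; the only genuine work is in the bookkeeping of finite subsets and $\delta$'s across peripherals.

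The first step is to establish a multi-peripheral analog of Lemma 4.5. The Dehn filling space for $(G, \{H_i\}, \{N_i\})$ given by Corollary 5.1 is obtained by gluing the mapping cylinders $M_\phi$ and $M_\psi$ along $\bigsqcup_i BH_i$. Running the Mayer--Vietoris sequence on its universal cover and applying excision, exactly as in the proof of Lemma 4.5, I obtain a commutative diagram of $\N(\bg)$-modules with exact rows whose leftmost terms are
\[\bigoplus_{i=1}^\ell \N(\bg)\otimes_{\N(H_i/N_i)}\cH_n(N_i\backslash EH_i\,;\N(H_i/N_i)) \quad\text{and}\quad \bigoplus_{i=1}^\ell \N(\bg)\otimes_{\N(H_i/N_i)}\cH_n(H_i/N_i\,;\N(H_i/N_i)),\]
the direct sums arising from the disjoint union $\bigsqcup_i BH_i$. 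By additivity of von Neumann dimension (Proposition 2.4 (iii)) together with \cite[Theorem 6.54(7)]{luck2002l2}, this yields for all $n \leqslant k+1$
\begin{align*}
    b^{(2)}_n\bigl(\ll{\textstyle\bigcup_i} N_i\rr\backslash EG\,;\bg\bigr) - \sum_{i=1}^\ell b^{(2)}_n(N_i\backslash EH_i\,;H_i/N_i) &\leqslant b^{(2)}_n(\bg, \{H_i/N_i\}) \\
    &\leqslant b^{(2)}_n\bigl(\ll{\textstyle\bigcup_i} N_i\rr\backslash EG\,;\bg\bigr) + \sum_{i=1}^\ell b^{(2)}_{n-1}(N_i\backslash EH_i\,;H_i/N_i),
\end{align*}
and the analogous inequality with $\bg$ in place of $\ll\bigcup_i N_i\rr\backslash EG$ and $H_i/N_i$ in place of $N_i\backslash EH_i$.

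The second step is to apply the quantitative L\"uck approximation (Corollary 3.4) to $G$, producing a finite subset $\fS_\delta \subset G\smallsetminus\{1\}$ controlling $|b^{(2)}_n(G) - b^{(2)}_n(M\backslash EG\,;G/M)|$ for every $M\lhd G$ avoiding $\fS_\delta$; and then to each $H_i$, which is of type $F_{k+1}$ by \cite[Corollary 4.32(b)]{dahmani2017hyperbolically}, producing finite subsets controlling $|b^{(2)}_n(H_i) - b^{(2)}_n(N_i\backslash EH_i\,;H_i/N_i)|$. Invoking \cite[Theorem 7.19(c)]{dahmani2017hyperbolically}, I extract finite subsets of each $H_i\smallsetminus\{1\}$ guaranteeing that $\ll\bigcup_i N_i\rr\cap \fS_\delta = \emptyset$ whenever each $N_i$ avoids its subset. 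Combining these inputs, and rescaling $\delta$ by a factor of $2\ell$ at each application so that summing $\ell$ error terms remains within $\delta$, delivers the second and third asserted inequalities.

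For the first inequality $b^{(2)}_1(\bg) < b^{(2)}_1(G) + \delta$, I would derive a multi-peripheral analog of Corollary 3.7 by the same argument: combine the upper bound of Corollary 3.6 applied to $M = \ll\bigcup_i N_i\rr$ with \cite[Theorem 7.19(c)]{dahmani2017hyperbolically} to pull back the required avoidance condition to each peripheral. Taking $\F_{i,\delta}$ to be the union of all the finite subsets of $H_i\smallsetminus\{1\}$ assembled above yields the family claimed in the statement. No step presents a real obstacle; the main (but routine) concern is confirming that the factor-$\ell$ loss in the approximation errors is absorbed by rescaling $\delta$, and that the upper bound in the inequality for $n=k+1$ requires only the one-sided estimate from Corollary 3.4, matching the form of the statement.
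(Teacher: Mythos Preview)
Your proposal is correct and matches the paper's approach exactly: the paper states in Section~5 that the proofs of the multi-peripheral results, including this one, ``are completely analogous [to the single-peripheral versions] and therefore left to the reader,'' and your outline is precisely the expected adaptation of the proof of \Cref{thm. li dehn filling} (together with \Cref{cor. locally indicable dehn filling} for the first inequality) using the multi-peripheral Dehn filling space and Mayer--Vietoris bookkeeping.
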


    \begin{corollary}\label{cor. gen li}
    Let $G$ be a virtually locally indicable group of type $F_\infty$ and has a finite family of hyperbolically embedded amenable subgroups $\{H_i\}^m_{i=1}\hookrightarrow_h G$. Then for every $\delta>0$ and $n\geq 0$, there exists a family of finite subsets $\{\F_{n,\delta,i}\subset H\smallsetminus\{1\}\}^m_{i=1}$ such that if a family of normal subgroups $\{N_i\lhd H_i\}^m_{i=1}$ satisfies that, for all $i$, $N_i\cap\F_{n,\delta,i}=\emptyset$ and $H_i/N_i$ is of type $F_\infty$, then
    \[\left| b^{(2)}_n(G/\ll \bigcup^m_{i=1} N_i \rr)-b^{(2)}_n(G)\right|< \delta.\]
    \end{corollary}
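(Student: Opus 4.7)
The plan is to generalize the proof of \Cref{thm. intro li} from a single peripheral to a finite family, using \Cref{thm. gen li} in place of \Cref{thm. li dehn filling}. First, I would partition $\{1,\ldots,m\} = I_f \sqcup I_\infty$ according to whether $H_i$ is finite or infinite, and set $\F_{n,\delta,i} := H_i \smallsetminus \{1\}$ for $i \in I_f$. The condition $N_i \cap \F_{n,\delta,i} = \emptyset$ then forces $N_i = \{1\}$ for every finite peripheral, so that $\bg = G/\ll \bigcup_{j \in I_\infty} N_j \rr$ and the filling effectively happens only on the infinite peripherals. If $I_\infty = \emptyset$ then $\bg = G$ and there is nothing to prove; otherwise subfamilies of hyperbolically embedded families remain hyperbolically embedded (one absorbs the finite peripherals into the relative generating set, preserving the three conditions in \Cref{hyperbolically embedded}), so $\{H_j\}_{j \in I_\infty} \hookrightarrow_h G$ and \Cref{thm. gen li} applies to this subfamily.

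For each $j \in I_\infty$, amenability is inherited by quotients, so $H_j/N_j$ is amenable; then \cite[Theorem 6.37]{luck2002l2} gives $b^{(2)}_k(H_j) = b^{(2)}_k(H_j/N_j) = 0$ for all $k \geq 1$, and \cite[Theorem 6.54 (8b)]{luck2002l2} gives $b^{(2)}_0(H_j) = 0$ since $H_j$ is infinite. Hence among the peripheral $L^2$-Betti numbers appearing in the error bounds of \Cref{thm. gen li}, only $b^{(2)}_0(H_j/N_j)$ can be nonzero, and it equals $1/|H_j/N_j|$ (when the quotient is finite) or $0$. Applying \Cref{lem. large quotient} to each $H_j$, I would choose a finite subset $\F'_j \subset H_j \smallsetminus \{1\}$ such that $N_j \cap \F'_j = \emptyset$ implies $b^{(2)}_0(H_j/N_j) < \delta / (2|I_\infty|)$.

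Now apply \Cref{thm. gen li} to $\{H_j\}_{j \in I_\infty}$ with tolerance $\delta/2$ and $k := \max(n,1)$; the type-$F_{k+1}$ hypothesis is automatic from the type-$F_\infty$ assumptions on $G$ and each $H_j/N_j$. Let $\F''_{n,\delta,j} \subset H_j \smallsetminus \{1\}$ be the resulting finite subset and set $\F_{n,\delta,j} := \F'_j \cup \F''_{n,\delta,j}$ for $j \in I_\infty$. Feeding the vanishing identities together with the bound on $b^{(2)}_0(H_j/N_j)$ into both sides of the two-sided inequality of \Cref{thm. gen li}, one obtains, uniformly for all $n \geq 0$,
\[\bigl| b^{(2)}_n(\bg) - b^{(2)}_n(G) \bigr| < \sum_{j \in I_\infty} b^{(2)}_0(H_j/N_j) + \frac{\delta}{2} < \frac{\delta}{2} + \frac{\delta}{2} = \delta,\]
as required. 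The only subtle step is controlling the residual term $b^{(2)}_0(H_j/N_j)$, which can fail to vanish when the quotient becomes finite; this is resolved cleanly by \Cref{lem. large quotient}, while everything else amounts to substitution of vanishing amenable-group $L^2$-Betti numbers into the approximation bounds of \Cref{thm. gen li}.
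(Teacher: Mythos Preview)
Your proposal is correct and follows essentially the same approach as the paper. The paper's remark after \Cref{cor. gen li} outlines precisely this strategy: set $\F_{n,\delta,i}=H_i\smallsetminus\{1\}$ for finite peripherals to force $N_i=\{1\}$, absorb those finite $H_i$ into the relative generating set so that the remaining infinite subfamily is still hyperbolically embedded, and then run the proof of \Cref{thm. intro li} verbatim with \Cref{thm. gen li} replacing \Cref{thm. li dehn filling}; your use of \Cref{lem. large quotient} to kill the residual $b^{(2)}_0(H_j/N_j)$ terms is exactly the mechanism used there.
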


    \begin{remark}
        One can prove \Cref{cor. gen li} be proceeding as follows: First, prove \Cref{cor. gen li} under the additional assumption that each $H_i$ is infinite. The proof will be a generalization of and completely analogous to the proof of \Cref{thm. intro li}. Now consider the general case. Suppose that there exists $\ell<k$ such that $|H_i|<\infty$ for $i=\ell+1,\dots, k$ and $|H_i|=\infty$ for $i=1,\dots,\ell$. We can then let $\F_i=H\smallsetminus\{1\}$ for $i=\ell+1,\dots,k$, from which it follows that $N_i=\{1\}$ for $i=\ell+1,\dots,k$. Moreover, it follows from the definition that, if $X\subset G$ is a subset such that $\{H_i\}^k_{i=1}\hookrightarrow_h (G,X)$, then $\{H_i\}^\ell_{i=1}\hookrightarrow_h (G,X\cup \bigcup^k_{i=\ell+1}H_i)$. In particular, $\{H_i\}^\ell_{i=1}\hookrightarrow_h G$. So we are reduced to the previously solved special case where the hyperbolically embedded family consists of only infinite amenable groups.
    \end{remark}

    \begin{theorem}\label{thm. gen hyp.}
    Let $G$ be a virtually compact cubical hyperbolic group that is hyperbolic relative to a finite family of subgroups $\{H_i\}^k_{i=1}<G$. Then there exists a family of finite-index torsion-free normal subgroups $\{K_i\lhd H_i\}^k_{i=1}$ such that the following ($\bigstar$) holds.

    \noindent
    ($\bigstar$) For each $i$, let $L_i\leqslant K_i$ be any finite-index subgroup. Then there exists a family of finite subsets $\{\F_i\subset L_i\smallsetminus\{1\}\}^k_{i=1}$ such that if a family of normal subgroups $\{N_i\lhd H_i\}^k_{i=1}$ satisfies for all $i$,
    \begin{itemize}
        \item $N_i\leqslant L_i$, 
        \item $N_i\cap\F_i=\emptyset$, and
        \item $L_i/N_i$ is torsion-free virtually compact cubical hyperbolic,
    \end{itemize}
    then
    \[b^{(2)}_1(G/\ll \bigcup^k_{i=1} N_i \rr)\leqslant b^{(2)}_1(G).\]
\end{theorem}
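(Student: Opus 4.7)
The plan is to mirror the proof of \Cref{thm. hyp.} step-for-step, replacing the scalar quantities attached to the single subgroup $H$ by their multi-peripheral analogues. Since $G$ is virtually compact cubical hyperbolic, \Cref{prop. properties of vir special groups} shows it is virtually compact special, hence virtually torsion-free; I would begin by fixing a torsion-free finite-index normal subgroup $G_1 \lhd G$.

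To obtain the $\{K_i \lhd H_i\}^k_{i=1}$, I would invoke the multi-peripheral Malnormal Special Quotient Theorem \cite[Theorem 2.7]{agol2016alternate} (see also \cite[Theorem 15.6]{wise2021structure}), yielding finite-index torsion-free normal subgroups $K_i \lhd H_i$ with the property that whenever $\{N_i \lhd H_i\}^k_{i=1}$ satisfies $N_i \leqslant K_i$ and each $\phi_N(H_i)$ is virtually compact cubical hyperbolic, the quotient $\bg := G/\ll\bigcup^k_{i=1} N_i \rr$ is virtually compact special hyperbolic---and in particular satisfies the Atiyah Conjecture by \cite{schreve2015l2}.

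Given arbitrary finite-index $\{L_i \leqslant K_i\}$, I would set $M := \max\{|G/G_1|,\,\max_i [H_i:L_i]\}$ and assemble $\F_i = \F_{i,1}\cup\F_{i,2}\cup\F_{i,3}$ from three ingredients. First, the multi-peripheral analogue of \Cref{cor. locally indicable dehn filling}---derived by running its proof with the multi-peripheral form of \cite[Theorem 7.19 (c)]{dahmani2017hyperbolically} in place of the single-peripheral one---would supply $\F_{i,1}$ such that $N_i \cap \F_{i,1} = \emptyset$ forces
\[b^{(2)}_1(\bg) < b^{(2)}_1(G) + \tfrac{1}{M!}.\]
Second, \cite[Theorem 1.1]{osin2007peripheral} would supply $\F_{i,2}$ ensuring $\phi_N(H_i) \cong H_i/N_i$; since $L_i/N_i$ is a finite-index subgroup of $H_i/N_i$ and is virtually compact cubical hyperbolic by hypothesis, so is $H_i/N_i$, putting us in the regime where the MSQT output above applies and $\bg$ is virtually compact special. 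Third, the multi-peripheral form of \Cref{lem. pre torsion-free}---which follows from \cite[Lemma 4.3]{dahmani2018recognizing} as already stated for finite families---would supply $\F_{i,3}$ bounding $\max\fin(\bg) \leqslant \max(\fin(G) \cup \bigcup_i \fin(H_i/N_i))$. Applying \Cref{lem. fin} to $G_1 \lhd G$ gives $\max\fin(G) \leqslant |G/G_1|$, and applying it to $L_i/N_i \lhd H_i/N_i$ (using torsion-freeness of $L_i/N_i$) gives $\max\fin(H_i/N_i) \leqslant [H_i:L_i]$; together $\max\fin(\bg) \leqslant M$. The Atiyah Conjecture then places both $b^{(2)}_1(G)$ and $b^{(2)}_1(\bg)$ inside $\tfrac{1}{M!}\cdot\Z$, and the strict bound above collapses to $b^{(2)}_1(\bg) \leqslant b^{(2)}_1(G)$.

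The main obstacle is essentially bookkeeping: verifying that every input to the proof of \Cref{thm. hyp.} has a multi-peripheral counterpart with matching hypotheses. Fortunately the underlying sources \cite{dahmani2017hyperbolically,dahmani2018recognizing,osin2007peripheral,agol2016alternate,schreve2015l2} are either formulated for, or adapt routinely to, a finite family of hyperbolically embedded or peripheral subgroups, so no new ideas are required. The key structural point inherited from \Cref{thm. hyp.} is the matching between the denominator $M!$ that bounds finite subgroup orders via the Atiyah Conjecture and the reciprocal-factorial error in the approximation inequality---this is precisely what converts strict approximation into the desired sharp inequality.
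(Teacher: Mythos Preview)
Your proposal is correct and follows exactly the approach the paper prescribes: the paper does not give a separate proof of \Cref{thm. gen hyp.}, stating only that it is ``completely analogous'' to the proof of \Cref{thm. hyp.} and ``left to the reader,'' and you have faithfully supplied that analogous argument, including the extra $(\bigstar)$ layer with the $L_i$.

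One small slip worth flagging: you write ``applying \Cref{lem. fin} to $L_i/N_i \lhd H_i/N_i$,'' but the statement of \Cref{thm. gen hyp.} does not require $L_i$ to be normal in $H_i$, so $L_i/N_i$ need not be normal in $H_i/N_i$. This is harmless: since $L_i/N_i$ is torsion-free of index $[H_i:L_i]$, any finite subgroup of $H_i/N_i$ meets it trivially and hence injects into the coset space, giving $\max\fin(H_i/N_i)\leqslant [H_i:L_i]$ directly without invoking \Cref{lem. fin}. (Also, the citation to \cite[Theorem 15.6]{wise2021structure} is the relative-to-abelian MSQT used in \Cref{lem. special quotient}; for the hyperbolic case here the relevant reference is \cite[Theorem 2.7]{agol2016alternate}, as in \Cref{cl. special}.)
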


    \begin{theorem}\label{thm. gen rel hyp.}
    Let $G$ be a virtually compact cubical group that is hyperbolic relative to a finite family of abelian  subgroups $\{H_i\}^k_{i=1}$. Then there exists a family of finite-index torsion-free subgroups $\{K_i\lhd H_i\}^k_{i=1}$ such that the following ($\bigstar$) holds.

    \noindent
    ($\bigstar$) For each $i$, let $L_i\leqslant K_i$ be any finite-index subgroup. Then there exist finite subsets $\{\F_i\subset L_i\smallsetminus\{1\}\}^k_{i=1}$ such that if a family of normal subgroups $\{N_i\lhd H_i\}^k_{i=1}$ satisfies for all $i$,
    \begin{enumerate}
        \item[(i)] $N_i\leqslant L_i$,
        \item[(ii)] $N_i\cap \F_i=\emptyset$, and
        \item[(iii)] $L_i/N_i$ is torsion-free,
    \end{enumerate}
    then, for all $n\in\mathbb N$,
    \[b^{(2)}_n\big( G/\ll \bigcup^k_{i=1} N_i \rr\big)=b^{(2)}_n (G).\]
\end{theorem}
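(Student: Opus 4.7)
The strategy is to mimic the single-peripheral proof of \Cref{thm. rel hyp.}, replacing each input by its multi-peripheral counterpart available from \Cref{sec. multiple peripherals} and \Cref{lem. special quotient}. First, by \Cref{prop. properties of vir special groups}, $G$ is virtually compact special, hence virtually locally indicable and of type $F_\infty$, so \Cref{thm. gen li} applies. Fix a torsion-free finite-index normal subgroup $G_1 \lhd G$, and let $\{K_i \lhd H_i\}_{i=1}^k$ be the intersection of $G_1$ with the family of finite-index torsion-free subgroups given by \Cref{lem. special quotient}; thus every filling $\{N_i \leqslant K_i\}$ for which each $K_i/N_i$ is virtually $\Z$ yields a virtually compact special quotient of $G$. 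For the rest, let $\{L_i \leqslant K_i\}$ be any finite-index subfamily.

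The first step is to dispose of the range $n > \vcd(G)+1$. Passing to the induced peripheral structure on $G_1$ via \Cref{lem. df in fi subgroup}, a $G$-equivariant sufficiently deep filling of $G_1$ yields, by \cite[Corollary 4.6(a)]{sun2019cohomologyii} together with \cite[Theorem 6.54(7)]{luck2002l2}, that the corresponding $L^2$-Betti numbers equal the sum of those of the peripheral quotients, each of which vanishes since every $L_i/N_i$ is infinite abelian (or else trivial, in which case one absorbs it by enlarging $\F_i$). Transporting back to $G$ via the covering formula \cite[Theorem 6.54(6)]{luck2002l2} gives $b^{(2)}_n(\bg)=0=b^{(2)}_n(G)$ for $n > \vcd(G)+1$. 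The constraint $N_i \leqslant L_i$ is used to guarantee that $L_i/N_i$ has infinite order whenever $N_i \neq L_i$, and one further enlarges $\F_i$ to force $N_i \neq L_i$, mirroring \Cref{cl. beyong vcd}.

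For the range $n \leqslant \vcd(G)+1$, the plan is to combine an approximation up to arbitrary $\delta > 0$ with a discreteness statement. The approximation is exactly \Cref{thm. gen li}, which, since each $H_i$ is abelian (hence $b^{(2)}_m(H_i)=0$ for $m\geqslant 1$ and $b^{(2)}_0(H_i)=0$ when infinite), collapses to $|b^{(2)}_n(\bg)-b^{(2)}_n(G)|<\delta + \text{vanishing terms coming from }H_i/N_i$; these terms can be made smaller than $\delta$ by imposing via \Cref{lem. large quotient} that each $L_i/N_i$ is large (and hence each $H_i/N_i$ has $b^{(2)}_0$ small and higher $b^{(2)}_m$ zero by amenability). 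For the discreteness, one needs $b^{(2)}_n(\bg)$ and $b^{(2)}_n(G)$ to lie in a common subgroup $\tfrac{1}{C!}\Z$ of $\Q$ for some constant $C$ independent of the filling; then, taking $\delta < 1/C!$, equality follows.

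The main obstacle, and heart of the argument, is verifying discreteness of $b^{(2)}_n(\bg)$ via \Cref{lem. det and atiyah ver 2} applied to $\bg$ itself. One needs that for every finite $\fS\subset \bg\smallsetminus\{1\}$ there is a further normal subgroup $\overline M_\fS \lhd \bg$, disjoint from $\fS$, such that $\bg/\overline M_\fS$ is virtually compact special and the finite subgroups of $\bg/\overline M_\fS$ are bounded uniformly in $\fS$. Using the multi-peripheral versions of \Cref{thm. simple Dehn filling} and \Cref{thm. cl}, choose for each peripheral $\phi_N(K_i)\cong K_i/N_i$ (which is free abelian of finite rank) a subgroup $\overline M_{\fS,i}$ via \Cref{lem. zn} so that $\phi_N(K_i)/\overline M_{\fS,i}\cong \Z$, avoiding the finite avoidance set coming from the Cohen--Lyndon property and from the peripheral avoidance in \cite[Theorem 1.1]{osin2007peripheral}. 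Taking $\overline M_\fS = \ll \bigcup_i \overline M_{\fS,i} \rr_{\bg}$, the composite quotient $\bg/\overline M_\fS \cong G/\ll \bigcup_i M_{\fS,i}\rr_G$ satisfies $L_i/M_{\fS,i}\cong \Z$ for all $i$, so \Cref{lem. special quotient} yields virtual compact specialness, hence the Atiyah Conjecture \cite{schreve2015l2} and virtual local indicability (\Cref{prop. properties of vir special groups}). The bound on finite subgroups is controlled by $\max\{[G:G_1],[H_i:K_i]\}$ via \Cref{lem. fin} and \Cref{lem. pre torsion-free} exactly as in \Cref{cl. rel. hyp. finite subgroup G}. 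Since the short exact sequence whose kernel $\ll \bigcup_i \overline M_{\fS,i}\rr_\bg$ is locally indicable (being a free product by Cohen--Lyndon), $\bg$ itself is virtually locally indicable, so \Cref{lem. det and atiyah ver 2} applies and yields $b^{(2)}_n(\bg)\in \tfrac{1}{C!}\Z$ for $C=\max\{[G:G_1],[H_i:K_i]_i\}$, where one also knows $b^{(2)}_n(G)\in \tfrac{1}{C!}\Z$ because $G$ itself is virtually compact special. Combining with the approximation finishes the proof.
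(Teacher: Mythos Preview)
Your plan is exactly the one the paper intends: the paper states that the proof of \Cref{thm. gen rel hyp.} is ``completely analogous'' to that of \Cref{thm. rel hyp.} and leaves it to the reader, and your outline reproduces that argument step by step (Claims~\ref{cl. beyong vcd}--\ref{cl. K/N 2}, with \Cref{thm. gen li} in place of \Cref{thm. li dehn filling} and the multi-peripheral Cohen--Lyndon and Osin inputs).

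Two small slips to fix. First, in the discreteness step you apply \Cref{lem. zn} to $\phi_N(K_i)\cong K_i/N_i$, calling it free abelian; but hypothesis (iii) only guarantees that $L_i/N_i$ is torsion-free, and $K_i/N_i$ can have torsion when $L_i\lneq K_i$. You should run \Cref{lem. zn} on $\phi_N(L_i)\cong L_i/N_i$ instead (indeed, two lines later you already switch to $L_i/M_{\fS,i}\cong\Z$, which is the correct statement and is exactly what \Cref{lem. special quotient} needs, since $K_i/M_{\fS,i}$ is then virtually~$\Z$). Second, your uniform bound $C=\max\{[G:G_1],[H_i:K_i]\}$ is too small: \Cref{lem. fin} gives $\max\fin(H_i/N_i)\leqslant [H_i:L_i]$, not $[H_i:K_i]$, so take $C=\max\{[G:G_1],\max_i[H_i:L_i]\}$ (this is allowed, since $C$ only enters through the choice of $\delta$, hence through the $\F_i$, which are permitted to depend on the $L_i$). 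With these corrections the argument goes through.
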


\begin{remark}
    \Cref{thm. gen rel hyp.} can be proved by proceeding along the the following line: First prove the theorem under the additional assumption that each $H_i$ is virtually $\Z^m$ for $m\geqslant 2$. The proof will be a generalization of and completely analogous to the proof of \Cref{thm. rel hyp.}. Now consider the general case. Suppose that there exists $\ell<k$ such that, for $i=\ell+1,\dots,k$, the group $H_i$ is virtually cyclic; and for $i=1,\dots,\ell$, the group $H_i$ is virtually $\Z^m$ for some $m\geqslant 2$. Then by \cite[Theorem 2.40]{osin2006relatively}, $G$ is hyperbolic relative to the family $\{H_i\}^\ell_{i=1}$. Moreover, if for some $i$, the group $H_i$ is finite, then we can take $K_i=\{1\}$, from which it follows that $N_i=\{1\}$. And if for some $i$, the group $H_i$ is virtually $\Z$, then we can choose $K_i$ to be isomorphic to $\Z$. In this case any finite-index subgroup $L_i\leqslant K_i$ will be isomorphic to $\Z$ and we can let $\F_i$ to consists of the generator of $L_i$. The only $N_i$ that satisfies conditions (i), (ii) and (iii) in \Cref{thm. gen rel hyp.} is $\{1\}$.  Therefore, $N_i=\{1\}$ for $i=\ell+1,\dots,k$ and we are reduced to the previously solved special case where $G$ is hyperbolic relative to the family $\{H_i\}^\ell_{i=1}$ with each $H_i$  virtually $\Z^m$ for some $m\geqslant 2$.
\end{remark}

\section{Virtually sparse special groups}\label{sec:sparse}

We will study $L^2$-Betti numbers of Dehn fillings of cusped hyperbolic manifolds. For this purpose, \Cref{thm. rel hyp.} is not satisfactory because it only deals with virtually compact cubical groups, and, to the best of the authors' knowledge, only a finite number of cusped hyperbolic manifolds of dimension at least 4 with virtually compact cubical fundamental group have been constructed so far. So, in this section, we will generalize \Cref{thm. rel hyp.} to deal with multiple cusps and virtually sparse special  groups.  Notably, Bergeron--Wise \cite{bergeron2012boundary} proved that all cusped arithmetic hyperbolic manifolds have virtually  sparse special fundamental group (see \Cref{thm. lattice sparse}). We start with the definition of a sparse group. For more details on this topic, we refer to \cite{wise2021structure}.

\begin{definition}
    A \textit{quasiflat} is a pair $(X,A)$, where $A$ is a finitely generated virtually abelian group and $X$ is a locally finite CAT(0) cube complex with a proper action by $A$ such that there are finitely many $A$-orbits of hyperplanes.
\end{definition}

\begin{definition}
    A quasiflat $(X,A)$ is \textit{closable} if there exists a quasiflat $(Y,B)$ such that $B$ acts properly and cocompactly on $Y$, and there is an inclusion $A\hookrightarrow B$ and an equivariant convex embedding $X\rightarrow Y$. 
    
    The quasiflat $(X,A)$ is called \textit{abelian closable} if additionally $(Y,B)$ can be chosen such that $B$ is free abelian. Note that if $(X,A)$ is a closable quasiflat, then there is a finite-index normal abelian subgroup $A_0\lhd A$ such that the quasiflat $(X,A_0)$ is abelian closable. Moreover, for every finite-index subgroup $A_1\leqslant A_0$, the quasiflat $(X,A_1)$ is abelian closable.
\end{definition}

\begin{definition}\label{def:cosparse}
    Let $G$ be a group that is hyperbolic relative to a finite family of virtually abelian groups $\{H_i\}^n_{i=1}$. An action $G\curvearrowright X$ on a CAT(0) cube complex $X$ is called \textit{cosparse} if there exists a compact subcomplex $K\subset X$ and a family of quasiflats $\{(F_i, H_i)\}^n_{i=1}$  with $H_i=\mathrm{Stab}_G(F_i)$ for each $i$ such that

    \begin{enumerate}[label=(\roman*)]
        \item\label{sparse_1} $X=GK\cup (\bigcup^n_{i=1}GF_i)$;
        \item\label{sparse_2}  for each $i$ we have $(F_i\cap GK)\subset H_iK_i$ for some compact subcomplex $K_i\subset X$; and
        \item\label{sparse_3}  for $i,j\in\{1,\dots,n\}$ and $g\in G$, either $(F_i\cap gF_j)\subset GK$ or $i=j$ and $F_i=gF_j$.
    \end{enumerate}

    The group $G$ is called \textit{sparse special} if additionally the action $G\curvearrowright X$ is free and the quotient $G\backslash X$ is special.
\end{definition}

\begin{remark}\label{rem:sparse compact} We note that any compact special group $G$ that is hyperbolic relative to a family of virtually abelian groups $\{H_i\}^n_{i=1}$  is sparse special. To see this, consider the special CAT(0) cube complex $X$ on which $G$ acts properly and cocompactly. Let $K$ be a compact subcomplex so that $X=GK$. Then, the assumptions \ref{sparse_1} and \ref{sparse_3} of \Cref{def:cosparse} are immediately satisfied. By Theorem 3.6 of \cite{Wise_Woodhouse}, for each $i$, there is an $H_i$-invariant convex subcomplex $F_i$ on which $H_i$ acts cocompactly, which gives \ref{sparse_2}.
\end{remark}

\begin{lemma}\label{lem. sparse finite index}
    Let $G$ be a group that is hyperbolic relative to a finite family of virtually abelian groups $\{H_i\}^n_{i=1}$. Suppose $G$ acts  cosparsely on a CAT(0) cube complex $X$. Then, the restriction of the action to any finite index subgroup  $G'\leq G$ is cosparse. 
    
    In particular, any finite index subgroup of a sparse special group is sparse special.
\end{lemma}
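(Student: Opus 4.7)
The plan is to construct an explicit cosparse structure for the restricted action $G'\curvearrowright X$ from the given one for $G$. First, I would fix a right transversal $\{t_1,\dots,t_m\}$ of $G'$ in $G$ and set $K'=\bigcup_k t_kK$, which is a compact subcomplex satisfying $G'K'=GK$. For each peripheral $H_i$ the $G$-orbit $GF_i$ splits into finitely many $G'$-orbits, parametrised by the double cosets $G'\backslash G/H_i$; choosing representatives $r_{i,1},\dots,r_{i,n_i}$, I would set
\[F_{i,j}'=r_{i,j}F_i,\qquad H_{i,j}'=\mathrm{Stab}_{G'}(F_{i,j}')=G'\cap r_{i,j}H_ir_{i,j}^{-1}.\]
Each $H_{i,j}'$ is a finite-index subgroup of the virtually abelian $r_{i,j}H_ir_{i,j}^{-1}$, hence itself finitely generated and virtually abelian; it acts properly on $F_{i,j}'$, and finitely many hyperplane orbits are inherited from $(F_i,H_i)$ (each $r_{i,j}H_ir_{i,j}^{-1}$-orbit splits into at most $[G:G']$ further $H_{i,j}'$-orbits). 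So $(F_{i,j}',H_{i,j}')$ is a quasiflat, and by \cite[Theorem 9.1]{hruska2010relative} this family realises the induced peripheral structure of $G'$.

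Next I would verify the three axioms of \Cref{def:cosparse} for $G'$ with data $(K',\{(F_{i,j}',H_{i,j}')\})$. Axiom \ref{sparse_1} is immediate from $G'K'=GK$ and $\bigcup_j G'F_{i,j}'=GF_i$. For \ref{sparse_2}, I would decompose $r_{i,j}H_ir_{i,j}^{-1}=\bigsqcup_l H_{i,j}'s_l$ as a finite disjoint union of right cosets and take $K_{i,j}'=\bigcup_l s_lr_{i,j}K_i$, giving
\[F_{i,j}'\cap G'K'\;\subset\; r_{i,j}(F_i\cap GK)\;\subset\; r_{i,j}H_iK_i\;=\;H_{i,j}'K_{i,j}'.\]
The delicate axiom is \ref{sparse_3}: for $g'\in G'$ and indices $(i,j),(i',j')$, I would apply the corresponding axiom for $G$ to $F_i$ and $r_{i,j}^{-1}g'r_{i',j'}F_{i'}$. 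Either the intersection lies in $GK=G'K'$ and translation by $r_{i,j}$ preserves this, or $i=i'$ and $g'\in r_{i,j}H_ir_{i',j'}^{-1}$; combined with $g'\in G'$, this forces $r_{i,j}$ and $r_{i',j'}$ to lie in the same double coset of $G'\backslash G/H_i$, so $j=j'$ and $F_{i,j}'=g'F_{i',j'}'$.

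For the ``in particular'' statement, assume $G$ is sparse special, so $G\curvearrowright X$ is free and $G\backslash X$ is special. Then $G'\curvearrowright X$ is still free, and $G'\backslash X$ is a cover of $G\backslash X$, which is special by \Cref{prop. properties of vir special groups}\ref{item: special}. I expect the main obstacle to be the verification of axiom \ref{sparse_3}: the double-coset choice of representatives $r_{i,j}$ is precisely what prevents spurious identifications $F_{i,j}'=g'F_{i,j''}'$ with $j\neq j''$ and what ensures that the ``same-orbit'' clause for $G$ transfers faithfully to $G'$. The other two axioms and the sparse-special passage are essentially routine bookkeeping.
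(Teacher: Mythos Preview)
Your argument is correct and follows the same overall strategy as the paper's: enlarge $K$ to a finite union of translates, replace each quasiflat by suitable translates, and verify the three axioms of \Cref{def:cosparse}.  The one substantive difference is in how the new quasiflats are indexed.  The paper takes a full right transversal $T$ of $G'$ in $G$ and lists \emph{all} pairs $(tF_i,\,tH_it^{-1}\cap G')$ for $t\in T$, whereas you parametrise by double-coset representatives $r_{i,j}\in G'\backslash G/H_i$.  Your choice is the more careful one: it matches the induced peripheral structure of $G'$ on the nose and makes axiom~\ref{sparse_3} go through cleanly.  With the paper's over-counting, two distinct indices $(i,t_1)\neq(i,t_2)$ can give quasiflats in the same $G'$-orbit whenever $t_1,t_2$ lie in the same double coset $G'\backslash G/H_i$, so the clause ``same index and $F_i=gF_j$'' of \Cref{def:cosparse} is not literally met for the listed family; one would still need to prune duplicates, which is exactly what your double-coset parametrisation builds in from the start.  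Apart from this bookkeeping point, the two proofs are the same.
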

\begin{proof} Let $T$ be a right transversal of $G'$ in $G$. For $t\in T$ and $i\in \{1,\dots,n\}$, let $H'_{i,t}=tH_it^{-1}\cap G'$. Then $G'$ hyperbolic relative to the family of virtually abelian groups $\{H'_{i,t}\}_{\{1\leq i\leq n,\, t\in T\}}$ by \cite[Theorem 9.1]{hruska2010relative}. Following the notation of \Cref{def:cosparse}, we define $K'=TK$ and the family of quasiflats $\{(tF_i, H'_{i,t})\}_{\{1\leq i\leq n,\, t\in T\}}$. Note that $H'_{i,t}=\mathrm{Stab}_{G'}(tF_i)$ for each $i$ and $t$. Next, we verify the conditions of \Cref{def:cosparse}.

\noindent {\it (i).} $X=GK\cup (\bigcup^n_{i=1}GF_i)= G'K'\cup (\bigcup_{\{1\leq i\leq n,\, t\in T\}}G'tF_i)$.\\
\noindent {\it (ii).} For each $i$ and $t$, we have $(tF_i\cap G'K')\subset tH_iK_i\subset H'_{i,t}K'_i$, for some compact subcomplex $K'_i\subset X$.\\ 
\noindent {\it (iii).} For $i,j\in\{1,\dots,n\}$, $t_1,t_2\in T$ and $g\in G$, either $(t_1F_i\cap gt_2F_j)=t_1(F_i\cap t_1^{-1}gt_2F_j)\subset G'K'$ or $i=j$ and $F_i=t_1^{-1}gt_2F_j$ implying $t_1F_i=gt_2F_j$. 

 For the last claim of the lemma suppose $\Gamma'$ is finite index subgroup of a sparse special group $\Gamma$. Then $\Gamma$ acts cosparsely on a CAT(0) cube complex $X$ such that the quotient is special. By the above, the group $\Gamma'$ acts cosparsely on $X$ and the quotient is a finite cover of the special complex $X/{\Gamma}$ and hence, it is special by \Cref{prop. properties of vir special groups} \ref{item: special}.
\end{proof}

\begin{lemma}\label{lem. get rid of finite index}
    Let $G$ be a finitely generated virtually special group that is hyperbolic relative to a finite family of virtually abelian subgroups $\{H_i\}^k_{i=1}$. For each $i$, let $K_i\leqslant H_i$ be any finite-index abelian subgroup. Then there exists a finite-index special normal subgroup $G_0\leqslant G$ such that $G_0\cap H_i\leqslant K_i$ for all $i$. If $G$ is virtually sparse (resp. compact) special, then we can take $G_0$ to be sparse (resp. compact) special.
\end{lemma}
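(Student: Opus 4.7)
The plan is to combine the separability of abelian subgroups in virtually special groups \cite[Corollary 6.8]{wise2021structure} with a normal-core construction to produce the desired $G_0$.

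First, I would produce a finite-index \emph{normal} subgroup $G_1 \lhd G$ that is special (resp.\ sparse special, compact special). Since $G$ is virtually special, there is a finite-index special subgroup $G_1' \leq G$, and its normal core $G_1 := \bigcap_{g \in G} g G_1' g^{-1}$ is a finite-index normal subgroup contained in $G_1'$. Because $G_1$ is a finite-index subgroup of the special group $G_1'$, it corresponds to a finite cover of the associated special cube complex and is itself special. In the virtually compact special case, the same cover is compact, so $G_1$ is compact special; in the virtually sparse special case, \Cref{lem. sparse finite index} yields sparse specialness of $G_1$.

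Next, for each $i$, the subgroup $K_i$ is finitely generated abelian (being a finite-index subgroup of the finitely generated virtually abelian group $H_i$), and therefore, by \cite[Corollary 6.8]{wise2021structure}, it is separable in $G$. Since $K_i$ has finite index in $H_i$, the complement $H_i \smallsetminus K_i$ is a union of finitely many cosets $h_{i,1} K_i, \ldots, h_{i,n_i} K_i$, and for each $j$ the separability of $K_i$ produces a finite-index subgroup $N_{i,j} \leq G$ containing $K_i$ but avoiding $h_{i,j}$. Setting $L_i := \bigcap_{j} N_{i,j}$ yields a finite-index subgroup of $G$ with $L_i \cap H_i = K_i$.

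I would then set $M := G_1 \cap \bigcap_{i=1}^{k} L_i$, which is a finite-index subgroup of $G$ satisfying $M \cap H_i \leq K_i$ for all $i$, and define $G_0$ to be the normal core of $M$ in $G$. Because $G_1 \lhd G$, every conjugate $gMg^{-1}$ lies in $G_1$, so $G_0 \leq G_1$; in particular $G_0$ is a finite-index subgroup of $G_1$ and inherits its specialness (respectively, compact specialness via the associated finite cover, and sparse specialness via \Cref{lem. sparse finite index}). Moreover, $G_0 \leq M$ gives $G_0 \cap H_i \leq M \cap H_i \leq K_i$, as required.

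The main obstacle is the separability step: one needs $K_i$ to be closed in the profinite topology of the \emph{ambient} group $G$, not merely in $H_i$, and this is precisely the content of Wise's result on separability of abelian subgroups of virtually special groups. Given that, the rest of the argument is a routine combination of intersections and normal-core constructions, and the passage between the three flavors of specialness requires no new ideas beyond \Cref{lem. sparse finite index} and the elementary observation that finite covers of compact special cube complexes are compact special.
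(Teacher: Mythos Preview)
Your proposal is correct and follows essentially the same approach as the paper: both use \cite[Corollary 6.8]{wise2021structure} to separate each $K_i$ inside $G$, intersect the resulting finite-index subgroups with a special (resp.\ sparse special, compact special) finite-index subgroup, and take the normal core. The only cosmetic differences are that the paper phrases separability via a nested sequence $G_{i0}>G_{i1}>\cdots$ with $\bigcap_j G_{ij}=K_i$ and a stabilization argument, whereas you argue directly with coset representatives; and you take the normal core of the special subgroup first, while the paper takes a single normal core at the end.
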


\begin{proof}
    Consider one of the $K_i$. By \cite[Corollary 6.8]{wise2021structure}, there exists a nested sequence of finite-index subgroups of $G$
    \[G_{i0}>G_{i1}>\dots\]
    such that $\bigcap_{j\geqslant 0} G_{ij}=K_i$. The sequence of numbers $\{[H_i: H_i\cap G_{ij}]\}_{j\geqslant 0}$ is a monotone increasing sequence of integers, with upper bound $[H_i:K_i]$. Since
    \[\bigcap_{j\geqslant 0} (H_i\cap G_{ij})=H_i\cap\left(\bigcap_{j\geqslant 0} G_{ij}\right)=K_i,\]
    this sequence of numbers will eventually stabilize, so there exists $j_i$ such that $[H_i: H_i\cap G_{ij_i}]=[H_i:K_i]$, i.e., $H_i\cap G_{ij_i}=K_i$.
    
    Let $G'_0\leqslant G$ be a finite-index special subgroup, and let $G_0$ be the normal core of $G'_0\cap\left( \bigcap^k_{i=1} G_{ij_i} \right)$. Then by \Cref{prop. properties of vir special groups} \ref{item: special}, $G_0$ satisfies all requirements. If $G$ is virtually sparse (resp. compact) special, then we can take $G'_0$ to be sparse (resp. compact) special, in which case $G_0$ will also be sparse (resp. compact) special by \Cref{lem. sparse finite index}.
\end{proof}

\begin{lemma}\label{lem. sparse implies amalgam}
    Let $G$ be a finitely generated virtually sparse special group that is hyperbolic relative to a finite family of infinite virtually abelian groups $\{H_i\}^n_{i=1}$. Then there exist a finite-index normal subgroup $G_0\lhd G$ and a compact cubical group $G'_0$ such that the following hold:

    \begin{enumerate}
        \item[(i)] $G'_0$ splits as a graph of groups $\mathcal{T}$ whose underlying  graph is a star, i.e., obtained by gluing a finite number of edges to a central vertex.
        \item[(ii)] The central vertex group of $\mathcal{T}$ is $G_0$. 
        \item[(iii)] Every non-central vertex group is infinite finitely generated free abelian.
        \item[(iv)] Let $\{K_j\}^m_{j=1}$ be the induced peripheral structure on $G_0$. Then each edge group of $\mathcal{T}$ is isomorphic to some $K_j$, and different edge groups correspond to different $K_j$.
        \item[(v)] For all $i$, the group $G_0\cap H_i$ is infinite finitely generated free abelian.
        \item[(vi)] $G_0$ is hyperbolic relative to the family of edge groups of $\mathcal{T}$.
    \end{enumerate}
\end{lemma}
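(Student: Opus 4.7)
The strategy is to pass to a finite-index normal subgroup whose peripheral subgroups are free abelian, and then form $G'_0$ by amalgamating along these peripherals with free abelian closures of their associated quasiflats. The star-shaped graph of groups $\mathcal{T}$ will then appear by construction, and the compactness of $G'_0$ will be a consequence of the Wise filling results \cite[Lemma 7.51, Theorem 7.54]{wise2021structure}.

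First, since each $H_i$ is infinite virtually abelian, choose a finite-index torsion-free (hence free abelian) subgroup $A_i\leqslant H_i$. Applying \Cref{lem. get rid of finite index} with these $A_i$'s, I obtain a finite-index sparse special normal subgroup $G_0\lhd G$ satisfying $G_0\cap H_i\leqslant A_i$ for all $i$. Being of finite index in the infinite group $H_i$, the intersection $G_0\cap H_i$ is infinite, and being a subgroup of $A_i$ it is finitely generated free abelian; this yields (v). Let $\{K_j\}_{j=1}^m$ denote the peripheral structure on $G_0$ induced by $\{H_i\}_{i=1}^n$, so each $K_j$ is a $G$-conjugate of $G_0\cap H_{i(j)}$ for some index $i(j)$. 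In particular each $K_j$ is infinite finitely generated free abelian, and $G_0$ is hyperbolic relative to $\{K_j\}_{j=1}^m$ by \cite[Theorem 9.1]{hruska2010relative}.

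Next, by \Cref{lem. sparse finite index} the group $G_0$ acts cosparsely on a CAT(0) cube complex $X$, with the cosparse structure witnessed by quasiflats $(F_j,K_j)$ where $K_j=\mathrm{Stab}_{G_0}(F_j)$. Since each $K_j$ is free abelian, each $(F_j,K_j)$ is abelian closable, producing a free abelian group $B_j$ containing $K_j$ together with a CAT(0) cube complex $Y_j\supset F_j$ on which $B_j$ acts freely, properly, and cocompactly with an equivariant convex embedding $F_j\hookrightarrow Y_j$. Define
\[
G'_0 \;=\; G_0 \ast_{K_1} B_1 \ast_{K_2} B_2 \cdots \ast_{K_m} B_m,
\]
the fundamental group of the star-shaped graph of groups $\mathcal{T}$ with central vertex carrying $G_0$, non-central vertex groups $B_j$, and edge groups $K_j$. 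Properties (i)--(iv) hold by construction, and (vi) follows from the relative hyperbolicity established above.

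Finally, to show $G'_0$ is compact cubical, I invoke \cite[Lemma 7.51, Theorem 7.54]{wise2021structure}: starting from the cosparse $G_0$-action on $X$ and the abelian closures $(Y_j,B_j)$, Wise's construction glues the $Y_j$'s to $X$ along $F_j\hookrightarrow Y_j$ to produce a CAT(0) cube complex $X'$ on which $G'_0$ acts freely and cocompactly, realising $G'_0$ as the fundamental group of the compact cube complex $G'_0\backslash X'$. The main obstacle is the careful verification in this last step that the algebraic star amalgamation coincides with the group obtained from Wise's geometric filling construction, and that the resulting action on $X'$ is indeed free and cocompact; this is where the free-abelianness of the $K_j$'s (rather than mere virtual abelianness) is essential, as it ensures that the closures attach without any further passage to finite-index subgroups.
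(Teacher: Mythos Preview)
Your overall strategy matches the paper's, but there is a genuine gap in the step where you assert ``Since each $K_j$ is free abelian, each $(F_j,K_j)$ is abelian closable.'' This implication does not follow from the definitions. Abelian closability of a quasiflat $(F,K)$ requires the existence of a free abelian group $B \geqslant K$ acting cocompactly on some $Y \supset F$ with an equivariant convex embedding; merely knowing that $K$ itself is free abelian does not produce such a $B$. What you can get (from \cite[Lemma 7.51]{wise2021structure}, using that $G_0\backslash X$ is special) is that each $(F_j,K_j)$ is \emph{closable}, say via some $(Y_j, B_j)$ with $B_j$ virtually abelian. The remark following the definition of abelian closability then only guarantees that a specific finite-index subgroup $A_j \leqslant K_j$ (namely the preimage of a free abelian finite-index subgroup of $B_j$) makes $(F_j,A_j)$ abelian closable---and there is no reason $A_j = K_j$, since the image of $K_j$ in $B_j$ need not land inside any chosen free abelian finite-index subgroup of $B_j$.

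The paper fixes this by reversing your order of operations: it first takes a sparse special finite-index normal subgroup $G_1 \lhd G$, applies \cite[Lemma 7.51]{wise2021structure} to establish closability of the quasiflats $(Y_k, L_k)$ of $G_1$, extracts from each the specific finite-index subgroup $N_k \leqslant L_k$ making $(Y_k, N_k)$ abelian closable, and only \emph{then} invokes \Cref{lem. get rid of finite index} to pass to $G_0 \lhd G$ with $G_0 \cap L_k \leqslant N_k$. Since abelian closability passes to further finite-index subgroups, the quasiflats of the $G_0$-action are then abelian closable as required for \cite[Theorem 7.54]{wise2021structure}. Your argument can be repaired along these lines, but as written you fix $G_0$ too early---based only on making the peripherals free abelian---before the subgroups governing abelian closability have been identified; you also cite \cite[Lemma 7.51]{wise2021structure} only at the final step, whereas it is needed earlier to establish closability in the first place.
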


\begin{proof}
    Let $G_1\lhd G$ be a finite-index normal sparse special subgroup. So there exists a CAT(0) cube complex $X$ with a cosparse cospecial action of $G_1$. By \cite[Lemma 7.51]{wise2021structure}, each quasiflat of $X$ is closable.

    Let $\{L_k\}^p_{k=1}$ be the induced peripheral structure of $G_1$. Each $L_k$ corresponds to a quasiflat $Y_k$ of $X$. There exists a finite-index free abelian normal subgroup $N_k\lhd L_k$  such that the quasiflat $(Y_k,N_k)$ is abelian closable. By \Cref{lem. get rid of finite index}, there exists a finite-index normal subgroup $G_0\lhd G$ such that $G_0\cap L_k\leqslant N_k$ for all $k$. Then, by \Cref{lem. sparse finite index}, the action $G_0\curvearrowright X$ is cosparse, the quotient $G_0\backslash X$ is special, and every quasiflat of the action $G_0\backslash X$ is abelian closable. The desired result follows from \cite[Theorem 7.54]{wise2021structure}. Note that \cite[Theorem 7.54]{wise2021structure} only asserts that the non-central vertex groups of $\mathcal{T}$ are virtually abelian, but a close inspection of its proof shows that, if each quasiflat is abelian closable, then $G'_0$ can be constructed in such a way that the non-central vertex groups of $\mathcal{T}$ are abelian: There is a CAT(0) cube complex $X$ with a free cosparse action of $G_0$ such that the quotient $G_0\backslash X$ is special and every quasiflat of $X$ is abelian closable. Each quasiflat of $X$ has the form $(F_j, K_j)$ for some $j\in \{1,\dots,m\}$,  and there exists a quasiflat $(F'_j, K'_j)$ such that $K'_j$ is finitely generated free abelian and there is an inclusion $K_j\hookrightarrow K'_j$ and an equivariant embedding $F_j\hookrightarrow F'_j$. The group $G'_0$ is the quotient of $G_0\ast (\Asterisk^m_{j=1}K'_j)$ by identifying, for each $j$, the two embeddings $K'_j\hookleftarrow K_j \hookrightarrow G_0$ of $K_j$.
\end{proof}

\begin{lemma}\label{lem. extend abelian}
    Let $A\leqslant B$ be finitely generated free abelian groups. Then there exists a finite-index subgroup $C\leqslant B$ such that $A\leqslant C$ and for every subgroup $D\leqslant A$ such that $A/D$ is torsion-free, we have that $C/D$ is also torsion-free.
\end{lemma}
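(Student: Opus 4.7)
The plan is to construct $C$ so that $A$ is a \emph{pure} subgroup of $C$ while $[B:C]$ is finite; once this is arranged, the conclusion follows essentially for free.

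Since $B$ is finitely generated free abelian and $A \leqslant B$, the quotient $B/A$ is a finitely generated abelian group. By the structure theorem, decompose $B/A = T \oplus F$, where $T$ denotes the (finite) torsion subgroup and $F$ is a free abelian group of finite rank. Letting $\pi \colon B \twoheadrightarrow B/A$ denote the quotient map, I would set
\[
C := \pi^{-1}(F).
\]
Then $A = \pi^{-1}(0) \leqslant C \leqslant B$, the index satisfies $[B:C] = |T| < \infty$, and $C/A \cong F$ is torsion-free.

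To verify the required property, let $D \leqslant A$ be a subgroup such that $A/D$ is torsion-free, and suppose $c \in C$ and $k \geqslant 1$ satisfy $kc \in D$. Since $kc \in D \leqslant A$ and $C/A$ is torsion-free, we obtain $c \in A$. Now $c \in A$, $kc \in D$, and $A/D$ is torsion-free, so $c \in D$. Thus $C/D$ is torsion-free, as required.

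There is no serious obstacle here; the essential observation is that the ``right'' finite-index enlargement of $A$ inside $B$ is the preimage of the torsion-free part of $B/A$, which simultaneously guarantees finite index in $B$ and purity of $A$ in $C$.
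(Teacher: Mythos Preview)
Your proof is correct and follows essentially the same approach as the paper: both construct a finite-index subgroup $C$ in which $A$ sits as a direct summand (equivalently, as a pure subgroup), after which the torsion-free conclusion for $C/D$ is immediate. The only cosmetic difference is that the paper builds $C$ by choosing elements $v_1,\dots,v_n\in B$ extending a $\Q$-basis of $A\otimes_{\Z}\Q$ to one of $B\otimes_{\Z}\Q$, whereas you pull back a free complement of the torsion in $B/A$; both yield $C/A$ free and $[B:C]<\infty$.
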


\begin{proof} Considering $A'$ as a $\Z$-module, $A'\otimes_{\Z}\Q$ is a $\Q$-vector space. Let $v_1,\dots,v_n$ be elements of $B\subset B\otimes_{\Z}\Q$ that together with a basis of $A\otimes_{\Z}\Q$  give a basis of $B\otimes_{\Z}\Q$. Let $C\leqslant B$ be the subgroup generated by $A$ and $v_1, \dots,v_n$. Then $C$ has finite-index in $B$. Moreover, $A$ is a direct factor of $C$ and thus if $D\leqslant A$ is a subgroup then $A/D$ is a direct factor of $C/D$. Thus, if $A/D$ is torsion-free then so is $C/D$.
\end{proof}

\begin{theorem}\label{thm. sparse rel. hyp.}
Let $G$ be a finitely generated virtually sparse special group that is hyperbolic relative to a finite family of virtually abelian subgroups $\{H_i\}^n_{i=1}$. Then there exists a family of finite-index normal subgroups $\{K_i\leqslant H_i\}^n_{i=1}$ such that the following holds:

\noindent
($\bigstar$) For each $i$, let $L_i\lhd H_i$ be any finite-index normal subgroup such that $L_i\leqslant K_i$. Then there exists a family of finite subsets $\{\F_i\subset L_i\smallsetminus\{1\}\}^n_{i=1}$ such that if a family of normal subgroups $\{N_i\lhd H_i\}$ satisfies that, for all $i$,
\begin{itemize}
    \item $N_i\leqslant L_i$,
    \item $N_i\cap \F_i=\emptyset$, and
    \item $L_i/N_i$ is torsion-free,
\end{itemize}
then, for all $n\in\mathbb N$,
\[b^{(2)}_n(G/\ll \bigcup^n_{i=1}N_i \rr)=b^{(2)}_n(G).\]
\end{theorem}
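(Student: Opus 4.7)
The plan is to reduce \Cref{thm. sparse rel. hyp.} to \Cref{thm. gen rel hyp.} (the virtually compact cubical case) by exploiting the structural result of \Cref{lem. sparse implies amalgam}. First I would apply that lemma to obtain a finite-index normal subgroup $G_0 \lhd G$, with induced peripheral structure $\{K^{(0)}_j\}_{j=1}^m$, together with a compact cubical group $G'_0$ containing $G_0$ as the central vertex group of a star graph of groups whose non-central vertex groups $\{K'_j\}_{j=1}^m$ are infinite finitely generated free abelian and whose edge groups are $K^{(0)}_j \hookrightarrow K'_j$. Since each $K'_j$ is abelian and $G_0$ is hyperbolic relative to $\{K^{(0)}_j\}$ by item (vi) of \Cref{lem. sparse implies amalgam}, Dahmani's combination theorem \cite[Theorem 0.1(3)]{dahmani2003combination} will guarantee that $G'_0$ is hyperbolic relative to $\{K'_j\}$, so \Cref{thm. gen rel hyp.} applies to $G'_0$.

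Next, I would apply \Cref{thm. gen rel hyp.} to $G'_0$ to obtain finite-index torsion-free subgroups $\{K^*_j \leqslant K'_j\}$, then transport $\{K^{(0)}_j \cap K^*_j\}$ back to finite-index subgroups $\{K_i \leqslant H_i\}$ via \Cref{lem. df in fi subgroup} \ref{item. finite index}; these will serve as the $K_i$ in the statement. Given any finite-index $L_i \leqslant K_i$, let $L^{(0)}_j \leqslant K^{(0)}_j \cap K^*_j$ denote its transport on the $G_0$ side. Applying \Cref{lem. extend abelian} with $A = L^{(0)}_j$ and $B = K^*_j$ produces a finite-index subgroup $L^*_j \leqslant K^*_j$ containing $L^{(0)}_j$ such that whenever $L^{(0)}_j/M_j$ is torsion-free, so is $L^*_j/M_j$. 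The ($\bigstar$) conclusion of \Cref{thm. gen rel hyp.} applied to $\{L^*_j\}$ yields finite subsets $\F^*_j \subset L^*_j \smallsetminus \{1\}$; I would set $\F^{(0)}_j := \F^*_j \cap K^{(0)}_j$ and pull back via \Cref{lem. df in fi subgroup} \ref{item. deep} to finite subsets of $L_i \smallsetminus \{1\}$, enlarging each (via \Cref{lem. large quotient} and \Cref{thm. cl}) to force $L_i/N_i$ to be infinite and to force $(G_0, \{K^{(0)}_j\}, \{M_j\})$ to be a Cohen--Lyndon triple.

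For any Dehn filling data $\{N_i\}$ satisfying these conditions, let $\{M_j\}$ be the $G$-equivariant translate on $G_0$ from \Cref{lem. df in fi subgroup}. Viewing each $M_j$ as a (normal) subgroup of the abelian group $K'_j$ via $K^{(0)}_j \hookrightarrow K'_j$, the hypotheses of \Cref{thm. gen rel hyp.} applied to $G'_0$ are met by construction, yielding
\[
b^{(2)}_n\bigl(G'_0/\ll \bigcup_j M_j \rr_{G'_0}\bigr) = b^{(2)}_n(G'_0)
\]
for all $n$. The universal property of the amalgamated product identifies $G'_0/\ll \bigcup_j M_j \rr_{G'_0}$ with the star amalgam of $G_0/\ll \bigcup_j M_j \rr_{G_0}$ and the $K'_j/M_j$ over the $K^{(0)}_j/M_j$; the required injectivity of $G_0/\ll \bigcup_j M_j \rr_{G_0}$ and $K^{(0)}_j/M_j$ into each vertex group follows from the Cohen--Lyndon property of \Cref{thm. cl}. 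Since all edge and non-central vertex groups of both star graphs of groups (for $G'_0$ itself and for its filling) are infinite finitely generated abelian, their $L^2$-Betti numbers vanish in every dimension by \cite[Theorem 1.44]{luck2002l2}, and the Mayer--Vietoris long exact sequence associated to a graph of groups combined with \cite[Theorem 6.54(7)]{luck2002l2} gives $b^{(2)}_n(G_0) = b^{(2)}_n(G'_0)$ and
\[
b^{(2)}_n\bigl(G_0/\ll \bigcup_j M_j \rr_{G_0}\bigr) = b^{(2)}_n\bigl(G'_0/\ll \bigcup_j M_j \rr_{G'_0}\bigr).
\]
Combining these identities with the index normalization $b^{(2)}_n(\bg) = [G:G_0]^{-1} \cdot b^{(2)}_n\bigl(G_0/\ll \bigcup_j M_j \rr_{G_0}\bigr)$ from \Cref{lem. df in fi subgroup} \ref{item. embedding} and \cite[Theorem 6.54(6)]{luck2002l2} (and analogously for $b^{(2)}_n(G)$) completes the proof.

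The hardest step will be identifying $G'_0/\ll \bigcup_j M_j \rr_{G'_0}$ with the star amalgam of the individual fillings; this boils down to proving the injectivity of $G_0/\ll \bigcup_j M_j \rr_{G_0} \hookrightarrow G'_0/\ll \bigcup_j M_j \rr_{G'_0}$, which rests on amalgam theory combined with the Cohen--Lyndon decomposition inside $G_0$. A secondary technical challenge is orchestrating three interlocking layers of sufficient-depth conditions — for Dehn fillings of $G$, of $G_0$, and of $G'_0$ — consistently through the translations provided by \Cref{lem. df in fi subgroup} and the extensions afforded by \Cref{lem. extend abelian}.
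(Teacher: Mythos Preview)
Your proposal is correct and follows essentially the same route as the paper's proof: reduce to the compact cubical overgroup $G'_0$ supplied by \Cref{lem. sparse implies amalgam}, apply \Cref{thm. gen rel hyp.} there, use \Cref{lem. extend abelian} to propagate torsion-freeness from the edge groups to the non-central vertex groups, compare $b^{(2)}_*(G_0)$ with $b^{(2)}_*(G'_0)$ (and likewise for the fillings) via Mayer--Vietoris, and finish by index normalization through \Cref{lem. df in fi subgroup}. The only cosmetic difference is that where you invoke the Cohen--Lyndon property in $G_0$ to get $K^{(0)}_j \cap \ll \bigcup_j M_j \rr_{G_0} = M_j$, the paper instead applies \cite[Theorem 1.1]{osin2007peripheral} to $G'_0$ to obtain $\ll \bigcup_j N_{0j} \rr_{G'_0} \cap K'_{0j} = N_{0j}$ and then deduces the $G_0$-statement; either route yields the edge-group injectivity needed to make $\overline{\mathcal T}$ a bona fide graph of groups, after which the identification with $G'_0/\ll \bigcup_j M_j \rr_{G'_0}$ is a presentation check and the embedding of the central vertex group follows from Bass--Serre theory.
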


\begin{proof}
    As we will take normal closures in different groups, we will use $\ll\cdot\rr_G$ instead of $\ll\cdot \rr$.

    If some $H_i$ is finite, for each choice of finite-index normal subgroup $L_i\lhd H_i$, we can let $\F_i=L_i\smallsetminus\{1\}$. Moreover, by \cite[Theorem 2.40]{osin2006relatively}, we can exclude $H_i$ from the peripheral structure of $G$. Therefore, we may assume that every $H_i$ is infinite.

    By \Cref{lem. sparse implies amalgam}, there exists a finite-index normal subgroup $G_0\lhd G$ and a compact special group $G'_0$ which splits as a graph of groups $\mathcal{T}$ such that $G_0,G'_0$ and $\mathcal{T}$ satisfy the conclusions of \Cref{lem. sparse implies amalgam}.

    Let $\{H_{0j}\}^m_{j=1}$ be the peripheral structure of $G_0$ induced by $\{H_i\}^n_{i=1}$. Then each $H_{0j}$ is an edge group of $\mathcal{T}$. The edge of $\mathcal{T}$ corresponding to $H_{0j}$ is incident to a unique non-central vertex, whose vertex group will be denoted by $H'_{0j}$. By \cite[Theorem 0.1 (2)]{dahmani2003combination}, $G'_0$ is hyperbolic relative to the family of abelian subgroups $\{H'_{0j}\}^m_{j=1}$.

    \begin{claim}\label{cl. equal l2 betti}
        $b^{(2)}_\ast(G'_0)=b^{(2)}_\ast(G_0)$.
    \end{claim}

    \begin{proof}[Proof of the claim]
    Consider the Mayer--Vietoris sequence:
    {\small
\[\dots \rightarrow \bigoplus^m_{j=1} H_r(H_{0j};\N(G'_0))\rightarrow \left(\bigoplus^m_{j=1} H_r(H'_{0j};\N(G'_0))\right)\oplus H_r(G_0;\N(G'_0)) \rightarrow H_r(G'_0;\N(G'_0))\rightarrow \dots\]}
 \noindent Since  $H_{0j}$ and $H'_{0j}$ are infinite abelian, the desired result follows by taking $\dim_{G'_0}$ and using Theorem 6.37 and Theorem 6.54 (7) of \cite{luck2002l2}.
    \end{proof}

    Let $\{K'_{0j}\lhd H'_{0j}\}^m_{j=1}$ be the family of finite-index normal subgroups given by \Cref{thm. gen rel hyp.} such that the ($\bigstar$) property of \Cref{thm. gen rel hyp.} holds. 
    
    For the rest of this proof, we will apply \Cref{lem. df in fi subgroup} multiple times. In each of these applications, we are always applying \Cref{lem. df in fi subgroup} with respect to the pair of groups $G_0\lhd G$ and the peripheral structures $\{H_i\}^n_{i=1}$ and $\{H_{0j}\}^m_{j=1}$.
    
    Let $\{\overline K_i\leqslant H_i\}^n_{i=1}$ be the family of finite-index subgroups given by \Cref{lem. df in fi subgroup} \ref{item. finite index} with respect to the family of finite-index subgroups $\{K'_{0j}\cap H_{0j}\}^m_{j=1}$. For each $i$, let $K_i$ be the normal core of $\overline K_i$ in $H_i$. Then $K_i$ has finite-index in $H_i$.
    
    Now consider a family of finite-index subgroups $\{L_i\leqslant K_i\}^n_{i=1}$ such that each $L_i$ is normal in $H_i$. Let $\{L_{0j}\lhd H_{0j}\}^m_{j=1}$ be the family of normal subgroups given by \Cref{lem. df in fi subgroup} \ref{item. bijection} with respect to the family $\{L_i\}^n_{i=1}$. Then by \Cref{lem. df in fi subgroup} \ref{item. peripheral}, for each $j$, the group $H_{0j}/L_{0j}$ is isomorphic to some $(G_0\cap H_i)/L_i$, and thus $L_{0j}$ has finite-index in $H_{0j}$. Let $\{K_{0j}\lhd H_{0j}\}^m_{j=1}$ be the family of normal subgroups given by \Cref{lem. df in fi subgroup} \ref{item. bijection} with respect to the family $\{K_i\lhd H_i\}^n_{i=1}$. Then \Cref{lem. df in fi subgroup} \ref{item. finite index} implies that $K_{0j}\leqslant K'_{0j}\cap H_{0j}$ for all $j$, and thus $L_{0j}\leqslant K'_{0j}$.

    For each $j$, \Cref{lem. extend abelian}, applied to the pair of finitely generated free abelian groups $L_{0j}\leqslant K'_{0j}$, gives rise to a finite-index subgroup $L'_{0j}\leqslant K'_{0j}$ such that for every subgroup $N_{0j}\leqslant L_{0j}$ with $L_{0j}/N_{0j}$ torsion-free, the group $L'_{0j}/N_{0j}$ is torsion-free as well.

    By \cite[Theorem 1.1]{osin2007peripheral}, \Cref{thm. gen rel hyp.} and our choice of $\{K'_{0j}\}^m_{j=1}$, there exists a family of finite subsets $\{F'_{0j}\subset L'_{0j}\smallsetminus\{1\}\}^m_{j=1}$ such that if a family of subgroups $\{N_{0j}\leqslant L'_{0j}\}^m_{j=1}$ satisfies that for all $j$,
    \begin{itemize}
        \item $N_{0j}\cap \F'_{0j}=\emptyset$, and
        \item $L'_{0j}/N_{0j}$ is torsion-free,
    \end{itemize}
    then
        \begin{equation}\label{eq. sparse 1}
        b^{(2)}_\ast(G'_0/\ll \bigcup^m_{j=1} N_{0j} \rr_{G'_0})=b^{(2)}_\ast(G'_0),
    \end{equation}
    and
    \begin{equation}\label{eq. intersection 1}
        \ll \bigcup^m_{j=1} N_{0j} \rr_{G'_0}\cap K'_{0j}=N_{0j} \text{ for all } j.
    \end{equation}
    We will specify our choice of $\{N_{0j}\leqslant L'_{0j}\}^m_{j=1}$ later.
    
    Let $\{\F_i\subset H_i\smallsetminus\{1\}\}^n_{i=1}$ be the family of finite subsets given by \Cref{lem. df in fi subgroup} \ref{item. deep} with respect to the family $\{F'_{0j}\cap H_{0j}\}^m_{j=1}$. Now fix a family of normal subgroups $\{N_i\lhd H_i\}^n_{i=1}$ such that for all $i$,
    \begin{itemize}
        \item $N_i\leqslant L_i$,
        \item $N_i\cap \F_i=\emptyset$, and
        \item $L_i/N_i$ is torsion-free.
    \end{itemize}
    Let $\{N_{0j}\lhd H_{0j}\}^m_{j=1}$ be the family of normal subgroups given by \Cref{lem. df in fi subgroup} \ref{item. bijection} with respect to the family $\{N_i\}^n_{i=1}$. Then $N_{0j}\cap \F'_{0j}=\emptyset$ for all $j$. Moreover, by \Cref{lem. df in fi subgroup} \ref{item. peripheral}, we have that, for every $j$, the group $L_{0j}/N_{0j}$ is isomorphic to some $L_i/N_i$, and thus is torsion-free. Therefore, so is $L'_{0j}/N_{0j}$, by \Cref{lem. extend abelian} and our choice of $L'_{0j}$. Hence, \eqref{eq. sparse 1} and \eqref{eq. intersection 1} hold.

    Note that, as $N_{0j}\leqslant K_{0j}\leqslant K'_{0j}$, \eqref{eq. intersection 1} implies
    \[N_{0j}\leqslant \ll \bigcup^m_{j=1} N_{0j} \rr_{G_0}\cap K_{0j}\leqslant \ll \bigcup^m_{j=1} N_{0j} \rr_{G'_0}\cap K'_{0j}=N_{0j}.\]
    In particular,
    \begin{equation}\label{eq. intersection 2}
        \ll \bigcup^m_{j=1} N_{0j} \rr_{G_0}\cap K_{0j}=N_{0j} \text{ for all } j.
    \end{equation}
    By enlarging $\F_i$ if necessary, we may assume that 
    \begin{equation}\label{eq. infinite abelian 1}
        |K_i/N_i|=\infty \text{ for all } i,
    \end{equation}
    and thus
    \begin{equation}\label{eq. infinite abelian 2}
        |K'_{0j}/N_{0j}|=|K_{0j}/N_{0j}|=\infty \text{ for all } j.
    \end{equation}
    \begin{claim}
        $G'_0/\ll \bigcup^m_{j=1} N_{0j} \rr_{G'_0}$ splits as a graph of groups $\overline{\mathcal{T}}$ with the following properties:
        
        \begin{itemize}
            \item The underlying graph of $\overline{\mathcal{T}}$ is the same as that of $\mathcal{T}$.

            \item The central vertex group is $G_0/\ll \bigcup^m_{j=1} N_{0j}\rr_{G_0}$.

            \item The non-central vertex groups, as well as the edge groups, are infinite abelian.
        \end{itemize}
    \end{claim}

    \begin{proof}[Proof of the claim]
    Construct a graph of groups $\overline{\mathcal{T}}$ from $\mathcal{T}$ by the following procedure:

    \begin{itemize}
            \item Let $v$ be a non-central vertex of $\mathcal{T}$. Then $G_v=K'_{0j}$ for some $j$. Replace $G_v$ by $K'_{0j}/N_{0j}$.

            \item Let $v$ be the central vertex of $\mathcal{T}$. Replace $G_v$ by $G_0/\ll \bigcup^m_{j=1}N_{0j}\rr_{G_0}$.

            \item Let $e$ be an edge of $\mathcal{T}$. Then $G_e=K_{0j}$ for some $j$. Replace $G_e$ by $K_{0j}/N_{0j}$.
    \end{itemize}
    
    By \eqref{eq. intersection 2}, for each $j$, the group $K_{0j}/N_{0j}$ is a subgroup of both $G_0/\ll \bigcup^m_{j=1}N_{0j}\rr_{G_0}$ and $K'_{0j}/N_{0j}$. Therefore, the above procedure does produce a graph of groups $\overline{\mathcal{T}}$. By \eqref{eq. infinite abelian 1} and \eqref{eq. infinite abelian 2}, the non-central vertex groups and the edge groups of $\mathcal{T}$ are infinite abelian. It is now easy to check that the fundamental group of $\overline{\mathcal{T}}$ is $G'_0/\ll \bigcup^m_{j=1} N_{0j} \rr_{G'_0}$. 
    \end{proof}

    Similar to \Cref{cl. equal l2 betti}, we have
    \begin{equation}\label{eq. equal l2 betti}
        b^{(2)}_\ast(G'_0/\ll \bigcup^m_{j=1} N_{0j} \rr_{G'_0})=b^{(2)}_\ast(G_0/\ll \bigcup^m_{j=1} N_{0j}\rr_{G_0}).
    \end{equation}
    By \Cref{lem. df in fi subgroup} \ref{item. embedding}, we have
    \begin{equation}\label{eq. index 2}
        [G/\ll \bigcup^n_{i=1} N_i \rr_G : G_0/\ll \bigcup^m_{j=1} N_{0j} \rr]=[G:G_0].
    \end{equation}
 The desired result follows from  \Cref{cl. equal l2 betti}, \eqref{eq. sparse 1}, \eqref{eq. equal l2 betti}, \eqref{eq. index 2} and \cite[Theorem 6.54 (6)]{luck2002l2}.    
\end{proof}

\begin{remark}\label{rm. getting a torsion-free fi subgroup}
    In \Cref{thm. sparse rel. hyp.}, we may assume that the family $\{K_i\}^n_{i=1}$ satisfies the following additional properties:
    
    \begin{enumerate}
        \item[(i)] There exists a finite-index sparse special normal subgroup $G_0\lhd G$ such that $K_i=G_0\cap H_i$ for all $i$.
        \item[(ii)] If $L_i/N_i\cong \Z$ for all $i$, then the group $G/\ll \bigcup^n_{i=1} N_i \rr$ is virtually compact special.
    \end{enumerate}

     Indeed, by \cite[Theorem 15.6]{wise2021structure}, there exists a family of finite-index normal subgroups $\{K'_i\lhd H_i\}^n_{i=1}$ such that if a family of normal subgroups $\{N_i\lhd H_i\}^n_{i=1}$ satisfies that, for all $i$,

     \begin{itemize}
         \item $N_i\leqslant K'_i$, and
         \item $K'_i/N_i$ is virtually cyclic,
     \end{itemize}
     then the quotient $G/\ll \bigcup^n_{i=1} N_i \rr$ is virtually compact special.

     By \Cref{lem. get rid of finite index}, there exists a finite-index sparse special normal subgroup $G_0\lhd G$ such that $G_0\cap H_i\leqslant K_i\cap K'_i$ for all $i$. Now simply note that the ($\bigstar$) condition continues to hold with $G_0\cap H_i$ in place of $K_i$.
\end{remark}

\section{Applications}\label{sec. applications}

Next, we present some applications of our main results.

\subsection{Algebraic fibering}

For $n\in\mathbb N^+\cup\{\infty\}$, we say that a group $G$ \textit{virtually $FP_n(\Q)$-fibers} if $G$ has a finite-index subgroup $G_0$ that has a homomorphism onto $\Z$ with kernel of type $FP_k(\Q)$. Algebraic fibrations of groups are intimately related to $L^2$-Betti numbers. The following theorem is proved for $n=1$ by Kielak, and then extended to all $n$ by Fisher.

\begin{theorem}[\cite{kielak2020RFRS,fisher2021improved}]\label{thm. fiber and betti}
    For a virtually RFRS group $G$ of type $FP_n(\Q)$, the following are equivalent:
    \begin{enumerate}
        \item[(i)] $G$ virtually $FP_n(\Q)$-fibers.
        \item[(ii)] $b^{(2)}_i(G)=0$ for $i\leqslant n$.
    \end{enumerate}
\end{theorem}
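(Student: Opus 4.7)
The forward direction (i) $\Rightarrow$ (ii) is the easier one. The plan is: suppose $G_0 \leqslant G$ has finite index with an epimorphism $\pi \colon G_0 \twoheadrightarrow \Z$ whose kernel $K$ is of type $FP_n(\Q)$. Since $L^2$-Betti numbers are multiplicative under finite covers, it suffices to show $b^{(2)}_i(G_0)=0$ for $i\leqslant n$. For this I would use the short exact sequence $1\to K\to G_0\to \Z\to 1$ together with Lück's spectral sequence (or the Cheeger--Gromov vanishing theorem) for group extensions with infinite amenable quotient: since $K$ has finite $L^2$-Betti numbers in degrees $\leqslant n$ (being of type $FP_n(\Q)$) and $\Z$ is infinite amenable, one concludes $b^{(2)}_i(G_0)=0$ for $i\leqslant n$.

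The hard direction is (ii) $\Rightarrow$ (i), and the main obstacle is to upgrade the analytic vanishing into an algebraic fibration. First I would pass to a finite-index RFRS subgroup $G_0$ so that there exists a cofinal chain $G_0>G_1>G_2>\cdots$ of finite-index normal subgroups with $\bigcap G_j=\{1\}$ such that each $G_j/G_{j+1}$ is torsion-free abelian and the map $G_j\twoheadrightarrow G_j/G_{j+1}$ factors through the abelianization of $G_j$. The next key input is an agrarian/Linnell-type embedding: for virtually RFRS groups, one can construct a skew field $\mathcal D$ (the Linnell skew field in Kielak's approach for $n=1$; Fisher's agrarian embedding for general $n$) containing $\Q G_0$ such that
\[\dim_{\mathcal D} H_i(G_0;\mathcal D) \;=\; b^{(2)}_i(G_0)\]
for all $i\leqslant n$, using the $FP_n(\Q)$ hypothesis. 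Thus hypothesis (ii) translates to the algebraic statement $H_i(G_0;\mathcal D)=0$ for $i\leqslant n$.

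The final step is to convert the vanishing of $\mathcal D$-homology into the existence of an honest character $\chi\colon G_0\twoheadrightarrow \Z$ whose kernel is of type $FP_n(\Q)$. I would use Sikorav's theorem (as extended by Bieri--Renz) identifying the Bieri--Neumann--Strebel--Renz invariants $\Sigma^n(G_0;\Q)$ with characters for which the Novikov homology $H_i(G_0;\widehat{\Q G_0}^{\chi})$ vanishes in all degrees $\leqslant n$. Using the RFRS chain, one can arrange that Novikov rings at finer and finer characters approximate $\mathcal D$, so that from $H_i(G_0;\mathcal D)=0$ one extracts a character $\chi$ (possibly after passing further down the RFRS chain) with $[\chi]\in\Sigma^n(G_0;\Q)\cap (-\Sigma^n(G_0;\Q))$. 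By the Bieri--Renz characterization, the kernel $\ker\chi$ is then of type $FP_n(\Q)$, which is precisely virtual $FP_n(\Q)$-fibering of $G$. The main obstacle in both Kielak's and Fisher's work is ensuring that the passage from vanishing of $L^2$-Betti numbers to non-emptiness of a rational point in the BNSR invariants is effective; Kielak handles $n=1$ via his twisted $L^2$-Betti numbers over the Linnell skew field, while Fisher's refinement uses $K$-theoretic properties of the agrarian embedding together with a stability argument for the RFRS tower.
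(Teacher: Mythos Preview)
The paper does not prove this theorem at all: it is stated with the citations \cite{kielak2020RFRS,fisher2021improved} and used as a black-box input in the applications section, so there is no ``paper's own proof'' to compare your proposal against.

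That said, your sketch is a fair high-level summary of the cited works. The direction (i) $\Rightarrow$ (ii) is indeed the easy one and goes essentially as you describe. For (ii) $\Rightarrow$ (i), you have correctly identified the architecture: pass to a RFRS chain, use the Linnell/Hughes-free skew field $\mathcal{D}$ to reinterpret $L^2$-Betti number vanishing as $\mathcal{D}$-acyclicity, and then descend to Novikov-ring acyclicity at some rational character, invoking the Sikorav/Bieri--Renz criterion for $\Sigma^n$. Be aware, however, that your ``final step'' hides essentially all of the real content of Kielak's and Fisher's papers. The passage from $H_i(G_0;\mathcal{D})=0$ to the existence of a single rational character lying in $\Sigma^n\cap(-\Sigma^n)$ is not a routine approximation; Kielak's key innovation is a structure theorem expressing elements of the Ore localization as power series supported on a finite union of half-spaces (after passage down the RFRS tower), and Fisher extends this machinery to higher degrees via the Hughes-free division ring rather than via ``$K$-theoretic properties'' as you write. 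Your outline would not constitute a proof on its own, but as a roadmap to the cited references it is accurate.
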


By combining the above theorem with our computations on $L^2$-Betti numbers, we obtain: 
\begin{corollary}\label{cor. 1 fiber}
    Let $G$ be a virtually compact cubical hyperbolic group. Suppose that $G$ virtually $FP_1(\Q)$-fibers and is hyperbolic relative to a family of subgroups $\{H_i\}^n_{i=1}$. Then there exists a family of finite-index torsion-free normal subgroups $\{K_i\lhd H_i\}^n_{i=1}$ such that for every family of sufficiently deep normal subgroups $\{N_i\lhd H_i\}^n_{i=1}$ that satisfies that, for all $i$, 

    \begin{itemize}
        \item $N_i\leqslant K_i$, and
        \item $K_i/N_i$ is torsion-free virtually compact cubical hyperbolic,
    \end{itemize}
    then the group $G/\ll \bigcup^n_{i=1} N_i \rr$ virtually $FP_1(\Q)$-fibers.
\end{corollary}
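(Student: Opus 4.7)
The plan is to combine Kielak--Fisher's characterization of virtual algebraic fibering (Theorem \ref{thm. fiber and betti}) with the first $L^2$-Betti number bound supplied by Theorem \ref{thm. gen hyp.}. First, since $G$ is virtually compact cubical hyperbolic, Proposition \ref{prop. properties of vir special groups} implies that $G$ is virtually compact special, hence virtually RFRS and of type $F_\infty$ (in particular of type $FP_1(\Q)$). Combining this with the hypothesis that $G$ virtually $FP_1(\Q)$-fibers, Theorem \ref{thm. fiber and betti} yields
\[ b^{(2)}_1(G) \;=\; 0. \]

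Next, I would apply Theorem \ref{thm. gen hyp.} to $G$ with its peripheral family $\{H_i\}_{i=1}^n$, which produces the torsion-free finite-index normal subgroups $\{K_i \lhd H_i\}_{i=1}^n$ required by the corollary. Taking $L_i = K_i$ in the $(\bigstar)$ clause, the theorem furnishes finite subsets $\F_i^{(1)} \subset K_i \smallsetminus \{1\}$ such that whenever $\{N_i \lhd H_i\}_{i=1}^n$ satisfies the hypotheses of the corollary together with $N_i \cap \F_i^{(1)} = \emptyset$ for all $i$, we obtain
\[ b^{(2)}_1(\bg) \;\leqslant\; b^{(2)}_1(G) \;=\; 0, \]
where $\bg := G/\ll \bigcup_{i=1}^n N_i \rr$, and hence $b^{(2)}_1(\bg) = 0$.

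To apply Theorem \ref{thm. fiber and betti} a second time to $\bg$ itself, I still need to verify that $\bg$ is virtually RFRS and of type $FP_1(\Q)$. The finite generation of $\bg$, and thus the $FP_1(\Q)$ property, is immediate since $\bg$ is a quotient of the finitely generated group $G$. For virtual RFRS-ness, I would enlarge each $\F_i^{(1)}$ to a finite set $\F_i \supset \F_i^{(1)}$, using first \cite[Theorem 1.1]{osin2007peripheral} to ensure that the natural maps $H_i/N_i \to \bg$ are injective, and then the multi-peripheral version of the Malnormal Special Quotient Theorem of Agol--Groves--Manning \cite[Theorem 2.7]{agol2016alternate} (the direct analogue, for a finite family of peripherals, of the special-quotient claim established inside the proof of Theorem \ref{thm. hyp.}) to guarantee that $\bg$ is virtually compact special hyperbolic. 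Proposition \ref{prop. properties of vir special groups} then yields the virtual RFRS-ness of $\bg$, and a final application of Theorem \ref{thm. fiber and betti} delivers the virtual $FP_1(\Q)$-fibering of $\bg$.

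The main obstacle I expect is the verification of virtual compact specialness of $\bg$ in the multi-peripheral setting, i.e.\ the multi-peripheral analogue of the special-quotient claim from the proof of Theorem \ref{thm. hyp.}. This is, however, essentially a bookkeeping extension of the single-peripheral argument, relying on the same Agol--Groves--Manning input, and introduces no essentially new difficulty once the torsion-freeness and virtual compact cubical hyperbolicity of each $K_i/N_i$ are given in the hypotheses.
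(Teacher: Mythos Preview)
Your approach mirrors the paper's and is essentially correct, but there is one genuine omission. To apply Theorem~\ref{thm. fiber and betti} in the direction $(\mathrm{ii})\Rightarrow(\mathrm{i})$ to $\bg$ with $n=1$, you must have $b^{(2)}_i(\bg)=0$ for \emph{all} $i\le 1$; you established $b^{(2)}_1(\bg)=0$ but never addressed $b^{(2)}_0(\bg)=0$, i.e.\ that $\bg$ is infinite. The paper handles this explicitly: after disposing of the degenerate case where $G$ is virtually~$\Z$ by hand, it invokes \cite[Theorem~1.1]{osin2007peripheral} to conclude that $\bg$ is non-elementary hyperbolic, whence $b^{(2)}_0(\bg)=0$ by \cite[Theorem~6.54~(8b)]{luck2002l2}. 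In your framework a uniform fix is available: for each $i$ with $H_i$ infinite, enlarge $\F_i$ to contain a nontrivial element of $K_i$; this forces $K_i/N_i\ne\{1\}$, and since $K_i/N_i$ is assumed torsion-free it is then infinite, so its injective image in $\bg$ (via the Osin injectivity you already invoke) makes $\bg$ infinite.

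A minor cosmetic difference: the paper obtains $b^{(2)}_1(G)=0$ from \cite[Theorem~6.63]{luck2002l2} rather than from the $(\mathrm{i})\Rightarrow(\mathrm{ii})$ direction of Theorem~\ref{thm. fiber and betti} as you do; both routes are valid.
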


The assumption of \Cref{cor. 1 fiber} seemingly differs from that of \Cref{cor. intro 1 fiber}, but these assumptions are in fact equivalent (see \Cref{rm. assumption differ}).

\begin{proof}
    By \Cref{prop. properties of vir special groups}, $G$ is of type $F_\infty$, and thus is of type $FP_\infty(\Q)$. First consider the case where $G$ is virtually $\Z$. For each $i$, let $\F_i=\emptyset$. It remains to choose a finite-index normal subgroup $K_i\lhd H_i$ for each $i$. If $|H_i|<\infty$, then we let $K_i=\{1\}$ and $\F_i=\emptyset$. If $|H_i|=\infty$, then $H_i=G$ and we will let $K_i=G$. 
    
    Now suppose that a family of normal subgroups $\{N_i\lhd H_i\}^n_{i=1}$ is given such that, for all $i$, 

    \begin{itemize}
        \item $N_i\leqslant K_i$, and
        \item $K_i/N_i$ is torsion-free.
    \end{itemize}

    If $|H_i|<\infty$ for all $i$, then by construction $N_i=\{1\}$ for all $i$ and thus the group $G/\ll \bigcup^n_{i=1} N_i \rr=G$ virtually $FP_1(\Q)$-fibers. If some $H_{i_0}$ is infinite, then for $i\neq i_0$ we will have $|H_{i_0}|<\infty$. Therefore, $G/\ll \bigcup^n_{i=1} N_i \rr=G/\ll N_{i_0} \rr=K_{i_0}/ N_{i_0}$. As the group $K_{i_0}/ N_{i_0}$ is torsion-free virtually cyclic, it is either $\{1\}$ or $\Z$. In either case, the group $K_{i_0}/ N_{i_0}$, and thus the group $G/\ll \bigcup^n_{i=1} N_i \rr$ virtually $FP_1(\Q)$-fibers.

    It remains to consider the case where $G$ is not virtually $\Z$. As $G$ virtually $FP_1(\Q)$-fibers, \cite[Theorem 6.63]{luck2002l2} implies that $b^{(2)}_1(G)=0$.

    By \Cref{thm. hyp.}, \cite[Theorem 1.1]{osin2007peripheral} and \cite[Theorem 2.7]{agol2016alternate}, there exists a family of finite-index torsion-free normal subgroup $\{K_i\lhd H_i\}^n_{i=1}$ such that for every family of sufficiently deep normal subgroup $\{N_i\lhd H_i\}^n_{i=1}$ that satisfies that, for all $i$, $N_i\leqslant K_i$ and $K_i/N_i$ is torsion-free virtually compact cubical hyperbolic, we have that 
    
    \begin{enumerate}
        \item[(i)] $G/\ll \bigcup^n_{i=1} N_i \rr$ is virtually compact special (and thus is of type $FP_\infty(\Q)$ and virtually RFRS by \Cref{prop. properties of vir special groups});
        \item[(ii)] $G/\ll \bigcup^n_{i=1} N_i \rr$ is non-elementary hyperbolic, in particular, $|G/\ll \bigcup^n_{i=1} N_i \rr|=\infty$, and thus $b^{(2)}_0(G/\ll \bigcup^n_{i=1} N_i \rr)=0$ by \cite[Theorem 6.54 (8b)]{luck2002l2}; and
        \item[(iii)] $b^{(2)}_1(G/\ll \bigcup^n_{i=1} N_i \rr)=0$.
    \end{enumerate}
    The desired result then follows from \Cref{thm. fiber and betti}.
\end{proof}

\begin{corollary}\label{cor. k fiber}
    Let $G$ be a finitely generated virtually sparse special group that is hyperbolic relative to a family of virtually abelian subgroups $\{H_i\}^n_{i=1}$, and let $k\in\mathbb N^+$. Then there exists a family of finite-index torsion-free normal subgroups $\{K_i\lhd H_i\}^n_{i=1}$ such that for every family of sufficiently deep normal subgroups $\{N_i\lhd H_i\}^n_{i=1}$ such that for all $i$,
    
    \begin{itemize}
        \item $N_i\leqslant K_i$, and
        \item $K_i/N_i\cong \Z$,
    \end{itemize}
    we have that the group $G/\ll \bigcup^n_{i=1} N_i \rr$ virtually $FP_k(\Q)$-fibers if and only if so does $G$.
\end{corollary}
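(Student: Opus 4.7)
The strategy is to convert \emph{virtual $FP_k(\Q)$-fibering} into a vanishing statement for $L^2$-Betti numbers via the Kielak--Fisher theorem (\Cref{thm. fiber and betti}), and then invoke the invariance of all $L^2$-Betti numbers under sufficiently deep Dehn fillings provided by \Cref{thm. sparse rel. hyp.}.

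First, I would apply \Cref{thm. sparse rel. hyp.} together with \Cref{rm. getting a torsion-free fi subgroup} to obtain the required family $\{K_i\lhd H_i\}_{i=1}^n$ of finite-index torsion-free normal subgroups, arising as $K_i=G_0\cap H_i$ for a finite-index sparse special normal subgroup $G_0\lhd G$. These results supply, as depth control, finite subsets $\{\F_i\subset K_i\smallsetminus\{1\}\}_{i=1}^n$ such that whenever $\{N_i\lhd H_i\}_{i=1}^n$ satisfies $N_i\leqslant K_i$, $N_i\cap\F_i=\emptyset$, and $K_i/N_i\cong\Z$ for all $i$, the quotient $\bg:=G/\ll\bigcup_{i=1}^n N_i\rr$ is virtually compact special and
\begin{equation*}
b^{(2)}_m(\bg)=b^{(2)}_m(G)\quad\text{for all }m\geqslant 0.
\end{equation*}

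Second, I would verify that both $G$ and $\bg$ satisfy the hypotheses of \Cref{thm. fiber and betti}, namely virtually RFRS and of type $FP_k(\Q)$. Both properties hold for $\bg$ by virtual compact specialty via \Cref{prop. properties of vir special groups}. For $G$, virtual RFRS-ness is immediate from the same proposition, since $G$ is in particular virtually special. To see that $G$ is of type $FP_k(\Q)$, I would apply \Cref{lem. sparse implies amalgam} to realise a finite-index subgroup $G_0\leqslant G$ as the central vertex group of a finite star-shaped graph of groups whose fundamental group $G'_0$ is compact cubical (hence of type $F$), and whose non-central vertex groups and edge groups are finitely generated abelian (hence of type $F$). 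The Mayer--Vietoris long exact sequence for this star-shaped amalgam then forces $G_0$, and therefore $G$, to be of type $F_\infty$, and in particular of type $FP_k(\Q)$.

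Finally, applying \Cref{thm. fiber and betti} twice and chaining the equivalences yields the desired conclusion: $G$ virtually $FP_k(\Q)$-fibers if and only if $b^{(2)}_i(G)=0$ for all $i\leqslant k$, if and only if $b^{(2)}_i(\bg)=0$ for all $i\leqslant k$, if and only if $\bg$ virtually $FP_k(\Q)$-fibers. The main obstacle is establishing type $FP_k(\Q)$ for the sparse special group $G$: while the virtually compact special quotient $\bg$ has type $F_\infty$ essentially for free, the sparse special setting for $G$ requires substantive use of the graph-of-groups decomposition of \Cref{lem. sparse implies amalgam}; once this finiteness is in hand, the rest of the proof is a formal combination of previously established results.
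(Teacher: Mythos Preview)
Your overall architecture is identical to the paper's: invoke \Cref{thm. sparse rel. hyp.} together with \Cref{rm. getting a torsion-free fi subgroup} to get equal $L^2$-Betti numbers and a virtually compact special quotient, then apply the Kielak--Fisher criterion (\Cref{thm. fiber and betti}) on both sides. The only substantive difference is how you establish that $G$ is of type $FP_k(\Q)$. The paper simply cites an external finiteness result for relatively hyperbolic groups (\cite[Theorem~2.11 and Remark~2.13]{patil2023finiteness}) to get $G$ of type $F_\infty$; you instead route through \Cref{lem. sparse implies amalgam}, realising a finite-index $G_0$ as the central vertex of a star of groups with compact cubical fundamental group and abelian edge/outer-vertex groups, and then extract finiteness of $G_0$. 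Your route is self-contained within the paper's toolkit, which is a genuine advantage.

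One point to tighten: the sentence ``the Mayer--Vietoris long exact sequence \ldots\ forces $G_0$ to be of type $F_\infty$'' is doing real work and is not literally a Mayer--Vietoris statement. What you are using is the $2$-out-of-$3$ property for $FP_n$ in amalgams (Bieri, \emph{Homological dimension of discrete groups}, Prop.~2.13): if $A\ast_C B$ is of type $FP_n$ and $C$ is of type $FP_{n-1}$, then $A$ and $B$ are of type $FP_n$. Iterating this along the star (peeling off one abelian leaf at a time) gives $G_0$ of type $FP_\infty(\Q)$; finite presentability of $G_0$ (a finite-index subgroup of a relatively hyperbolic group with finitely presented peripherals) then upgrades this to $F_\infty$ if you want that. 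You should name this step explicitly rather than gesture at Mayer--Vietoris. Also note that \Cref{lem. sparse implies amalgam} assumes the peripherals are infinite; the finite $H_i$ can be discarded from the peripheral structure beforehand via \cite[Theorem~2.40]{osin2006relatively}, exactly as the paper does elsewhere.
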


\begin{proof}
        By Theorem 2.11 and Remark 2.13 of \cite{patil2023finiteness}, each $H_i$ is finitely generated virtually abelian, and thus is of type $F_\infty$. By Theorem 2.11 and Remark 2.13 of \cite{patil2023finiteness} again, the group $G$ is of type $F_\infty$ (in particular, $FP_\infty(\Q)$). By \Cref{prop. properties of vir special groups}, the group $G$ is virtually RFRS. Therefore, $G$ virtually $FP_n(\Q)$-fibers if and only if $b^{(2)}_j(G)=0$ for $i\leqslant k$ by \Cref{thm. fiber and betti}.

        By \Cref{cor. finiteness conditions}, \Cref{lem. special quotient}, and  \Cref{thm. sparse rel. hyp.}, there exists a family of finite-index normal subgroups $\{K_i\lhd H_i\}^n_{i=1}$ such that for every family of sufficiently deep normal subgroups $\{N_i\lhd H_i\}^n_{i=1}$ such that for all $i$, $N_i\leqslant K_i$ and $K_i/N_i\cong \Z$, we have the following: 
        
        \begin{itemize}
            \item The group $G/\ll \bigcup^n_{i=1} N_i \rr$ is virtually compact special, and thus is virtually RFRS and of type $F_\infty$ by \Cref{prop. properties of vir special groups}.
            \item For all $j$, we have $b^{(2)}_j(G/\ll N \rr)=0$ if and only if $b^{(2)}_j(G)=0$.
        \end{itemize}
        Note that by \Cref{rm. getting a torsion-free fi subgroup}, we may assume that the subgroups $K_i$ are contained in a finite-index sparse special subgroup of $G$. In particular, we may assume that the $K_i$ are torsion-free.
        
        Now, \Cref{thm. fiber and betti} implies that $G/\ll N \rr$ virtually $FP_k(\Q)$-fibers if and only if $b^{(2)}_j(G/\ll N \rr_G)=0$ for $j\leqslant k$, which is equivalent to $b^{(2)}_j(G)=0$ for $j\leqslant k$, which in turn is equivalent to that $G$ virtually $FP_k(\Q)$-fibers.
\end{proof}

\subsection{Arithmetic lattices in $SO(n,1)$} We apply the results of Section \ref{sec:sparse} to hyperbolic arithmetic lattices.

\begin{theorem}[Bergeron--Wise \cite{bergeron2012boundary}]\label{thm. lattice sparse}
    Let $G<SO(n,1)$ be a non-uniform arithmetic lattice. Then $G$ is virtually sparse special.
\end{theorem}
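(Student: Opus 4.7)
The strategy is to construct, on a finite-index subgroup of $G$, a cosparse action on a CAT(0) cube complex whose quotient is special, in the sense of \Cref{def:cosparse}. First, by Selberg's lemma I would pass to a torsion-free finite-index subgroup of $G$. By the classification of non-uniform arithmetic lattices in $SO(n,1)$, every such lattice is, up to commensurability, of simplest type: it arises as the integer points of the orthogonal group of an isotropic rational quadratic form $f$ of signature $(n,1)$. The positive-norm rational vectors for $f$ cut out a dense, $G$-invariant collection of totally geodesic hyperplanes in $\mathbb H^n$; the stabilizer of each such hyperplane is itself a non-uniform arithmetic lattice in $SO(n-1,1)$, yielding an abundant supply of codimension-one subgroups.

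Next, I would apply Sageev's dual-cube-complex construction to the $G$-invariant collection $\Cal W$ consisting of $G$-translates of a finite list of these rational hyperplanes, chosen large enough that the associated wall structure satisfies the Bergeron--Wise boundary criterion. The density of rational hyperplanes in the Grassmannian of hyperplanes of $\mathbb H^n$ is precisely the input needed to verify this criterion; it then delivers a proper $G$-action on a finite-dimensional CAT(0) cube complex $X$.

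The third step is to verify that $G\curvearrowright X$ is cosparse. Each cusp of the finite-volume manifold $\mathbb H^n/G$ contributes a maximal parabolic subgroup $P\leqslant G$ virtually isomorphic to $\Z^{n-1}$; the Margulis lemma supplies disjoint $G$-equivariant horoballs around the cusps. The hyperplanes of $\Cal W$ meeting a given horoball are permuted periodically by the corresponding $P$, and their dual cubes assemble into a $P$-invariant convex subcomplex $F_P\subset X$ on which $P$ acts with finitely many orbits of hyperplanes, giving the required quasiflat $(F_P,P)$. On the complement of the horoballs, finite covolume forces cocompactness of the action; choosing the horoballs disjoint enough yields conditions \ref{sparse_1}, \ref{sparse_2} and \ref{sparse_3} of \Cref{def:cosparse}.

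Finally, for specialness, I would pass to a further finite-index subgroup to eliminate the four hyperplane pathologies (self-intersection, one-sidedness, direct and inter-osculation). Each pathology corresponds to a specific double coset configuration among the codimension-one subgroups, and separability of these geometrically finite subgroups inside the arithmetic lattice allows the offending configurations to be separated in a finite cover. A compactness argument reduces the a priori infinitely many pathologies to finitely many $G$-orbits, so a single finite-index subgroup kills all of them. The main obstacle, and the heart of the Bergeron--Wise argument, is the separability step: it requires the non-trivial double coset separability of geometrically finite subgroups in non-uniform arithmetic hyperbolic lattices, a property intrinsic to the arithmetic setting and not available for arbitrary cusped hyperbolic groups.
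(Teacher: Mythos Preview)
Your outline follows essentially the same construction as Bergeron--Wise, and the paper's own proof is little more than a pointer to that construction. The paper does not reprove anything: it simply recalls that Bergeron--Wise produce a codimension-one subgroup meeting every peripheral in an infinite subgroup, apply their cubulation criterion (their Proposition~5.2) to obtain a proper action on a CAT(0) cube complex with virtually special quotient, and then observe that Wise's Theorem~7.9 in \cite{wise2021structure} is a later strengthening of that same cubulation criterion which guarantees, under the identical hypotheses, that the resulting action is cosparse. That single citation is the entire content of the paper's proof.

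The substantive difference with your proposal is in how cosparseness is obtained. You attempt to verify it by hand, arguing that each parabolic $P$ acts with finitely many hyperplane orbits on a convex subcomplex $F_P$ and that the remaining conditions of \Cref{def:cosparse} follow from disjointness of horoballs and cocompactness on the thick part. This is plausible in spirit but is not a proof as written: extracting a \emph{convex} $P$-invariant subcomplex $F_P$ from the horoball picture, showing $F_P\cap GK$ is $P$-cocompact, and especially verifying condition~\ref{sparse_3} (that distinct $G$-translates of quasiflats meet only inside the cocompact part) all require real work. This is precisely what Wise's Theorem~7.9 packages, and the paper rightly invokes it rather than redo it.

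A minor correction on your final step: specialness in Bergeron--Wise does not rest on double-coset separability of geometrically finite subgroups in the lattice. What is actually used is that the hyperplane stabilizers (themselves arithmetic lattices in $SO(n-1,1)$) are separable in $G$; this is where arithmeticity enters, but it is ordinary subgroup separability, not double-coset separability.
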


\begin{proof}
    In \cite[Theorem 6.2]{bergeron2012boundary}, Bergeron--Wise  proved that $G$ is virtually special by the following argument: The group $G$ has a quasi-isometrically embedded codimension-1 subgroup that intersects every peripheral subgroup of $G$ along an infinite subgroup. The authors applied \cite[Proposition 5.2]{bergeron2012boundary} to obtain an action $G\curvearrowright X$ of $G$ on a CAT(0) cube complex $X$ and argue that the action is proper and that the quotient $G_0\backslash X$ is special, where $G_0$ is a finite-index torsion-free subgroup given by the Selberg's Lemma. Note that \cite[Proposition 5.2]{bergeron2012boundary} is strengthened by \cite[Theorem 7.9]{wise2021structure} which asserts that the action $G\curvearrowright X$ is cosparse as well.
\end{proof}

 The following theorem is a direct consequence of \cite[Theorem 2.3]{jost2000vanishing} and \cite[Theorem 6.54 (6b)]{luck2002l2}. We provide the proof for the convenience of the reader.

\begin{theorem}\label{thm. l2 betti hyp mnfl}
    Let $G<SO(n,1)$ be a lattice. Then 
\[
b^{(2)}_i(G)=
\begin{cases}
    0,&\text{if }n\neq n/2\\
    (-1)^{n\over 2}\chi(G)=\dfrac{2\cdot vol(\mathbb H^n/G)}{\Omega_n},&\text{if }i=n/2,
\end{cases}
\]
where $\Omega_n$ is the volume of the unit $n$-sphere and $\chi(G)$ is the rational-valued Euler characteristic of $G$.
\end{theorem}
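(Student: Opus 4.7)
The plan is to first reduce to the torsion-free case, then invoke the Jost--Xin vanishing result, and finally use the $L^2$-Euler characteristic to obtain the middle-dimensional value. By Selberg's Lemma, there exists a torsion-free finite-index subgroup $G_0\leqslant G$. The multiplicativity property \cite[Theorem 6.54 (6b)]{luck2002l2} gives
\[
b^{(2)}_i(G)=\frac{1}{[G:G_0]}\,b^{(2)}_i(G_0),\qquad \chi(G)=\frac{1}{[G:G_0]}\chi(G_0),
\]
and $\mathrm{vol}(\mathbb H^n/G_0)=[G:G_0]\cdot\mathrm{vol}(\mathbb H^n/G)$, so it suffices to establish the formula for the torsion-free lattice $G_0$.

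Once $G_0$ is torsion-free, $M=\mathbb H^n/G_0$ is a complete finite-volume hyperbolic $n$-manifold (closed if $G_0$ is uniform, cusped otherwise), and $\mathbb H^n$ is a model for $EG_0$. Thus $b^{(2)}_i(G_0)=b^{(2)}_i(\mathbb H^n;G_0)$. The first step is to apply \cite[Theorem 2.3]{jost2000vanishing}, which asserts that on any complete finite-volume real hyperbolic manifold the space of $L^2$-harmonic forms vanishes outside the middle dimension. By the $L^2$-Hodge decomposition applied to the $G_0$-equivariant setting, this gives $b^{(2)}_i(G_0)=0$ for $i\neq n/2$. In particular, when $n$ is odd, every $b^{(2)}_i(G_0)$ vanishes; on the other hand, the Gauss--Bonnet theorem yields $\chi(G_0)=0$ in odd dimensions, so the two sides of the claimed equality agree trivially.

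It remains to handle the middle dimension when $n=2q$ is even. Since $G_0$ is of type $F$ (being the fundamental group of a finite-volume hyperbolic manifold, which admits a finite classifying space by Borel--Serre in the cusped case and is itself a finite complex in the closed case), the $L^2$-Euler characteristic formula
\[
\chi(G_0)=\sum_{i\geqslant 0}(-1)^i b^{(2)}_i(G_0)
\]
holds. Combining this with the already-established vanishing $b^{(2)}_i(G_0)=0$ for $i\neq q$ yields $\chi(G_0)=(-1)^q b^{(2)}_q(G_0)$, i.e., $b^{(2)}_q(G_0)=(-1)^q\chi(G_0)$. Finally, the classical Gauss--Bonnet--Chern theorem for finite-volume hyperbolic manifolds identifies the Euler characteristic with the normalized volume, giving $(-1)^q\chi(G_0)=2\,\mathrm{vol}(\mathbb H^n/G_0)/\Omega_n$. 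Dividing through by $[G:G_0]$ recovers the claimed formula for $G$. The only step requiring real content beyond standard tools is the application of Jost--Xin's $L^2$-vanishing theorem for cusped finite-volume hyperbolic manifolds, which is the main reason the result extends from uniform to non-uniform lattices.
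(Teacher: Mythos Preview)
Your reduction to a torsion-free subgroup via Selberg and multiplicativity is fine, and your derivation of the middle-dimensional value through the $L^2$-Euler characteristic and Gauss--Bonnet is essentially the paper's route (the paper cites \cite{KZ01} for the latter). The substantive divergence is in how the two arguments treat \emph{non-uniform} lattices, and this is where your argument has a gap.

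The paper first handles only the torsion-free \emph{cocompact} case with Jost--Xin, and then passes to arbitrary lattices by invoking \cite[Corollary 0.2]{gaboriau2002invariants}: any two lattices in $SO(n,1)$ are measure equivalent with coupling index the ratio of covolumes, and $L^2$-Betti numbers scale accordingly. This completely avoids analytic issues in the cusped setting.

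You instead apply Jost--Xin to the cusped quotient $M=\mathbb H^n/G_0$ and then invoke an ``$L^2$-Hodge decomposition in the $G_0$-equivariant setting'' to deduce $b^{(2)}_i(G_0)=0$. That deduction is not justified. Dodziuk's $L^2$-de~Rham theorem, which identifies the cellular $b^{(2)}_i(\widetilde M;G_0)$ with the $G_0$-dimension of $L^2$-harmonic forms on the universal cover, requires the action to be \emph{cocompact}; for non-uniform $G_0$ there is no off-the-shelf identification of this kind. And the space of $L^2$-harmonic forms on the non-compact quotient $M$ computes the reduced $L^2$-cohomology of $M$, which is not the same invariant as $b^{(2)}_i(G_0)$. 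So even granting that Jost--Xin yields vanishing of $L^2$-harmonic forms on cusped finite-volume hyperbolic manifolds, you have not explained why this forces $b^{(2)}_i(G_0)=0$. Note also that passing to a finite-index torsion-free subgroup cannot turn a non-uniform lattice into a uniform one, so Selberg alone does not get you back to the cocompact case. The clean fix is precisely the paper's: compute for some torsion-free \emph{uniform} lattice in $SO(n,1)$ (these exist by Borel) and transfer via Gaboriau's proportionality.
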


\begin{proof}
    First suppose that $G$ is a torsion-free cocompact lattice. Then $\mathbb H^n/G$ is a hyperbolic $n$-manifold. By \cite[Theorem 2.3]{jost2000vanishing},

    \[
b^{(2)}_i(H)=
\begin{cases}
    0,&\text{if }n\neq n/2\\
    (-1)^{n\over 2}\chi(\mathbb H^n/G),&\text{if }i=n/2.
\end{cases}
\]

\noindent When $n$ is even, by \cite[Theorem 3.3]{KZ01},  $(-1)^{n\over 2}\chi(\mathbb H^n/G)=2\cdot vol(\mathbb H^n/G)/\Omega_n$, which concludes the proof in this case.

The general case follows from the above special case and \cite[Corollary 0.2]{gaboriau2002invariants}.
\end{proof}

The theorem below summarizes our results in the setting of hyperbolic lattices. 

\begin{theorem}\label{thm. lattice summarize}
Let $G<SO(n,1)$ be any non-uniform  lattice with $n\geqslant 2$, and let $\{H_i\}^k_{i=1}$ be the peripheral subgroups of $G$. 

\begin{enumerate}[label=(\roman*)]

    \item\label{item. lattice li} Assume $G$ is virtually locally indicable (e.g.~$G$ is arithmetic). For every $\delta>0$, there exists a family of finite subsets $\{\F_{\delta,i}\subset H_i\smallsetminus\{1\}\}^k_{i=1}$ such that if a family of normal subgroups $\{N_i\lhd H_i\}^k_{i=1}$ satisfies that $N_i\cap \F_{\delta,i}=\emptyset$ for all $i$, then for $\ell\leqslant n$ we have

    \begin{equation}\label{eq. lattice l2 betti inequality}
        \left| b^{(2)}_\ell(G/\ll \bigcup^k_{i=1} N_i\rr_G) - b^{(2)}_\ell(G) \right|<\delta.
    \end{equation}
    
\noindent  In particular, if additionally $n$ is even and $\delta$ is sufficiently small, then $b^{(2)}_{n/2}(G/\ll \bigcup^k_{i=1} N_i\rr_G)>0$.\\

    \item\label{item. lattice df} Assume $G$ is arithmetic. There exists a finite-index sparse special normal subgroup $G_0\lhd G$ such that the following holds: For each $i$, let $H'_i:=G_0\cap H_i$. Then there exists a family of finite subsets $\{\F'_i\subset H'_i\smallsetminus\{1\}\}^k_{i=1}$ such that if a family of normal subgroups $\{N_i\lhd H_i\}^k_{i=1}$ satisfies that, for all $i$,

    \begin{itemize}
        \item $N_i\cap \F'_i=\emptyset$,
        \item $N_i\leqslant H'_i$, and
        \item $H'_i/N_i$ is torsion-free,
    \end{itemize}
    then
    \begin{equation*}
        b^{(2)}_\ast(G/\ll \bigcup^k_{i=1} N_i\rr_G)=b^{(2)}_\ast(G).
    \end{equation*}
    Moreover, if additionally $H'_i/N_i\cong \Z$ for all $i$, then the quotient $G/\ll \bigcup^k_{i=1} N_i\rr_G$ is virtually compact special. In particular, when $n$ is odd, the group $G/\ll \bigcup^k_{i=1} N_i\rr_G$ virtually $FP_\infty(\Q)$-fibers; and when $n$ is even, the group $G/\ll \bigcup^k_{i=1} N_i\rr_G$ virtually $FP_{n/2-1}(\Q)$-fibers but does not virtually $FP_{n/2}(\Q)$-fiber.\\
\end{enumerate}
\end{theorem}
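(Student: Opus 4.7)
The proof will assemble pieces already established in the paper. For any non-uniform lattice $G<SO(n,1)$, the peripheral subgroups $\{H_i\}_{i=1}^k$ are virtually $\Z^{n-1}$, hence amenable, and $G$ is of type $F_\infty$ (via the Borel--Serre compactification) and hyperbolic relative to $\{H_i\}_{i=1}^k$. These properties, combined with the hypothesis that $G$ is virtually locally indicable, match the standing hypotheses of \Cref{cor. gen li}. For part (i), I would apply \Cref{cor. gen li} separately to each $\ell\in\{0,1,\dots,n\}$ (with the same $\delta$) and take the union of the resulting finite subsets of $H_i\smallsetminus\{1\}$ to form $\F_{\delta,i}$; the hypothesis that $H_i/N_i$ is of type $F_\infty$ is automatic, since any quotient of a finitely generated virtually abelian group is itself finitely generated virtually abelian. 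For the ``in particular'' clause, when $n$ is even \Cref{thm. l2 betti hyp mnfl} yields $b^{(2)}_{n/2}(G)=2\cdot\mathrm{vol}(\mathbb H^n/G)/\Omega_n>0$, so taking $\delta<b^{(2)}_{n/2}(G)$ in \eqref{eq. lattice l2 betti inequality} forces the corresponding $L^2$-Betti number of the filling to be strictly positive.

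For part (ii), the Bergeron--Wise result \Cref{thm. lattice sparse} shows that an arithmetic $G$ is virtually sparse special, matching the standing hypothesis of \Cref{thm. sparse rel. hyp.}. I would invoke \Cref{thm. sparse rel. hyp.} together with the strengthening in \Cref{rm. getting a torsion-free fi subgroup}, which supplies a finite-index sparse special normal subgroup $G_0\lhd G$ such that one may take $K_i:=G_0\cap H_i=H'_i$. Setting $L_i:=K_i$ in the ``$(\bigstar)$'' clause of \Cref{thm. sparse rel. hyp.} then yields finite subsets $\F'_i\subset H'_i\smallsetminus\{1\}$ producing the equality $b^{(2)}_\ast(G/\ll\bigcup^k_{i=1} N_i\rr_G)=b^{(2)}_\ast(G)$ under the stated conditions. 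The additional clause that the quotient is virtually compact special whenever each $H'_i/N_i\cong\Z$ is precisely part (ii) of \Cref{rm. getting a torsion-free fi subgroup}.

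The fibering consequences then follow by pairing \Cref{thm. l2 betti hyp mnfl} with the Kielak--Fisher criterion \Cref{thm. fiber and betti}. Writing $\bg:=G/\ll\bigcup^k_{i=1} N_i\rr_G$, the equality of $L^2$-Betti numbers transfers the vanishing pattern of $b^{(2)}_\ast(G)$ verbatim to $\bg$: all $L^2$-Betti numbers vanish when $n$ is odd, whereas for $n$ even only $b^{(2)}_{n/2}(\bg)$ survives. Since $\bg$ is virtually compact special, \Cref{prop. properties of vir special groups} shows it is of type $F_\infty$ and virtually RFRS, so \Cref{thm. fiber and betti} applies, giving virtual $FP_\infty(\Q)$-fibering in the odd case and virtual $FP_{n/2-1}(\Q)$-fibering together with the obstruction to $FP_{n/2}(\Q)$-fibering in the even case. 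No substantial new difficulties arise; the only care required is matching the hypotheses of \Cref{cor. gen li}, \Cref{thm. sparse rel. hyp.}, and \Cref{rm. getting a torsion-free fi subgroup} to the lattice setting, and verifying that the virtually abelian peripherals retain the finiteness properties required as $N_i$ varies.
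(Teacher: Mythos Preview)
Your proposal is correct and follows essentially the same approach as the paper's proof: for (i) you apply \Cref{cor. gen li} to each $\ell\in\{0,\dots,n\}$ and take the union of the resulting finite subsets, then invoke \Cref{thm. l2 betti hyp mnfl} for the ``in particular'' clause; for (ii) you combine \Cref{thm. lattice sparse}, \Cref{thm. sparse rel. hyp.}, \Cref{rm. getting a torsion-free fi subgroup}, \Cref{prop. properties of vir special groups}, and \Cref{thm. fiber and betti} exactly as the paper does. Your explicit verification that $H_i/N_i$ is automatically of type $F_\infty$ (needed for \Cref{cor. gen li}) is a small detail the paper leaves implicit.
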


\begin{proof}\hspace{1mm}
   
      \noindent {\it (i)}. 
      For $\delta>0$ and each $\ell\in \{0,1,\dots,n\}$, \Cref{cor. gen li} provides a family of finite subsets $\{\F_{\delta,i,\ell}\subset H_i\smallsetminus\{1\}\}^k_{i=1}$ such that if a family of normal subgroups $\{N_i\lhd H_i\}^k_{i=1}$ satisfies that $N_i\cap \F_{\delta,i,\ell}=\emptyset$ for all $i$, then
      \[\left| b^{(2)}_\ell(G/\ll \bigcup^k_{i=1} N_i\rr_G) - b^{(2)}_\ell(G) \right|<\delta.\]
      Inequality \eqref{eq. lattice l2 betti inequality} follows by letting $\F_{\delta,i}=\bigcup^n_{\ell=0} \F_{\delta,i,\ell}$. The last statement of the theorem follows from \eqref{eq. lattice l2 betti inequality} and \Cref{thm. l2 betti hyp mnfl}.

In the case when  $G$ is arithmetic, by  \Cref{thm. lattice sparse}, it is finitely generated virtually special, and thus it is virtually locally indicable by \Cref{prop. properties of vir special groups}.

    \noindent {\it (ii)}. By \Cref{thm. lattice sparse}, the group $G$ is finitely generated virtually sparse special. So the desired conclusion follows from \Cref{thm. sparse rel. hyp.}, \Cref{rm. getting a torsion-free fi subgroup}, \Cref{prop. properties of vir special groups}, and \Cref{thm. fiber and betti}.
\end{proof}

\begin{remark}\label{rm. further finite index}
    Assume the setting of \Cref{thm. lattice summarize} \ref{item. lattice df} and let $G_1\lhd G$ be any finite-index normal subgroup that is contained in $G_0$. Then the same proof shows that the conclusion of \ref{item. lattice df} still holds if $H'_i:=G_0\cap H_i$ is replaced by $H'_i:=G_1\cap H_i$.
\end{remark}

\subsection{Einstein manifolds}

In this and the next subsections, we will apply our result to cusped arithmetic hyperbolic manifolds. There are two notions of Dehn filling, one for manifolds and the other for groups. To distinguish them, we will call the former {\it manifold filling}, as defined below.

\begin{definition}
    Let $M$ be a compact topological $n$-manifold such that $\partial M$ can be decomposed as a disjoint union $\partial M=\bigsqcup^k_{i=1} C_i$,
    with each $C_i$ homeomorphic to the $(n-1)$-torus $(S^1)^{n-1}$.
    
    Let $T_1,\dots,T_k$ be $k$-copies of the solid $n$-torus $D^2\times (S^1)^{n-2}$. A \textit{manifold filling} $\overline M$ of $M$ is a closed $n$-manifold obtained by gluing $T_1,\dots,T_k$ to $M$ by a homeomorphism identifying $\partial T_i$ with $C_i$ for each $i$.
    For each $i$, $\partial D^2$ represents an element $x_i\in \pi_1(\partial T_i)$, via the homeomorphism $T_i\cong D^2\times (S^1)^{n-2}$. Let $H_i$ be the image of $\pi_1(C_i)$ in $\pi_1(M)$ under the inclusion $C_i\hookrightarrow M$. The gluing sends $x_i$ to an element $y_i\in H_i$.

    We say that a property $\mathcal P$ \textit{holds for all sufficiently deep manifold fillings} $\overline M$ of $M$ if there exists a family of finite subsets $\{\F_i\subset H_i\smallsetminus\{1\}\}^k_{i=1}$ such that $\mathcal P$ holds whenever $\langle y_i\rangle\cap \F_i=\emptyset$ for all $i$.
\end{definition}

\begin{definition}
    We say that a Dehn filling $\overline M$ of $M$ a \textit{$G$-equivariant manifold filling} if $\overline M$ is a manifold filling of $M$ and $\pi_1(\overline M)$ is a $G$-equivariant filling of $G_0$ (see \Cref{def. equivariant filling}).
\end{definition}

The following  theorem is a consequence of \Cref{thm. lattice summarize} \ref{item. lattice li}.

\begin{theorem}\label{thm. li hyp mnfl}
    Let $M$ be a cusped hyperbolic manifold with torus cusps and virtually locally indicable fundamental group, then for every $\delta>0$, every sufficiently deep manifold filling $\overline{M}$ of $M$ satisfies
    \begin{enumerate}
            \item[(a)] if $n\neq \dim(M)/2$, then $b^{(2)}_n(\pi_1(\overline{M}))<\delta$;

            \item[(b)] if $n=\dim M/2$, then $|b^{(2)}_n(\pi_1(\overline{M}))-|\chi(M)||<\delta$.
    \end{enumerate}
\end{theorem}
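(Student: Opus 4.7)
Set $m=\dim M$ and let $G=\pi_1(M)$; then $G$ is a torsion-free non-uniform lattice in $SO(m,1)$ which is virtually locally indicable by hypothesis. Each torus cusp yields a peripheral subgroup $H_i\cong\Z^{m-1}$, and by Seifert--van Kampen $\pi_1(\overline M)=G/\ll\bigcup_{i=1}^k\langle y_i\rangle\rr$, where $y_i\in H_i$ is the image of $\partial D^2\subset T_i$. By construction, a manifold filling is sufficiently deep exactly when the corresponding group-theoretic Dehn filling along $\{\langle y_i\rangle\}_{i=1}^k$ is sufficiently deep, so I translate the entire question into group-theoretic Dehn filling. The plan is to apply \Cref{thm. lattice summarize} \ref{item. lattice li} in the range $\ell\leqslant m$ and to handle $\ell>m$ by a dimension bound on the Dehn filling space.

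\textbf{Range $\ell\leqslant m$.} \Cref{thm. lattice summarize} \ref{item. lattice li} provides, for every $\delta>0$, finite subsets $\F_{\delta,i}\subset H_i\smallsetminus\{1\}$ such that whenever $\langle y_i\rangle\cap\F_{\delta,i}=\emptyset$ for all $i$, one has $|b^{(2)}_\ell(\pi_1(\overline M))-b^{(2)}_\ell(G)|<\delta$ for every $\ell\leqslant m$. Now \Cref{thm. l2 betti hyp mnfl} gives $b^{(2)}_\ell(G)=0$ for $\ell\neq m/2$ and, when $m$ is even, $b^{(2)}_{m/2}(G)=(-1)^{m/2}\chi(G)=|\chi(G)|$ (positive by Gauss--Bonnet). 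Since $M$ is aspherical with the homotopy type of its compact core---an $m$-manifold whose boundary is a disjoint union of $(m-1)$-tori (each of Euler characteristic zero)---we have $\chi(M)=\chi(G)$. Substituting gives both (a) and (b) in the range $\ell\leqslant m$.

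\textbf{Range $\ell>m$.} For higher dimensions I use the Dehn filling space of \Cref{thm. intro topology}. The key point is that each slope $y_i$ is a \emph{primitive} element of $H_i\cong\Z^{m-1}$: the gluing $\partial T_i=S^1\times(S^1)^{m-2}\to C_i$ is a self-homeomorphism of the $(m-1)$-torus whose induced change of basis on $\pi_1$ sends the basis element $[\partial D^2\times\{\mathrm{pt}\}]$ to $y_i$, forcing $y_i$ to be primitive. Hence $\overline H_i=H_i/\langle y_i\rangle\cong\Z^{m-2}$ is free abelian and admits an $(m-2)$-torus as a classifying space. Taking $BG$ to be the compact core of $M$ (dimension $m$), $BH_i$ an $(m-1)$-torus, and $B\overline H_i$ an $(m-2)$-torus, each mapping cylinder in \Cref{Dehn_space} has dimension $\max(m,m)=m$ or $\max(m,m-2)=m$, so the Dehn filling space $X$ satisfies $\dim X\leqslant m$. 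For sufficiently deep fillings, \Cref{thm. intro topology} identifies $X$ with a $K(\pi_1(\overline M),1)$; the cellular chain complex of its universal cover then vanishes in degrees $>m$, so $b^{(2)}_\ell(\pi_1(\overline M))=0$ for all $\ell>m$.

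Enlarging each $\F_{\delta,i}$ to absorb the ``sufficiently deep'' constraint of \Cref{thm. intro topology} yields a single finite family for which both ranges give the desired bound. The main obstacle is the dimension control in the range $\ell>m$: without knowing that $y_i$ is primitive, $\overline H_i$ could have torsion and $B\overline H_i$ would be infinite-dimensional, destroying the dimension bound on the Dehn filling space; the manifold-filling hypothesis is precisely what rules this out.
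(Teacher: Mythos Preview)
Your argument is correct and, for the range $\ell\leqslant m$, coincides with the paper's proof: the paper simply declares the theorem ``a consequence of \Cref{thm. lattice summarize} \ref{item. lattice li}'' and invokes \Cref{thm. l2 betti hyp mnfl} implicitly, exactly as you do.

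Where you go beyond the paper is the range $\ell>m$. The paper's one-line deduction cites only \Cref{thm. lattice summarize} \ref{item. lattice li}, which as stated bounds $|b^{(2)}_\ell(\bg)-b^{(2)}_\ell(G)|$ only for $\ell\leqslant m$, so the paper leaves degrees above the dimension implicit. Your primitivity observation (that a manifold filling forces each $y_i$ to extend to a basis of $H_i\cong\Z^{m-1}$, hence $\bh_i\cong\Z^{m-2}$) lets you build an $m$-dimensional Dehn filling space and conclude $b^{(2)}_\ell(\pi_1(\overline M))=0$ for $\ell>m$. This is a clean way to close that gap; just note that with multiple cusps you should cite the multi-peripheral \Cref{cor. gen topology} rather than \Cref{thm. intro topology}.
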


Suppose $M=\mathbb H^n/{\Gamma}$ is a finite volume cusped hyperbolic manifold. Then $M$ is called {\it arithmetic} if $\Gamma$ is commensurable with a non-uniform arithmetic lattice in $\mathrm{SO}(n,1)$.

\begin{theorem}\label{thm. einstein}
    Let $M$ be a cusped arithmetic hyperbolic manifold, then there is a finite-sheeted cover $W$ of $M$ such that $W$ has torus cusps and sufficiently deep $\pi_1(M)$-equivariant manifold fillings $\overline{W}$ of $W$ satisfy
        \[b^{(2)}_\ast(\pi_1(\overline{W}))=[\pi_1(M):\pi_1(W)]\cdot b^{(2)}_\ast(\pi_1(M)).\]
\end{theorem}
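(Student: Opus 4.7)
The plan is to deduce this theorem directly from Theorem \ref{thm. lattice summarize}(ii), combined with the multiplicativity of $L^2$-Betti numbers under finite-index inclusions. First, I would set $G=\pi_1(M)$, which is a torsion-free non-uniform arithmetic lattice in $\mathrm{Isom}(\mathbb{H}^n)$, and apply Theorem \ref{thm. lattice summarize}(ii) to obtain a finite-index sparse special normal subgroup $G_0\lhd G$, together with the associated finite subsets $\F'_i\subset H'_i\smallsetminus\{1\}$, where $H'_i:=G_0\cap H_i$. Using Remark \ref{rm. further finite index} (shrinking $G_0$ further via Selberg's Lemma and a standard cover argument if necessary), I may additionally assume that each $H'_i$ is isomorphic to $\mathbb{Z}^{n-1}$, so that $W:=\mathbb{H}^n/G_0$ is a finite-sheeted cover of $M$ with torus cusps.

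Next, I would unpack the assumption that $\overline{W}$ is a sufficiently deep $\pi_1(M)$-equivariant manifold filling of $W$. By Seifert--van Kampen, such a filling corresponds to a $\pi_1(M)$-equivariant family $\{M_j\lhd K_j\}$ of normal subgroups of the induced peripheral structure $\{K_j\}$ of $G_0$, where each $M_j=\langle y_j\rangle$ is generated by the filling curve. By Lemma \ref{lem. df in fi subgroup}(i), this family corresponds bijectively to a family $\{N_i\lhd H_i\}$ of normal subgroups of $G$ with $N_i\leqslant H'_i$. Lemma \ref{lem. df in fi subgroup}(ii) then identifies $H'_i/N_i\cong K_j/M_j\cong \mathbb{Z}^{n-1}/\langle y_j\rangle$, which is torsion-free precisely because $\overline{W}$ is a \emph{manifold}, forcing $y_j$ to be primitive. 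Lemma \ref{lem. df in fi subgroup}(iv) guarantees that if $\overline{W}$ is sufficiently deep then $\{N_i\}$ avoids the finite subsets $\F'_i$ produced by Theorem \ref{thm. lattice summarize}(ii). Hence all hypotheses of Theorem \ref{thm. lattice summarize}(ii) are met, yielding
\[
b^{(2)}_\ast\bigl(G/\ll\textstyle\bigcup_i N_i\rr_G\bigr)=b^{(2)}_\ast(G).
\]

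To finish, I would apply finite-index multiplicativity of $L^2$-Betti numbers. Lemma \ref{lem. df in fi subgroup}(iii) gives the index formula
\[
\bigl[G/\ll\textstyle\bigcup_i N_i\rr_G\;:\;G_0/\ll\textstyle\bigcup_j M_j\rr_{G_0}\bigr]=[G:G_0]=[\pi_1(M):\pi_1(W)],
\]
and \cite[Theorem 6.54(6)]{luck2002l2} applied to this inclusion yields
\[
b^{(2)}_\ast(\pi_1(\overline{W}))=b^{(2)}_\ast\bigl(G_0/\ll\textstyle\bigcup_j M_j\rr_{G_0}\bigr)=[\pi_1(M):\pi_1(W)]\cdot b^{(2)}_\ast\bigl(G/\ll\textstyle\bigcup_i N_i\rr_G\bigr).
\]
Combining the last two displays gives the claim.

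I do not anticipate a substantive obstacle: Theorem \ref{thm. lattice summarize}(ii) does all the heavy lifting. The only point requiring care is the translation between $\pi_1(M)$-equivariant manifold fillings of $W$ and ordinary Dehn fillings of $G$ satisfying the torsion-free-quotient hypothesis — which reduces to observing that manifold (as opposed to orbifold) fillings use primitive slopes, making $H'_i/N_i$ torsion-free automatically.
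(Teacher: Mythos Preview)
Your proposal is correct and follows the same approach as the paper: invoke Theorem~\ref{thm. lattice summarize}\ref{item. lattice df} to produce $G_0$ and take $W$ to be the corresponding cover. The paper's proof is a terse two sentences; you have correctly unpacked the implicit details---namely that $G_0$ being sparse special forces $H'_i=G_0\cap H_i$ to be free abelian (so $W$ has torus cusps), that Lemma~\ref{lem. df in fi subgroup} translates $\pi_1(M)$-equivariant fillings of $W$ into fillings of $G$, that primitivity of manifold-filling slopes yields the torsion-free hypothesis on $H'_i/N_i$, and that the index factor comes from multiplicativity of $L^2$-Betti numbers.
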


\begin{proof}
    Let $G_0\lhd \pi_1(M)$ be the finite-index normal subgroup provided by \Cref{thm. lattice summarize} \ref{item. lattice df}, and let $W$ be the cover of $M$ with $\pi_1(W)=G_0$. Then $W$ satisfies the requirements.
\end{proof}

\begin{corollary}\label{cor. constructing Einstein}
For every cusped arithmetic hyperbolic manifold $M$ of dimension at least four, there exists a sequence of pairwise non-homeomorphic Anderson-type closed Einstein manifolds $\{W_n\}_{n\geqslant 1}$ such that each $W_n$ satisfies the Singer Conjecture and $\{W_n\}_{n\geqslant 1}$ converges to a finite-sheeted cover $W$ of $M$ under the pointed Gromov--Hausdorff metric with based point any $y\in W$.
\end{corollary}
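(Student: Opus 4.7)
The strategy is to combine three inputs: Anderson's Einstein Dehn filling theorem \cite{anderson2006dehn}, our $L^2$-Betti identity \Cref{thm. einstein}, and the pointed Gromov--Hausdorff convergence inherent to Anderson's construction. First apply \Cref{thm. einstein} to produce a finite-sheeted cover $W\to M$ with torus cusps, together with a notion of ``sufficiently deep'' $\pi_1(M)$-equivariant manifold filling $\overline W$ of $W$ for which
\[b^{(2)}_*(\pi_1(\overline W))=[\pi_1(M):\pi_1(W)]\cdot b^{(2)}_*(\pi_1(M)).\]
By \Cref{thm. l2 betti hyp mnfl}, the right-hand side vanishes outside the middle degree $n/2$, where $n=\dim M$.

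Next, since $n\geq 4$, Anderson's theorem guarantees that manifold fillings of $W$ along slopes of sufficiently large normalized length admit Einstein metrics of negative Einstein constant, and the resulting closed Einstein manifolds converge to $W$ (with its cusped hyperbolic metric) in the pointed Gromov--Hausdorff topology as the slope lengths tend to infinity. Both the ``long slope'' condition of Anderson and the ``deep filling'' condition of \Cref{thm. einstein} are avoidance conditions against finite subsets of the peripheral subgroups of $W$; together with $\pi_1(M)$-equivariance (enforced by symmetrizing the exceptional sets over the $\pi_1(M)/\pi_1(W)$-orbits of cusps), they can be met simultaneously along an infinite sequence of slopes $\{s_n\}$ of strictly increasing length, producing Anderson-type Einstein manifolds $W_n$ converging to $W$.

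Each $W_n$ is closed and aspherical: gluing the hyperbolic metric on the complement of the filled region to a flat metric on each solid torus $D^2\times T^{n-2}$, with the horospherical cross-sections matched to the flat torus boundaries, yields via the Reshetnyak--Bridson--Haefliger gluing theorem a locally CAT(0) metric on $W_n$, making $\widetilde{W_n}$ contractible. Consequently $b^{(2)}_i(W_n)=b^{(2)}_i(\pi_1(W_n))$ for every $i$, which by the computation above vanishes whenever $i\neq n/2$. This is precisely the Singer Conjecture for $W_n$.

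Pairwise non-homeomorphism can be arranged by passing to a subsequence along which the first integral homology groups $H_1(W_n;\mathbb{Z})$ are mutually distinct, which is a generic condition on the chosen slopes. The main technical obstacle is harmonizing the three ``sufficiently deep'' regimes (Einstein existence, the $L^2$-Betti identity, and $\pi_1(M)$-equivariance); this is handled uniformly because each takes the form of avoidance of a finite subset in the peripheral subgroups, a condition closed under finite unions.
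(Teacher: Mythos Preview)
Your proposal is correct and follows essentially the same route as the paper: pass to the cover $W$ from \Cref{thm. einstein}, choose a sequence of equivariant slopes that simultaneously avoid the finite exceptional sets for Anderson's Einstein theorem, the CAT(0) metric, and the $L^2$-Betti identity, and conclude Singer from asphericity plus \Cref{thm. l2 betti hyp mnfl}.

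Two places where your justifications diverge from the paper are worth noting. First, your asphericity sketch via ``flat metric on $D^2\times T^{n-2}$ plus Reshetnyak gluing'' is not quite right as stated: the horosphere is not convex in $\mathbb{H}^n$, and a genuinely flat solid torus will not match the cusp geometry so as to make the glued metric locally CAT(0). The paper instead invokes \cite[Theorem~2.7]{fujmann2010}, which carries out the correct negatively/nonpositively curved filling construction and gives the CAT(0) conclusion directly for sufficiently long slopes. Second, for pairwise non-homeomorphism the paper simply cites \cite[Proposition~3.9]{anderson2006dehn}, avoiding any appeal to a ``generic'' $H_1$ argument; your version is plausible but would need an actual computation to be complete.
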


\begin{proof}
    Let $W$ be the finite-sheeted cover of $M$ given by \Cref{thm. einstein}, and let $H_1,\dots,H_m$ be the fundamental groups of the cusps of $M$, viewed as subgroups of $\pi_1(M)$. For $i=1,\dots, m$, let $a_i,b_i\in \pi_1(W)\cap H_i$ be two elements that can be extended to a basis of the free abelian group $\pi_1(W)\cap H_i$. For $n\in\mathbb N^+$, let $x_{i,n}=a_i+n\cdot b_i$. Then the sequence of elements $\{x_{i,n}\}_{n\in\mathbb N^+}$ satisfies the following:

    \begin{enumerate}
        \item[(i)] Each $x_{i,n}$ is not a proper power of another element of $\pi_1(W)\cap H_i$.
        \item[(ii)] For each finite subset $\fS\subset H_i$, there exists $m>0$ such that for all $n>m$ we have $\langle x_{i,n} \rangle\cap \fS=\emptyset$.
    \end{enumerate}
    
    Let $\{K_j\}^\ell_{j=1}$ be the peripheral structure on $\pi_1(W)$ induced by $\{H_i\}^m_{i=1}$. For each $n$, let $\{Q_{j,n}\leqslant K_j\}^\ell_{j=1}$ be the family given by \Cref{lem. df in fi subgroup} \ref{item. bijection} with respect to the family $\{\langle x_{i,n}\rangle \leqslant H_i\}^k_{i=1}$. Then for all $j$ and $n$, we have that $Q_{j,n}\cong \Z$ and $K_j/Q_{j,n}$ is torsion-free, by \Cref{lem. df in fi subgroup} \ref{item. peripheral}. Therefore, each of the quotient groups $\pi_1(W)/\ll \bigcup^\ell_{j=1} Q_{j,n} \rr_{\pi_1(W)}$ is the fundamental group of a $\pi_1(M)$-equivariant manifold filling of $W$, which will be denoted by $W_n$.
    
    By the main result of \cite{anderson2006dehn} and \cite[Theorem 2.7]{fujmann2010}, there exists a family of finite subsets $\{\F_j\subset K_j\smallsetminus\{1\}\}^\ell_{j=1}$ such that if a family of subgroups $\{Q_j\leqslant K_j\}^\ell_{j=1}$ induces a manifold filling $\overline{W}$ of $W$ and satisfies that for each $j$, we have $Q_j\cap \F_j=\emptyset$, then $\overline{W}$ admits both an Einstein metric and a locally CAT(0) metric. The latter implies that $\overline{W}$ is aspherical. 

    By \Cref{lem. df in fi subgroup} \ref{item. deep}, for $n$ large enough we have that $Q_{j,n}\cap \F_j=\emptyset$. By \Cref{thm. einstein} and \Cref{thm. l2 betti hyp mnfl}, for large $n$, we have that each $W_n$ satisfies the Singer Conjecture. By \cite[Proposition 3.9]{anderson2006dehn}, we may assume that the manifolds $W_n$ are pairwise non-homeomorphic. By \cite{anderson2006dehn}, the sequence $\{W_n\}_{n\geqslant 1}$ converges to $W$ under the pointed Gromov--Hausdorff metric with based point any $y\in W$.
\end{proof}

\subsection{Exotic subgroups of hyperbolic groups}  We present a new construction of exotic subgroups of hyperbolic groups that arise from hyperbolic Dehn filling.

\begin{theorem}\label{thm. exotic quotient}
Let $G$ be the fundamental group of a cusped arithmetic hyperbolic manifold of dimension $n$.  Then there is an infinite sequence $\{\bg_k\}^\infty_{k=1}$ of pairwise non-isomorphic hyperbolic Dehn filling quotients of $G$, each of which has the same $L^2$-Betti numbers as $G$. In particular,
\begin{itemize}
  \item[(i)] If $n=2q\geqslant 4$,  then for each $k$, there exist a finite-index subgroup  $\bg'_k\leqslant \bg_k$ and an epimorphism $\psi:\bg'_k\rightarrow \Z$ such that $\ker \psi$ is of type $FP_{q-1}(\Q)$ but not of type $FP_{q}(\Q)$.\\
  \item[(ii)] If $n=2q+1\geqslant 5$, then there exist a finite-index subgroup $\bg'_k\leqslant \bg_k$ and an epimorphism $\psi:\bg_k\rightarrow \Z$ such that $\ker \psi$ is of type $FP(\Q)$ but not hyperbolic.
\end{itemize}
  \end{theorem}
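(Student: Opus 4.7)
Build $\{\bg_k\}^\infty_{k=1}$ as group-theoretic Dehn fillings of $G$ with cyclic peripheral quotients, deduce $b^{(2)}_\ast(\bg_k)=b^{(2)}_\ast(G)$ from \Cref{thm. lattice summarize} \ref{item. lattice df}, compute these values via \Cref{thm. l2 betti hyp mnfl}, and then extract the promised finiteness properties of the fibered kernels by applying the Kielak--Fisher criterion \Cref{thm. fiber and betti} directly to each $\bg_k$.

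\textbf{Construction.} Let $\{H_i\}$ denote the peripheral subgroups of $G$. Applying \Cref{thm. lattice summarize} \ref{item. lattice df} furnishes a finite-index sparse special normal subgroup $G_0\lhd G$ and, setting $H'_i:=G_0\cap H_i\cong\mathbb Z^{n-1}$, finite subsets $\mathcal F'_i\subset H'_i\smallsetminus\{1\}$. By \Cref{lem. zn}, for each $i$ there are infinitely many subgroups $N_i\leqslant H'_i$ that avoid $\mathcal F'_i$ and satisfy $H'_i/N_i\cong\mathbb Z$. Varying them as $k\to\infty$ produces Dehn fillings $\bg_k:=G/\ll\bigcup_i N_{i,k}\rr$. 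Each $\bg_k$ is hyperbolic by \cite[Theorem 1.1]{osin2007peripheral} (the peripheral quotients $\mathbb Z$ are hyperbolic), is virtually compact special by \Cref{thm. lattice summarize} \ref{item. lattice df} (hence virtually RFRS and virtually of type $F$ by \Cref{prop. properties of vir special groups}), and satisfies $b^{(2)}_\ast(\bg_k)=b^{(2)}_\ast(G)$. By \Cref{thm. l2 betti hyp mnfl} this value concentrates in middle degree $n/2$ with value $|\chi(G)|$ when $n$ is even, and vanishes in every other degree. Pairwise non-isomorphism is arranged by passing to a subsequence along which an invariant such as the minimum filling-slope length diverges.

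\textbf{Finiteness properties via Kielak--Fisher.} For (i), with $n=2q$: $b^{(2)}_j(\bg_k)=0$ for $j<q$ while $b^{(2)}_q(\bg_k)\neq 0$. Applying \Cref{thm. fiber and betti} to the virtually RFRS, type-$F_\infty$ group $\bg_k$ yields a finite-index subgroup $\bg'_k\leqslant\bg_k$ and an epimorphism $\psi\colon\bg'_k\to\mathbb Z$ whose kernel is of type $FP_{q-1}(\mathbb Q)$; if this kernel were also of type $FP_q(\mathbb Q)$, the converse direction of the same theorem would force $b^{(2)}_q(\bg_k)=0$, contradicting $b^{(2)}_q(\bg_k)=|\chi(G)|\neq 0$. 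For (ii), with $n=2q+1$: all $L^2$-Betti numbers of $\bg_k$ vanish, so \Cref{thm. fiber and betti} gives a virtual $FP_\infty(\mathbb Q)$-fibration; combined with $\cd_{\mathbb Q}(\bg_k)<\infty$ (from virtual $F$-ness) the kernel is in fact of type $FP(\mathbb Q)$.

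\textbf{Main obstacle.} The genuinely hard step is ruling out that $\ker\psi$ is hyperbolic in case (ii). The natural approach is to invoke Anderson's theorem \cite{anderson2006dehn} (as in the proof of \Cref{cor. constructing Einstein}) to arrange that sufficiently deep $\bg_k$ are fundamental groups of closed aspherical $n$-manifolds, hence virtual $PD(n)$-groups, making $\ker\psi$ virtually a $PD(2q)$-group of cohomological dimension $2q\geqslant 4$. Were $\ker\psi$ hyperbolic, Bestvina--Mess would force its Gromov boundary to be $S^{2q-1}$ and Mosher's rigidity for hyperbolic extensions by $\mathbb Z$ would force $\bg'_k$ to be a mapping-torus-like extension with ``pseudo-Anosov''-type monodromy on a closed aspherical $PD(2q)$-fiber group. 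Combining this rigid structure with the explicit peripheral $\mathbb Z$-subgroups surviving in $\bg_k$ from the Dehn filling should force a contradiction; pinning down this last step, rather than the $L^2$-Betti-number bookkeeping, is where most of the technical work would lie.
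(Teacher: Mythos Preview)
Your construction and case (i) are essentially the paper's: pick fillings with $H'_i/N_i\cong\Z$, invoke \Cref{thm. lattice summarize}\ref{item. lattice df} for the $L^2$-Betti number equality and virtual compact specialness, and read off the finiteness properties of the fiber from \Cref{thm. fiber and betti}. That part is fine.

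There are two real gaps. First, your pairwise non-isomorphism argument is not an argument: ``minimum filling-slope length'' is not a group invariant, and distinct slopes can give isomorphic quotients. The paper instead arranges the fillings so that $H_i/N_{k,i}$ contains torsion of order tending to infinity with $k$ (by taking $N_{k,i}$ inside a shrinking chain $G_k\cap H_i$), and then uses that a fixed hyperbolic group has a bound on the orders of its finite subgroups. This is a genuine idea you are missing.

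Second, and more seriously, your plan for case~(ii) does not lead to a contradiction. Even granting that (a finite-index subgroup of) $\bg_k$ is a $PD(n)$ group, the kernel being $PD(2q)$ with $2q\geqslant 4$ is perfectly compatible with hyperbolicity --- uniform lattices in $SO(2q{+}1,1)$ already show this --- and the Mosher-style rigidity you invoke does not by itself obstruct the configuration. The paper's route is entirely different and avoids geometry: it passes to a torsion-free finite-index subgroup $\bg''_k$, uses the relative-cohomology excision for Cohen--Lyndon fillings (\Cref{cor. alg excision} and \cite[Theorem~4.10]{sun2019cohomologyii}) together with Poincar\'e--Lefschetz duality for the cusped manifold to show $H^n(\bg''_k,\{\overline H_{k,j}\};\Z/2\Z)\neq 0$, hence $\cd(\bg''_k)\geqslant n\geqslant 5$. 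On the other hand, if $\ker\psi$ were hyperbolic then a result of Kudlinska \cite[Proposition~3.3]{kudlinska} (torsion-free hyperbolic, hyperbolic kernel over $\Z$ forces $\cd(\ker\psi)\leqslant 2$) gives $\cd(\bg'_k)\leqslant 3$, contradicting the previous bound. So the missing ingredients on your side are the cohomological-dimension lower bound via excision, and the upper bound from Kudlinska's theorem; neither is visible in your sketch.
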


\begin{proof}
    As we will discuss normal closures in different groups, we will use $\ll\cdot\rr_G$ instead of $\ll \cdot \rr$. Let $H_i,i=1,\dots,\ell$, be the the fundamental groups of the cusps of $M$, viewed as subgroups of $G$. By \cite[Theorem 7.15 (a)]{dahmani2017hyperbolically}, there exists a family of finite subsets
    $\{\F_i\subset G\cap H_i\smallsetminus\{1\}\}^\ell_{i=1}$ such that if a family of normal subgroups $\{N_i\lhd H_i\}^\ell_{i=1}$ satisfies that, for all $i$,
    
    \begin{itemize}
        \item $N_i\cap \F_i=\emptyset$, and
        \item $H_i/N_i$ is virtually-$\Z$,
    \end{itemize}
    then the group $G/\ll \bigcup^\ell_{i=1}N_i\rr_G$ is hyperbolic, and each of the natural homomorphisms $H_i/N_i\rightarrow G/\ll \bigcup^\ell_{i=1}N_i\rr_G$ is injective, in which case $H_i/N_i$ can be viewed as a subgroup of $G/\ll \bigcup^\ell_{i=1}N_i\rr_G$.

    \Cref{thm. lattice summarize} \ref{item. lattice df} provides us with a finite-index sparse special normal subgroup $G_0\lhd G$. As $G$ is residually finite, there is an infinite sequence $\{G_k\}^\infty_{k=1}$ of finite-index normal subgroups of $G$ such that $G_k\leqslant G_0$ for all $k$ and $\bigcap_{k\geqslant 1}G_k=\{1\}$. For each $k$, \Cref{rm. further finite index} gives us a family of finite subsets $\{\F_{k,i}\subset G_k\cap H_i\smallsetminus\{1\}\}^\ell_{i=1}$ such that the conclusion of \Cref{thm. lattice summarize} \ref{item. lattice df} holds with $H'_i:=H_i\cap G_k$. For each $k$, we will construct a hyperbolic Dehn filling quotient $\bg_k$ that has the same $L^2$-Betti numbers as $G$ and the desired fibering property, and then prove that the sequence $\{\bg_k\}^\infty_{k=1}$ contains an infinite number of isomorphism types of groups.\\

    \noindent {\it (i)}. Suppose $n=2q\geqslant 4$. For each $i$ and $k$, the group $G_k\cap H_i$ is free abelian of rank at least $3$. For each $i$, use \Cref{lem. zn} to choose a subgroup $N_{k,i}\leqslant (G_k\cap H_i)$ such that $(G_k\cap H_i)/N_{k,i}\cong \Z$ and $N_{k,i}\cap (\F_i\cup\F_{k,i})=\emptyset$. Then the quotient $\bg_k:=G/\ll \bigcup^n_{i=1} N_{k,i} \rr_G$ is hyperbolic, has the same $L^2$-Betti numbers as $G$, and virtually $FP_{q-1}(\Q)$-fibers, i.e., it has a finite-index subgroup $\bg'_k\leqslant \bg_k$ with an epimorphism $\psi\colon \bg'_k\rightarrow \Z$ such that $\ker\psi$ is of type $FP_{q-1}(\Q)$. By \Cref{thm. lattice summarize} \ref{item. lattice df}, the group $\bg_k$ does not virtually $FP_q(\Q)$-fiber. So $\ker\psi$ is not of type $FP_q(\Q)$.\\

    \noindent {\it (ii)}. For each $k$, let $\{H_{k,j}\}^m_{j=1}$ be the induced peripheral structure of $G_k$. By Corollary 4.7, Theorem 4.10 (ii) of \cite{sun2019cohomologyii} and \Cref{cor. gen finiteness conditions}, there exists a family of finite subsets $\{\fS_{k,j}\subset H_{k,j}\smallsetminus\{1\}\}^m_{j=1}$ such that if a family of subgroups $\{M_{k,j}\}^m_{j=1}$ satisfies that $M_{k,j}\cap \fS_{k,j}=\emptyset$ for all $j$, then the group
    \[\bg''_k:= G_k/\ll \bigcup^m_{j=1} M_{k,j} \rr_{G_k}\]
    satisfies:
    \begin{enumerate}[label=(\alph*)]
        \item\label{item. torsion-free} $\cd(\bg''_k)<\infty$, in particular, $\bg''_k$ is torsion-free.
    \end{enumerate}
    Also,
    \begin{equation}\label{eq. dimension}
        H^n(\bg''_k ,\{\overline{H}_{k,j}\}^m_{j=1};\Z/2\Z)\cong H^n(G_k,\{H_{k,j}\}^m_{j=1};\Z/2\Z)\cong \Z/2\Z,
    \end{equation}
    where, for $j=1,\dots,m$, the group $\overline{H}_{k,j}$ is the natural image of $H_{k,j}$ in $\bg''_k$, and the last isomorphism follows from Poincar\'{e}--Lefschetz duality: as $G_k$ is sparse special, it is torsion-free, and thus is the fundamental group of a cusped hyperbolic manifold. We will specify our choice of $\{M_{k,j}\}^m_{j=1}$ later.

    By applying \Cref{lem. df in fi subgroup} \ref{item. deep} to the pair of groups $G_k\lhd G$, we obtain a family of finite subsets $\{\F'_{k,i}\subset H_i\smallsetminus\{1\}\}^\ell_{i=1}$ with respect to the family $\{\fS_{k,j}\}^m_{j=1}$. 
    
    We now specify our choice of $\{N_{k,i}\}^\ell_{i=1}$ and $\{M_{k,j}\}^m_{j=1}$. As $n\geqslant 5$, for each $i$, the group $G_k\cap H_i$ is free abelian of rank at least $4$. For each $i$, use \Cref{lem. zn} to choose a subgroup $N_{k,i}\leqslant G_k\cap H_i$ such that $N_i\cap (\F_{k,i}\cup \F_i\cup \F'_i)=\emptyset$ and $(G_k\cap H_i)/N_i\cong \Z$. Let $\{M_{k,j}\leqslant H_{k,j}\}^m_{j=1}$ be the $G$-equivariant family of subgroups given by \Cref{lem. df in fi subgroup} \ref{item. bijection} with respect to the family $\{N_{k,i}\}^\ell_{i=1}$. Then $M_{k,j}\cap \fS_{k,j}=\emptyset$ for all $j$. Therefore, \ref{item. torsion-free} and \eqref{eq. dimension} hold. Let 
    \[\bg_k:=G/\ll \bigcup^{\ell}_{i=1} N_{k,i} \rr_G,\;\;\bg''_k:=G_k/\ll \bigcup^m_{j=1} M_{k,j}\rr_{G_k}.\] 
    
    By \Cref{thm. lattice summarize} \ref{item. lattice df}, the group $\bg_k$ has the same $L^2$-Betti numbers as $G$ and virtually $FP_\infty(\Q)$-fibers. Being a finite-index subgroup of $\bg_k$, the group $\bg''_k$ also virtually $FP_\infty(\Q)$-fibers. So there is a finite-index subgroup $\bg'_k\leqslant \bg''_k$ and an epimorphism $\psi\colon \bg'_k\rightarrow \Z$ such that $\ker\psi$ is of type $FP_\infty(\Q)$. By \ref{item. torsion-free} and $\ker\psi<\bg''_k$, we have $\cd(\ker\psi)<\infty$ and thus $\ker\psi$ is of type $FP(\Q)$. Assume, for the sake of contradiction, that $\ker\psi$ is hyperbolic. Since $\bg''_k$ is torsion-free by \ref{item. torsion-free}, so is its subgroup $\bg'_k$. It follows from \cite[Proposition 3.3]{kudlinska} that $\cd(\ker\psi)\leqslant 2$, and thus $\cd(\bg'_k)\leqslant 3$. On the other hand, the fact that $\overline H_{k,j}\cong \Z$ for all $j$ and \eqref{eq. dimension} imply that $\cd(\bg''_k)\geqslant 5$ and thus $\cd(\bg'_k)\geqslant 5$, which is a contradiction. So $\ker\psi$ is not hyperbolic.

    It remains to prove that $\{\bg_k\}^\infty_{k=1}$ contains infinitely many isomorphism types of groups. Fix $i\in\{1,\dots\ell\}$. For each $k$, the group $H_i/N_{k,i}$ is abelian virtually-$\Z$. Hence, there exist integers $p_{1,k},\dots,p_{n-2,k}$ such that
    \[H_i/N_{k,i}\cong \Z\times \prod^{n-2}_{r=1} \Z/(p_{r,k}\Z).\]
    Note that $H_i/N_{k,i}$ has a finite-order element of order
    $p_{1,k}$. As $\bg_k$ contains $H_i/N_{k,i}$ as a subgroup, it contains a finite-order element of order $p_{1,k}$ as well.
    
    The group $(H_i/N_{k,i})/((G_k\cap H_i)/N_{k,i})\cong H_i/(G_k\cap H_i)$ is finite abelian. So there exists an integer $p_{n-1,k}$ such that
    \[H_i/(G_k\cap H_i)\cong \prod^{n-1}_{r=1} \Z/(p_{r,k}\Z).\]
    As $\{G_k\cap H_i\}^\infty_{k=1}$ is a nested sequence of groups and $\bigcap^\infty_{k=1} (G_k\cap H_i)=\{1\}$, we have $\lim_{k\rightarrow\infty}p_{r,k}=\infty$ for $r=1,\dots,n-1$. Therefore, for every $C>0$, there exists $k>0$ such that $\bg_k$ contains a finite-order element of order at least $C$.

    On the other hand, each $\bg_k$ is a hyperbolic group, and thus cannot contain finite-order elements of arbitrary large order. So $\{\bg_k\}^\infty_{k=1}$ must contain an infinite number of isomorphism types of groups.
\end{proof}

\subsection{Deficiency} 
Deficiency is an important invariant of a finitely presented group. A positive deficiency suggests that the group may be large (i.e.~a finite index subgroup admits a homomorphism onto a non-cyclic free group)  or possess a certain degree of freedom \cite{BM78}, which often manifests in its homological or geometric properties. In contrast, groups with non-positive deficiency tend to have a more rigid and constrained structure, such as many lattices in simple Lie groups \cite{Lott99}.

\begin{corollary}\label{cor. deficiency}
    Let $G$ be a virtually locally indicable group of type $F_3$ that has a finite family of hyperbolically embedded subgroups $\{H_i\}^n_{i=1}$. Then for sufficiently deep families of normal subgroups $\{N_i\lhd H_i\}^n_{i=1}$ such that $H_i/N_i$ is of type $F_3$ for all $i$, the group $G/\ll \bigcup^n_{i=1} N_i \rr$ is finitely presented and we have

 \begin{align*}
     &\defi(G/\ll \bigcup^n_{i=1} N_i \rr)\\
     \leqslant &b^{(2)}_1(G)-b^{(2)}_2(G)+\sum^n_{i=1} b^{(2)}_1(H_i/N_i)+\sum^n_{i=1} b^{(2)}_2(H)+1\\
     \leqslant &b^{(2)}_1(G)-b^{(2)}_2(G)+\sum^n_{i=1} b^{(2)}_1(H_i)+\sum^n_{i=1}b^{(2)}_2(H_i)+1.
 \end{align*}

\end{corollary}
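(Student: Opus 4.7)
The plan is to combine L\"uck's deficiency inequality with the Mayer--Vietoris sequence associated to the Dehn filling space of \Cref{cor. gen topology}, together with the quantitative approximation of \Cref{cor. l2 betti approximation ver 2}. A na\"ive combination of \Cref{thm. gen li} and L\"uck's inequality would produce a bound containing $2\sum b_1^{(2)}(H_i/N_i)$ rather than the single copy claimed, so the Mayer--Vietoris step must be carried out directly to avoid this doubling.

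First, since $G$ is of type $F_3\supset F_2$ and each $H_i/N_i$ is of type $F_3\supset F_2$, \Cref{cor. gen finiteness conditions} ensures that $\bg$ is of type $F_2$, hence finitely presented, for sufficiently deep $N_i$. L\"uck's deficiency inequality \cite[Theorem 7.2]{luck2002l2} then gives $\defi(\bg)\leqslant 1-b_0^{(2)}(\bg)+b_1^{(2)}(\bg)-b_2^{(2)}(\bg)\leqslant 1+b_1^{(2)}(\bg)-b_2^{(2)}(\bg)$. I then apply the Mayer--Vietoris long exact sequence associated to the decomposition $\widetilde X=M_{\widetilde\phi}\cup M_{\widetilde\psi}$ of the universal cover of the Dehn filling space, which is a model for $E\bg$ by \Cref{cor. gen topology}. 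Identifying $\cH_n(M_{\widetilde\phi};\N(\bg))\cong \cH_n(\ll N\rr\backslash EG;\N(\bg))$, $\cH_n(M_{\widetilde\psi};\N(\bg))\cong \bigoplus_i\N(\bg)\otimes_{\N(\bh_i)}\cH_n(\bh_i;\N(\bh_i))$ and the intersection piece with $\bigoplus_i\N(\bg)\otimes_{\N(\bh_i)}\cH_n(N_i\backslash EH_i;\N(\bh_i))$, then taking $\dim_{\bg}$ and using additivity together with \cite[Theorem 6.54 (7)]{luck2002l2} yields
\begin{align*}
b_1^{(2)}(\bg) &\leqslant b_1^{(2)}(\ll N\rr\backslash EG\;;\bg)+\sum\nolimits_i b_1^{(2)}(\bh_i)+\sum\nolimits_i b_0^{(2)}(N_i\backslash EH_i\;;\bh_i),\\
b_2^{(2)}(\bg) &\geqslant b_2^{(2)}(\ll N\rr\backslash EG\;;\bg)+\sum\nolimits_i b_2^{(2)}(\bh_i)-\sum\nolimits_i b_2^{(2)}(N_i\backslash EH_i\;;\bh_i).
\end{align*}

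Next, I apply \Cref{cor. l2 betti approximation ver 2} with $k=2$ (noting that each $H_i$ is finitely generated and virtually locally indicable, being a subgroup of such), along with \cite[Theorem 7.19 (c)]{dahmani2017hyperbolically} to translate the depth condition from $\ll N\rr$ to the individual $N_i$. For each $\delta>0$ and sufficiently deep $N_i$, we have $|b_n^{(2)}(\ll N\rr\backslash EG\;;\bg)-b_n^{(2)}(G)|<\delta$ and $|b_n^{(2)}(N_i\backslash EH_i\;;\bh_i)-b_n^{(2)}(H_i)|<\delta$ for $n\leqslant 2$. Subtracting the two Mayer--Vietoris inequalities, dropping the non-positive term $-\sum_i b_2^{(2)}(\bh_i)$, substituting the approximations, and plugging into L\"uck's inequality gives
\[
\defi(\bg)<1+b_1^{(2)}(G)-b_2^{(2)}(G)+\sum\nolimits_i b_1^{(2)}(\bh_i)+\sum\nolimits_i b_0^{(2)}(H_i)+\sum\nolimits_i b_2^{(2)}(H_i)+C\delta
\]
for some absolute constant $C$. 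Reducing to the case that every $H_i$ is infinite (finite $H_i$ force trivial filling by \Cref{rm. sufficiently deep for finite groups} and can be omitted from the peripheral family), the term $\sum_i b_0^{(2)}(H_i)$ vanishes; since $\defi(\bg)\in\Z$, the $C\delta$ error is absorbed by taking the depth sets $\F_i$ sufficiently large, yielding the first stated inequality. The second inequality follows by additionally applying \Cref{cor. upper bound ver 2} to each $H_i$ to bound $b_1^{(2)}(\bh_i)<b_1^{(2)}(H_i)+\delta$ for sufficiently deep $N_i$.

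The main obstacle is the Mayer--Vietoris bookkeeping: one must derive the upper bound on $b_1^{(2)}(\bg)$ and the lower bound on $b_2^{(2)}(\bg)$ simultaneously from the same long exact sequence, so that only a single copy of $\sum_i b_1^{(2)}(\bh_i)$ survives in the difference $b_1^{(2)}(\bg)-b_2^{(2)}(\bg)$, rather than the doubled copy produced by combining \Cref{thm. gen li} with L\"uck's inequality. A secondary technical point is the absorption of the approximation error $\delta$, which exploits the integer-valued nature of group deficiency.
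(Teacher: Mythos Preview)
Your proof is correct and reaches the same bound, but it takes a different route from the paper's.

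The paper does \emph{not} re-run Mayer--Vietoris. Instead it uses the pre-packaged \Cref{thm. gen li} only for the lower bound on $b^{(2)}_2(\bg)$, and for $b^{(2)}_1(\bg)$ it invokes the sharper estimate of \Cref{cor. locally indicable dehn filling} (which rests on \Cref{lem. first l2 inequality}, the degree-one inequality $b^{(2)}_1(N\backslash EG;G/N)\geqslant b^{(2)}_1(G/N)$). That corollary gives $b^{(2)}_1(\bg)<b^{(2)}_1(G)+\delta$ with no peripheral contribution at all, so the doubling you worried about simply never arises: the single copy of $\sum_i b^{(2)}_1(H_i/N_i)$ in the final bound comes entirely from the $b^{(2)}_2$ estimate, not from $b^{(2)}_1$. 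In your argument the same term appears, but from the opposite source---the $b^{(2)}_1$ Mayer--Vietoris inequality---while your $b^{(2)}_2$ bound contributes $\sum_i b^{(2)}_2(N_i\backslash EH_i;\bh_i)\approx\sum_i b^{(2)}_2(H_i)$ instead.

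What each approach buys: the paper's route is shorter and more modular, and avoids the reduction to infinite $H_i$ (and the associated $\sum_i b^{(2)}_0(H_i)$ bookkeeping) because \Cref{cor. locally indicable dehn filling} already absorbs that. Your route is more self-contained, deriving both inequalities from one Mayer--Vietoris computation rather than appealing to the special degree-one lemma; this makes the mechanism behind the single-copy bound transparent. Both handle the integer-absorption of the $\delta$-error in the same way (and share the same mild circularity, since the first displayed bound depends on $N_i$ through $b^{(2)}_1(H_i/N_i)$).
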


\begin{proof}
    For simplicity, denote
    \begin{align*}
        s&:=b^{(2)}_1(G)-b^{(2)}_2(G)+\sum^n_{i=1} b^{(2)}_1(H_i/N_i)+\sum^n_{i=1} b^{(2)}_2(H)+1\\
        t&:=b^{(2)}_1(G)-b^{(2)}_2(G)+\sum^n_{i=1} b^{(2)}_1(H_i)+\sum^n_{i=1}b^{(2)}_2(H_i)+1.
    \end{align*}

    Choose $\delta>0$ such that each of the open intervals $(s,s+2\delta)$ and $(t,t+(n+2)\delta)$ does not contain any integer.
    
    By \Cref{cor. gen finiteness conditions} and \Cref{thm. gen li}, there exists a family of finite subsets $\{\F_{1,i}\subset H_i\smallsetminus\{1\}\}^n_{i=1}$ such that if a family of normal subgroups $N_i\lhd H_i$ satisfies that, for all $i$, $N_i\cap \F_{1,i}=\emptyset$ and $H_i/N_i$ is of type $F_3$, then $G/\ll \bigcup^n_{i=1} N_i \rr$ is of type $F_3$ and
    \begin{equation}\label{eq. li dehn 10}
        b^{(2)}_2(G/\ll \bigcup^n_{i=1} N_i \rr)>b^{(2)}_2(G)-\sum^n_{i=1} b^{(2)}_1(H_i/N_i)-\sum^n_{i=1} b^{(2)}_2(H)-\delta.
    \end{equation}
    
    For each $i$, by \Cref{cor. upper bound ver 2}, there exists a finite subset $\F_{2,i}\subset H_i\smallsetminus\{1\}$ such that if a normal subgroup $N_i\lhd H_i$ satisfies $N_i\cap \F_{2,i}=\emptyset$, then
    \begin{equation}\label{eq. li dehn 11}
        b^{(2)}_1(H_i/N_i)<b^{(2)}_1(H_i)+\delta.
    \end{equation}
    
    By \Cref{cor. locally indicable dehn filling}, there exists a family of finite subsets $\{\F_{3,i}\subset H_i\smallsetminus\{1\}\}^n_{i=1}$ such that if a family of normal subgroups $\{N_i\lhd H_i\}^n_{i=1}$ satisfies that, for all $i$, $N_i\cap \F_{3,i}=\emptyset$, then
    \begin{equation}\label{eq. li dehn 12}
        b^{(2)}_1(G/\ll \bigcup^n_{i=1} N_i \rr)<b^{(2)}_1(G)+\delta.
    \end{equation}
    
    Let $\{N_i\lhd H_i\}$ be a family of normal subgroups such that, for all $i$,
    \[N_i\cap (\F_{1,i}\cup\F_{2,i}\cup \F_{3,i})=\emptyset\]
    and $H_i/N_i$ is of type $F_3$. Then from \cite[Lemma 7.16]{luck2002l2}, we conclude that
   \begin{align*}
        \defi(G/\ll \bigcup^n_{i=1} N_i \rr) &\leqslant b^{(2)}_1(G/\ll \bigcup^n_{i=1} N_i \rr)-b^{(2)}_2(G/\ll \bigcup^n_{i=1} N_i \rr)+1\\
        &<s+2\delta &\text{by \eqref{eq. li dehn 10} and \eqref{eq. li dehn 12}}\nonumber\\
        &<t+(n+2)\delta &\text{by \eqref{eq. li dehn 11}}\nonumber
   \end{align*}

   The desired result follows as $\defi(G/\ll \bigcup^n_{i=1} N_i \rr)$ is an integer.
\end{proof}

\begin{corollary}\label{cor. defi lattice} 
    Let $G$ be a non-uniform arithmetic lattice in $SO(n,1)$, $n\geq 3$. Denote by $\{H_i\}_{i=1}^k$  the collection of peripheral subgroups. Then for every family of sufficiently deep normal subgroups $\{N_i\lhd H_i\}_{i=1}^k$, we have
    $$\defi(\bg)\leqslant 1.$$
        In addition, if $n=4$, then 
    $$\defi(\bg)\leqslant 1-\frac{3}{4\pi^2}\mbox{vol}(\mathbb H^4/G).$$
\end{corollary}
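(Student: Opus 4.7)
The plan is to apply \Cref{cor. deficiency} directly to $G$ with the family of peripheral subgroups $\{H_i\}_{i=1}^k$, and then compute the relevant $L^2$-Betti numbers explicitly using \Cref{thm. l2 betti hyp mnfl} and the amenability of the peripherals.

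First I would verify the hypotheses of \Cref{cor. deficiency}. Since $G$ is a non-uniform arithmetic lattice in $SO(n,1)$, by \Cref{thm. lattice sparse} it is virtually sparse special, hence virtually locally indicable by \Cref{prop. properties of vir special groups}. Since $G$ is virtually torsion-free with a torsion-free finite-index subgroup acting freely and cocompactly on a finite-volume hyperbolic manifold with truncated cusps, $G$ is virtually of type $F$, and hence of type $F_\infty$ (in particular $F_3$). Relative hyperbolicity of $G$ with respect to $\{H_i\}_{i=1}^k$ implies that this family is hyperbolically embedded in $G$. Finally, each $H_i$ is finitely generated virtually abelian, so any quotient $H_i/N_i$ is finitely generated virtually abelian and thus of type $F\subseteq F_3$, so the side condition of \Cref{cor. deficiency} is automatic for all sufficiently deep $N_i$.

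Next I would compute the $L^2$-Betti numbers appearing in the bound. Each $H_i$ is infinite amenable (virtually $\Z^{n-1}$ with $n-1\geq 2$), so by \cite[Theorem 6.37 and Theorem 6.54 (8b)]{luck2002l2} we have $b^{(2)}_j(H_i)=0$ for every $j\geq 0$. By \Cref{thm. l2 betti hyp mnfl}, $b^{(2)}_j(G)=0$ whenever $j\neq n/2$; in particular $b^{(2)}_1(G)=b^{(2)}_2(G)=0$ unless $n=4$. For $n=4$ we have $n/2=2$, and
\[
b^{(2)}_2(G)\;=\;\frac{2\,\mathrm{vol}(\mathbb H^4/G)}{\Omega_4}\;=\;\frac{2\,\mathrm{vol}(\mathbb H^4/G)}{8\pi^2/3}\;=\;\frac{3}{4\pi^2}\,\mathrm{vol}(\mathbb H^4/G),
\]
while still $b^{(2)}_1(G)=0$.

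Plugging these values into the inequality provided by \Cref{cor. deficiency} gives, for all $n\geq 3$ and all sufficiently deep fillings,
\[
\defi(\bg)\;\leq\; b^{(2)}_1(G)-b^{(2)}_2(G)+\sum_{i=1}^{k} b^{(2)}_1(H_i)+\sum_{i=1}^{k}b^{(2)}_2(H_i)+1\;=\;1-b^{(2)}_2(G),
\]
which is exactly $1$ when $n\neq 4$ and exactly $1-\tfrac{3}{4\pi^2}\mathrm{vol}(\mathbb H^4/G)$ when $n=4$. Since this is a direct assembly of already-established results, there is no real obstacle; the only point requiring a moment of care is checking that the type-$F_3$ hypothesis on $H_i/N_i$ in \Cref{cor. deficiency} is automatically satisfied for the virtually abelian peripherals, so that the ``sufficiently deep'' conclusion of \Cref{cor. deficiency} passes to a clean ``sufficiently deep'' conclusion here without extra side conditions.
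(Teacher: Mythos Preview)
Your proof is correct and follows essentially the same route as the paper: verify the hypotheses of \Cref{cor. deficiency} (virtual local indicability via \Cref{thm. lattice sparse} and \Cref{prop. properties of vir special groups}, and the automatic type-$F_3$ condition on the virtually abelian quotients $H_i/N_i$), then plug in the vanishing of $b^{(2)}_1(G)$ and of all positive-degree $L^2$-Betti numbers of the peripherals. The only cosmetic difference is in the $n=4$ step: you read off $b^{(2)}_2(G)=\tfrac{3}{4\pi^2}\,\mathrm{vol}(\mathbb H^4/G)$ directly from the volume formula in \Cref{thm. l2 betti hyp mnfl} using $\Omega_4=8\pi^2/3$, whereas the paper passes through $b^{(2)}_2(G)=\chi(G)$ and then cites Lott's formulas for the same identity; both are equivalent.
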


\begin{proof}  By combining \Cref{thm. lattice sparse} and \Cref{prop. properties of vir special groups}, we see that $G$ is virtually locally indicable. Also, \Cref{thm. l2 betti hyp mnfl} implies that $b^{(2)}_1(G)=0$. Since the peripheral subgroups of $G$ are virtually abelian, they have vanishing $L^2$-Betti numbers in positive degrees. The first claim now follows from applying \Cref{cor. deficiency}.

When $n=4$, one gets a sharper bound as follows:
 \begin{align*}
        \defi(\bg) &\leq 1-b^{(2)}_2(G) &\text{by \Cref{cor. deficiency},}\nonumber\\
        &\leq 1-\chi(G) &\text{by \Cref{thm. l2 betti hyp mnfl},}\nonumber\\
        &\leqslant 1-\frac{3}{4\pi^2}\mbox{vol}(\mathbb H^4/G) &\text{by (2.13) and (2.14) of \cite{Lott99}.}\nonumber
\end{align*}
\end{proof}

\bibliographystyle{alpha}
\bibliography{bin_refs}

\end{document}